\documentclass[12pt]{amsart}

\usepackage[utf8x]{inputenc}
\usepackage{tikz-cd}

\usepackage{amsmath, amsthm, amsfonts, amssymb}
\setcounter{tocdepth}{1}
\usepackage{epsfig, enumerate}
\usepackage{color}
\usepackage[left=3.5cm, marginratio=1:1]{geometry}

\usepackage{hyperref}
\usepackage[all]{xy}

\usepackage{float}

\usepackage{tikz}
\usetikzlibrary{arrows,shapes,trees}
\usepackage{graphicx}
\usepackage{amssymb}
\usepackage{amsfonts}
\usepackage{amsthm}
\usepackage{amsmath}
\usepackage{mathrsfs}

\textheight 21.1truecm
\textwidth 15truecm

\newtheorem{maintheorem}{Theorem} 

\newtheorem{theorem}{Theorem}
\newtheorem{lemma}[theorem]{Lemma}
\newtheorem{coro}[theorem]{Corollary}
\newtheorem{prop}[theorem]{Proposition}

\newtheorem{conjecture}{Conjecture}

\newtheorem{problem}{Problem}

\newtheorem*{assumptions*}{Assumptions}

\newtheorem*{rem*}{Remark}

\theoremstyle{remark}
\newtheorem{remark}[theorem]{Remark}
\newtheorem*{remark*}{Remark}

\theoremstyle{example}
\newtheorem{example}[theorem]{Example}

\theoremstyle{definition}
\newtheorem{definition}{Definition}


\newcommand{\C}{{\mathbf C}}

\newcommand{\M}{{\mathbf M}}

\newcommand{\T}{{\mathbf T}}

\newcommand{\Z}{{\mathbf Z}}




\newcommand{\Aa}{{\mathcal A}}

\newcommand{\Ee}{{\mathcal E}}

\newcommand{\Gg}{{\mathcal G}}

\newcommand{\Kk}{{\mathcal K}}
\newcommand{\Ll}{{\mathcal L}}

\newcommand{\Uu}{{\mathcal U}}
\newcommand{\Vv}{{\mathcal V}}
\newcommand{\Ww}{{\mathcal W}}

\newcommand{\cZ}{{\mathcal Z}}



\newcommand{\CC}{{\mathbb C}}

\newcommand{\NN}{{\mathbb N}}

\newcommand{\QQ}{{\mathbb Q}}
\newcommand{\RR}{{\mathbb R}}

\newcommand{\TT}{{\mathbb T}}

\newcommand{\ZZ}{{\mathbb Z}}







\newcommand{\SL}{{\rm SL}}
\newcommand{\GL}{{\rm GL}}

\newcommand{\interior}{{\operatorname{int}}}



\DeclareMathOperator{\supp}{{\rm supp}}

\DeclareMathOperator{\Aff}{\operatorname{Aff}}
\DeclareMathOperator{\Lip}{\operatorname{Lip}}

\newcommand{\be}[1]{\begin{equation} \label{#1} }
\newcommand{\ee}{\end{equation}}
\newcommand{\beq}{\begin{equation}}

\def \Zk{{\mathbb Z^k}}
\def \Diff{{\rm Diff}}

\def \Lip{{\rm Lip}}

\def \al{{\alpha}}
\def \bal{{\bar{\alpha}}}
\def \Rk {{\mathbb R^k}}

\def \C{{\mathscr{C}}}

\def \Ad{{\mathrm{Ad}}}

\def \bal{\bar{\alpha}}

\def \hx0{\hat{x_0}}

\def \Hom{\mathrm{Homeo}}

\def \id{\mathrm{id}}
\def \Id{\operatorname{Id}}
\def \Haar{\operatorname{Haar}}

\def \Z{\mathcal{Z}}

\begin{document}

\title[The symmetries of affine $K$-systems]{The symmetries of affine $K$-systems and a program for centralizer rigidity}

\author{Danijela Damjanovi\'c}

\address[Damjanovi\'c]{Department of mathematics, Kungliga Tekniska hogskolan, Lindstedtsv\"agen 25, SE-100 44 Stockholm, Sweden.} 
\email{ddam@kth.se}

\author{Amie Wilkinson}
\address[Wilkinson]{Department of mathematics, the University of Chicago, Chicago, IL, US, 60637}
\email{amie.wilkinson.math@gmail.com}

\author{Chengyang  Wu}
\address[Wu]{School of mathematical sciences, Peking University, Beijing, China, 100871}
\email{chengyangwu@stu.pku.edu.cn}

\author{Disheng Xu}
\address[Xu]{School of Sciences, Great Bay University and Great Bay Institute for Advanced Study, Songshan Lake Innovation Entrepreneurship Community A5,  Dongguan, Guangdong, China, 523000}
\email{xudisheng@gbu.edu.cn}

\begin{abstract} Let $\Aff(X)$ be the group of affine diffeomorphisms of a closed homogeneous manifold $X=G/B$ admitting  a $G$-invariant Lebesgue-Haar probability measure $\mu$.
For $f_0\in \Aff(X)$, let $\cZ^\infty(f_0)$ be the
 group of $C^\infty$ diffeomorphisms of $X$ commuting with $f_0$. This paper addresses the question: for which $f_0\in \Aff(X)$ is
  $\cZ^\infty(f_0)$ a Lie subgroup of $\Diff^\infty(X)$? 
  Among our main results are the following.
\begin{itemize}
\item If $f_0\in \Aff(X)$ is weakly mixing with respect to $\mu$, then $\cZ^\infty(f_0)<  \Aff(X)$, and hence is a Lie group.
\item If $f_0\in \Aff(X)$ is ergodic with respect to $\mu$, then $\cZ^\infty(f_0)$ is a (necessarily $C^0$ closed) Lie subgroup of $\Diff^\infty(X)$ (although not necessarily a subgroup of $\Aff(X)$).
\item If $f_0\in \Aff(X)$ fails to be a $K$-system with respect to $\mu$, then there exists $f\in \Aff(X)$ arbitrarily close to $f_0$ such that $\cZ^\infty(f)$ is not a Lie group, containing as a continuously embedded subgroup  either the abelian group $C^\infty_c((0,1))$ (under addition) or the simple group $\Diff^\infty_c((0,1))$ (under composition).

\item 
Considering perturbations of $f_0$ by left translations, we conclude that $f_0$ is stably ergodic if and only if the condition
 $\cZ^\infty < \Aff(X)$ holds in a neighborhood of $f_0$ in $\Aff(X)$.
 (Note that by \cite{BS97, Dani77} $f_0\in\Aff(X)$ is stably ergodic in  $\Aff(X)$ if and only if $f_0$ is a $K$-system.)
 
\end{itemize}
 
The affine $K$-systems are precisely those that are partially hyperbolic and essentially accessible, belonging to a class of diffeomorphisms whose dynamics have been extensively studied.  In addition, the properties of partial hyperbolicity and accessibility are stable under $C^1$-small perturbation, and in some contexts, essential accessibility has been shown to be stable under smooth perturbation.

Considering the smooth perturbations of affine $K$-systems, we outline a full program for (local) centralizer rigidity. Special cases of realization of the program are \cite{DWXcent}, \cite{ZhijingWang}, \cite{Sven}, \cite{Sandfeldt2024}, and the forthcoming work \cite{DSWX} and \cite{DWX rank 1 factor}. 
\end{abstract}
\maketitle
\tableofcontents
\section*{Introduction}\label{sec: intro}

The {\em symmetries} of a smooth dynamical system 
 $f\colon M\to M$ on a closed manifold $M$ are the coordinate changes that preserve the dynamics -- that is, the set of diffeomorphisms of $M$ that commute with $f$ under composition. The set of symmetries forms a group under composition, and when $f$ itself is a diffeomorphism, this group is the {\em centralizer of $f$} in the full diffeomorphism group, containing all of the powers of $f$.
 
 The question of whether an Anosov or partially hyperbolic $f$, when equipped with a large group of symmetries, is necessarily of algebraic origin has been studied extensively since the 1980's (see, e.g. \cite{KS94, KL, KalSpa, KSad, FKS,HW14, DX1, SpaVin, DSVX}) and is referred to as the {\it global rigidity} problem. Examples of systems of algebraic origin are automorphisms of tori and  translations on homogeneous spaces, all of which belong to the class of {\it affine diffeomorphisms}. 

From a somewhat different perspective, it has long been suspected that the generic diffeomorphism has no symmetries other than its own iterates, and Smale posed this formally as a question in \cite{Smale, Smale2}.  This question has been explored in various contexts since the 1970's 
\cite{Kopell, PY89} and
has been answered decisively in the $C^1$ topology on any manifold: the $C^1$-generic diffeomorphism has trivial centralizer \cite{BCWvolume, BCW}. 
 
 In this paper we unite these two perspectives in a program to describe  symmetry groups and their effect on dynamics in the setting of affine diffeomorphisms and their smooth perturbations.
This program is motivated by recent results about {\em centralizer rigidity} phenomena, first studied by three of the authors in \cite{DWXcent}.  

Particularly strong types of centralizer rigidity phenomena have been found to hold in the affine examples studied in \cite{ZhijingWang, Sven, Sandfeldt2024}.  For these examples, symmetries of {\em any} ($C^1$-small) smooth perturbation constitute a Lie group, there is a finite list of all possible Lie groups that may appear as symmetry groups of perturbations, and if the symmetry group remains (algebraically) the same after a perturbation then this forces rigidity: the perturbed system is smoothly conjugate to an algebraic one (i.e. to an affine diffeomorphism).  

What these affine examples in  \cite{Sven, Sandfeldt2024, ZhijingWang} (on certain nilmanifolds and semisimple homogeneous spaces, respectively) have in common is that they are {\em $K$-systems}, meaning that they preserve a finite Haar measure and with respect to this measure have no factors of entropy zero, a condition that implies mixing of all orders.
The classically-studied affine $K$-systems form a large subclass of affine diffeomorphisms that include the ergodic automorphisms of tori and diagonal translations on $\SL(n,\RR)$ quotients.
Affine $K$-systems  are partially hyperbolic, as are their perturbations,  and they exhibit a strong irreducibility condition known as {\em essential accessibility} in the partially hyperbolic literature.

The first step toward obtaining a general picture of the kind described above for arbitrary affine diffeomorphisms is taken in this paper: in our main result (Theorem~\ref{thm: liecent iff ergodic}), we show that the class of affine diffeomorphisms whose smooth symmetries remain affine after an affine perturbation is precisely the class of affine $K$-systems. Moreover, this class characterizes  those affine diffeomorphisms whose affine perturbations continue to have a (finite dimensional) Lie group as the group of all smooth symmetries. 

After presenting our main results,  we outline a program for the study of symmetries of perturbations of affine $K$-systems. Some of the conjectures we formulate here have been confirmed in earlier works \cite{DWXcent}, \cite{ZhijingWang}, \cite{Sven}, \cite{Sandfeldt2024} and some will  appear in the forthcoming works \cite{DSWX}, \cite{DWX rank 1 factor}.

In the following subsections, we present our main results, introducing the necessary background and notation.


\subsection{The centralizer}
The centralizer $\cZ^r(f)$ of a $C^r$ diffeomorphism $f\in \Diff^r(M)$ of a closed manifold $M$ (with $r\in \NN\cup \{\infty\}$) is simply the group of diffeomorphisms that commute with it:
\[
\cZ^r(f):= \{g\in \Diff^r(M): g\circ f = f\circ g\} < \Diff^r(M).
\]
As mentioned above, for any closed manifold $M$, it is conjectured that the $C^r$-generic\footnote{In the sense of Baire.} diffeomorphism $f\in \Diff^r(M)$ has the smallest possible centralizer -- meaning $\cZ^r(f) = \langle f \rangle := \{f^n: n\in \ZZ\}$, a property called {\em trivial centralizer}; this conjecture has been proved in certain settings, for example when $M=\RR/\ZZ$ \cite{Kopell}, for all $r\geq 2$, and for $M$ an arbitrary closed manifold and $r=1$ \cite{BCW}. 

For $f\in\Diff^\infty(M)$, we will call $\cZ^\infty(f)$  the {\em smooth centralizer} of $f$.
In the discussion that follows, we will primarily consider this case and will write $\Diff(M)$ for $\Diff^\infty(M)$ and $\cZ(f)$ for the smooth centralizer of $f$ when this context is understood.

\subsection{Affine diffeomorphisms}

Let $X=G/B$ be a compact homogeneous manifold, where $G$ is a connected Lie group acting transitively and locally freely on $X$, and $B$ is the point stabilizer of this action. 
We will assume that $X$ admits a $G$-invariant probability measure $\mu_{\Haar}$. Two main classes of examples are:
\begin{itemize}
    \item $B=\Gamma<G$ is a torsion-free, cocompact lattice, and\
    \item $G$ is compact and $B$ is any connected, closed subgroup. For example, $X= SO(n)/SO(n-1)\cong S^n$.
\end{itemize} 
 In this work, we will often use the notion of commensurability:  groups $H_1, H_2$ are {\em commensurable}, denoted here by $H_1\doteq H_2$, if they are isomorphic to finite index subgroups of some group $H$.  Commensurable groups are isomorphic ``up to finite groups."


An {\em affine diffeomorphism} $f_0\colon X\to X$ is a map of the form $L_a\circ \Phi$, where $L_a$ denotes left translation by
$a\in G$ and $\Phi\colon G\to G$ is an automorphism preserving $B$. 
We denote by $\Aff(X)$ the space of affine diffeomorphisms of $X$, endowed with the $C^0$ topology. Then  $\Aff(X)$ is a Lie group that embeds in the space $\Diff^r(X)$ of $C^r$ diffeomorphisms, and the $C^0$ and $C^r$ metrics coincide on $\Aff(X)$, for any $r\in \NN\cup \{\infty\}$.

Affine diffeomorphisms display a rich variety of dynamics, and their ergodic-theoretic properties with respect to the Haar measure $\mu_{\hbox{\tiny Haar}}$ have been studied extensively (see \cite{Dani77,BM81,KSS}).  Nonlinear perturbations and deformations of elements of $\Aff(X)$ within $\Diff^r(X)$ are also basic objects of study in smooth dynamics, from the structurally stable Anosov automorphisms of nilmanifolds to the non-structurally stable but {\em stably ergodic} (i.e., those whose $C^1$-small, volume-preserving perturbations in $\Diff^\infty(X)$ are all ergodic) partially  hyperbolic diffeomorphisms in, e.g.,  \cite{GPS, fede, BS97}.

The space $\Aff(X)$ is also  a natural class of diffeomorphisms to study with respect to their symmetries: all manner of Lie groups and many infinite dimensional groups appear as centralizers of affine diffeomorphisms.  While the centralizer of the generic element of $\Diff(X)$  is conjecturally trivial, this is decidedly not the case for affine diffeomorphisms:  for example, the smooth centralizer of the generic $f_0\in \Aff(\TT^3)$ is {\em nontrivial} (and in fact commensurable to the free abelian group $\ZZ^2$).

\subsection{Diffeomorphisms with Lie group centralizer.}
In this general homogeneous setting, we aim to understand  how the properties of $\cZ(f)$ vary for $f$ in a neighborhood $\Uu$ of a fixed $f_0\in \Aff(X)$, first for  $\Uu\subseteq \Aff(X)$ and then for  $\Uu\subseteq \Diff(X)$.   We start with the properties of $\cZ(f_0)$ itself.

Clearly the restricted centralizer $\cZ^{\Aff}(f_0):=  \{g\in \Aff(X): g\circ f = f\circ g\}$ within the group $\Aff(X)$ is a subgroup of  the smooth centralizer $\cZ(f_0)$.   On the other hand,  $\cZ(f_0)$ need not be a subgroup of $\Aff(X)$, or even be a Lie group (e.g., $f_0=\id$).  This leads us to the following problem.
\begin{problem}\label{p=affine centralizer}
Determine which $f_0\in\Aff(X)$ have the property that $\cZ(f_0) < \Aff(X)$.  More generally, for which $f_0\in \Aff(X)$ is $\cZ(f_0)$ a (finite-dimensional) Lie group?
\end{problem}
Previous works on this and related problems due to Walters \cite{Wa-example, Wa} and   Morris \cite{Wit87, Wit87cor}  show that the ergodic properties of $f_0$ are closely related to the structure of $\cZ(f_0)$.  
In this paper, we  give a  solution to  Problem~\ref{p=affine centralizer} by establishing a general connection between the ergodic-theoretic properties of $f_0$ and the properties of its centralizer.  


\medskip



\section{Main results: affine $K$-systems and stability of affine centralizers}
Our main results (Theorems~\ref{thm: liecent iff ergodic}, \ref{t=mixing ZLip is ZAff} and \ref{thm: not K imp big cent}) describe algebraic properties of the function $f\mapsto \cZ(f)$ on $\Aff(X)$.

To further motivate these results, consider the case in dimension 1. The group of affine diffeomorphisms of the circle $\TT=\RR/\ZZ$ is the semidirect product $\TT \ltimes \ZZ/2\ZZ$, whose action is generated by rotations and the involution  $x\mapsto -x$. Restricting to the orientation-preserving component $\Aff_+(\TT)$, the values of the function $f\mapsto \cZ(f)$ oscillate between $\TT$ (at the irrational rotations) and huge groups at the rational rotations.
In particular, if $f_0(x) = x+p/q$, then for any diffeomorphism $g\colon \TT\to \TT$, the lift $\tilde g$ of $g$ under the covering map $x\mapsto qx$ commutes with $f_0$ (since $\tilde g(x+ 1/q) = g(x) + 1/q$), and so the finite extension $\ZZ/q\ZZ\hookrightarrow \widetilde{\Diff(S^1)} \to \Diff(S^1)$ is contained in $\Z^\infty(f_0)$.


The unique ergodicity of the irrational rotations tightly controls the centralizer in this particular case.  Using Ratner's measure rigidity,  Morris \cite{Wit87} proved that a vast generalization of this phenomenon holds for the {\em unipotent} translations:  ergodicity of $f_0 = L_u$ with respect to $\mu_{\Haar}$ implies that $\cZ^1(f_0) < \Aff(X)$.\footnote{He in fact proves something stronger: for ergodic, unipotent $f_0$,  if $g$ preserves $\mu_{\Haar}$ and $gf_0=f_0g$, then $g$ is affine. For $g\in \cZ^1(f)$ (or even $\cZ^{\Lip}(f)$),  it is easy to see that the ergodicity of $f_0$ implies that $g$ preserves $\mu_{\Haar}$. }

In the general setting,  mere ergodicity of $f_0\in \Aff(X)$ does not imply that $\cZ(f_0)<\Aff(X)$ (see end of Section~\ref{ProofB} for an important example due to Walters).  Moreover, even when $\cZ(f_0)<\Aff(X)$ does hold, the condition can be fragile.
For example, while any ergodic rotation $f_0$ on $\TT$ does satisfy the condition $\cZ(f_0)<\Aff(\TT)$, the density of the (nonergodic) rational rotations in $\Aff_+(\TT)$ precludes the existence of a rotation $f_0$ where the condition $\cZ(f_0)<\Aff(X)$ holds stably under perturbations. 

Moving to another example,  when $f_0\in\Aff(\TT^d)$ is Anosov, the smooth centralizer of every perturbation of $f_0$ in $\Aff(\TT^d)$ is affine; these examples (unlike the irrational rotations) are also {\em stably} ergodic within  $\Aff(X)$.

It turns out these connections between the stability properties of ergodicity and affine centralizer is completely general:
Theorem~\ref{thm: liecent iff ergodic} below shows that
 stable ergodicity of $f_0$ within $\Aff(X)$ is in fact equivalent to  the stability of the condition  $\cZ <\Aff(X)$. To state this result, we define some terms.

An {\em affine $K$-system} is an affine diffeomorphism $f_0\in\Aff(X)$ with the Kolmogorov, or ``$K$", property with respect to $\mu_{\Haar}$.  The $K$ property means that $f_0$ has no measurable factor of entropy $0$, and it has many equivalent formulations, some of which we discuss below.  It is stronger than mixing and weaker than the Bernoulli property.  Conjecturally, all affine $K$-systems are expected to be exponentially mixing. 

Let $\Aa(X) = \{ f\in \Aff(X): \cZ(f)< \Aff(X)\}$ be the set of affine diffeomorphisms whose smooth centralizer is affine, and let \[
\Ll(X) :=  \{ f\in \Aff(X): \hbox{$\cZ(f)$ is a Lie subgroup of $\Diff(X)$} \}.
\]
Our main result classifies $\Ll(X)$ and the $C^0$-interior $\interior(\Aa(X))$ in terms of the ergodic properties of their elements.

To state our main result, we also introduce the following notation for the sets $\Ee(X), \Ww(X), \Kk(X) \subset \Aff(X)$
of affine ergodic, weakly mixing and $K$-systems, respectively.

\begin{itemize}
    \item $\Ee(X) := \{f \in \Aff(X): \hbox{$f$ is ergodic (with respect to $\mu_{Haar}$)}\}$.
     \item $\Ww(X) := \{f \in \Aff(X): \hbox{$f$ is weakly mixing (with respect to $\mu_{Haar}$)}\}$.
    \item $\Kk(X) := \{f \in \Aff(X): \hbox{$f$ is a $K$-system (with respect to $\mu_{Haar}$)} \}$.
\end{itemize}
Note that $\Kk(X)\subset \Ww(X) \subset \Ee(X)$.

 Brezin-Shub \cite{BS97} and later Starkov \cite[Appendix]{PS04}  investigated the elements of $\Aff(X)$ that are stably ergodic within $\Aff(X)$ -- that is,  the $C^0$ interior $\interior(\Ee(X))$ in $\Aff(X)$.  Their work, combined with earlier work of Dani \cite{Dani77}, shows that $\Kk(X)$ is open in $\Aff(X)$, and in fact
 \[\Kk(X) =  \interior(\Ww(X)) = \interior(\Ee(X)).\] Our main result identifies the interior of $\Aa(X)$ as this common set.

\begin{maintheorem}\label{thm: liecent iff ergodic} For any compact homogeneous manifold $X$, we have
\[\Ee(X) \subseteq \Ll(X),\]
and 
\[\Kk(X) = \interior(\Aa(X)) =  \interior(\Ll(X)) .\]
 
%


\end{maintheorem}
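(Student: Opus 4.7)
The plan is to deduce the theorem from three ingredients: Theorem~\ref{t=mixing ZLip is ZAff}, which yields the inclusion $\Ww(X)\subseteq \Aa(X)$; Theorem~\ref{thm: not K imp big cent}, which shows that every $f_0\in \Aff(X)\setminus \Kk(X)$ is arbitrarily $C^0$-approximable by affine diffeomorphisms whose smooth centralizer contains a continuously embedded copy of $C^\infty_c((0,1))$ or $\Diff^\infty_c((0,1))$; and the Brezin--Shub/Dani identity $\Kk(X) = \interior(\Ee(X))$, in particular the $C^0$-openness of $\Kk(X)$ in $\Aff(X)$. A standing observation used throughout is that $\Aa(X)\subseteq \Ll(X)$: since the commutation relation $g\circ f=f\circ g$ is $C^0$-closed, $\cZ(f)$ is $C^0$-closed in $\Diff^\infty(X)$, and when it lies inside $\Aff(X)$ it is then a closed subgroup of a Lie group and hence a Lie subgroup by Cartan's closed subgroup theorem.

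First I would establish the inclusion $\Ee(X)\subseteq \Ll(X)$ directly, rather than attempting to reduce to the weakly mixing case: such a reduction in general fails, since an ergodic affine diffeomorphism may have a nontrivial Kronecker factor whose pure-point spectrum no finite algebraic cover can trivialize. The strategy is to show that the space $\mathfrak{z}(f_0)$ of smooth vector fields $V$ on $X$ satisfying the cocycle condition $V\circ f_0 = (df_0)\cdot V$ is finite-dimensional whenever $f_0$ is ergodic, by exploiting the algebraic decomposition of $TX$ into $f_0$-equivariant summands coming from the homogeneous structure, together with ergodicity, which constrains any invariant section by its restriction to a single orbit. Combining finite-dimensionality of $\mathfrak{z}(f_0)$ with the $C^0$-closedness of $\cZ(f_0)$ in $\Diff^\infty(X)$ and a Bochner--Montgomery-type argument then upgrades $\cZ(f_0)$ to a Lie subgroup of $\Diff^\infty(X)$.

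Granting this, the three-way equality $\Kk(X) = \interior(\Aa(X)) = \interior(\Ll(X))$ follows by assembly. For $\Kk(X)\subseteq \interior(\Aa(X))$, combine $\Kk(X)\subseteq \Ww(X)\subseteq \Aa(X)$ (via Theorem~\ref{t=mixing ZLip is ZAff}) with the openness of $\Kk(X)$. The inclusion $\interior(\Aa(X))\subseteq \interior(\Ll(X))$ is immediate from the standing observation $\Aa(X)\subseteq \Ll(X)$. For the reverse inclusion $\interior(\Ll(X)) \subseteq \Kk(X)$, suppose $f_0\in \Aff(X)\setminus \Kk(X)$; then Theorem~\ref{thm: not K imp big cent} produces $f\in \Aff(X)$ arbitrarily $C^0$-close to $f_0$ whose smooth centralizer continuously contains $C^\infty_c((0,1))$ or $\Diff^\infty_c((0,1))$, neither of which embeds into any finite-dimensional Lie group; hence $f\notin \Ll(X)$, and so $f_0\notin \interior(\Ll(X))$.

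The hardest step will be controlling the dimension of $\mathfrak{z}(f_0)$ for merely ergodic (not weakly mixing) $f_0$: here Walters's example shows that $\cZ(f_0)$ can contain smooth diffeomorphisms that are not affine, so the rigidity tools available in the weakly mixing regime (where every invariant vector field turns out to be tangent to an affine orbit) do not apply directly, and a more delicate spectral analysis of the transfer operator induced by $f_0$ on smooth sections of $TX$ is required.
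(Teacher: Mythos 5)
Your assembly of the interior equalities is correct and is essentially the paper's own argument: $\Kk(X)\subseteq\Ww(X)\subseteq\Aa(X)$ by Theorem~\ref{t=mixing ZLip is ZAff}, the $C^0$-openness of $\Kk(X)$ in $\Aff(X)$ (Dani, Brezin--Shub, Starkov) gives $\Kk(X)\subseteq\interior(\Aa(X))$; the observation $\Aa(X)\subseteq\Ll(X)$ gives $\interior(\Aa(X))\subseteq\interior(\Ll(X))$; and the contrapositive use of Theorem~\ref{thm: not K imp big cent} gives $\interior(\Ll(X))\subseteq\Kk(X)$, closing the circle.

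The gap is in the first half, $\Ee(X)\subseteq\Ll(X)$. In the paper this inclusion is exactly the ergodic clause of Theorem~\ref{t=mixing ZLip is ZAff}, proved via the block analysis of the derivative cocycle $D_xg$ (Corollary~\ref{c = Dg classification}), the fact that for an ergodic affine map each Koopman eigenvalue has a one-dimensional eigenspace spanned by a smooth eigenfunction of constant modulus (Parry, reduced to the nilmanifold case via Dani and Brezin--Moore), and then a continuous embedding of the closed group $\Gg\supseteq\cZ^\infty(f_0)$ into $\GL(d,\CC)$, which makes $\Gg$, and hence its closed subgroup $\cZ^\infty(f_0)$, a Lie group. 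You instead propose to prove $\Ee(X)\subseteq\Ll(X)$ by showing that the space $\mathfrak z(f_0)$ of $Df_0$-invariant smooth vector fields is finite-dimensional and then invoking $C^0$-closedness together with ``a Bochner--Montgomery-type argument.'' There are two problems. First, the finite-dimensionality of $\mathfrak z(f_0)$ for merely ergodic $f_0$ is the actual content of the step, and you explicitly leave it open (``a more delicate spectral analysis \dots is required''), so the inclusion is not established. Second, even granting it, the bridge to ``Lie group'' does not follow as stated: Bochner--Montgomery applies to locally compact transformation groups, and neither closedness of $\cZ^\infty(f_0)$ in $\Diff^\infty(X)$ nor finite-dimensionality of $\mathfrak z(f_0)$ yields local compactness; moreover, a bound on invariant vector fields controls at most a neighborhood of the identity component and says nothing about an a priori large, totally disconnected part of the centralizer. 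The paper's eigenfunction/linearization argument is precisely what circumvents both difficulties, so you should either quote the ergodic part of Theorem~\ref{t=mixing ZLip is ZAff} directly (which already states $\Ee(X)\subseteq\Ll(X)$) or actually supply the spectral argument you defer, together with a correct mechanism for passing from the invariant vector fields to the full centralizer.
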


Our proof of Theorem~\ref{thm: liecent iff ergodic} uses the following characterization of $\Kk(X)$ from \cite{Dani77} (see also \cite{BS97}). For an affine diffeomorphism $f_0$ of $X=G/B$, consider the induced automorphism $\hat f_0 := ad(g)\circ D_e\Phi\colon \mathfrak{g} \to \mathfrak{g}$ of the Lie algebra $\mathfrak{g}$ of $G$.  Let $V^{su}\subset \mathfrak{g}$ be the direct sum of the generalized eigenspaces corresponding to those eigenvalues $\lambda$  of $\hat f_0$ with $|\lambda|\neq 1$, and let $\mathfrak{h}\subset \mathfrak{g}$ be the ($\hat f_0$-invariant) Lie subalgebra generated by $V^{su}$.  
Then $\mathfrak{h}$ is an ideal and thus tangent to a connected normal subgroup $H\lhd G$, which we call the {\em hyperbolically generated subgroup of $G$}. 

In their main result, \cite{BS97} show that if $f_0\in \interior(\Ee(X))$ then $\overline{HB} = G$. 
In fact, the property $\overline{HB} = G$ is stable under affine perturbations, and so $f_0\in \interior(\Ee(X))$ is {\em equivalent} to   $\overline{HB} = G$ \cite[Appendix, Theorem 1]{PS04}. On the other hand, Dani had previously proved that the condition $\overline{HB} = G$ is equivalent to $f_0$ being a $K$-automorphism.
In the language of smooth dynamics, the affine $K$-systems are prescisely  those  that are partially hyperbolic and essentially accessible.

Since any $f_0\in \interior(\Ee(X))$ is a $K$-system, and hence weakly mixing, the proof of the inclusion $\interior(\Ee(X)) \subseteq \interior(\Aa(X))$ in Theorem~\ref{thm: liecent iff ergodic} is implied by the following.  
For $f_0\in \Aff(X)$, we denote by $\Z^{\Lip}(f_0)$ the set of bi-Lipschitz homeomorphisms of $X$ commuting with $f_0$.  Clearly  $\Z^\infty(f_0) < \Z^{\Lip}(f_0)$.

\begin{maintheorem}
\label{t=mixing ZLip is ZAff}
If $f_0\in \Aff(X)$ is ergodic, then $\Z^{\Lip}(f_0)= \Z^\infty(f_0)$  is a Lie subgroup of $\Diff^{\infty}(M)$. In particular,
\[
\Ee(X) \subseteq \Ll(X).
\]

If $f_0\in \Aff(X)$ is weakly mixing, then $\Z^{\Lip}(f_0) = \Z^{\Aff}(f_0)$. In particular,
    \[
   \Ww(X) \subseteq \Aa(X).
    \]

\end{maintheorem}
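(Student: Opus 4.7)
My plan is to proceed in three stages: first, show that any $g \in \Z^{\Lip}(f_0)$ preserves $\mu_{\Haar}$; second, bootstrap regularity from Lipschitz to $C^\infty$ using the $\hat f_0$-invariant structure on $X = G/B$ together with ergodicity; and third, in the weakly mixing case, identify the smooth centralizer with the affine centralizer.

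The Haar-preservation step is the easiest. Since $g$ is bi-Lipschitz, the pushforward $g_*\mu_{\Haar}$ is absolutely continuous with bounded Radon--Nikodym derivative $\phi = d(g_*\mu_{\Haar})/d\mu_{\Haar}$; the commutation $g f_0 = f_0 g$ combined with $f_0$-invariance of $\mu_{\Haar}$ shows that $\phi$ is $f_0$-invariant, and ergodicity forces $\phi \equiv 1$.

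For the regularity bootstrap, I would decompose the Lie algebra $\mathfrak{g}$ into the generalized eigenspaces of $\hat f_0 = \Ad(a)\circ D_e\Phi$, giving rise to left-invariant stable, central, and unstable foliations on $X$. The stable and unstable leaves are dynamically characterized as the sets where $d(f_0^n x, f_0^n y)$ decays exponentially (for $n \to +\infty$ or $n \to -\infty$, respectively), and are therefore preserved under the bi-Lipschitz symmetry $g$. Along the hyperbolic directions, classical tools---Livsic-type smoothing along individual stable and unstable leaves, combined with Journ\'e's regularity lemma---upgrade $g$ from Lipschitz to $C^\infty$ transversely to the central foliation. The main obstacle is the central direction, where no hyperbolic regularization is available; my approach would be to exploit that the central leaves are orbits of a closed subgroup of $G$ tangent to the $\hat f_0$-fixed subspace of $\mathfrak{g}$, and that $f_0$ restricted to a typical central leaf has quasi-unipotent algebraic dynamics. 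A Mautner-type argument combined with the Haar-preservation of $g$ should then force $g$ to be smooth along central orbits, yielding $g \in \Diff^\infty(X)$. The resulting centralizer, once identified as a closed subgroup of a finite-dimensional normalizer inside $\Diff^\infty(X)$, will be a Lie group.

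For the weakly mixing case I would strengthen the conclusion to $g \in \Aff(X)$. Weak mixing means $f_0$ has no non-constant $L^2$ eigenfunctions, which is precisely the obstruction to non-affine symmetries---as illustrated by Walters's example in the merely ergodic (non-weakly mixing) case. Generalizing Morris's argument in \cite{Wit87} from unipotent translations to arbitrary weakly mixing affine systems, any putative non-affine component of $g$ would yield a non-trivial measurable eigenfunction of $f_0$, contradicting weak mixing. The hardest single step in the whole program is the regularity bootstrap along the central direction, where I must replace Pesin-style hyperbolic regularization with an algebraic-measure-rigidity input; the rest of the argument consists of verifying that the standard homogeneous-dynamics toolkit (Mautner, Ratner, Morris) extends to the bi-Lipschitz setting.
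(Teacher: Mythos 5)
Your opening step (Haar preservation of any $g\in\Z^{\Lip}(f_0)$ via ergodicity of the Radon--Nikodym derivative) is correct, and is exactly what the paper observes in passing. After that, however, there is a genuine gap, and it occurs precisely at the point you yourself flag as hardest. The regularity bootstrap you propose rests on the stable/unstable foliations of $f_0$, Livsic-type smoothing along leaves, and Journ\'e's lemma; but the theorem concerns \emph{arbitrary} ergodic $f_0\in\Aff(X)$, and such maps need not have any hyperbolicity at all (ergodic translations, or quasi-unipotent affine maps such as $(x,y)\mapsto(x+\alpha,y+x)$ on $\TT^2$, have $\hat f_0$ with all eigenvalues on the unit circle, so the ``central direction'' is the whole tangent bundle). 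In those cases the entire burden falls on your ``Mautner-type argument combined with Haar preservation,'' which is only a gesture: Mautner phenomena concern unitary representations of the acting group and give no mechanism for upgrading a bi-Lipschitz commuting map to a smooth one along center orbits. Moreover, even granting smoothness, you have no mechanism for the finite-dimensionality of $\Z^{\infty}(f_0)$: you cannot locate it inside $\Aff(X)$ or any a priori finite-dimensional normalizer, since Walters's example shows that for ergodic (non--weakly mixing) $f_0$ the smooth centralizer is strictly larger than the affine one. Similarly, in the weakly mixing case, ``generalizing Morris's argument'' is not an argument: Morris's proof in \cite{Wit87} is specific to entropy-zero affine maps and uses unipotent/measure-rigidity techniques, whereas the theorem must also cover weakly mixing affine maps of positive entropy; producing the ``non-trivial measurable eigenfunction'' from a non-affine symmetry is exactly the content that needs to be proved.

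For comparison, the paper's mechanism is entirely different and much more elementary: one fixes a framing of $T_{\CC}X$ by (projected) right-invariant vector fields, in which $Df_0$ is a \emph{constant} matrix in Jordan form, and writes the a.e.\ defined derivative cocycle $D_xg$ of $g\in\Z^{\Lip}(f_0)$ in block form relative to the generalized eigenspaces. The commutation relations $D_xg = Df_0^{-n}\,D_{f_0^n(x)}g\,Df_0^{n}$ for all $n$, together with essential boundedness of $Dg$, force each entry of each block either to vanish or to satisfy $a\circ f_0=\omega^{-1}a$ with $\omega=\lambda_i/\lambda_j$, i.e.\ to be a bounded measurable Koopman eigenfunction. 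Weak mixing kills all nonconstant eigenfunctions, so $Dg$ is constant a.e.\ and $g$ is affine; in the merely ergodic case, the one-dimensionality of each eigenspace and the smoothness of a generating eigenfunction (Parry, Dani, Brezin--Moore) give a continuous embedding of $\Z^{\Lip}(f_0)$ into $\GL(d,\CC)$ by recording the constant coefficients, which is exactly what makes $\Z^{\Lip}(f_0)=\Z^{\infty}(f_0)$ a Lie subgroup of $\Diff^{\infty}(X)$. Your proposal contains no substitute for this eigenfunction mechanism, which is where both conclusions of the theorem actually come from.
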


\begin{remark}  Morris \cite{Wit87, Wit87cor} proved a stronger result in the weak mixing,  entropy $0$ context: for $f_0\in \Aff(X)$ with entropy 0 with respect to $\mu_{\Haar}$, if $f_0$ is weakly mixing and commutes with a $\mu_{\Haar}$-preserving measurable map $g$, then $g$ is affine.\footnote{This implies Theorem~\ref{t=mixing ZLip is ZAff} in the case where $f_0$ is weakly mixing and has entropy $0$, since if $f_0$ is ergodic with respect to $\mu_{\Haar}$, and $g$ is bi-Lipschitz and commutes with $f_0$, then $g$ must preserve $\mu_{\Haar}$ as well.} 

Walters also proved a stronger result in the case where $X=N/B$, with $N$ nilpotent: if $f_0\in\Aff(X)$ is weakly mixing, then $\cZ^0(f_0)<\Aff(X)$, where $\cZ^0(f_0)$ denotes the group of homeomorphisms commuting with $f_0$.  Note that this case is complementary to the situation considered in \cite{Wit87, Wit87cor}, as weakly mixing maps on nilmanifolds must have positive entropy. 

For an ergodic homogeneous semisimple flow, \cite[Theorem C.1]{DSVX} also derive that the Lipshitz centralizer of the flow is affine, from earlier work of Zeghib \cite{Zeghib}.
\end{remark}
\begin{remark} The bi-Lipschitz centralizer $\Z^{\Lip}(f_0)$ can be replaced by the set of homeomorphisms $g$ commuting with $f_0$
with the properties that $g,g^{-1}$ be differentiable a.e.~with $\|Dg\|, \|Dg^{-1}\| \in L^2(X,\mu_{\Haar})$: the proof is almost identical.  One can also extend this result to conjugacies between affine diffeomorphisms: if $f_1,f_2\in \Aff(X)$, with $f_1$ weakly mixing,  and $f_2 = g\circ f_1 \circ g^{-1}$, for some bi-Lipschitz homeomorphism $g$, then $g\in \Aff(X)$. 
\end{remark}
\begin{remark}
    If $f_0$ is ergodic but not weakly mixing, then it is possible to have $\cZ^\infty(f_0)\not\subset \Aff(X)$ : we describe  examples, originally found by Walters, 
    after the proof of  Theorem~\ref{t=mixing ZLip is ZAff} at the end of Section \ref{ProofB}.
    For these examples, $\Z^{\Aff}(f_0)$ is a proper subset of  $\Z^\infty(f_0)$, which is nonetheless a  Lie subgroup of $\Diff^\infty(X)$.  The existence of such examples is made clear by the proof of Theorem~\ref{t=mixing ZLip is ZAff}.

    
    
\end{remark}
\begin{remark} 
    In certain circumstances,  ergodicity of $f_0$ alone implies $\Z^{\Lip}(f_0)<\Aff(X)$; see Remark~\ref{remark 10} in the proof of Theorem~\ref{t=mixing ZLip is ZAff}.
\end{remark}

To prove the inclusion $\interior(\Aa(X)) \subseteq \interior(\Ee(X))$ in Theorem~\ref{thm: liecent iff ergodic}, we establish the contrapositive:
 
\begin{maintheorem}\label{thm: not K imp big cent} If $f_0\in \Aff(X)$ is not a K-system, then there exists an affine perturbation $f$ of $f_0$ such that $\Z^\infty(f)$ is not a Lie group and in fact  contains 
a continuously embedded copy of one of the following groups:
\begin{itemize}
\item the abelian group $C^\infty_c(I)$ of compactly-supported smooth functions on the interval $I=(0,1)$ (under addition) or  
\item the simple group $\Diff^\infty_c(I)$ of compactly-supported diffeomorphisms of $I$ (under composition).
\end{itemize}
\end{maintheorem}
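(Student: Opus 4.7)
The plan is to exploit the Brezin--Shub--Starkov--Dani characterization used earlier: since $f_0$ is not a $K$-system, $L := \overline{HB}$ is a proper closed subgroup of $G$, and on the factor $Y = G/L$ the induced affine map $\bar f_0$ has linear part with spectrum entirely on the unit circle. Using the Jordan decomposition of $\hat f_0$ modulo $\mathfrak{l}$ together with Kronecker/Weyl-type analysis of compact isometric homogeneous actions, I extract a further quotient $\sigma \colon X \to \TT^k$ on which $f_0$ descends to a translation by some $\alpha \in \TT^k$. (In the extreme case $H = \{e\}$, $Y = X$ is already such a quotient.)

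Next, I perturb the translation factor of $f_0$. The idea is to choose a one-parameter toral subgroup $T_0 < G$ whose image under $\sigma$ is nontrivial in $\TT^k$ and which commutes with the affine data $(a, \Phi)$; such a subgroup is furnished by the neutral part of the Jordan decomposition of $\hat f_0$, which generates an $\Ad$-commuting, $\Phi$-fixed torus in $G$. Composing $f_0$ with left translation by small $t \in T_0$ produces $f \in \Aff(X)$ arbitrarily close to $f_0$ such that $f$ descends to translation by $\alpha + \beta(t)$ on $\TT^k$. Choosing $t$ so that $\alpha + \beta(t) = p/q$ with $p \in \ZZ^k$ and $q \in \NN$, the iterate $f^q$ acts as the identity on $\TT^k$.

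The centralizer construction is then straightforward: for any $C^\infty$ map $\psi \colon \TT^k \to T_0$ that is $p/q$-periodic, the map
\[
g_\psi(x) := \psi(\sigma(x)) \cdot x
\]
commutes with $f$, because the $T_0$-action is central and $f$ preserves $\sigma$-fibers up to the rational translation $p/q$. Choosing $\psi$ valued in a single $\TT$-factor of $T_0$ and supported in an interval $I \subset (0, 1/q)$ yields a continuous injection $C^\infty_c(I) \hookrightarrow \cZ^\infty(f)$. When the fibration $\sigma$ admits a positive-dimensional fiber direction on which $f^q$ acts trivially---typical when $f_0$ is non-ergodic, so that an entire fiber subgroup of $G$ acts freely along $\sigma$-fibers---the same construction with arbitrary fiber-wise diffeomorphisms in place of translations produces a continuous injection $\Diff^\infty_c(I) \hookrightarrow \cZ^\infty(f)$. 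The main obstacle is the second step: securing the commuting toral subgroup $T_0$ with the right projection to $\TT^k$, which demands a careful structural analysis of the centralizer of $(a, \Phi)$ in $G$ and of how the Jordan decomposition of $\hat f_0$ interacts with $B$. The dichotomy between the abelian $C^\infty_c(I)$ and the simple $\Diff^\infty_c(I)$ alternatives in the conclusion corresponds precisely to whether only abelian translations, or full fiber-wise diffeomorphisms, are available transversally to the rational rotation.
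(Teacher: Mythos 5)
Your proposal has a genuine gap, at two places. First, the reduction to a toral quotient $\sigma\colon X\to\TT^k$ on which $f_0$ acts by a translation is not available in general. After factoring out the hyperbolically generated subgroup, the base $G/\overline{HB}$ is a compact homogeneous space whose group can be semisimple (compact or not); for instance when the base is $G'/\Gamma'$ with $G'$ semisimple without compact factors and $\Gamma'$ a lattice, or when the base map is an affine map of a compact semisimple quotient with nontrivial automorphism part, there is simply no homogeneous torus factor onto which $f_0$ descends as a rotation. The paper never passes to a torus in these cases: it builds an $f_0$-invariant tower of homogeneous fibrations adapted to the structure of $G$ (upper central series/Mostow in the solvable case, the closed group generated by the conjugating element in the compact semisimple case, density of elements with finite-order adjoint \`a la Brezin--Shub together with normalizer-of-lattice facts in the noncompact semisimple case) and perturbs by an \emph{arbitrary} left translation, lifted through the tower using only that the projection $G\to\bar G$ is open. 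In particular the perturbing element is not required to commute with $(a,\Phi)$, which is exactly the requirement you flag as your ``main obstacle'': a $\Phi$- and $\mathrm{Ad}(a)$-invariant torus $T_0$ projecting nontrivially need not exist, and demanding it amounts to assuming a global eigenvalue-$1$ direction, which is precisely what fails in half the cases.

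Second, your centralizer construction and your dichotomy do not match what actually governs the two alternatives. The map $g_\psi(x)=\psi(\sigma(x))\cdot x$ commutes with $f=L_{a'}\Phi$ only if the values of $\psi$ are fixed by $\mathrm{Ad}(a')\circ D\Phi$ along the fibers; this is an eigenvalue-$1$ condition, and in the paper it is handled only \emph{locally} (over a small open set in a periodic base, with a smooth selection via the rank theorem) and only after invoking the twisted-conjugacy lemma of An (Lemma~\ref{eigen1trans}) to transfer eigenvalue $1$ from the automorphism to its inner-twisted version; this produces the $C^\infty_c(I)$ embedding. When $1$ is \emph{not} an eigenvalue of the fiberwise linear part, the paper uses the complementary twisted-conjugacy statement (Lemma~\ref{eigen1fix}) to straighten $f$ locally to a product $\bar f\times A$ and then embeds $\Diff^\infty_c(I)$ by perturbing in the \emph{base} direction with compactly supported diffeomorphisms commuting with the periodic base map --- not, as you propose, by ``fiber-wise diffeomorphisms'' along a direction where $f^q$ acts trivially (if such a direction existed you would be back in the eigenvalue-$1$ case and only the flow construction is available). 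So the dichotomy is: $1$ an eigenvalue of $D(A^d)$ (fiber-tangent invariant vector field, $C^\infty_c(I)$) versus $1$ not an eigenvalue (local product structure, base perturbations, $\Diff^\infty_c(I)$), and both halves rest on the twisted-conjugacy lemmas and on first arranging, by a translational perturbation through the tower, that the base map is periodic.
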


 Hence $\interior(\Ll(X))\subseteq \interior(\Ee(X))$
 Since $\Aa(X) \subseteq \Ll(X)$, Theorems~\ref{t=mixing ZLip is ZAff} and \ref{thm: not K imp big cent} imply Theorem~ %


\subsection{Structure of the proofs}\label{ss=structure}

Here we sketch an outline of the proofs.
\medskip

\noindent
{\bf Theorem ~\ref{t=mixing ZLip is ZAff}.}
The proof of  Theorem~\ref{t=mixing ZLip is ZAff} is relatively straightforward. We suppose that $f_0\in\Aff(X)$ is weakly mixing with respect to $\mu_{\Haar}$ and fix an appropriate framing of the complexified tangent bundle $T_\CC X$ with the following property: the derivative cocycle (with respect to this framing) of a diffeomorphism $g$ of $X$ is constant if and only if $g$ is affine (with respect to the fixed homogeneous structure on $X$).
This framing is further chosen so that the (constant) matrix cocycle  $Df_0$ is in Jordan canonical form.

Fixing $g\in \cZ^{\Lip}(f_0)$, we explore the consequences of the ($\mu_{\Haar}$-almost everywhere) cocycle relations $D(g\circ f_0^n) = D(f_0^n\circ g)$, for $n\in\ZZ$, obtaining asymptotic twisted invariance equations over $f_0$ that the entries of $Dg$ must satisfy a.e.  The assumption that $f_0$ is weakly mixing then implies that the entries of $Dg$ are constant a.e., which implies that $g\in \Aff(X)$.

In the case where $f_0$ is merely ergodic, we further explore the consequences of these asymptotic twisted invariance equations over $f_0$, using the simplicity of eigenspaces of the associated Koopman operator and smoothness of the associated eigenfunctions to embed $\cZ^{\Lip}(f_0)$ into $\GL(d,\CC)$.
\bigskip

\noindent
{\bf Theorem~\ref{thm: not K imp big cent}.}
To prove Theorem~\ref{thm: not K imp big cent}, we consider $f_0\in \Aff(X)$ that is not a $K$-system. Let $H \triangleleft G$ be the hyperbolically-generated subgroup for $f_0$; by assumption, we have $\overline{H\cdot B}\neq G$.
This gives a nontrivial fibration
\[(\overline{H\cdot B})/B\to G/B \to G/(\overline{H\cdot B})
\cong\dfrac{G/H}{(\overline{H\cdot B})/H}.\]
Crucially, the projection of $f_0$ to $G/(\overline{H\cdot B})$ is an affine diffeomorphism all of whose eigenvalues have modulus $1$. 

Using the structure theory of connected Lie groups, we then construct $g_0\in G$ arbitrarily close to the identity element $e$ such that the affine perturbation
$f = L_{g_0} f_0$  carries a nontrivial, $f$-invariant smooth fiber bundle structure
$F\to G/B \stackrel{\pi}{\to} \bar X$ over a compact homogeneous space $\bar X$, with  fiber $F = G_F/B_F$ a compact homogeneous space, satisfying the following properties:
\begin{enumerate}
\item the projection $\bar f\colon \bar X\to \bar X$ of $f$ is a periodic affine map, of period $d\geq 1$; and
\item there exist a nonempty open set $U\subset \bar X$  whose iterates $f(U), \ldots, f^{d-1}(U)$ are disjoint, and a trivialization \[\pi^{-1}(\bigsqcup \bar{f}^j(U)) \cong \bigsqcup \bar{f}^j(U) \times F\] such that, in these trivializing coordinates, the restriction ${f^d_{\bar x}}$ of $f^d$ to the fiber $F_{\bar x} = \pi^{-1}(\bar x)$ 
is affine, with constant linear part: for every $\bar x\in\bar U$ and $y\in F_{\bar x}$,
\[f^d_{\bar x}(y):= b(\bar x) A(y),\] for some smooth $b \colon U \to G_F$ and automorphism $A$ of $G_F$ preserving $B_F$ (which crucially does not depend on $y$). 
\end{enumerate}

Having carried out this construction, the proof breaks into two cases:
\begin{itemize}
\item[]{\bf Case 1: 
$1$ is an eigenvalue of $D_eA$.}  In this case, we construct a $Df$-invariant vector field $V$ tangent to the fibers of $\pi$ that
generates a flow $\psi_t$  commuting  with $f$. Moreover, there is a nonempty, $\bar f$-invariant open set ${\mathcal V}\subset \bar X$ so that $V$ is nonvanishing on $\pi^{-1}({\mathcal V})$.
(Proposition~\ref{prop: key prop 1}).
\smallskip
\item[]{\bf Case 2: 
$1$ is not an eigenvalue of $D_eA$.}  We show that 
the dynamics of $f$ restricted to $\pi^{-1}(\bigsqcup \bar{f}^j(U)) \cong \bigsqcup \bar{f}^j(U) \times F$ is then a product, i.e. 
in these trivializing coordinates, $f$ is smoothly conjugate to a product map
\[(\bar x, y) \mapsto (\bar f(\bar x), h(y)),\] for some $h\in \Aff(F)$ (Proposition~\ref{prop: key prop 2}).
\end{itemize}

In both cases, we claim that the smooth centralizer of  $f$ is not a Lie group.

To see this in Case 1, note that any compactly supported, smooth function $\phi\colon \bar X\to \RR$  that is periodic under the action of $\bar f$
pulls back to a compactly supported, $f$-invariant smooth function $\tilde\phi\colon X\to \RR$. Then the 
vector field $\tilde\phi V$ is also $Df$-invariant (since on each fiber $F_{\bar x}$, $\tilde \phi  V$ is a scalar multiple of $V$). The flow generated by $\tilde\phi V$ is thus contained in $\cZ(f)$, for every such $\psi$, which implies that $\cZ(f)$ is not a Lie group.  Indeed,  one can smoothly embed the interval $I$ in ${\mathcal V}$ so that it is disjoint from its $\bar f$ iterates.  In this way, $C^\infty_c(I)$ can be embedded in $\cZ^\infty(f)$.

In  Case 2, we fix any compactly-supported diffeomorphism $\bar g\colon U\to U$, which extends to a compactly-supported diffeomorphism $\bar g\colon \bigsqcup \bar{f}^j(U)  \to \bigsqcup \bar{f}^j(U)$ commuting with $\bar f$. With respect to the trivializing coordinates above, we then define $g\colon \pi^{-1}(\bigsqcup \bar{f}^j(U))  \to \pi^{-1}(\bigsqcup \bar{f}^j(U))  $ by
\[ g = \bar g\times \Id_F\]
and extend the diffeomorphism by the identity on the rest of $X$ (see Lemma~\ref{wildperiod}). This again gives an uncountably-generated family of diffeomorphisms in $\cZ(f)$. Embedding $I^k$ in $U$  (for $k=\dim U$) thus gives an embedding of  $\Diff^\infty_c(I^k)$ in $\cZ(f)$, from which one obtains an embedding of $\Diff^\infty_c(I)$ into $\cZ(f)$.

\section{A perturbative study of symmetries: the central program}

 In the setting where $M=X$, we are interested in the {\em algebraic} properties of the values of the function $f\mapsto \cZ(f)$ in the neighborhood of a  fixed $K$-automorphism  $f_0\in \Aff(X)$. In the previous section we considered $\cZ$ as a function on the domain $\Aff(X)$; subsequently we consider it as a function on the full diffeomorphism group $\Diff(X)$. The focus of the program is to examine the interplay between the algebraic properties of $\cZ(f)$ and the dynamical properties of $f$.



Our starting point and motivational example is a global rigidity theorem of Rodriguez Hertz and Wang
  \cite{HW14}, which  directly implies the following perturbative result.
\begin{theorem} 
\label{t=FRHW}\cite{HW14}
Let $d\geq 3$ and $f_0\in \Aff(\TT^d)$ be of the form  $f_0(x) = Ax + b$ with $A\in\SL(d,\ZZ)$ hyperbolic and irreducible (i.e., the characteristic polynomial of $A$ does not split over $\QQ$).  Then for any $f\in \Diff^\infty(\TT^d)$ sufficiently $C^1$-close to $f_0$,   either $\cZ(f)$ is virtually trivial, or $f$ is $C^\infty$ conjugate to $f_0$. 
\end{theorem}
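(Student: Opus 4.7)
My plan is to reduce the statement to the Rodriguez Hertz--Wang global rigidity theorem via structural stability and some classical algebraic number theory. First, since $A$ is hyperbolic the affine map $f_0$ is Anosov, and by structural stability any $f\in\Diff^\infty(\TT^d)$ sufficiently $C^1$-close to $f_0$ is Anosov with the same linearization $A$. By the Franks--Manning theorem there is a H\"older homeomorphism $h\colon\TT^d\to\TT^d$, homotopic to the identity, intertwining $f$ with an affine map $Ax+b'$. Any $g\in \cZ(f)$ induces $g_\ast\in \GL(d,\ZZ)$ on $\pi_1(\TT^d)\cong\ZZ^d$, and $gf=fg$ forces $g_\ast A = Ag_\ast$, yielding a homomorphism $\rho\colon \cZ(f)\to\GL(d,\ZZ)$ into the centralizer $C_{\GL(d,\ZZ)}(A)$. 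I would further argue that $\rho$ has finite kernel: a diffeomorphism commuting with the transitive Anosov map $f$ and acting trivially on $\pi_1$ must preserve the countable set of periodic orbits of each period, and the uniqueness of the Franks--Manning semi-conjugacy then forces it to be of finite order.

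The irreducibility hypothesis now enters decisively. Since the characteristic polynomial of $A$ is irreducible over $\QQ$, the algebra $\QQ[A]\subset\Mat(d,\QQ)$ is a number field $K$ of degree $d$, and any rational matrix commuting with $A$ lies in $\QQ[A]$. Consequently $C_{\GL(d,\ZZ)}(A)$ embeds into the unit group $\mathcal{O}_K^\ast$, which by Dirichlet's unit theorem is finitely generated abelian with free rank $r_1+r_2-1$ (where $r_1,r_2$ count the real and complex places of $K$). Thus $\cZ(f)$ is virtually abelian, and identified up to finite index with a subgroup of $\mathcal{O}_K^\ast$. This yields a clean dichotomy: either the free rank of $\rho(\cZ(f))$ is one, so $\rho(\cZ(f))\doteq\langle A\rangle$ and $\cZ(f)\doteq\langle f\rangle$ is virtually trivial; or the free rank is at least two, and there exists $g\in\cZ(f)$ whose linearization $B$ is multiplicatively independent from $A$. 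The non-hyperbolic units (those having an archimedean absolute value equal to $1$) lie in the preimage of a union of proper hyperplanes under the log embedding, so they form a subgroup of strictly smaller rank; after replacing $B$ by an appropriate product $A^m B^n$ we may assume $B$ is hyperbolic, and then $g$, being $h$-conjugate to the affine map $Bx+b''$, is itself Anosov.

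In this second branch one obtains a faithful $\ZZ^2$-action on $\TT^d$ by Anosov diffeomorphisms whose linearization is a rank-two subgroup of $\GL(d,\ZZ)$ containing $A$. Irreducibility of $A$ prevents this linearization from admitting any non-trivial rank-one algebraic factor, so the action satisfies the hypotheses of the Rodriguez Hertz--Wang global rigidity theorem. Applying HW14 gives a $C^\infty$-conjugacy of the $\ZZ^2$-action to its linearization, and in particular a $C^\infty$-conjugacy of $f$ to $f_0$. The main obstacle is of course HW14 itself, used here as a black box; outside of that, the genuinely delicate points are the virtual injectivity of $\rho$ (requiring the density of periodic orbits and the uniqueness of Franks--Manning), and the extraction of an independent hyperbolic unit (a short genericity argument in the logarithmic embedding of $\mathcal{O}_K^\ast$). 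Both are classical but need to be stated carefully.
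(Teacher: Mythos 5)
Your proposal is essentially the paper's own route: the paper gives no independent proof of Theorem~\ref{t=FRHW}, presenting it as a direct consequence of the global rigidity theorem of \cite{HW14}, and your reduction (structural stability and Franks--Manning, the homomorphism $\rho\colon\cZ(f)\to C_{\GL(d,\ZZ)}(A)$ with finite kernel, Dirichlet's unit theorem via irreducibility, and the rank dichotomy feeding a $\ZZ^2$-action into \cite{HW14}) is exactly the intended chain of deductions. For the finite kernel, the cleanest argument is the one the paper alludes to elsewhere: if $g\in\cZ(f)$ acts trivially on $\pi_1$, then $hgh^{-1}$ is a homeomorphism commuting with $f_0$ and homotopic to the identity, hence affine by Adler--Palais \cite{AP}, i.e.\ a translation by a point annihilated by $A-I$, and there are only finitely many such points since $\det(A-I)\neq 0$; your periodic-orbit phrasing can be replaced by this.

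One step is unjustified as written, though it is also unnecessary: topological conjugacy of $g$ to a hyperbolic affine map does not make $g$ Anosov, since the conjugacy $h$ is merely H\"older and the Anosov property is not transported by it. Fortunately the hypotheses of \cite{HW14} only require that the $\ZZ^2$-action contain a single Anosov element together with a linearization having no rank-one factor: $f$ itself is Anosov, and irreducibility of $A$ (no nontrivial $A$-invariant rational subspaces) combined with the multiplicative independence of $B$ from $A$ (so $\langle A,B\rangle$ is not virtually cyclic) already excludes rank-one factors. So you should simply delete the passage replacing $B$ by a hyperbolic unit $A^mB^n$ and the claim that $g$ is Anosov; with that excision, and the standard remark that $\det(A-I)\neq 0$ lets one conjugate any affine map with linear part $A$ to $f_0$ by a translation, the argument is complete.
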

Theorem~\ref{t=FRHW} gives a complete description of the behavior of $f\mapsto \cZ(f)$ in a neighborhood of an irreducible hyperbolic automorphism $f_0\in\Aff(X)$: up to commensurability, $\cZ(f)$ assumes only two values, $\ZZ$ and $\cZ(f_0)$ (which is commensurable to $\ZZ^\ell$, for some $\ell\geq 2$), and moreover when this maximal value $\cZ(f)\cong \cZ(f_0)$ is attained, there is strong rigidity: $f$ is smoothly affine.

The program we outline here explores to  what extent Theorem~\ref{t=FRHW} generalizes to affine diffeomorphisms of a general compact homogeneous manifold $X$.

\subsection{From affine perturbations to general perturbations.}

While it is natural to ask whether an analogue of Theorem B holds under smooth perturbations—i.e., whether ergodicity (or topological transitivity) still guarantees a Lie group centralizer—this turns out to be false. Even in a neighborhood of affine diffeomorphisms, counterexamples arise: for instance, the centralizer of a weakly mixing pseudo-rotation on the sphere fails to be locally compact due to local rigidity phenomena \cite{AFLXZ}. While these examples are perturbations of systems (rotations) with Lie group centralizer, the {\em dynamical reason} that rotations have Lie group centralizer is not robust under perturbations.  

On the other hand, the affine $K$-systems discussed in this paper, while not structurally stable, have many {\em robust} dynamical features, first and foremost being partial hyperbolicity. Moreover, {\em accessibility}\footnote{A partially hyperbolic diffeomorphism $f$ is {\em accessible} if there exists an $su$-path -- that is, a piecewise $C^1$ path everywhere tangent to either the stable or unstable bundle -- connecting any two points in $M$.  $f$ is {\em essentially accessible} if any two positive measure sets can be connected by such a path. All affine $K$-systems are essentially accessible.}, a property that many affine $K$-systems have (for example those for which $G$ is simple), is also stable under perturbation, and this property is closely connected to that of having Lie group centralizer, as we discuss below. 



Our main result, Theorem \ref{thm: liecent iff ergodic}, identifies these affine $K$-systems as precisely those affine diffeomorphisms 
whose centralizer remains a Lie group after affine perturbations. We conjecture  that the $K$-systems continue to have Lie group centralizer when perturbed within $\Diff^\infty(X)$. 




\begin{conjecture}\label{conj: lie group rank 1} Let $f_0$ be an affine diffeomorphism of a compact homogeneous manifold $X=G/B$ admitting a $G$-invariant probability measure. 

Then $f_0$ is a $K$-system (if and) only if for every $f\in \Diff^\infty(X)$  sufficiently $C^1$-close to $f_0$,
the centralizer $\Z^\infty(f)$ is a $C^0$-closed Lie subgroup of $\Diff^\infty(X)$.
\end{conjecture}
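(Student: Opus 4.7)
The conjecture has two implications, which I would treat asymmetrically. The \emph{only if} direction is essentially a corollary of Theorem~\ref{thm: not K imp big cent}: if $f_0\in\Aff(X)$ fails to be a $K$-system, that theorem supplies $f\in\Aff(X)$ arbitrarily close to $f_0$ whose smooth centralizer contains a continuously embedded copy of $C^\infty_c((0,1))$ or $\Diff^\infty_c((0,1))$, neither of which is a finite-dimensional Lie group, so $\Z^\infty(f)$ is not a Lie subgroup of $\Diff^\infty(X)$. Since $\Aff(X)$ is a finite-dimensional Lie subgroup of $\Diff^\infty(X)$ on which the $C^0$ and $C^1$ topologies coincide, such $f$ is also $C^1$-close to $f_0$ in $\Diff^\infty(X)$, establishing the contrapositive.

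For the \emph{if} direction, suppose $f_0$ is a $K$-system, so that (as recalled after Theorem~\ref{thm: liecent iff ergodic}) $f_0$ is partially hyperbolic and essentially accessible. The plan is to show that any $f\in\Diff^\infty(X)$ sufficiently $C^1$-close to $f_0$ inherits these properties robustly and that this forces $\Z^\infty(f)$ to embed in a finite-dimensional Lie group. The first step is standard: $f$ remains partially hyperbolic with splitting $TX = E^s_f\oplus E^c_f\oplus E^u_f$ and strong integral foliations $W^s_f$, $W^u_f$; accessibility is $C^1$-open by Dolgopyat--Wilkinson, and one needs to verify (case by case, using the structure of the central factor on $G/\overline{HB}$) that essential accessibility also persists under smooth perturbation. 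The second step is to observe that any $g\in\Z^\infty(f)$ permutes $W^s_f$ and $W^u_f$, hence preserves $E^c_f$; a Liv\v{s}ic-type cocycle-rigidity argument along $su$-paths, together with (essential) accessibility, should then propagate the derivative data of $g$ from one orbit to the whole of $X$ and reduce $g$ to a piece of data along a single center leaf.

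The heart of the proof is therefore a \emph{central rigidity} statement: the restriction of $g$ to a generic center leaf must lie in a finite-dimensional group of symmetries of a canonical structure inherited from $f_0$. This is the main obstacle, since the center dynamics of a nonlinear perturbation $f$ of $f_0$ is generally not affine, so one cannot directly invoke Theorem~\ref{t=mixing ZLip is ZAff} on the center. The strategy confirmed in \cite{DWXcent}, \cite{ZhijingWang}, \cite{Sven}, \cite{Sandfeldt2024}, and pursued in the forthcoming \cite{DSWX}, \cite{DWX rank 1 factor}, is case-by-case: when the center is one-dimensional, a rotation-number or KAM-type argument smoothly conjugates the center factor to its affine model; when the central factor on $G/\overline{HB}$ is higher-rank abelian or semisimple homogeneous, one applies the global rigidity theorem appropriate to its dynamics. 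A uniform proof of the conjecture would require a structural classification of centers of perturbed $K$-affine systems, together with a bootstrap of continuous commutation with $f$ to smoothness along the center; the hardest anticipated subcase is that of a higher-dimensional, non-abelian center where neither one-dimensional KAM nor existing higher-rank rigidity applies directly. The $C^0$-closedness of $\Z^\infty(f)$ in $\Diff^\infty(X)$ would then follow from the identification with a closed subgroup of the symmetry group of the center, in the spirit of the Lipschitz-to-smooth step in the proof of Theorem~\ref{t=mixing ZLip is ZAff}.
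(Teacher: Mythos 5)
The statement you are proving is a conjecture in the paper, and the paper does not prove it: it only observes that one direction follows from Theorem~\ref{thm: liecent iff ergodic} (equivalently Theorem~\ref{thm: not K imp big cent}), and that the other direction is known for \emph{accessible} affine $K$-systems via a slight generalization of the arguments in \S\ref{ss=support}, while the case of $K$-systems that are essentially accessible but not accessible is explicitly left open. Your treatment of the easy direction (not $K$ $\Rightarrow$ some $C^1$-nearby $f\in\Aff(X)\subset\Diff^\infty(X)$ has non-Lie centralizer) is correct and is exactly the paper's argument; note only that you have swapped the labels, since in the conjecture's phrasing this is the ``if'' (``if and'') part, not the ``only if'' part.

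For the hard direction ($f_0$ a $K$-system $\Rightarrow$ every $C^1$-nearby $f\in\Diff^\infty(X)$ has $C^0$-closed Lie centralizer) your text is a program, not a proof, and the gaps in it are precisely why the statement remains a conjecture. Concretely: (a) your first step asserts that essential accessibility ``persists under smooth perturbation'' subject to case-by-case verification, but this persistence is \emph{false} for general partially hyperbolic systems (Dolgopyat's compact group extension examples, cited in the paper) and is an open problem for affine systems; accessibility (not essential accessibility) is the property with known $C^1$-openness results, and Dolgopyat--Wilkinson gives density of stable accessibility, not openness of accessibility in general. So the very hypothesis you propagate to the perturbation is not available. (b) Your ``central rigidity'' step — reducing a commuting $g$ to finite-dimensional data on a center leaf via Liv\v{s}ic-type propagation along $su$-paths and then classifying the center dynamics — is exactly the content of \cite{DWXcent}, \cite{ZhijingWang}, \cite{Sven}, \cite{Sandfeldt2024}, \cite{DSWX}, which cover special classes of $X$ and $f_0$; you acknowledge that no uniform argument exists (e.g.\ higher-dimensional non-abelian center), so nothing is proved beyond what those works already establish. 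In short, the easy direction matches the paper, and the remaining direction is asserted as a plan whose two key steps (stability of essential accessibility for affine systems, and a uniform central rigidity statement) are open problems, not lemmas you can invoke.
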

One direction of the conjecture, the ``if and" part, follows directly from Theorem~\ref{thm: liecent iff ergodic}.
%
A slight generalization of  the results in $\S$\ref{ss=support}) establishes the  ``only if'' part of this conjecture for the accessible $K$-systems.  This leaves open the general case of $K$-systems that are essentially accessible but not accessible.  While for general partially hyperbolic systems, essential accessibility is not stable (see \cite{Dolgopyat}), it remains open whether this property might be stable for affine systems.  Indeed, in some situations this stability has been established (see \cite{fede}).  In other contexts, something weaker than essential accessibility is enough to obtain Lie group centralizer (see \cite{DWXcent}). 

A natural follow-up question is whether perturbations of affine 
$K$-systems admit a {\em uniform} upper bound on the dimension of their centralizers. More generally:  what dynamical properties of a diffeomorphism $f$ guarantee an {\em a priori} upper bound on the dimension of any Lie group contained in $\cZ(f)$? Quite generally, topological transitivity gives such a bound.

\begin{prop}\label{T:bounddim}
Let $M$ be a topological manifold and $h:M\to M$ 
with a point $x\in M$ whose orbit under $h$ is dense in $M$. 
Then the $C^0$ centralizer $\cZ^0(h)$ cannot contain a Lie group $G$ with $\dim(G)>\dim(M)$.
\end{prop}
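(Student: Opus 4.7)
The plan is to exploit the topological transitivity of $h$ to force the $G$-action on $M$ to be free at any point with dense orbit, and then derive a dimension contradiction from the orbit map.

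First I would assume, for contradiction, that some Lie group $G \subseteq \cZ^0(h)$ has $\dim(G) = n > m = \dim(M)$, viewing $\cZ^0(h)$ as a topological group under the compact-open topology so that the inclusion $G \hookrightarrow \cZ^0(h)$ gives a continuous and faithful action $\rho \colon G \times M \to M$. Fix a point $x_0 \in M$ whose $h$-orbit $\{h^n x_0 : n \geq 0\}$ is dense in $M$, and let $G_{x_0} = \{g \in G : g \cdot x_0 = x_0\}$ be its stabilizer. The key step is to show $G_{x_0}$ is trivial: for any $g \in G_{x_0}$, commutation with $h$ yields $g(h^n x_0) = h^n (g \cdot x_0) = h^n x_0$, so the fixed-point set of $g$ --- which is closed in $M$ since $g$ is a homeomorphism --- contains the dense $h$-orbit and therefore equals $M$. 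Thus $g$ acts as the identity on $M$, and faithfulness of the embedding forces $g = e$.

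Next I would consider the orbit map $\phi \colon G \to M$, $g \mapsto g \cdot x_0$. It is continuous (by joint continuity of $\rho$) and injective, since $\phi(g_1) = \phi(g_2)$ implies $g_2^{-1} g_1 \in G_{x_0} = \{e\}$. Choose a compact neighborhood $K$ of $e$ in $G$ that is homeomorphic to a closed $n$-ball, which exists because $G$ is locally Euclidean of dimension $n$. Then $\phi|_K$ is a continuous bijection from the compact space $K$ onto its image in the Hausdorff space $M$, hence a homeomorphism, and in particular $\phi$ restricts to a topological embedding of the open $n$-ball $\interior(K)$ into the $m$-manifold $M$. Since $n > m$, this contradicts invariance of domain, equivalently the facts that topological (Lebesgue covering) dimension is monotone under topological embedding and that any $n$-manifold has topological dimension exactly $n$.

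I expect the only real subtlety in this plan is the faithfulness and continuity built into the interpretation of ``$\cZ^0(h)$ contains the Lie group $G$'': the natural reading --- a continuous, injective homomorphism from $G$ into the topological group $\cZ^0(h)$ --- is exactly what the stabilizer computation and the invariance-of-domain step require, and after that the proof is essentially routine.
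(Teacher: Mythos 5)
Your proposal is correct and follows essentially the same route as the paper: show the stabilizer of the transitive point is trivial (an element fixing the dense $h$-orbit fixes all of $M$ by continuity and commutation), then observe that the resulting injective continuous orbit map $G\to M$ contradicts invariance of domain when $\dim(G)>\dim(M)$. Your extra detail on the compact-ball embedding merely makes the invariance-of-domain step explicit; no substantive difference.
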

\begin{proof}
    Suppose to the contrary that there exists a Lie subgroup $G$ of $\cZ^0(h)$ with $\dim(G)>\dim(M)$. It is clear that $G$ acts on $M$ via homeomorphisms. Let $x\in M$ be a point whose orbit closure under $h$ equals $M$, and $G_x$ be the stabilizer of $x$ in $G$. If $G_x$ is trivial, then the orbit map $G\to M$ via $g\mapsto gx$ is an injective continuous map. It follows from invariance of domain that $\dim(G)\leq\dim(M)$, a contradiction. So $G_x$ must be non-trivial. Let us take any non-identity element $\phi\in G$ with $\phi(x)=x$. It follows from $\phi\circ h=h\circ\phi$ that $\phi$ also fixes $h^n(x)$ for any $n\in\ZZ$. By the density of $\{h^n(x):n\in\ZZ\}$ in $M$ and the continuity of $\phi$, we conclude that $\phi=\id$, which is a contradiction.
\end{proof}

Proposition~\ref{T:bounddim} cannot however be applied in general to  perturbations of affine $K$-systems, as there exist (accessible!) examples  that are not topologically transitive (see \cite{SThs}).  For  accessible affine $K$-systems, the same proof that gives that the centralizer of any perturbation is a Lie group also gives a uniform bound on its dimension: it is at most the dimension of the center bundle in the partially hyperbolic splitting.

It turns out that for the general class of partially hyperbolic diffeomorphisms,  a property weaker than essential accessibility (which might hold for perturbations of affine $K$-systems) suffices to obtain such a bound.
\begin{prop}\label{T:bounddim PH}
Let $M$ be a smooth manifold and $f\in \Diff(M)$ be a partially hyperbolic diffeomorphism 
with a point $x\in M$ whose accessibility class \[\mathrm{acc}(x):= \{y\in M: \hbox{$\exists$ an $su$-path from $x$ to $y$} \}\] is dense in $M$. 
Then the $C^1$ centralizer $\cZ^1(f)$ cannot contain a Lie group $G$ with $\dim(G)>\dim(E^c_f)$.
\end{prop}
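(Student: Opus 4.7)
The plan is to mirror the proof of Proposition~\ref{T:bounddim}, using the partially hyperbolic splitting $TM=E^s_f\oplus E^c_f\oplus E^u_f$ to bound $\dim(Gx)$ by $\dim(E^c_f)$, and using density of $\mathrm{acc}(x)$ (rather than of an $f$-orbit) to rule out non-trivial stabilizers.

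First I would observe that every $\phi\in\cZ^1(f)$ preserves the splitting. Indeed, differentiating $\phi\circ f=f\circ\phi$ gives $Df_{\phi(y)}\circ D\phi_y=D\phi_{f(y)}\circ Df_y$, so the distributions $\phi_*E^\sigma_f$ for $\sigma\in\{s,c,u\}$ form a $Df$-invariant dominated splitting with the same contraction/expansion rates as the original; uniqueness of the partially hyperbolic splitting then forces $D\phi$ to preserve each summand. In particular $\phi$ maps stable and unstable leaves to stable and unstable leaves, and therefore permutes accessibility classes.

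Next, given a one-parameter subgroup $\{\phi_t\}\subset G$, consider the vector field $v(y):=\left.\tfrac{d}{dt}\right|_{t=0}\phi_t(y)$. Differentiating $\phi_t\circ f=f\circ\phi_t$ at $t=0$ yields the equivariance relation $v(f(y))=Df_y(v(y))$. Writing $v=v^s+v^c+v^u$ with respect to the partially hyperbolic splitting, the relations $v^s(y)=Df^n(v^s(f^{-n}(y)))$ and $v^u(f^n(y))=Df^n(v^u(y))$ combined with uniform exponential contraction/expansion of $Df$ on $E^s$ and $E^u$ and a uniform bound on $v$ (automatic on compact $M$) force $v^s\equiv v^u\equiv 0$. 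Hence $v(y)\in E^c_f(y)$ for all $y$, so the tangent space at $x$ to the orbit $Gx$ lies inside $E^c_f(x)$, giving $\dim(Gx)\leq\dim(E^c_f)$.

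The hypothesis $\dim(G)>\dim(E^c_f)$ therefore forces the stabilizer $G_x$ to have positive dimension, hence to contain a non-trivial one-parameter subgroup $\{\phi_t\}$ whose generator $v$ satisfies $v(x)=0$. I would then argue by induction along $su$-paths based at $x$ that $\phi_t$ fixes all of $\mathrm{acc}(x)$ pointwise: since $\phi_t$ preserves the leaf $W^s(x)=W^s(\phi_t(x))$, it restricts to a $C^1$ flow of $W^s(x)$ whose generator $v|_{W^s(x)}$ is a section of $E^s|_{W^s(x)}$; combined with $v\in E^c_f$ this gives $v|_{W^s(x)}\equiv 0$, so $\phi_t$ fixes $W^s(x)$ pointwise. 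Applying the same argument at each $y\in W^s(x)$ shows $\phi_t$ fixes $W^u(y)$ pointwise, and iterating along arbitrary $su$-paths gives $\phi_t|_{\mathrm{acc}(x)}=\mathrm{id}$. By continuity and density of $\mathrm{acc}(x)$ we conclude $\phi_t\equiv\id$ on $M$, contradicting non-triviality.

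The main obstacle is securing the uniformity in the second step: one needs a uniform bound on $v$ so that the exponential contraction/expansion estimates actually kill $v^s$ and $v^u$. This is automatic when $M$ is compact (the main case of interest, where affine $K$-systems live); in the general smooth setting one must combine the uniformity built into partial hyperbolicity with the $C^1$ topology on $\cZ^1(f)$ to obtain the necessary bounds.
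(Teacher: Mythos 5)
Your proposal is correct and follows essentially the same route as the paper: you show the generator of any one-parameter subgroup is an $f$-equivariant vector field forced into $E^c_f$ by the boundedness/growth argument, and then propagate its vanishing at $x$ along stable and unstable leaves through $su$-paths, using density of $\mathrm{acc}(x)$ to force triviality — exactly the content of the paper's Lemma~\ref{lem: bd PH} combined with its orbit/stabilizer dimension count. The only cosmetic differences are your explicit preliminary step that centralizer elements preserve the splitting and your phrasing in terms of the flow fixing leaves pointwise rather than the vector field vanishing on them.
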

\begin{proof}We start with a lemma.
\begin{lemma}\label{lem: bd PH} Let $V$ be a continuous $Df$-invariant vector field.  Then
\begin{enumerate}
    \item for every $p\in M$, $V(p)$ is tangent to $E^c(p)$; and
    \item if  $x\in M$ has a dense accessibility class, then  $V(x)=0$ implies that $V=0$ everywhere.
\end{enumerate}\end{lemma}
\begin{proof}(1). Assume $V(p)$ is not tangent to $E^c(p)$. The invariance $Df^n_\ast V=V$ implies that $V(f^n(p))=Df^n(V(p))$. By taking $n\to \pm \infty$, we obtain that $V$ is not bounded, contradicting  the continuity of $V$.

(2). We claim that if there exists a $p$ such that $V(p)=0$, then $V(q)=0$ for any $q$ in $W^s(p)$ (or $W^u(p)$). Suppose $V(q)\neq 0$, and consider the flow $\varphi_t$ generated by $V$; then for $t$ small, $\varphi_t(q)\neq q$. By commutativity, $\varphi_t$ preserves every  $W^s$-leaf. Since $V(p)=0$, $\varphi_t(p)=p$. Hence $$W^s(\varphi_t(q))=\varphi_t\cdot W^s(q)=\varphi_t\cdot W^s(p)=W^s(\varphi_t(p))=W^s(p)=W^s(q).$$
Therefore $\varphi_t(q)$ belongs to  $W^s(q)$ for small $t$, and so $V(q)$ is non-zero and tangent to $E^s(q)$. By (1) we know this is impossible. The claim holds. 

Now for a point $x\in M$ with a dense accessibility class, if $V(x)=0$ then $V$ is $0$ on $\mathrm{acc}(x)$, and so by density of $\mathrm{acc}(x)$, we have $V=0$ everywhere.
\end{proof}
Suppose now that there exists a Lie subgroup $G$ of $\cZ^1(f)$ with $\dim(G)>\dim E^c_f$. Let $x$ be a point with a dense accessibility class, and let $G_x$ be the stablizer of $x$ in $G$, which is closed, and hence is also a Lie group. If $G_x$ is discrete, then the orbit map $G\to M$ via $g\mapsto gx$ is a local $C^1$ embedding, whose tangent map embeds $\mathrm{Lie}(G)$ into $E^c(x)$. Thus $\dim G\leq \dim E^c_f$, contradiction. If $G_x$ is non-discrete, then there is a non-vanishing continuous vector field $V$ (an element in $\mathrm{Lie}(G_x)$) that is $Df$-invariant and vanishing at $x$. This contradicts  Lemma \ref{lem: bd PH}.
\end{proof}
 
\subsection{Rigidity and rank-1 factors.}   
In the scenario posited by Conjecture~\ref{conj: lie group rank 1}, the first natural question to ask is whether an analogue of the strong rigidity result of \cite{HW14} in Theorem~\ref{t=FRHW} holds in this context: is it the case that for a perturbation $f$ of an  affine $K$-system, either $\cZ(f) \doteq \langle f \rangle$ or $f$ is smoothly affine (i.e.~smoothly conjugate to an affine diffeomorphism)?  

 The answer to this question is ``no" without some type of irreducibility assumption on $f_0$ like the one in Theorem~\ref{t=FRHW}. In particular, if $\cZ(f_0)$ has a {\em smooth rank-$1$ factor}, meaning that there is a smooth submersion $X\to Y$ that semiconjugates the action of $\cZ(f_0)$ to an action on $Y$ of a group commensurable to $\RR$ or $\ZZ$, then it is often possible to perturb $f_0$ to a non-affine $f$ without changing the centralizer, even when the centralizer is reasonably ``large." 
 
 For example, if 
 $f_0 = g_0\times h_0 \in \Aff(\TT^{d+2})$ is the product of hyperbolic automorphisms $g_0\in\Aff(\TT^2)$ and $h_0\in\Aff(\TT^d)$ with $d\geq 2$, then any perturbation of the form $f=g\times h_0$ has  $\cZ(f)\doteq \cZ(f_0)\doteq \ZZ^\ell$, for some $\ell\geq 2$, but most such perturbations are not smoothly affine. Note that projection onto the first $\TT^2$-factor semiconjugates the action of $\cZ(f_0)$ to the rank-1 action of $\cZ(g_0)\doteq \ZZ$, and this rank-1 factor is the source of the non-rigidity of this example.

 \begin{definition}[Smooth rank-$1$ factor]\label{def: smooth rank-1 factor} Let $H$ be a topological group and let $\alpha\colon H\to \Diff^\infty(M)$ be a continuous action on a closed manifold $M$. A ($C^\infty$) {\em rank-1 factor of $\alpha$} is defined by 
\begin{itemize}
\item a closed $C^\infty$ manifold $Y$ and a $C^\infty$
submersion $\pi\colon M\to Y$;
\item a surjective (topological group) homomorphism $\sigma\colon H\to A$, where $A$ is a compact extension of $\RR$ or $\ZZ$; 
and
\item a locally free, faithful 
$C^\infty$ action $\bal\colon A\to \Diff^\infty(Y)$ such that \[\pi(\alpha(a))(x) = \bal(\sigma(a))(\pi(x)),\]
for all $x\in M, a\in H$. 
\end{itemize}
\end{definition}

We discuss in Remark~\ref{rem: totally transitive} algebraic conditions on $f_0\in\Aff(X)$ that imply that $\cZ(f_0)$ has no smooth (or even continuous) rank-1 factors.  Examples (which are also $K$-systems) are ergodic automorphisms of tori induced by irreducible matrices and nontrivial diagonal translations on $\SL(n,\RR)$ quotients, for $n\geq 3$.

We conjecture that a centralizer with a rank-1 factor is the only obstruction to the following type of smooth centralizer rigidity.

\begin{conjecture}\label{conj: main smooth variant} Let $f_0$ be an affine $K$-system on a compact homogeneous manifold $X$. Assume that $\Z^\infty(f_0)$ has no rank-1 factor \footnote{may be substituted by a weaker condition: there exists a free abelian subgroup of $\Z^\infty(f_0)$ which has  rank 2, contains $f$ and has no rank one factors.}. If $f$ is sufficiently $C^1$-close to $f_0$ in $\Diff^\infty(X)$, then either $\Z^\infty(f)$ has a smooth rank-1 factor, or $f$ is smoothly conjugate to an affine $K$-system.
\end{conjecture}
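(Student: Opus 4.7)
The plan is to combine the stable partial hyperbolicity of affine $K$-systems with higher-rank smooth rigidity to establish the desired dichotomy: either $\Z^\infty(f)$ admits a smooth rank-1 factor, or $f$ is smoothly conjugate to an affine diffeomorphism. I would proceed in four steps, the first three of which set up a higher-rank abelian action to which classical rigidity theorems can be applied.

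First, I would use the fact that $f_0$ is an affine $K$-system to guarantee that $f$ inherits partial hyperbolicity and (in the accessible regime) essential accessibility for all $C^1$-small perturbations. Invoking Conjecture~\ref{conj: lie group rank 1} or its partial resolutions (e.g.\ \cite{DWXcent}), this ensures that $\Z^\infty(f)$ is a closed Lie subgroup of $\Diff^\infty(X)$ of dimension at most $\dim E^c_f$ by Proposition~\ref{T:bounddim PH}; this reduces the analysis to the setting of classical smooth rigidity. Next, from the explicit algebraic description of $\Z^\infty(f_0)$, which lies in $\Aff(X)$ by Theorem~\ref{t=mixing ZLip is ZAff} since $f_0$ is ergodic, I would extract a free abelian subgroup $A_0 \subset \Z^\infty(f_0)$ of rank $\geq 2$ containing $f_0$ whose restricted action on $X$ has no rank-1 factor in the sense of Definition~\ref{def: smooth rank-1 factor}. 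The no rank-1 factor hypothesis on $\Z^\infty(f_0)$ is precisely what is needed to ensure such an $A_0$ exists, as noted in the footnote to the conjecture.

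The analytic core is to show that, for $f$ sufficiently $C^1$-close to $f_0$, either $\Z^\infty(f)$ already has a smooth rank-1 factor, or one may deform $A_0$ to a free abelian subgroup $A \subset \Z^\infty(f)$ of the same rank, close to $A_0$ in $\Diff^\infty(X)$, whose action still has no rank-1 factor. I would attempt to build the deformed generators using cohomological (Livsic-type) obstructions together with the accessibility and partial hyperbolicity of $f$, and interpret the failure of these constructions as the emergence of a rank-1 factor of $\Z^\infty(f)$. Once the higher-rank abelian action $A \ni f$ is in hand, I would apply the smooth rigidity machinery for partially hyperbolic $\ZZ^k$ and $\RR^k$ actions due to Katok--Spatzier \cite{KS94, KalSpa}, Damjanovi\'c--Katok, and Rodriguez Hertz--Wang \cite{HW14, FKS}: a higher-rank abelian action on a compact homogeneous space without rank-1 factors, $C^1$-close to an affine higher-rank action, is smoothly conjugate to that affine action, whence $f$ itself is smoothly conjugate to an affine $K$-system.

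The main obstacle is the deformation step above, which lies at the heart of all centralizer rigidity: generic $C^1$ perturbations destroy the entire centralizer beyond $\langle f \rangle$, so the ``no rank-1 factor'' hypothesis must be used decisively to prevent such collapse. The execution of this step will likely require a case-by-case analysis depending on the algebraic type of $f_0$ (e.g.\ ergodic automorphisms of nilmanifolds versus semisimple homogeneous translations), together with rigidity inputs tailored to each setting; the dichotomy between ``deforming $A_0$'' and ``producing a rank-1 factor'' is itself non-classical and is the key new ingredient that must be developed. The forthcoming works \cite{DSWX} and \cite{DWX rank 1 factor} implement this strategy in specific classes of affine $K$-systems, and a general proof is expected to build on these case studies.
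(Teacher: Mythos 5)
The statement you are trying to prove is Conjecture~\ref{conj: main smooth variant}; the paper does not prove it, and offers it explicitly as an open problem supported only by special cases (\cite{Sven, Sandfeldt2024, ZhijingWang} and the forthcoming \cite{DSWX, DWX rank 1 factor}). So there is no proof in the paper to compare against, and your text is a research program rather than a proof: you yourself flag that the central step --- deforming the rank-$\geq 2$ abelian subgroup $A_0\subset\Z^\infty(f_0)$ into $\Z^\infty(f)$, or else extracting a smooth rank-1 factor of $\Z^\infty(f)$ when the deformation fails --- is not carried out. That dichotomy is exactly the content of the conjecture, so as written the argument is circular at its core.

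Two further ingredients you invoke are also not available at the level of generality needed. First, you appeal to Conjecture~\ref{conj: lie group rank 1} to conclude that $\Z^\infty(f)$ is a Lie group for every $C^1$-small smooth perturbation; the paper only establishes this for affine perturbations (Theorem~\ref{thm: liecent iff ergodic}) and for accessible $K$-systems, leaving the essentially accessible but non-accessible case open, so a proof of Conjecture~\ref{conj: main smooth variant} cannot rest on it. Second, the local rigidity statement you cite --- that a higher-rank abelian action without rank-1 factors which is $C^1$-close to an affine higher-rank action is smoothly conjugate to it --- is not known for general non-Anosov affine $K$-actions: the paper's own remark points out that the known results are either for accessible actions (\cite{DK-PH, VW, Sandfeldt2024}) or are KAM-type results requiring closeness in high regularity (\cite{DK-KAM, ZhenqiWang}), which cannot handle $C^1$-small perturbations, and that Conjecture~\ref{conj: main smooth variant} would in fact \emph{imply} the missing local rigidity. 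So the appeal to existing rigidity machinery is circular in precisely the regime the conjecture addresses. Your outline is a reasonable description of the expected route (and matches the paper's own discussion in the section on support for the conjectures), but it does not constitute a proof.
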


 We note that the irreducibility assumption on $f$ in Theorem~\ref{t=FRHW} is stronger than the ``no rank-1 factors" assumption, but the main result in \cite{HW14} also implies the conclusion of the theorem under this weaker hypothesis.



 The following variation on Conjecture~\ref{conj: main smooth variant} is a more algebraic take on Theorem~\ref{t=FRHW}.  We again first consider the special case where $X=\TT^d$ and $f_0$ is hyperbolic.  Structural stability of $f_0$ then implies that every smooth perturbation $f$ is topologically conjugate to $f_0$ by some homeomorphism $h$, and so if $\phi\in \cZ(f)$,  then $h\phi h^{-1}$ is a homeomorphism commuting with $f_0$.  But any such homeomorphism must be affine \cite{AP} and so $\cZ(f)\cong h \cZ(f) h^{-1}$ is a subgroup of $\cZ(f_0)$.  The maximal such subgroup is of course $\cZ(f_0)$ itself, and it is natural to explore what happens when $\cZ(f) \cong \cZ(f_0)$.

In this hyperbolic setting, the results in \cite{HW14} also  imply the  statement: if $\cZ(f_0)$ has no rank-1 factors, and  $\cZ(f)\cong \cZ(f_0)$, then the perturbation $f$ is smoothly affine.  In the general case, when an affine $K$-system $f_0$ is not structurally stable,   $\cZ(f)$ need not be a subgroup of $\cZ(f_0)$, even when the perturbation $f$ is also affine.
Nonetheless, it is reasonable to expect that $\cZ(f) \cong \cZ(f_0)$ (or the weaker statement  $\cZ(f) \doteq \cZ(f_0)$) to be a rigid scenario, motivating our next conjecture.
 
 \begin{conjecture}\label{conj: main smooth} Let $f_0$ be an affine $K$-system on a compact homogeneous manifold $X$.  Assume that $\Z^\infty(f_0)$ contains an embedded $\mathbb Z^2$ subgroup that has no rank-1 factors.
 
 If a diffeomorphism $f$ is sufficiently $C^1$ close to $f_0$ in $\Diff^\infty(X)$, 
and $\Z^\infty(f) \doteq \Z^\infty(f_0)$ (as abstract groups), 
then $f$ is $C^\infty$-conjugate to an affine $K$-system on $X$.
\end{conjecture}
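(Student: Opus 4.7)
The plan is to reduce Conjecture~\ref{conj: main smooth} to a smooth local rigidity theorem for higher-rank abelian actions. The overall strategy has three steps: (i) extract from $\cZ^\infty(f)$ an embedded $\ZZ^2$-subgroup $\Gamma$ corresponding, via the commensurability, to the distinguished $\Gamma_0\subset \cZ^\infty(f_0)$; (ii) show that the induced action $\alpha\colon \Gamma \to \Diff^\infty(X)$ is a $C^1$-small perturbation of the algebraic action $\alpha_0\colon \Gamma_0\to \Aff(X)$; and (iii) invoke higher-rank local rigidity to conclude that $\alpha$ is $C^\infty$-conjugate to $\alpha_0$, whence $f$ is smoothly conjugate to an element of $\Aff(X)$.

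For steps (i) and (ii), I would first observe that since $f_0$ is an affine $K$-system, it is partially hyperbolic and essentially accessible, and both properties are $C^1$-stable, so $f$ is partially hyperbolic. Assuming Conjecture~\ref{conj: lie group rank 1} (or, under an accessibility hypothesis, using the method of Theorem~\ref{thm: liecent iff ergodic}), the centralizer $\cZ^\infty(f)$ is a $C^0$-closed Lie subgroup of $\Diff^\infty(X)$, and the commensurability $\cZ^\infty(f)\doteq \cZ^\infty(f_0)$ becomes a commensurability of Lie groups. After passing to finite-index subgroups, this yields an embedded $\ZZ^2 \subset \cZ^\infty(f)$ containing some power $f^N$ and a second generator $h'$, in abstract correspondence with a finite-index subgroup of $\Gamma_0$ containing $f_0^N$ and a second generator $h$. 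The delicate point here is to arrange that $h'$ is $C^1$-close to $h$ in $\Diff^\infty(X)$; I would exploit that $f^N$ is $C^1$-close to $f_0^N$ inside the Lie group $\cZ^\infty(f)$, together with the local rigidity of a discrete $\ZZ^2$-subgroup of a Lie group containing a prescribed nearby element, to constrain $h'$ to a small $C^1$-neighborhood of $h$.

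For step (iii), the assumption that $\Gamma_0$ has no rank-1 factors means $\alpha_0$ is genuinely higher-rank in the sense required by smooth local rigidity theorems in the tradition of Katok--Spatzier, Damjanovi\'c--Katok, Damjanovi\'c--Fisher, and their successors. Applying such a theorem to the perturbation $\alpha$ would yield a diffeomorphism $H \in \Diff^\infty(X)$ intertwining $\alpha$ with $\alpha_0$. In particular $H\circ f^N \circ H^{-1} \in \Aff(X)$, and using that $f$ lies in $\cZ^\infty(f)$ and commutes with $\Gamma$, one can upgrade this to $H\circ f\circ H^{-1} \in \Aff(X)$, which is forced to be a $K$-system by $C^1$-closeness to $f_0$ and the openness of $\Kk(X)$ in $\Aff(X)$ from Theorem~\ref{thm: liecent iff ergodic}.

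The main obstacle is step (iii): presently known higher-rank local rigidity theorems cover many specific algebraic classes (e.g., totally Anosov $\ZZ^k$-actions, and various $\ZZ^k$-actions on semisimple homogeneous spaces and nilmanifolds), but do not yet treat the full generality allowed by the hypotheses --- namely, arbitrary $\ZZ^2 \subset \Aff(X)$ on $X = G/B$ with no rank-1 factors. Closing this gap requires handling the center direction of the partially hyperbolic action via KAM- or cohomology-type arguments, and is precisely the program being carried out case-by-case in \cite{DWXcent, ZhijingWang, Sven, Sandfeldt2024} and the forthcoming \cite{DSWX, DWX rank 1 factor}. A secondary but nontrivial obstacle is the extraction step (i): converting the abstract commensurability of centralizers into a $C^1$-small perturbation of $\alpha_0$ at the level of generators of the $\ZZ^2$-action.
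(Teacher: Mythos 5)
The statement you have set out to prove is Conjecture~\ref{conj: main smooth}: the paper itself offers no proof of it. It is posed as an open problem, with only special cases known — $G$ nilpotent with $f_0$ hyperbolic \cite{HW14}, certain partially hyperbolic toral and Heisenberg automorphisms \cite{Sven, Sandfeldt2024}, and generic Cartan translations on $\RR$-split semisimple quotients \cite{ZhijingWang} — so there is nothing in the paper to compare your argument against, and your proposal, as you yourself acknowledge, is a program rather than a proof. The gap you flag in step (iii) is genuine and is precisely the one the authors point to: reducing the conjecture to local rigidity of higher-rank abelian actions does not close it, because for totally $K$ but non-Anosov affine $\ZZ^2$-actions the local rigidity problem for $C^1$-small perturbations is itself open (the known results are either for accessible actions \cite{DK-PH, VW, Sandfeldt2024} or are KAM-type theorems \cite{DK-KAM, ZhenqiWang} requiring smallness in high regularity, which cannot reach $C^1$-small perturbations). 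Indeed, the paper remarks that resolving Conjecture~\ref{conj: main smooth variant} would \emph{imply} such local rigidity statements, so your reduction runs in the direction of the harder, unresolved problem rather than resting on established results.

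There is also a substantive gap in your steps (i)--(ii) that you underestimate. The hypothesis is only an \emph{abstract} commensurability $\Z^\infty(f)\doteq \Z^\infty(f_0)$; it carries no information about where the second generator of the extracted $\ZZ^2$ sits inside $\Diff^\infty(X)$. Knowing that $f^N$ is $C^1$-close to $f_0^N$ does not constrain $h'$ to a $C^1$-neighborhood of $h$, because $\Z^\infty(f)$ need not be close to $\Z^\infty(f_0)$ as a subset of $\Diff^\infty(X)$, and ``local rigidity of a discrete $\ZZ^2$ in a Lie group containing a prescribed nearby element'' is not available here — the ambient Lie group itself varies with $f$ and is only known to be a Lie group conditionally (your appeal to Conjecture~\ref{conj: lie group rank 1} is likewise open outside the accessible case). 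The paper's own remark only observes that \emph{if} $\Z^\infty(f)$ happens to intersect a $C^1$-small neighborhood of $g_0$, then $\langle f,g\rangle$ is a small perturbation of $\langle f_0,g_0\rangle$; upgrading the purely algebraic hypothesis $\Z^\infty(f)\doteq\Z^\infty(f_0)$ to that geometric closeness is part of the content of the conjecture, not a routine step.
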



\begin{remark}\label{rem: totally transitive} 
We note here that while the condition of ``not having a rank-1 factor" that appears in these conjectures (and more generally in the study of abelian actions) might appear difficult to check, there are natural conditions that imply it. 

An action by a (discrete) group of transformations in which all transformations  (except for the identity)  have some property $\mathcal P$ is said to be  a {\it totally} $\mathcal P$ {\it action}. If the centralizer $\Z^\infty(f)$ of $f$ contains a totally topologically transitive $\mathbb Z^2$-subgroup then  $\Z^\infty(f)$ has no (topological) rank 1-factors. 
Namely, if there is a rank-1 factor and there exists an epimorphism $\sigma$ as in Definition \ref{def: smooth rank-1 factor}, then there is a transitive element $g$ of the action in the kernel of $\sigma$. But then if $p$ has a dense $g$-orbit, then $\sigma^{-1}(\sigma(p))$ is a closed, $g$-invariant set with a dense orbit in $X$: this implies that the factor map is trivial, sending $X$ to $\sigma(p)$.
For this argument it suffices that $\pi$ is continuous, and thus total topological transitivity implies that there are no {\it continuous} rank-one factors. 

Since ergodicity with respect to $\mu_{\Haar}$ implies topological transitivity, if $f_0$ is $K$, and $\Z^{\rm Aff}(f_0)$ contains a totally ergodic action by a $\mathbb Z^2$ subgroup generated by $f_0$ and some $g_0$ (let us denote this subgroup by $\langle f_0, g_0\rangle$), then $\Z^{\rm Aff}(f_0)$ has no rank-1 factors. 
\end{remark}

\begin{remark}

Let $f_0\in \Aff(X)$ be an affine $K$-system. If $f$ is a $C^1$-small perturbation of $f_0$ and $\Z^\infty(f)$ intersects non-trivially a $C^1$-small neighborhood of $g_0$ at some smooth diffeomorphism $g$, then the action $\langle f, g\rangle$ is a $C^1$-small perturbation of $\langle f_0, g_0\rangle$. The question of whether any  small perturbation of action  $\langle f_0, g_0\rangle$  is smoothly conjugate to an affine action is known in literature as a {\it local rigidity problem} and there are many results in this direction. We stress however that despite large literature in this direction, for general  totally K actions $\langle f_0, g_0\rangle$ that are not Anosov, the answer to the local rigidity problem is still not completely known. The known cases are either accessible (\cite{DK-PH}, \cite{VW}, \cite{Sandfeldt2024}), or require control on perturbations in high regularity (\cite{DK-KAM}, \cite{ZhenqiWang}). The latter have been obtained via a KAM method, which cannot possibly give results for $C^1$-small perturbations. An answer to Conjecture \ref{conj: main smooth variant} would in fact imply local rigidity for $C^1$-small totally ergodic perturbations of totally K affine $\ZZ^2$-actions. 
\end{remark}

\subsection{Topological rigidity.}
Finally, we further explore the question: ``what happens when $\cZ(f)\doteq \cZ(f_0)$?" from a different perspective. Conjectures~\ref{conj: main smooth variant} and \ref{conj: main smooth}  address the case when $f_0$ has no rank-$1$ factors, but what if $f_0$ does have a rank-$1$ factor?  In the simple example of a hyperbolic $f_0\in\Aff(\TT^2)$, and any perturbation $f$,  $\cZ(f)$ is a subgroup of $\cZ^0(f_0)= \cZ^{\Aff}(f_0)\doteq \ZZ$, and so the condition $\cZ(f) \doteq \cZ(f_0)$ clearly does {\em not} imply that $f$ is smoothly conjugate to an affine diffeomorphism. Yet in this example, $f$ is {\em topologically} conjugate to an affine diffeomorphism, namely the automorphism $f_0$ itself.

Our final conjecture below addresses a generalization of this situation, in which our given affine $K$-system $f_0$ commutes with an affine hyperbolic map $g_0$.  We conjecture that under this scenario, any perturbation $f$ of $f_0$ with $\cZ(f)\doteq \cZ(f_0)$ must be topologically conjugate to an affine system (even though $f_0$  itself is not necessarily hyperbolic and so {\em not} a priori structurally stable,  and the centralizer of $f$ a priori need not contain any Anosov elements).

We consider affine $K$-systems
whose centralizer contains an Anosov diffeomorphism. This condition neither implies nor is implied by the ``no rank-1 factors" condition.  
\begin{conjecture}\label{conj: main top}Let $f_0\in \Aff(X)$ be a  $K$-system. Assume that $\cZ(f_0)$ contains a hyperbolic affine diffeomorphism. Suppose that  $f\in \Diff(X)$ is sufficiently $C^1$-close to $f_0$ and $\cZ(f)\doteq \cZ(f_0)$. Then $f$ is $C^0$ conjugate to an affine $K$-system.
\end{conjecture}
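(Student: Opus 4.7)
The plan is to leverage the hyperbolic affine element $g_0 \in \cZ(f_0)$ via Anosov structural stability, in four steps: (i) locate an Anosov element $g \in \cZ(f)$ that is $C^1$-close to some iterate $g_0^N$; (ii) use structural stability to get a homeomorphism $h$, $C^0$-close to $\Id$, with $g = h^{-1} g_0^N h$; (iii) apply classical rigidity of topological centralizers of Anosov affine maps to conclude that $hfh^{-1}$, which then commutes with $g_0^N$, is affine; and (iv) verify that this affine map is still a $K$-system.

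For step (i), which is the crux, I would proceed as follows. Because $f_0$ is a $K$-system, $f_0$ is partially hyperbolic and essentially accessible; partial hyperbolicity passes to $f$ by $C^1$-closeness, and accessibility persists in the contexts where it has been established stable (cf.~\cite{fede}). By Theorem~\ref{thm: liecent iff ergodic} together with its conjectural smooth extension (Conjecture~\ref{conj: lie group rank 1}), $\cZ(f)$ is a Lie subgroup of $\Diff^\infty(X)$ of dimension at most $\dim E^c_f = \dim E^c_{f_0}$ by Proposition~\ref{T:bounddim PH}; combined with the abstract commensurability $\cZ(f)\doteq \cZ(f_0)$, this pins down the Lie algebra of $\cZ(f)$ up to finite index. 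The key technical step is to upgrade abstract commensurability to topological proximity: I would fix a $C^1$-path $f_t$ from $f_0$ to $f$, parametrize a neighborhood of a suitable iterate $g_0^N$ inside the Lie group $\cZ(f_0)$ by a local chart, and run an implicit-function / graph-transform argument, anchored on the persistence of the partially hyperbolic splitting, to transport this chart to a chart on $\cZ(f_t)$ centered at an element $g_t$ varying continuously in $t$. Setting $g := g_1 \in \cZ(f)$ yields an element $C^1$-close to $g_0^N$, which is Anosov by openness of the Anosov condition in $\Diff^1$.

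For step (ii), Anosov structural stability of $g_0^N$ produces the required $h$. For step (iii), conjugating $fg = gf$ by $h$ and using $hgh^{-1} = g_0^N$ gives $(hfh^{-1})g_0^N = g_0^N(hfh^{-1})$, so $hfh^{-1}$ lies in the topological centralizer of the Anosov affine diffeomorphism $g_0^N$. Classical rigidity of such centralizers (Adler--Palais on tori, Manning--Franks on nilmanifolds and their finite quotients, which is essentially the only class of $X = G/B$ admitting an Anosov affine element) then forces $hfh^{-1} \in \Aff(X)$. For step (iv), the closeness of $h$ to $\Id$ and of $f$ to $f_0$ makes $hfh^{-1}$ $C^0$-close, hence---since the $C^0$ and $C^\infty$ metrics coincide on $\Aff(X)$---$C^\infty$-close to $f_0$ within $\Aff(X)$; by openness of $\Kk(X)$ in $\Aff(X)$ (Brezin--Shub, Dani), $hfh^{-1}$ is an affine $K$-system, completing the argument.

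The main obstacle is step (i). The centralizer assignment $f \mapsto \cZ(f)$ is at best upper semicontinuous in dimension, so the existence of a $C^1$-close element $g \in \cZ(f)$ matching the distinguished $g_0 \in \cZ(f_0)$ does not follow from abstract commensurability alone: one must genuinely exploit the hyperbolic structure of $g_0$ and the persistence of the partially hyperbolic splitting of $f$. Any weakening of either (for instance, loss of accessibility, or $g_0$ lying in a non-identity component of $\cZ(f_0)$ whose motion under perturbation is delicate to track) is where the difficulty concentrates, and this is precisely the place where the proof interacts nontrivially with the earlier centralizer-rigidity program assumed in the statement.
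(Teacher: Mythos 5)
The statement you are proving is Conjecture~\ref{conj: main top}; the paper offers no proof of it (it is posed as part of the authors' program), so the only question is whether your argument closes it, and it does not. Granting your step (i), the rest is sound in outline: an Anosov $g\in\cZ^\infty(f)$ that is $C^1$-close to $g_0^N$ yields, by structural stability, a homeomorphism $h$ close to the identity with $h g h^{-1}=g_0^N$; then $hfh^{-1}$ lies in the topological centralizer of a hyperbolic affine diffeomorphism (which forces $G$ nilpotent), and the Adler--Palais/Walters rigidity of such centralizers makes $hfh^{-1}$ affine; since the $C^0$ and $C^\infty$ metrics coincide on $\Aff(X)$ and $\Kk(X)$ is open there, $hfh^{-1}$ is an affine $K$-system. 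So the whole weight rests on step (i), and that step is precisely the open problem.

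Your argument for step (i) does not work. First, it is conditional on Conjecture~\ref{conj: lie group rank 1}, which is itself unproved. Second, and more fundamentally, the hypothesis $\cZ^\infty(f)\doteq\cZ^\infty(f_0)$ is an \emph{abstract} commensurability of groups; it carries no information about where the elements of $\cZ^\infty(f)$ sit inside $\Diff^\infty(X)$, and in particular does not produce any element $C^1$-close (or even $C^0$-close) to $g_0^N$, nor any Anosov element at all --- this is exactly the caveat the paper makes when stating the conjecture (``the centralizer of $f$ a priori need not contain any Anosov elements''). Your path-continuation scheme cannot repair this: the commensurability hypothesis is assumed only at the endpoint $f=f_1$, while for generic intermediate $f_t$ the centralizer is expected to be virtually trivial, so there is no family $g_t\in\cZ^\infty(f_t)$ to continue, and no implicit-function or graph-transform setup is available because $f\mapsto\cZ^\infty(f)$ is badly discontinuous (it is not lower semicontinuous in any useful sense; the partially hyperbolic splitting of $f_t$ persists, but that splitting does not generate commuting diffeomorphisms). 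Even your intermediate claim that commensurability ``pins down the Lie algebra of $\cZ^\infty(f)$ up to finite index'' only fixes an abstract isomorphism type; it says nothing about the embedding of $\cZ^\infty(f)$ into $\Diff^\infty(X)$, which is what steps (ii)--(iv) actually require. In short, the proposal reduces the conjecture to its hardest point --- converting abstract commensurability of centralizers into $C^1$-proximity of a distinguished hyperbolic symmetry --- and leaves that point unproved.
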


\subsection{Support for and approach to the conjectures}\label{ss=support}
Conjectures \ref{conj: lie group rank 1}-\ref{conj: main top} are supported by a variety of results, some old and some very recent. 
 
The assumptions on $f_0$ and $Z(f_0)$ in our conjectures place restrictions on the homogeneous spaces $X$ that one needs to consider. The strongest restriction appears in Conjecture~\ref{conj: main top}, where $X = G/B$ must carry an Anosov automorphism, which forces $G$ to be nilpotent.

For the other conjectures, the assumption that $f_0$ be a $K$-system also places some restrictions on $X$.
In particular, any affine $K$-system is weakly mixing, and  Morris proved that the radical of any weakly mixing affine diffeomorphism is nilpotent \cite[Prop. 4.22]{Wit85}. As a consequence, the Lie group $G$ splits as a semidirect product
$G = N \rtimes S$, where $N$ is nilpotent, and $S$ is semisimple.
The projection of $B$ to any compact factor is Zariski dense.

The first cases to consider, then, are when $G$ is either nilpotent or semisimple.  For Conjectures~\ref{conj: lie group rank 1}-\ref{conj: main smooth}, the former case has been considered in \cite{Sven, Sandfeldt2024}, and will also be addressed  in the upcoming work \cite{DSWX}. In the  semisimple case the progress has been made in \cite{ZhijingWang}.

\medskip
\noindent{\bf The nilpotent case.}  We summarize some of the results obtained to date.
\begin{itemize}
\item When $f_0$ is hyperbolic, the 
structural stability of Anosov diffeomorphisms trivially implies that Conjecture~\ref{conj: main top} holds, even without the the additional assumption $\cZ(f)\doteq \Z(f_0)$ on the perturbation $f$.  The main result in \cite{HW14} shows that Conjectures~\ref{conj: lie group rank 1}-\ref{conj: main smooth} also hold true when $G$ is nilpotent and $f_0$ is assumed to be hyperbolic.  
\item 
In \cite{Sven} Sven Sandfeldt considers volume preserving perturbations of irreducible and partially hyperbolic toral automorphisms with a $2-$dimensional isometric center. In this context, Sandfeldt proves the analogue of Conjecture~\ref{conj: lie group rank 1} up to finite index (i.e. the centralizer is a subgroup up to finite index) and the analogue of Conjecture~\ref{conj: main smooth} with the minor extra assumption that no three eigenvalues of the unperturbed automorphism have the same modulus. The analogue of Conjecture~\ref{conj: main smooth variant} is proved for certain toral automorphisms in low dimension, including all irreducible and ergodic automorphisms of $\TT^4$. Further restricting attention to symplectic perturbations, Sandfeldt proves the analogue of Conjecture~\ref{conj: main smooth variant} for all $C^{5}-$small perturbations.
\item Class $\C$: 
Forthcoming work of Damjanovi\'c, Sandfeldt, Wilkinson and Xu \cite{DSWX} defines a class $\C$ of affine $K$-systems on nilmanifolds $X=N/\Gamma$, where the neutral direction of a partially hyperbolic affine diffeomorphism $f_0$ coincides with the center of the nilmanifold $N$ (this includes, for example, Heisenberg manifolds in any dimension) and will show  Conjecture~\ref{conj: lie group rank 1} is within the class $\C$. 

Sandfeldt \cite{Sandfeldt2024} has proved Conjectures~\ref{conj: main smooth variant} and \ref{conj: main smooth} in the special case when $N$ is a $2-$step nilpotent Lie group with a $1-$dimensional derived subgroup, $\dim[N,N] = 1$. This special class of nilmanifolds consists of all Heisenberg nilmanifolds and all products between Heisenberg nilmanifolds and tori. Sandfeldt also proved Conjecture~\ref{conj: lie group rank 1} up to finite index for certain automorphisms on Heisenberg nilmanifolds.
\end{itemize}

\medskip
\noindent{\bf The semisimple case}.  Suppose that $G$ is semisimple.  Since compact Lie groups cannot carry $K$-systems, $G$ must be noncompact.  Then  by passing to its universal cover, we may assume that  $G$ can be split as a direct product of simple Lie groups, with at least one noncompact simple factor.

 Wendy Zhijing Wang \cite{ZhijingWang}  considers the case where $G$ is $\RR$-split, higher rank, and with no compact factors (such as $SL(n,\RR)$, and  $f_0$ is translation by a generic element of the maximal torus (for example a diagonal element of $SL(n,\RR)$ with no two diagonal entries equal). 
She shows that for a $C^1$-small smooth perturbation $f$ of $f_0$, the centralizer $\cZ^\infty(f)$ is a Lie group (establishing Conjecture~\ref{conj: lie group rank 1} in this setting) and proves that if the dimension of $\cZ^\infty(f)$ is sufficiently large, then 
then $f$ is $C^\infty$ conjugate to an affine diffeomorphism. This establishes Conjecture~\ref{conj: main smooth} in this  context and also proves Conjecture~\ref{conj: main smooth variant} for generic translations on simple, non-compact Lie groups of rank 2 (such as $SL(3,\RR)$).

It is also possible to have affine $K$-systems on $G/\Gamma$, where $G$ has compact factors.
Consider $X=G/\Gamma$, where $G=\operatorname{SO}^0(2,1)\times \operatorname{SO}(3)$, $\Gamma\subseteq G$ is an irreducible lattice  (see \cite{Mo15}, Proposition 5.45 for existence), and $f_0$ is the left translation by $(a,e)$ on $X$, where $a$ is a nontrivial diagonal element of $\operatorname{SO}^0(2,1)\cong \SL(2,\RR)$.
Then the hyperbolically generated subgroup $H$ for $f_0$ is $\operatorname{SO}^0(2,1)\times\{e\}$. It follows from the irreducibility of $\Gamma$ that $\overline{H\cdot \Gamma}=G$, which implies that $f_0$ is a $K$-system. 
 Here $\cZ(f_0) \cong \mathbb R\times \operatorname{SO}(3)$, which is a compact extension of $\RR$, and so $\cZ(f_0)$ itself is a rank-1 factor of $\cZ(f_0)$.

 One can also construct higher dimensional versions of this example, for instance for $G=\operatorname{SO}(2,3)^0\times \operatorname{SO}(5)$. On such a quotient $G/\Gamma$, if $f_0$ is the translation by $(a,e)$, with $a$ a nontrivial element of the Cartan subgroup, then the centralizer $\cZ(f)$ contains a  $\mathbb Z^2$ subgroup  whose action is totally $K$.  Therefore this subgroup (and thus $\cZ(f_0)$) has no continuous rank-one factors.  Whether the conjectures hold for $f_0$ and related examples remains an open question.
 

\medskip
\noindent{\bf The general (mixed semisimple and nilpotent) case.} Suppose $G$ splits nontrivially as $G=N\rtimes S$, with $N$ nilpotent and $S$ semisimple, and $X=G/\Gamma$.
Since $G$ admits a $K$-system, one can show that it can't have a nontrivial normal compact semisimple subgroup (such a subgroup would give rise to a factor of entropy $0$). 

One can then reduce to two possibilities:
\begin{itemize}
\item Compact semisimple-by-nilpotent (isometric extension): here $X=G/\Gamma$ is a $C$-bundle over a nilmanifold $Y$, where $C$ is compact semisimple.  The projection of $f_0$ to $Y$ is a $K$-system, and $f_0$ acts as a translation in the fibers. In the case where $f_0$ is Anosov, Theorem A in \cite{DWXtransitive} implies that the conclusions of Conjecture~\ref{conj: lie group rank 1} must hold.

\item Nilpotent-by-semisimple: 
in this case $X=G/\Gamma$ is a nilmanifold $N/\Gamma_N$ bundle over a semisimple quotient $Y=P/\Gamma_P$, and the monodromy of the $P$-action on $X$ is by partially hyperbolic automorphisms.   The projection of $f_0$ to $Y$ is a $K$-system, acting by translation by an element of $P$.
 In recent work, Wang considers this case for $G=\SL(n,\RR)$ and $N=\RR^{m}$.

\end{itemize}






\subsection{Acknowledgements}
We thank Dave Morris for many useful discussions about
centralizers and Lie theory.  We thank Jinpeng An for explaining his work summarized in Appendix~\ref{app sec: twisted conjugacy}.  We are also grateful to Uri Bader, Bassam Fayad, Katie Mann, 
 Sven Sandfeldt and Wendy Wang for useful discussions and corrections.

 This paper is dedicated to Michael Shub, whose paper \cite{BS97} with Jonathan Brezin served as a starting point for this work.

Damjanovi\'c was supported by Swedish Research Council grants VR2019-04641 and VR2023-03596.
Wilkinson  was supported by NSF grants DMS-1900411 and DMS-2154796.
Xu was supported by NSFC 12090010 and 12090015.

\section{Proof of Theorem~\ref{t=mixing ZLip is ZAff}.}\label{ProofB}

We begin with a lemma.

\begin{lemma}\label{l=invariant framing}
    Let $X=G/B$ be a homogeneous manifold, and $f_0 = L_{g_0}\circ \Phi\in \Aff(X)$. Suppose that  $\{v_i\}_{i=1}^n$ is a basis of $\mathfrak{g}=\mathrm{Lie}(G)$ generating $n$ independent right-invariant vector fields on $G$. Then their projections to $G/B$ give a frame of vector fields on $X$. Moreover, with respect to this frame, the derivative of $f_0$ defines a constant matrix cocycle.
\end{lemma}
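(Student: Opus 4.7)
The plan is essentially a direct computation that exploits how right-invariant vector fields interact with affine maps on a Lie group.

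First, I would lift the situation to $G$. For each $v_i$, let $V_i$ be the right-invariant vector field on $G$ defined by $V_i(g)=dR_g(v_i)$, where $R_g$ denotes right multiplication. Since each $V_i$ is $R_h$-invariant for every $h\in G$, and in particular for $h\in B$, and since $\pi\colon G\to G/B$ is the quotient by the right $B$-action, the pushforward $\tilde V_i:=\pi_*V_i$ is well-defined on $X$. Because $\{V_i(g)\}$ is a basis of $T_gG$ at every $g$ and $d\pi_g$ is surjective, the $\tilde V_i$ span $T_{\pi(g)}X$ pointwise, producing the claimed (over-complete when $\dim B>0$) frame on $X$.

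Next, I would compute $df_0$ on the $V_i$ at the level of $G$, treating the two factors of $f_0=L_{g_0}\circ\Phi$ separately. The homomorphism property of $\Phi$ gives $\Phi\circ R_g=R_{\Phi(g)}\circ\Phi$, which upon differentiation at $e$ yields $d\Phi\cdot V_i(g)=\sum_j(d\Phi_e)_{ji}\,V_j(\Phi(g))$, where $(d\Phi_e)_{ji}$ is the matrix of $d\Phi_e$ in the basis $\{v_i\}$. For the left translation, the commutation $L_{g_0}\circ R_g=R_g\circ L_{g_0}$ together with the standard identity $dL_{g_0}|_e=dR_{g_0}|_e\circ\mathrm{Ad}(g_0)$ (read off from $C_{g_0}=R_{g_0^{-1}}\circ L_{g_0}$) gives $dL_{g_0}\cdot V_i(g)=\sum_j A_{ji}\,V_j(g_0g)$ with $A=\mathrm{Ad}(g_0)$. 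Composing, $df_0\cdot V_i(g)=\sum_j(A\,d\Phi_e)_{ji}\,V_j(f_0(g))$, i.e.\ the cocycle of $df_0$ on $G$ in this frame is the constant matrix $\hat f_0:=\mathrm{Ad}(g_0)\circ d\Phi_e$.

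Finally, I would push everything down to $X$. Since $\Phi(B)=B$, the map $f_0$ descends to $\bar f_0\colon X\to X$ and satisfies $\pi\circ f_0=\bar f_0\circ\pi$, so $d\pi\circ df_0=d\bar f_0\circ d\pi$. Applying $d\pi$ to the $G$-level identity yields $d\bar f_0\cdot\tilde V_i(\bar x)=\sum_j(\hat f_0)_{ji}\,\tilde V_j(\bar f_0(\bar x))$, which is the asserted constant matrix cocycle.

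There is no real obstacle; the proof is forced by the choice of right-invariant vector fields, which is precisely what makes the homogeneous structure on $G/B$ (given by the \emph{right} $B$-action) and the action of left translations compatible with one another. The only mild caveat is that when $\dim B>0$ the $\tilde V_i$ fail to be linearly independent, so \emph{frame} must be read as \emph{spanning family}, and the intrinsic $n\times n$ matrix of the cocycle is the one produced by the computation on $G$ rather than one coming from a genuine basis on $X$.
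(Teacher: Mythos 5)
Your proof is correct and follows essentially the same route as the paper's: both reduce to the fact that the affine map $f_0=L_{g_0}\circ\Phi$ pushes the right-invariant field generated by $v$ to the one generated by $\mathrm{Ad}(g_0)\circ D_e\Phi(v)$ — you by differentiating the commutation relations with $R_g$, the paper by tracking the flow $t\mapsto \exp(tv)\cdot gB$. Your closing caveat about $\dim B>0$ is moot in the paper's setting, where $G$ is assumed to act locally freely, so $B$ is discrete and the projected fields do form a genuine frame.
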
 
\begin{proof}
It is known that each $v\in \mathfrak{g}=\mathrm{Lie}(G)$ generates a right-invariant vector field on $G$ and hence on $X$, whose corresponding flow through $gB\in X$ is of the form
$$
t\mapsto \exp(tv)\cdot gB.
$$
Composing with $f_0$, the orbit of $gB\in X$ is sent to the orbit of  $f_0(gB) =  g_0\Phi(g)B$ under a new flow:
$$
t\mapsto L_{g_0}\circ \Phi(\exp(tv)\cdot gB)
=\exp(t\cdot \mathrm{Ad}(g_0)\circ D_e\Phi(v))\cdot L_{g_0}\circ\Phi(g)B.
$$
Writing $V_v$ for the vector field generating $\exp_{tv}$,
we thus have  $(f_0)_\ast V_v = V_{\mathrm{Ad}(g_0)\circ D_e\Phi}$, which is independent of basepoint.
In particular,  the frame of vector fields generated by $\{v_i\}_{i=1}^n$ determines a constant matrix cocycle over $f_0$.
\end{proof}

We now proceed to the proof.
\begin{proof}[Proof of Theorem~\ref{t=mixing ZLip is ZAff}, in the case where $f_0$ is weakly mixing] Suppose that $f_0 = L_g \Phi\in \Aff(X)$ is weakly mixing.

For convenience, we consider the complexification of tangent spaces and tangent maps in the following proof. This will not affect the affine property of centralizers.

Lemma~\ref{l=invariant framing} gives a basis of vector fields with respect to which $Df_0$ is constant.
By a 
change of coordinates, we may assume that 
 $Df_0$ is in Jordan form with respect to this basis:
\[Df_0 \equiv \begin{pmatrix} \Lambda_{1}& 0& \cdots & 0 \\
0& \Lambda_{2}& \cdots & 0 \\
0& \cdots & \cdots &0 \\
0& 0 & 0 &\Lambda_N \\
\end{pmatrix},
\]
where each $\Lambda_j$ is a Jordan block for $Df_0$ in the minimal generalized eigenspace $E_{\Lambda_j}$ corresponding to an eigenvalue $\lambda_j\in \CC$ (there can be more than one block with the same eigenvalue).

Suppose that  $g \in \Z^{\Lip}(f)$. Then $g, g^{-1}$ are differentiable almost everywhere with bounded derivative. We apply the same  change of coordinates to $Dg$ and write, for a.e.~$x$,
\begin{equation}\label{e=Dg}D_x g =  \begin{pmatrix} A_{11}(x)& A_{12}(x)& \cdots & A_{1N}(x)\\
A_{21}(x)&\cdots & \cdots & A_{2N}(x) \\
\cdots & \cdots & \cdots &\cdots \\
A_{N1}(x) & \cdots & \cdots &A_{NN}(x) \\
\end{pmatrix},
\end{equation}
where the block decomposition is with respect to the minimal generalized eigenspaces for $Df_0$.
Since $g\circ f_0^n = f_0^n\circ g$, we have the almost everywhere equality
\begin{equation}\label{e=matrix commutation}
D_xg = Df_0^{-n}\, D_{f_0^n(x)}g  \, Df_0^{n},\end{equation}
for all $n\in \ZZ$.


Expressing \eqref{e=matrix commutation} in terms of the $A_{ij}$, we obtain the almost everywhere identity
\begin{equation}\label{e=block commutation}\Lambda_i^n \, A_{ij}(x) = A_{ij}(f_0^n(x))\,\Lambda_j^n,
\end{equation}
for all $n\in \ZZ$ and all $1\leq i,j\leq N$. 
As remarked above, since $g$ is Lipschitz,  the norms of the $A_{ij}$ are essentially bounded.






Since each $\Lambda_i$ is a Jordan block, it takes the form
\[\Lambda_i = \Delta_i + P_i,
\]
where $\Delta_i = \lambda_iI_{d_i}$, and 
\[
P_i = \sum_{p=1}^{d_i-1} \delta_{d_i}^{p\,p+1},
\]
for some $d_i\in \{1,\ldots, \dim(X)\}$ (in the case where $d_i=1$, we set $P_i=0$). Here $\delta^{p\,q}_{d_i}$ denotes the $d_i\times d_i$ matrix with all entries $0$, except $\left(\delta^{p\,q}_{d_i}\right)_{p,q}= 1$.
Note that for $k\geq 1$, we have
\begin{eqnarray*}P_i^k & =& \sum_{p=1}^{d_i-k} \delta^{p\,p+ k}_{d_i},
\end{eqnarray*}
which vanishes for $k> d_i-1$,
and so for $n\geq d_i$, we have
\begin{eqnarray*} \Lambda_i^n & =& \Delta_i^n + n  \Delta_i^{n-1} P_i + \cdots + {n \choose {d_i-1}} \Delta_i^{n-(d_i-1)} P_i^{d_i - 1}\\
&=& \Delta_i^n + \sum_{k=1}^{d_i-1} \sum_{p=1}^{d_i-k} {n \choose {k}} \lambda_i^{n-k} \delta_{d_i}^{p\,p+ k}.
\end{eqnarray*}
Note that ${n \choose {k}} \asymp C_k n^k$ as $n\to\infty$ with $k$ fixed.

Now fix $i,j\in\{1,\ldots, N\}$ and write $A(x) = A_{ij}(x)$. To avoid confusion with the block matrices $A_{ij}$, we shall write $a_{p,q}$ for the $p,q$-th entry of $A$.
We claim that the matrix $A$ is constant almost everywhere.

To see this, we first rewrite
the  identity \eqref{e=block commutation} as an a.e.~matrix equation
\begin{align}\label{e=main polynomial equation}
\lambda_{i}^n
U_n A(x)  =  \lambda_j^n A(f_0^n(x)) V_n,
\end{align}
where
\begin{align}
\notag U_n:= I_{d_i}   + \sum_{k=1}^{d_i-1} \sum_{p=1}^{d_i-k} {n \choose {k}} \lambda_i^{-k} \delta^{p\,p+ k}_{d_i},\hbox{ and } V_n:=  I_{d_j}  + \sum_{k=1}^{d_j-1} \sum_{p=1}^{d_j-k} {n \choose {k}} \lambda_j^{-k}\delta_{d_j}^{p\,p+ k}.
\end{align}
Note that for $k$ fixed, the entries on $U_n$ and $V_n$ grow polynomially in $n$ as $|n|\to\infty$, at the  rates $|n|^{d_i-1}$ and $|n|^{d_j-1}$, respectively.

Since $U_n$ is unipotent, it is invertible, and we can rewrite \eqref{e=main polynomial equation} as
\begin{align}\label{e=main polynomial equation2}
\left(\frac{\lambda_{i}}{\lambda_j}\right)^n   A(x)  = U_n^{-1}  A(f_0^n(x)) V_n.
\end{align}
Suppose first that $|\lambda_i| > |\lambda_j|$.
Since the norm of $A$ is $a.e.$ bounded, and the entries of $U_n, V_n^{-1}$ grow at most polynomially in $n$, we see immediately that $A=0$, 
almost everywhere.  Similarly, if $|\lambda_i| < |\lambda_j|$, then  $A=0$, 
almost everywhere.  Thus we may assume that $|\lambda_i| = |\lambda_j|$. 

Let $\omega:= \lambda_j\lambda_i^{-1}$, and rewrite the equation \eqref{e=main polynomial equation} as 
\begin{equation}\label{UA=AV}
U_n A(x) = \omega^n  A(f_0^n(x)) V_n,
\end{equation}
with the equivalent formulation
\begin{equation}\label{UA=AV2}
\omega^n  A(x) V_n =  U_n A(f_0^{-n}(x)).
\end{equation}
We now examine this family (in $n\geq 0$)  of equations entrywise.

For any $d_i\times d_j$ matrix $C$, the product $\delta_{d_i}^{p \,q } C$ is the $d_i\times d_j$ matrix all of whose rows are $0$, except the $p$th row, which is equal to the $q$th row of $C$, and $C \delta_{d_j}^{p\,q}$ is the  $d_i\times d_j$ matrix all of whose columns are $0$, except the $q$th column, which is equal to the $p$th column of $C$.  From this it follows that for $r\in [1,d_i], s\in [1,d_j]$, the $r,s$ entry in the left hand side of 
\eqref{UA=AV} can be expressed in the entries of $A(x)$ in the following functional equation
\begin{align}\notag
\left(U_n A(x)\right)_{r,s} = a_{r,s}(x) + {{n}\choose{1}}\lambda_i^{-1}
a_{r+1,s}(x) + {{n}\choose{2}}\lambda_i^{-2}
a_{r+2,s}(x) + \cdots \\
+ {{n}\choose{{d_i-r}}}\lambda_i^{-(d_i-r)} a_{d_i,s}(x),\label{e=LHS of UA=AV}
\end{align}
and the right hand side may be expressed (for each $n\geq 0$) by
\begin{align}\notag
\left(\omega^n  A(f_0^n(x)) V_n\right)_{r,s} = 
\omega^n \left(a_{r,s}(f_0^n(x)) + {n\choose 1}\lambda_j^{-1}
a_{r,s-1}(f_0^n(x)) + {n\choose 2}\lambda_j^{-2} a_{r,s-2}(f_0^n (x))\right.\\
+ \cdots 
+ \left.{n\choose{s-1}}\lambda_j^{-(s-1)} a_{r,1}(f_0^n (x))\right).\label{e=RHS of UA=AV}
\end{align}
Let $k_0 = \max\{d_i-r, s-1\}$.  Setting equations \eqref{e=LHS of UA=AV} and \eqref{e=RHS of UA=AV} to be equal almost everywhere, and dividing both sides by $n\choose k_0$, we obtain an a.e.~equation in some of the entries of $A$.  Since $|\omega^n|=1$ and $A$ is almost everywhere bounded, this family of equations can be written  (using \eqref{UA=AV2} in case (c)) 
in one of the following forms.
\begin{enumerate}
    \item[(a)] $a_{d_i,s}(x) = \omega^{n-k_0} a_{r_,1}(f_0^n(x)) + O(n^{-1})$, if $k_0 = d_i-r= s-1$,
    \item[(b)] $a_{d_i,s}(x) = O(n^{-1})$, if $d_i-r >s-1$, or
    \item[(c)]  $a_{r,1}(x) = O(n^{-1})$, if $d_i-r < s-1$. 
\end{enumerate}

Clearly if $a_{p,q}(x) = O(n^{-1})$ for all $n$, then $a_{p,q}\equiv 0$ almost everywhere. Thus for the equations in (b) and (c), the leading term (either ${n\choose d_i-r}\lambda_i^{-(d_i-r)}a_{d_i,s}(x)$ in case (b) or $\omega^{n}{n\choose s-1}\lambda_j^{-(s-1)}a_{r,1}(f_0^n(x))$ in (c)) in the $r,s$ entry of equation \eqref{UA=AV} vanishes almost everywhere.

Suppose then that $d_i-r= s-1$ and the equation in (a) holds almost everywhere. The form of this equation may be written abstractly as an almost everywhere equation
\begin{equation}\label{e=abstract iterative relation} b (x) =  \omega^{n} a(f_0^n(x)) + O(n^{-1}),
\end{equation}
where $a,b$ are bounded measurable functions (in this case $a=\omega^{-(s-1)}a_{r,1}$ and $b=a_{d_i,s}$). We use the following lemma.

\begin{lemma}\label{l=functional equation} Let $f_0\in\Aff(X)$, and suppose that $a,b\colon X\to \CC$ are bounded measurable functions satisfying \eqref{e=abstract iterative relation}, for some $\omega\in\CC$ with $|\omega| = 1$, and all $n\in\ZZ$. 

Then for $\mu_{\Haar}$-a.e.\ $x\in X$, we have
\[
b(x) = \omega\,b(f_0(x)), \quad a(x) = \omega\,a(f_0(x)), \quad \text{and } a(x) = b(x).
\]
In particular, if $f_0$ is ergodic and $\omega = 1$, then $a=b$ is constant almost everywhere, and if $f_0$ is weakly mixing, and $\omega\neq 1$ then $a=b=0$ almost everywhere.

\end{lemma}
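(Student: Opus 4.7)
The overall plan is to upgrade the asymptotic hypothesis into a pointwise Koopman eigenequation for $b$, then use Birkhoff's ergodic theorem to force $a=b$ almost everywhere, and finally invoke the assumed mixing to identify the eigenfunction. Throughout I will use that the implicit constant in $O(n^{-1})$ is uniform in $x$; this is traceable to the derivation of the equation from \eqref{UA=AV}, where the binomial ratios $\binom{n}{k}/\binom{n}{k_0}$ that enter in \eqref{e=LHS of UA=AV}--\eqref{e=RHS of UA=AV} produce constants depending only on $\|A\|_\infty$, on the Jordan block sizes $d_i,d_j$, and on the eigenvalues $\lambda_i,\lambda_j$.

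First I would obtain the eigenequation $b(x)=\omega\,b(f_0(x))$. Evaluating the hypothesis at $x$ with shift $n+1$, and at $f_0(x)$ with shift $n$, yields
\[
\omega^{n+1}\,a(f_0^{n+1}(x)) = b(x) + O(n^{-1}), \qquad \omega^{n}\,a(f_0^{n+1}(x)) = b(f_0(x)) + O(n^{-1}).
\]
Eliminating the common factor $a(f_0^{n+1}(x))$, dividing by $\omega^n$ (which has modulus one), and letting $n\to\infty$ gives $b(x)=\omega\,b(f_0(x))$ for $\mu_{\Haar}$-a.e.\ $x$.

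Second I would prove $a=b$ almost everywhere. Iterating the eigenequation gives $b\circ f_0^n=\omega^{-n}b$ a.e., so substituting into the hypothesis and using $|\omega|=1$ yields $|a(f_0^n(x))-b(f_0^n(x))|=O(n^{-1})$ for a.e.\ $x$. Setting $h:=a-b$, we conclude $h\circ f_0^n\to 0$ pointwise almost everywhere. For each $\epsilon>0$ and $E_\epsilon:=\{|h|>\epsilon\}$, almost every orbit therefore eventually leaves $E_\epsilon$, so Birkhoff's ergodic theorem applied to $\mathbf{1}_{E_\epsilon}$ gives $\EE[\mathbf{1}_{E_\epsilon}\mid\Ii]=0$ a.e., and hence $\mu_{\Haar}(E_\epsilon)=0$. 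Letting $\epsilon\to 0$ shows $a=b$ a.e., and the eigenequation for $b$ then transfers to $a$.

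Finally I would conclude using the spectral input. When $\omega=1$, the identity $b=b\circ f_0$ combined with ergodicity forces $b$, and hence $a$, to be constant almost everywhere. When $\omega\neq 1$, the relation $b\circ f_0=\omega^{-1}b$ exhibits $b\in L^\infty\subseteq L^2$ as a Koopman eigenfunction with eigenvalue distinct from $1$, and weak mixing rules out such eigenfunctions, forcing $b\equiv 0$ and hence $a\equiv 0$. The main obstacle I anticipate is establishing the \emph{uniformity in $x$} of the implicit constant in $O(n^{-1})$: both the limit argument producing the eigenequation and the pointwise decay $h\circ f_0^n\to 0$ feeding the Birkhoff step rely on having a single constant that works simultaneously at $x$, $f_0(x)$, and all iterates $f_0^n(x)$, which requires a careful inspection of \eqref{e=LHS of UA=AV}--\eqref{e=RHS of UA=AV} rather than a blackbox use of the $O(n^{-1})$ notation.
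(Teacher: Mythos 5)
Your argument is correct. Its skeleton is the same as the paper's: compare the hypothesis at $x$ with exponent $n+1$ against the hypothesis at $f_0(x)$ with exponent $n$, let $n\to\infty$ to get the Koopman eigenequation for $b$, and then invoke ergodicity (for $\omega=1$) or the absence of nonconstant eigenfunctions under weak mixing (for $\omega\neq 1$). The only genuine divergence is the middle step. The paper also runs the hypothesis in the backward direction (composing with $f_0^{-n}$, which is legitimate since the relation is assumed for all $n\in\ZZ$) to obtain the eigenequation $a=\omega\,a\circ f_0$ directly; substituting $a\circ f_0^n=\omega^{-n}a$ back into the hypothesis then gives $b(x)=a(x)+O(n^{-1})$ at the \emph{same} point $x$, so $a=b$ follows pointwise with no ergodic-theoretic input. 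You instead use only forward iterates, obtain $|a-b|=O(n^{-1})$ only along the forward orbit, and then need measure preservation to pull this back to a.e.\ equality; your Birkhoff argument does this correctly (and, as you note, without ergodicity), though it can be shortened: since $h:=a-b$ is bounded and $h\circ f_0^n\to 0$ a.e., dominated convergence plus $f_0$-invariance of $\mu_{\Haar}$ gives $\int|h|\,d\mu_{\Haar}=\int|h\circ f_0^n|\,d\mu_{\Haar}\to 0$. Finally, the uniformity in $x$ of the $O(n^{-1})$ constant that you flag as the main obstacle is not actually needed anywhere in your proof: every limit you take is at a fixed $x$ (or along a fixed orbit), so a measurable $x$-dependent constant suffices; in the paper's application the constant is in any case essentially uniform, being controlled by the $L^\infty$ bound on the entries of $Dg$ coming from the Lipschitz hypothesis.
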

\begin{proof}
Applying \eqref{e=abstract iterative relation} twice, we have 
\begin{eqnarray*}
\omega\cdot b(f_0(x))& = &\omega^{n+1} a(f_0^{n+1}(x)) + O(n^{-1})\\
&=& b(x) + O(n^{-1}),
\end{eqnarray*}
for almost every $x$.
Since $n$ is arbitrary, we must have $b(f_0(x)) = \omega^{-1}\cdot b(x)$  almost everywhere.

Similarly, by composing with $f_0^{-n}$, we obtain from  \eqref{e=abstract iterative relation} the a.e.~system of equations
\begin{equation*} a (x) =  \omega^{-n} b(f_0^{-n}(x)) + O(n^{-1}),
\end{equation*}
and we conclude that $a(x) = \omega^{-1}a(f_{0}^{-1}(x))$ a.e., which gives that $a(x) = \omega\cdot a(f_0(x))$, almost everywhere.

Using  that $a(f_0^n(x)) = \omega^{-n} a(x)$, we obtain from \eqref{e=abstract iterative relation} that
\[
b(x) =  a(x) + O(n^{-1}),
\]
for all $n$, which implies that $a(x)=b(x)$, a.e.

This proves the first assertions of the lemma.  Since $a,b$ are eigenfunctions of $f_0$ with eigenvalue $\omega$, it is immediate that if $f_0$ is ergodic and $\omega=1$, then $a=b$ are constant, and if $f_0$ is weakly mixing and $\omega\neq 1$, then $a=b=0$, almost everywhere.
\end{proof}

Applying Lemma~\ref{l=functional equation} to the equation in (a), we obtain from the ergodicity of $f_0$ that in the case $\omega=1$, we must have $a_{d_i,s} = a_{r,1}$ are constant almost everywhere.  Restricting to a full measure, $f_0$-invariant subset, the terms  ${n\choose d_i-r}\lambda_i^{-(d_i-r)}a_{d_i,s}(x)$ and $\omega^{n}{n\choose s-1}\lambda_j^{-(s-1)}a_{r,1}(f_0^n(x))$
can then be canceled from both sides in the $r,s$ entry of the functional equation \eqref{UA=AV}. Similarly, since $f_0$ is assumed to be weakly mixing,  if $\omega\neq 1$, then both leading terms vanish.   In either case, we can remove the ``top order" terms (in the coefficients of the matrix entries) from equations \eqref{e=LHS of UA=AV} and \eqref{e=RHS of UA=AV} after setting them equal.

(We remark here that even if $f_0$ is not ergodic, the leading terms can still be canceled, since  Lemma~\ref{l=functional equation} implies that  $a_{d_i,s}(x) = \omega^{-(s-1)} a_{r,1}(x)  =  \omega^{n-(s-1)} a_{r,1}(f_0^n(x))$).

Repeating this argument, setting \eqref{e=LHS of UA=AV} and \eqref{e=RHS of UA=AV} equal almost everywhere and dividing by ${n\choose k_1}$, for an appropriately chosen $k_1$, we remove the next highest order terms from the $r,s$ entry of \eqref{UA=AV}.  

Arguing inductively,  we are eventually reduced to the equation
\[a_{r,s}(x) = \omega a_{r,s}(f_0(x)),\]
and the weak mixing of $f$ then implies that $a_{r,s}$ is constant almost everywhere (and $0$ if $\omega\neq 1$).  Since $r,s$ are arbitrary, we conclude that $A = A_{i,j}$ is constant a.e. 
(and $0$ if $\lambda_i\neq \lambda_j$). 

Since $i,j$ are arbitrary, we have that $Dg$ is constant almost everywhere with respect to the left-invariant basis we started with. This implies that $g$ is affine. This completes the proof of Theorem~\ref{t=mixing ZLip is ZAff}.

We remark that the weak mixing hypothesis was  used only in the case where there are distinct eigenvalues $\lambda_i\neq \lambda_j$ of $Df_0$ with $|\lambda_i| =  |\lambda_j|$; otherwise, ergodicity of $f_0$ suffices. 
\end{proof}

\begin{remark}\label{remark 10}
Our proof of Theorem~\ref{t=mixing ZLip is ZAff} in the weak mixing case actually implies that for every ergodic affine diffeomorphism such that no two distinct eigenvalues have the same modulus, the Lipschitz centralizer is affine. In particular a typical $\RR$-semisimple ergodic homogeneous translation has this property and thus  has affine Lipshitz centralizer. This also generalizes the result  \cite[Theorem C.1]{DSVX}, where it is proved that for any ergodic, $\RR$-semisimple homogeneous flow $\varphi_t$, the Lipschitz and affine centralizer of $\varphi_t$ coincide. To see this, observe that $\RR$-semisimplicity of a homogeneous flow implies all eigenvalues are real, and any ergodic homogeneous flow contains an ergodic element with no negative eigenvalues (thus eliminating 
the case where $\lambda,-\lambda$ are both eigenvalues of $Df_0$).
For a semisimple automorphism $\Phi$, the same result holds, provided that $\Phi^2$ is ergodic.
 \end{remark}


\begin{proof}[Proof of Theorem~\ref{t=mixing ZLip is ZAff} in the ergodic case]
We first record the following corollary of the proof of Theorem~\ref{t=mixing ZLip is ZAff} in the weak mixing case.
\begin{coro}\label{c = Dg classification}
Let $f_0$ be ergodic, and choose a Jordan frame for $T X\otimes \CC$ as above. Denote by $\lambda_i$ the eigenvalue of $Df_0$ corresponding to the block $\Lambda_i$, and let $\omega_{i,j}:= \lambda_i/\lambda_j$. Let $g\in \cZ^{\Lip}(f_0)$.  Then with respect to this frame, $D_xg$ takes the block form \eqref{e=Dg}, where
\begin{enumerate}
    \item $A_{i,j}(x)=0$ if $\omega_{i,j}\neq 1$, and $\omega_{i,j}$ is not an eigenvalue of the operator $U_f: \phi\to \phi\circ f_0$ (in particular, this holds if $|\omega_{i,j}|\neq 1$);\label{e=omega not an eigenvalue}
    \item $A_{i,j}(x)$ is a constant matrix $A_{i,j}$ satisfying $\Lambda_i A_{i,j} =A_{i,j}\Lambda_j $, if $\omega_{i,j}=1$ (in particular, this holds if $i=j$); or \label{e=omega is one}
    \item the nonzero entries of $A_{i,j}(x)$ are  eigenfunctions of $U_f$, with eigenvalue $\omega_{i,j}$, if $\omega_{i,j}$ is an eigenvalue of $U_f$.\label{e=omega is eigenvalue}
\end{enumerate}
\end{coro}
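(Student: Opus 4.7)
The plan is to revisit the proof of Theorem~\ref{t=mixing ZLip is ZAff} in the weak mixing case and pinpoint where weak mixing was actually used. Inspecting that argument, weak mixing entered only at the very end, when Lemma~\ref{l=functional equation} was applied to conclude that an eigenfunction of the Koopman operator $U_{f_0}$ with eigenvalue $\omega\neq 1$ must vanish. Everything else in the proof was derived purely from the cocycle identity \eqref{e=block commutation}, the Jordan structure, and the ergodicity of $f_0$. Hence the proof will consist in re-running the same analysis block by block and reading off the more refined possibilities that ergodicity alone allows.

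First I would dispose of the case $|\lambda_i|\neq |\lambda_j|$, which was already settled by the polynomial-versus-exponential comparison in \eqref{e=main polynomial equation2} and gives $A_{i,j}\equiv 0$ a.e. Since $f_0$ preserves $\mu_{\Haar}$, the operator $U_{f_0}$ is unitary on $L^2(X,\mu_{\Haar})$, so any eigenvalue has modulus one; in particular $|\omega_{i,j}|\neq 1$ implies $\omega_{i,j}\notin \operatorname{spec}_{\mathrm{pp}}(U_{f_0})$, and this case is subsumed by item~\eqref{e=omega not an eigenvalue}.

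Next I would assume $|\omega_{i,j}|=1$ and rerun the entrywise matching of \eqref{e=LHS of UA=AV} and \eqref{e=RHS of UA=AV}. After dividing by the leading binomial coefficient, each leading entry equation has the form $b(x)=\omega^n a(f_0^n(x))+O(n^{-1})$ covered by Lemma~\ref{l=functional equation}, with $\omega=\omega_{i,j}$ (or its inverse, depending on convention; either way ergodicity of $f_0$ and the unitarity of $U_{f_0}$ make the two choices of convention equivalent for the purposes of the statement). The lemma then yields $a(x)=\omega\,a(f_0(x))$, $b(x)=\omega\,b(f_0(x))$, and $a=b$ a.e. Three alternatives now split the argument: (i) if $\omega_{i,j}=1$, ergodicity forces $a$ to be an a.e.\ constant; (ii) if $\omega_{i,j}\neq 1$ is not an eigenvalue of $U_{f_0}$, the only $L^2$ solution is $a\equiv 0$; and (iii) if $\omega_{i,j}\neq 1$ is an eigenvalue of $U_{f_0}$, $a$ is an eigenfunction with that eigenvalue. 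In cases (i) and (ii) one then cancels the leading-order term from \eqref{UA=AV} and iterates on the next power of $n$, exactly as in the weak mixing proof, arriving at item~\eqref{e=omega is one} or item~\eqref{e=omega not an eigenvalue}; in the former case, substituting the resulting constant $A_{i,j}$ back into \eqref{e=block commutation} at $n=1$ yields the commutation relation $\Lambda_i A_{i,j}=A_{i,j}\Lambda_j$.

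The main obstacle is the inductive cancellation in case (iii), where leading entries are eigenfunctions rather than constants, so that what gets canceled is not an identical term on both sides of \eqref{UA=AV} but an expression that matches only modulo the eigenvalue identity. This is handled by observing that Lemma~\ref{l=functional equation} actually provides the pointwise almost-everywhere identity $a(f_0^n(x))=\omega^{-n}a(x)$ for every eigenfunction $a$ it produces, which is precisely the coboundary relation required to absorb the $\omega^n$ prefactors in \eqref{e=RHS of UA=AV} when subtracting the leading contribution from both sides. Once leading eigenfunction entries are stripped off, the next subleading entry equation is again of the form covered by Lemma~\ref{l=functional equation}, and the same trichotomy applies; after finitely many steps (bounded by $\max(d_i,d_j)$) every entry of $A_{i,j}$ has been classified, delivering item~\eqref{e=omega is eigenvalue} and completing the corollary.
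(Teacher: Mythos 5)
Your proposal is correct and follows essentially the same route as the paper: the paper obtains this corollary precisely by re-inspecting the proof of Theorem~\ref{t=mixing ZLip is ZAff}, noting that weak mixing enters only through Lemma~\ref{l=functional equation}, which under mere ergodicity yields the trichotomy constant/zero/eigenfunction according to whether $\omega_{i,j}=1$, $\omega_{i,j}$ is not an eigenvalue of $U_{f_0}$, or it is. Your resolution of the inductive cancellation in the eigenfunction case via the pointwise relation $a(f_0^n(x))=\omega^{-n}a(x)$ is exactly the parenthetical remark made in the paper's proof, so nothing further is needed.
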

 
Let $\Gg$ be the set of all $C^\infty$ diffeomorphisms $g$ of $X$ such that, when written in block form, the blocks of $D_xg$ and $D_xg^{-1}$  satisfy properties \eqref{e=omega not an eigenvalue}-\eqref{e=omega is eigenvalue}, where we restrict to the eigenfunctions of $U_{f_0}$ on $C^\infty(X,\CC)$.
\begin{lemma}
    $\Gg$ is a closed subgroup of $\Diff^\infty(X)$,  and $\cZ^\infty(f_0)\subseteq \Gg$.
\end{lemma}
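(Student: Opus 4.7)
First, to show $\cZ^\infty(f_0) \subseteq \Gg$: for $g\in\cZ^\infty(f_0)$, both $g$ and $g^{-1}$ are smooth diffeomorphisms commuting with $f_0$, hence bi-Lipschitz. Corollary~\ref{c = Dg classification} then applies to both $g$ and $g^{-1}$, giving that the blocks of $D_xg$ and $D_xg^{-1}$ satisfy (1)--(3). Smoothness of the Jacobians forces the eigenfunctions that appear to lie in $C^\infty(X,\CC)$, matching the restriction in the definition of $\Gg$.

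Next, to show $\Gg$ is a closed subgroup: the identity lies in $\Gg$ trivially ($D\mathrm{id} = I$ is block-diagonal with each diagonal block in the centralizer of $\Lambda_i$), and the defining condition is symmetric under $g \leftrightarrow g^{-1}$, so $\Gg$ is closed under inversion. For closedness in $\Diff^\infty(X)$, note that each block of $Dg$ is constrained to one of the closed finite-dimensional subspaces of $C^\infty(X,\CC)$ (namely $\{0\}$, the constants, or the one-dimensional eigenspace $E_\omega$ of $U_{f_0}$, which is one-dimensional by ergodicity), and $C^\infty$-convergence $g_n\to g$ passes these constraints to the limit.

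The substantive step is closure under composition. Given $g,h\in\Gg$, the chain rule yields
\[
(D(g\circ h))_{ij}(x) \;=\; \sum_{k}A^g_{ik}(h(x))\cdot A^h_{kj}(x).
\]
Since $\Gamma := \mathrm{Spec}(U_{f_0})\cap C^\infty(X,\CC)$ is a multiplicative subgroup of $S^1$ with $\omega_{ij}=\omega_{ik}\omega_{kj}$, any summand in which $\omega_{ik}$ or $\omega_{kj}$ lies outside $\Gamma\cup\{1\}$ vanishes. The crux is the following sub-claim: for $h\in\Gg$ and $\phi\in E_\omega$, the pullback $\phi\circ h$ again lies in $E_\omega$. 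Granted the sub-claim, each nonzero summand is a scalar multiple of $(\phi_{\omega_{ik}}\circ h)\cdot\phi_{\omega_{kj}}\in E_{\omega_{ik}\omega_{kj}}=E_{\omega_{ij}}$ (using that $E_\omega\cdot E_{\omega'}\subseteq E_{\omega\omega'}$), which yields properties (1)--(3) for $D(g\circ h)$.

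To establish the sub-claim, I would show that the block form of $Dh$ encodes, within the Jordan structure, the pointwise cocycle identity $Dh(f_0 x) = Df_0\cdot Dh(x)\cdot Df_0^{-1}$ in the right-invariant frame of Lemma~\ref{l=invariant framing}. By the chain rule this matches the Jacobians of $f_0\circ h$ and $h\circ f_0$ as matrix-valued functions in this frame, forcing $f_0\circ h$ and $h\circ f_0$ to differ by a fixed left translation $L_c$ in $G$ once integrated using the homogeneous structure $X=G/B$. Hence $h$ conjugates $f_0$ to an affine map with identical derivative and identical smooth Koopman eigendata, from which $\phi\circ h\in E_\omega$ follows. \emph{Main obstacle}: the sub-claim is where the argument is most delicate. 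Two steps require care: verifying that the block conditions (particularly the relations within Jordan blocks implicit in the intertwining condition of property (2) and its analogue for property (3)) are equivalent to the cocycle equation, and then promoting pointwise agreement of Jacobians in the right-invariant frame to the global statement that $f_0\circ h$ and $h\circ f_0$ differ by a fixed left translation. Both steps lean on the homogeneous structure of $X$ rather than on purely local differential data.
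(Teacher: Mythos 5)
Your treatment of the easy parts (identity, inversion, $C^\infty$-closedness, and the containment $\cZ^\infty(f_0)\subseteq\Gg$ via Corollary~\ref{c = Dg classification} applied to $g$ and $g^{-1}$) matches the paper's proof. The crux is composition, and you have correctly put your finger on the point the paper passes over in one sentence: the chain rule produces entries of the form $\phi_{\omega_{ik}}(h(x))\,\phi_{\omega_{kj}}(x)$, so one needs the sub-claim that $\phi\circ h\in E_\omega$ for every $h\in\Gg$. The gap is that your proposal does not prove this sub-claim, and in fact it is false for general elements of $\Gg$ as defined. Take Walters's example from the end of Section~\ref{ProofB}: $X=\TT^{n+1}$, $f_0(x,t)=(\Phi(x),t+\tfrac{\theta}{2\pi})$, where $D\Phi$ has a complex eigenvalue $\lambda=e^{i\theta}$ of modulus one. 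For the block pair (complex $\lambda$-eigenline, $t$-line) one has $\omega_{i,j}=e^{i\theta}$ and $E_{e^{i\theta}}=\CC\, e^{2\pi i t}$. The involution $h(x,t)=(x,-t)$ has constant, block-diagonal derivative $\mathrm{diag}(1,\dots,1,-1)$ in the Jordan frame, so $h=h^{-1}\in\Gg$; yet $e^{2\pi i t}\circ h=e^{-2\pi i t}\in E_{e^{-i\theta}}$, not $E_{e^{i\theta}}$. Concretely, composing $h$ with $g_v(x,t)=(x+R_{2\pi t}v,t)$ (which lies in $\Gg$, its only nonconstant derivative entries being multiples of $e^{\pm 2\pi i t}$ in the blocks with $\omega=e^{\pm i\theta}$) produces a map whose $(t\to\lambda)$-block entries are multiples of $e^{-2\pi i t}$, violating property~\eqref{e=omega is eigenvalue}. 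So closure under composition cannot be extracted from the block conditions on $Dh$, $Dh^{-1}$ alone, and your main step would fail.

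The fallback you sketch does not repair this: in the same example $f_0\circ h$ and $h\circ f_0$ do differ by a fixed left translation, i.e.\ $h f_0 h^{-1}=L_c\circ f_0$ with $c=(0,-\theta/\pi)$, but this left-translate is a genuinely different affine map whose Koopman eigenfunction $e^{2\pi i t}$ now has eigenvalue $e^{-i\theta}$; ``differing from $f_0$ by a left translation'' does not give ``identical smooth Koopman eigendata,'' so the final implication of your sub-claim argument breaks exactly where you flagged it as delicate. For comparison, the paper's own proof of composition closure is a one-liner (``$\phi_{i,j}\cdot\phi_{j,k}$ is an $\omega_{i,k}$-eigenfunction, then the chain rule''), which silently multiplies both eigenfunctions as if evaluated at the same point and thus elides the same issue you isolated; what actually saves the intended application is that the pullback property $\phi\circ h\in E_\omega$ \emph{does} hold on $\cZ^\infty(f_0)$ (since $\phi\circ h\circ f_0=\phi\circ f_0\circ h=\omega\,\phi\circ h$), so any correct argument must feed the commutation with $f_0$ (or an equivalent strengthening of the definition of $\Gg$) into the composition step rather than rely on the derivative block structure alone. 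Your proposal identifies the right obstacle but does not supply that ingredient, so as written it does not establish the lemma.
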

\begin{proof} Clearly the identity belongs to $\Gg$, and by definition $\Gg$ is closed under inversion.

Recall that $\omega
_{i,j} = \lambda_i/\lambda_j$.  It follows that if $\phi_{i,j}$ is an eigenfunction of $\omega_{i,j}$ and  $\phi_{j,k}$ is an eigenfunction of $\omega_{j,k}$, then $\phi_{i,j}\cdot \phi_{j,k}$ is an eigenfunction of $\omega_{i,k}$.  The chain rule then implies that $\Gg$ is closed under composition.

Since a $C^\infty$ limit of smooth $\omega_{i,j}$-eigenfunctions for $U_{f_0}$ is a smooth $\omega_{i,j}$-eigenfunction, $\Gg$ is closed.

To check that $\cZ(f_0)\subseteq\Gg$, note that if $g\in \cZ(f_0)$, then $g^{-1}\in \cZ(f_0)$. The inclusion then follows from Corollary~\ref{c = Dg classification}.
\end{proof}
The next lemma is standard.
\begin{lemma} Suppose $f_0\in\Aff(X)$ is ergodic, and there exists  $\phi_0\in L^2(X,\mu_{\Haar})$, with $\phi_0\neq 0$ such that $\phi_0\circ f_0 = \omega \cdot \phi_0$, for some $\omega\in \CC$, $|\omega|=1$.  Then for every $\phi \in L^2(X,\mu_{\Haar})$, if $\phi \circ f_0 = \omega\cdot \phi$, then there exists $c\in\CC$ such that $\phi = c\cdot \phi_0$.
\end{lemma}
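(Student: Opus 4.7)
The plan is to exploit the classical fact that eigenvalues of the Koopman operator of an ergodic transformation are simple, via the ratio trick.

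First, I would show that $\phi_0$ is nonzero almost everywhere. Since $|\omega| = 1$, the identity $\phi_0 \circ f_0 = \omega \cdot \phi_0$ implies $|\phi_0| \circ f_0 = |\phi_0|$ a.e., so $|\phi_0|$ is an $f_0$-invariant $L^2$ function. By ergodicity, $|\phi_0|$ is constant a.e., and since $\phi_0 \neq 0$ in $L^2$ this constant must be nonzero. Hence $\phi_0 \neq 0$ almost everywhere, and the ratio $\phi/\phi_0$ makes sense as a measurable function (for any $\phi \in L^2$).

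Next I would compute directly:
\[
\left(\frac{\phi}{\phi_0}\right) \circ f_0 \;=\; \frac{\phi \circ f_0}{\phi_0 \circ f_0} \;=\; \frac{\omega \cdot \phi}{\omega \cdot \phi_0} \;=\; \frac{\phi}{\phi_0},
\]
almost everywhere, so $\phi/\phi_0$ is an $f_0$-invariant measurable function. Moreover, since $|\phi_0|$ is a positive constant a.e., we have $|\phi/\phi_0| = |\phi|/|\phi_0| \in L^2$, and in particular $\phi/\phi_0$ is a well-defined measurable (in fact $L^2$) invariant function.

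By ergodicity of $f_0$ with respect to $\mu_{\Haar}$, every $f_0$-invariant measurable function is constant a.e. Thus there exists $c \in \CC$ with $\phi/\phi_0 = c$ a.e., i.e.\ $\phi = c \cdot \phi_0$, completing the proof.

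There is no real obstacle here; the only delicate point is justifying that $\phi_0 \neq 0$ a.e.\ before dividing, which is handled by the modulus-invariance argument above.
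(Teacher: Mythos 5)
Your proof is correct and follows essentially the same argument as the paper: show $|\phi_0|$ is $f_0$-invariant, hence a nonzero constant by ergodicity, and then observe that the ratio $\phi/\phi_0$ is $f_0$-invariant and therefore constant. The extra care you take in justifying the division is fine but adds nothing beyond the paper's reasoning.
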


\begin{proof} Note first that $|\phi_0|$ is $f_0$-invariant, and so constant, and this constant is not equal to $0$, by assumption.  If $\phi$ also satisfies $\phi \circ f_0 = \omega \cdot \phi$, then $\phi/\phi_0$ is $f_0$-invariant, and hence constant.
\end{proof}

\begin{lemma}
    $\Gg$ is a Lie group.  
\end{lemma}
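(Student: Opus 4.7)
The plan is to embed $\Gg$ as a locally compact topological group into a finite-dimensional manifold, and then invoke the Bochner--Montgomery theorem on Lie transformation groups. The first step exploits the parametrization of $D_xg$ from Corollary~\ref{c = Dg classification} together with the simplicity of $U_{f_0}$-eigenspaces proved in the preceding lemma. For each pair of Jordan block indices $(i,j)$ and each $g\in \Gg$, the block $A_{i,j}(x)$ has one of three forms: identically zero; a constant matrix $B_{i,j}$ intertwining $\Lambda_i$ and $\Lambda_j$; or $B_{i,j}\,\phi_{i,j}(x)$ for a constant matrix $B_{i,j}$ and a fixed smooth $U_{f_0}$-eigenfunction $\phi_{i,j}$ at eigenvalue $\omega_{i,j}$. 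By the simplicity lemma each $\phi_{i,j}$ is unique up to scalar, and since $|\phi_{i,j}|$ is $f_0$-invariant and $f_0$ is ergodic, $|\phi_{i,j}|$ is constant; normalize so that $|\phi_{i,j}|\equiv 1$, whence $\phi_{i,j}$ is nowhere vanishing. Consequently the assignment $g\mapsto B(g):=(B_{i,j}(g))_{i,j}$ sends $\Gg$ continuously into a finite-dimensional complex vector space $\mathcal V$.

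Fix a point $x_0\in X$ and define
\[\Phi\colon \Gg \longrightarrow X\times \mathcal V, \qquad \Phi(g)=(g(x_0),B(g)).\]
For injectivity, suppose $\Phi(g)=\Phi(g')$, so that $g(x_0)=g'(x_0)$ and $D_xg=D_xg'$ for every $x\in X$. Given any smooth path $\gamma\colon [0,1]\to X$ with $\gamma(0)=x_0$, the curves $t\mapsto g(\gamma(t))$ and $t\mapsto g'(\gamma(t))$ satisfy the same first-order initial-value problem (common initial point $g(x_0)=g'(x_0)$, and at each $t$ common velocity $D_{\gamma(t)}g\cdot\gamma'(t)=D_{\gamma(t)}g'\cdot\gamma'(t)$), so they coincide; since $X$ is path-connected, $g=g'$. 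For continuity of $\Phi^{-1}$, suppose $\Phi(g_n)\to \Phi(g)$. Since each block of $Dg_n$ is a fixed linear combination of finitely many smooth functions with coefficients drawn from $B(g_n)$, convergence $B(g_n)\to B(g)$ in $\mathcal V$ entails $Dg_n\to Dg$ in every $C^k$-norm. Together with $g_n(x_0)\to g(x_0)$, integrating along smooth paths in the Lie-theoretic frame of Lemma~\ref{l=invariant framing} yields $g_n\to g$ in $C^\infty$. Finally, $\Phi(\Gg)$ is closed in $X\times \mathcal V$ because $\Gg$ is $C^\infty$-closed in $\Diff^\infty(X)$.

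Hence $\Gg$ is homeomorphic to a closed subset of the finite-dimensional manifold $X\times \mathcal V$, and in particular is locally compact. Since $\Gg$ acts continuously and effectively on $X$ by $C^\infty$ diffeomorphisms, the Bochner--Montgomery theorem implies that $\Gg$ is a Lie group. The technical crux is the homeomorphism property of $\Phi$: upgrading convergence of the finitely many scalar parameters $(g(x_0),B(g))$ to full $C^\infty$ convergence of $g_n\to g$. This is where the smoothness and nowhere-vanishing of the normalized eigenfunctions $\phi_{i,j}$, combined with the global frame of Lemma~\ref{l=invariant framing} that permits uniform integration of $Dg_n$ on $X$ starting from $g_n(x_0)$, play essential roles; a subsidiary point is verifying closedness of the image, so that $\Gg$ is genuinely locally closed in the target manifold.
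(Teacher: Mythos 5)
Your proposal follows the paper's proof through its first half — both arguments rest on the parametrization of $D_xg$ by finitely many constants, obtained from Corollary~\ref{c = Dg classification} together with the simplicity of the $U_{f_0}$-eigenspaces and a choice of unimodular smooth eigenfunctions $\phi_{i,j}$ — but it diverges at the final step. The paper sends $Dg$ to the constant matrix $B_g\in\GL(d,\CC)$ and concludes via the closed-subgroup theorem that the image, hence $\Gg$, is a Lie group; you instead send $g$ to $(g(x_0),B(g))\in X\times\mathcal V$, argue this is a homeomorphism onto its image, deduce local compactness, and invoke Bochner--Montgomery. Your route buys something real: the paper's map $g\mapsto B_g$ is not injective (any $g$ whose derivative is the identity in the frame of Lemma~\ref{l=invariant framing}, e.g.\ suitable translations, lies in its kernel), and recording the single orbit point $g(x_0)$ repairs exactly this, with a clean ODE-in-the-frame argument for injectivity and for continuity of the inverse. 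The price is a heavier black box (Bochner--Montgomery rather than Cartan's closed-subgroup theorem) and the need to control the image inside $X\times\mathcal V$ rather than inside a group.

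That last point is where your write-up has a gap. You justify closedness of $\Phi(\Gg)$ in $X\times\mathcal V$ by ``$\Gg$ is $C^\infty$-closed in $\Diff^\infty(X)$,'' but this does not follow: if $\Phi(g_n)\to(p,B)$, your own ODE argument produces a $C^\infty$-limit map $g$ with the prescribed derivative matrix, yet nothing you have said rules out that this limit fails to be invertible (the constants $B(g_n)$ could a priori degenerate), in which case $g\notin\Diff^\infty(X)$ and closedness of $\Gg$ inside $\Diff^\infty(X)$ is of no use. Since local compactness is exactly what Bochner--Montgomery needs, this step must be argued, not asserted. It can be filled: by simplicity, any cyclic product $\phi_{i_1,i_2}\phi_{i_2,i_3}\cdots\phi_{i_k,i_1}$ is a $1$-eigenfunction, hence constant and unimodular, so $\det$ of the frame cocycle of any $g\in\Gg$ is constant in $x$; since $g$ is a diffeomorphism of the compact manifold $X$ and the frame volume has finite total mass, this constant has modulus $1$. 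Passing to the limit gives $|\det|=1$ for the limit matrix, so $g$ is a local diffeomorphism, hence (degree $\pm1$) a diffeomorphism, and then closedness of $\Gg$ does apply. To be fair, the paper's own proof makes an analogous unproved assertion (that the image in $\GL(d,\CC)$ is a closed subgroup, and that the map is an embedding), so with this one step supplied your argument is a legitimate, and in some respects more careful, alternative.
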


\begin{proof}
For each $i,j$ such that $\omega_{i,j}$ is an eigenvalue for $U_{f_0}$, either $A_{i,j}(x)\equiv 0$ or there is a nonzero entry, in which case there exists a $C^\infty$ $\omega_{i,j}$-eigenfunction $\phi_{i,j} \colon X \to \CC$ with $|\phi_{i,j}|=1$. Fix these choices $\{\phi_{i,j}\}$, where $i,j$ ranges over the appropriate indices (where $\omega_{i,j}$ is a smooth eigenvalue of $f_0$).

Then the derivative cocycle $Dg$ for each $g\in \Gg$ maps to a  matrix $B_g$ in $\GL(d,\CC)$ by replacing each entry of the form $c\phi_{i,j}(x)$ in the cocycle $D_xg$ with a corresponding entry $c$ in $B_g$.  This defines a continuous embedding of $\Gg$ into a closed subgroup of $\GL(d,\CC)$; hence $\Gg$ is a Lie group.  Since $\Gg$ acts continuously on $X$ by diffeomorphisms, its smooth structure as a Lie group is inherited from the $C^\infty$ topology on $\Diff^\infty(X)$.
\end{proof}

Since $\cZ^\infty(f_0)$ is a closed subgroup of $\Gg$ (in any topology), we obtain the following.
\begin{coro}
 If $f_0\in\Ee(X)$, then  $\cZ^\infty(f_0)$ is a Lie subgroup of $\Diff^{\infty}(X)$; that is:
 \[
 \Ee(X) \subset \Ll(X).
 \]
\end{coro}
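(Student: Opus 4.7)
The plan is short because the preceding lemmas have done essentially all the work: we have embedded $\cZ^\infty(f_0)$ into a Lie group $\Gg$, and it remains only to verify that the inclusion is closed so that Cartan's closed subgroup theorem applies.

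First I would check that $\cZ^\infty(f_0)$ is a closed subgroup of $\Gg$ with respect to the Lie topology on $\Gg$. The containment $\cZ^\infty(f_0)\subseteq \Gg$ is established by Corollary~\ref{c = Dg classification} above. For closedness, recall that the Lie structure on $\Gg$ is transported from a closed subgroup of $\GL(d,\CC)$ via the continuous map $g\mapsto B_g$, which records the coefficients of the entries of $D_xg$ relative to the fixed eigenfunctions $\phi_{i,j}$. Since the commutation relation $g\circ f_0=f_0\circ g$ is a closed condition in the $C^0$ topology on $\Diff^\infty(X)$ (it is the pointwise equality of two continuous maps), and since the Lie topology on $\Gg$ dominates the $C^0$ topology on $\Gg$ (the embedding into $\Diff^\infty(X)$ being continuous by construction), convergence $g_n\to g$ in $\Gg$ forces $g_n\to g$ in $C^0$, and the commutation passes to the limit. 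Hence $\cZ^\infty(f_0)$ is closed in $\Gg$.

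Second, I would apply Cartan's closed subgroup theorem: any closed subgroup of a Lie group is a Lie subgroup. Since $\Gg$ is a Lie group (in particular, its image in $\GL(d,\CC)$ is a closed Lie subgroup), we conclude that $\cZ^\infty(f_0)$ inherits the structure of a Lie group from $\Gg$. Because the inclusion $\Gg\hookrightarrow \Diff^\infty(X)$ is continuous and smooth in the appropriate sense (the action of $\Gg$ on $X$ is by smooth diffeomorphisms depending continuously on the group parameters), this Lie structure agrees with the one inherited from $\Diff^\infty(X)$, so $\cZ^\infty(f_0)$ is a Lie subgroup of $\Diff^\infty(X)$. This yields $\Ee(X)\subseteq \Ll(X)$.

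The only step that requires any care is the topology comparison: one needs to know that the map $g\mapsto B_g$ used to embed $\Gg$ into $\GL(d,\CC)$ is not just a group homomorphism but continuous when $\Gg$ carries the subspace topology from $\Diff^\infty(X)$. This follows directly from the construction in the previous lemma, since $B_g$ is extracted by evaluating $D_xg$ in the Jordan frame at any fixed basepoint $x$ and dividing by the (nowhere-vanishing) eigenfunction values $\phi_{i,j}(x)$—an operation continuous in the $C^1$ topology on $g$. No further obstacle is anticipated.
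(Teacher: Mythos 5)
Your proposal is correct and follows essentially the same route as the paper: having established $\cZ^\infty(f_0)\subseteq\Gg$ and that $\Gg$ is a Lie group via the embedding $g\mapsto B_g$ into $\GL(d,\CC)$, the paper likewise deduces the corollary simply from the fact that $\cZ^\infty(f_0)$ is a closed subgroup of $\Gg$, invoking the closed subgroup theorem. Your additional verification of the topology comparison fleshes out what the paper dispatches with the phrase ``closed subgroup of $\Gg$ (in any topology),'' but it is the same argument.
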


The proof of Theorem~\ref{t=mixing ZLip is ZAff} is then reduced to the following result.

\begin{theorem} For every $f_0\in \Ee(X)$, the bi-Lipschitz and smooth centralizer coincide: $\cZ^{\Lip}(f_0) = \cZ^{\infty}(f_0)$.
\end{theorem}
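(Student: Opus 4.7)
The plan is to combine Corollary~\ref{c = Dg classification} with two additional inputs: smoothness of the discrete spectrum of $U_{f_0}$ on an ergodic affine system, and a standard regularity bootstrap. The inclusion $\cZ^\infty(f_0)\subseteq \cZ^{\Lip}(f_0)$ is immediate, so I focus on the reverse.

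Fix $g\in\cZ^{\Lip}(f_0)$. Corollary~\ref{c = Dg classification} writes each Jordan block $A_{i,j}(x)$ of $D_xg$ as either zero, a constant matrix satisfying the appropriate braiding with $\Lambda_i,\Lambda_j$, or a block whose entries are measurable $\omega_{i,j}$-eigenfunctions of the Koopman operator $U_{f_0}$; the Lipschitz hypothesis places these eigenfunctions in $L^\infty(X,\mu_{\Haar})$. The first key step is to promote such an $L^\infty$-eigenfunction to a genuinely smooth eigenfunction. For an ergodic affine diffeomorphism of $X=G/B$, the maximal equicontinuous (Kronecker) factor is itself an ergodic translation on a compact abelian Lie group $K$, and the associated factor map $\pi_K\colon X\to K$ is smooth; this is classical structure theory for affine systems. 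Consequently every $L^2$-eigenfunction of $U_{f_0}$ coincides a.e.\ with the pullback $\chi\circ\pi_K$ of a character $\chi$ of $K$, hence with a smooth function on $X$. Combined with Corollary~\ref{c = Dg classification}, this produces a smooth endomorphism-valued section $\Phi$ of $TX$ such that $D_xg=\Phi(x)$ for $\mu_{\Haar}$-a.e.\ $x$.

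The second step is a standard absolute-continuity argument. Because $g$ is bi-Lipschitz, Rademacher's theorem gives $g$ differentiable a.e., and in any chart $g$ is absolutely continuous along a.e.\ straight-line segment. Hence for almost every pair $(x,v)$ in a local chart,
\[ g(x+v)-g(x) \;=\; \int_0^1 D_{x+tv}g\cdot v\,dt \;=\; \int_0^1 \Phi(x+tv)\cdot v\,dt, \]
with the right-hand side smooth in $(x,v)$. Continuity of both sides upgrades this identity to hold everywhere, so $g\in C^\infty(X)$ and thus $g\in\cZ^\infty(f_0)$.

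The main obstacle is the first step: rigorously exhibiting every $L^2$-eigenfunction of $U_{f_0}$ as smooth. On compact abelian groups and on nilmanifolds this follows from Parry-type descriptions of the Kronecker factor; on semisimple quotients the Mautner phenomenon forces the discrete spectrum to be trivial. In the general mixed case one uses the semidirect decomposition $G=N\rtimes S$ together with a Kronecker-factor analysis of each piece. Once smoothness of eigenfunctions is secured, the remaining bootstrap from a.e.\ smooth derivative to global smoothness is essentially routine.
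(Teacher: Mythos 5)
Your proposal is correct and takes essentially the same route as the paper: the key input in both is that $L^2$-eigenfunctions of an ergodic affine map are (a.e.\ equal to) smooth functions --- obtained from Parry's nilmanifold result together with the Dani/Brezin--Moore reduction, which is exactly the Kronecker-factor structure theory you invoke --- after which a bi-Lipschitz commuter whose derivative data are a.e.\ smooth is upgraded to a smooth diffeomorphism. One cosmetic caveat: in your segment-integration step the integrand also involves the frame at $g(x)$, which at first is only continuous (since $g$ is only Lipschitz), so the identity initially yields $g\in C^1$ and the bootstrap must be iterated to reach $C^\infty$; this is the routine regularity step you already indicate.
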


\begin{proof}[Idea of proof]
Let $\Gg_0$ be the set of all bi-Lipschitz homeomorphisms $g$ of $X$ such that, when written in block form, the blocks of $D_xg$ and $D_xg^{-1}$  satisfy properties \eqref{e=omega not an eigenvalue}-\eqref{e=omega is eigenvalue}. (Probably the most appropriate class to consider is the set of homeos with one derivative in $L^2$).
We would then use a theorem like this:

\begin{theorem} Let $f_0\in \Aff(X)$ be ergodic, and let $U_{f_0}\colon L^2(X,\mu_{\Haar})\to  L^2(X,\mu_{\Haar})$ be the associated unitary operator  (whose spectrum lies in $\{w\in\CC: |w|=1\}$).

Suppose that $\omega$ is an eigenvalue of $U_{f_0}$, and let \[E_\omega = \{ \phi\in  L^2(X,\mu_{\Haar}): U_{f_0} \phi = \omega \phi\} .\]
Then there exists $\phi_\omega\in C^\infty(X,\CC)$ such that $|\phi_\omega(x)| = 1$ for all $x\in X$, and
$E_\omega = \CC \cdot \phi_\omega$.

\end{theorem}

\begin{proof}  In \cite[Theorem 1]{Pa69}, it is proved that every eigenfunction of an ergodic affine transformation of a nilmanifold factors through the abelianization. This implies the statement of the theorem for nilmanifolds. 
 Theorem 7.1 in Dani's paper and Theorem 6.2 of Brezin-Moore can then be used to reduce  to the case of a nilmanifold.
   
\end{proof}

We conclude that $\Gg_0 = \Gg$, and we are done.
\end{proof}
This completes the proof of Theorem~\ref{t=mixing ZLip is ZAff} in the ergodic case.
\end{proof}


We complete this section with the discussion of the example due to Walters that demonstrates why a weakly mixing hypothesis on $f_0$ is necessary to conclude that $\cZ^\infty(f_0) \subset \Aff(X)$.

\medskip
\noindent
{\bf Walters's example} \cite{Wa-example}.
Let $\Phi\colon {\TT^n}\to {\TT}^n$ be an ergodic automorphism of the form
$x\mapsto Ax$, where $A\in\SL(n,\ZZ)$ has a complex eigenvalue $\lambda$, $|\lambda| = 1$. (Notice that the lowest possible dimension in which such $\Phi$ exists is $n=4$).  Let $V_\lambda \subset \RR^n$ be the corresponding real eigenspace, with the property that for all $v\in V_\lambda$,
\[Av = R_\theta v,
\]
where  $\theta = \arg(\lambda)$, and $R_\theta$ is the rotation on $V_\lambda$ through angle $\theta$, fixing the complementary generalized eigenspaces.  

Define $f_0\colon \TT^{n+1}\to \TT^{n+1}$ by
\[f_0(x,t) = (\Phi(x), t+\frac{\theta}{2\pi}),\]
and note that $f_0$ is ergodic (though not weakly mixing).
Fix $v\in V_\lambda$, and define $g_v\colon \TT^{n+1}\to \TT^{n+1}$ by
\[g_v(x,t) = (x + R_{2\pi t}v, t). \]
Then $g_v\in \Z^\omega(f)$, but $g_v$ is not affine.  Thus $\Z^\omega(f) > \Z^\omega(\Phi) \times \CC$.\footnote{Walters proves equality here: $\Z^0(f) = \Z^\omega(\Phi) \times \CC$.}  Note that Walters' example has two eigenvalues of the same modulus -- $1$ and $\lambda = e^{\theta i}$ -- which is exactly why $\cZ^\infty(f_0)\not\subset \Aff(X)$(since $f_0$ is not weakly mixing).

Interestingly, $g_v$ {\em is} an affine diffeomorphism if we put a different homogeneous structure on $\TT^{n+1}$ in which $f_0$ remains affine.\footnote{We thank Uri Bader for asking us about this possibility.}  That is, if we write $\TT^{n+1} = G/\Gamma$, where $G$ is a suitably-chosen nonabelian Lie group, then $f_0$ remains affine and $g_v$ becomes affine. In this particular case, we can write  $G= (V\times W) \rtimes \RR$, where
$W$ is the sum of the complementary eigenspaces, and
\[(v,w,t)\cdot (v',w',t') = (v' + R_{2\pi t'} v, w' + w, t'+ t).\] Then  $\Gamma = \ZZ^{n+1}$ (the set if integer points in $G$) is a subgroup of $G$, since $R_{2\pi n} = Id$, for $n\in \ZZ$. Since $\Phi$ is an automorphism of $G$ preserving $\Gamma$, and $(v,w,t)\mapsto (v,w,t+\theta/2)$ is a translation in $G$, we conclude that $f_0\in\Aff(G/\Gamma)$.  In this new homogeneous structure, $g_v$ is also affine: it is just translation by $(v,0,0)$.

Perhaps every ergodic affine map has affine centralizer with respect to some ``maximal" compatible homogeneous structure, and weakly mixing just forces this maximal structure to be unique. 
 \medskip
 \noindent
\section{Proof of Theorem~\ref{thm: not K imp big cent}: the case when $G$ is solvable, compact semisimple, or semisimple without compact factors}

 

\subsection{Examples}

In the outline of the proof of Theorem~\ref{thm: liecent iff ergodic} given in Section~\ref{ss=structure}, if $f_0$ is not a $K$-system, then there is an affine perturbation $f$ that takes a special form of a bundle map, and 
there are two cases that can occur, leading to two different constructions of diffeomorphisms in $\cZ(f)$. The following simple examples on $\TT^3$ illustrate the construction of large centralizer in these two cases.

The first example corresponds to Case 1.
\begin{example}\label{ex=torus1}
     Let $f\in\Aff(\TT^3)$ be the (non-ergodic) map defined by
    \[f(x,y,z) = (x+1/2, x+ y, 2x + y + z).\]
 In this example, $f$ preserves the structure of $\TT^3$ as a trivial $\TT^2$-bundle over $\TT$.  The map thus takes the form $(x,(y,z))\mapsto (x+1/2, g_x(y,z))$, where $g_x\in \Aff(\TT^2)$ is given by
 \[g_x(y,z) = B(y,z) + (x,2x),\]
 with $B = \begin{pmatrix}
     1&0\\
     1& 1
 \end{pmatrix}$.
 The key property of $g_x$ we will use here is that $B$ has $1$ as an eigenvalue, and in particular $g_x(y,z+\alpha) = g_x(y,z)+(0,\alpha)$, for all $\alpha\in \RR$ 
 
 The base map $R_{1/2}: x\mapsto x+1/2$ is periodic.  Let $\phi\colon \TT\to \RR$ be an arbitrary smooth function that is $1/2$- periodic, and
define a smooth map $h\colon \TT^3\to \TT^3$ by
\[
h(x,y,z) = (x, y, z+\phi(x)).
\]
By its construction, $h\in\cZ(f)$.  Thus $\cZ(f)$ contains the additive group of all $1/2$-periodic functions in $C^\infty(\TT,\RR)$ (which is the pullback of $C^\infty(\TT,\RR)$ under the covering map $x\mapsto 2x$ on $\TT$ and contains $C^\infty_c(\TT)$).

Notice that ``$1/2$" may be replaced by any rational number in this example, and $B$ can be replaced by any automorphism of $\TT^2$ with $1$ as an eigenvalue.  Thus any affine map of the form 
$(x,(y,z))\mapsto (x+\alpha, g_x(y,z))$, where the linear part of $g_x$ has $1$ an an eigenvalue can be approximated by an affine map whose centralizer contains $C^\infty_c(I)$.
\end{example}

The next example illustrates the situation in Case 2.

\begin{example}\label{ex=torus2}
    Let $f\in\Aff(\TT^3)$ be the (nonergodic) map defined by
    \[f(x,y,z) = (x+1/2, x+ 2y + z, y+z).\]
    In this example, $f$ also preserves the $\TT^2$-bundle structure, and so $(x,(y,z))\mapsto (R_{1/2}(x), g_x(y,z))$, this time with
\[g_x(y,z) = B(y,z) + (x,0),\]
where $B = \begin{pmatrix}
     2&1\\
     1& 1
 \end{pmatrix}$.
In this case, $B$ does {\em not} commute with any affine vector field, as it does not have $1$ as an eigenvalue.  On the other hand, the fact that $B-I$ is invertible implies we can find a smooth  coordinate change on $\TT^3$ so that $g_x$ becomes a product $R_{1/2}\times B$. Then any diffeomorphism of the form $\psi\times \Id$, where $\psi$ commutes with $R_{1/2}$, centralizes $f$. Since $\cZ(R_{1/2})$ contains $\Diff^\infty_c(I)$, so does $\cZ(f)$.  
\end{example}

These two examples might seem special, but in fact any non-stably ergodic affine map of $\TT^3$ can, after an affine perturbation and change of coordinates, be put in the form of a skew product over a periodic translation.  Then one of the two methods above can be used to build a huge non-affine centralizer for this perturbation.  The details for higher dimensional tori are similar, although the change of coordinates and perturbation are subtler.   The argument is detailed in Section~\ref{sec: abelian case}.


For other Lie groups, the final step of the argument (building the huge centralizer) follows the same type of method (see the ``key propositions" in Section~\ref{sec: key propositions}). 

We proceed to the proof of Theorem~\ref{thm: not K imp big cent}.

\subsection{Key propositions}\label{sec: key propositions}
In our proof, we will work with  smooth fiber bundles with fibers modeled on homogeneous spaces. On such bundles, we will focus on a specific class of bundle morphisms: those that  are affine along fibers (with constant linear part) and that project to  a periodic map on the base. The {\em linear part} of an affine map $f:=L_a\circ \Phi$ on a homogeneous space $G/B$ is the automorphism  $\Phi\in \mathrm{Aut}(G,B)$.
We now define these notions more precisely.

\begin{definition}\label{def: homo fib}
A {\em homogeneous fiber bundle} is a smooth fiber bundle  $F\to X\xrightarrow{\pi} \bar{X}$ such that the fiber $F$ is modeled on a homogeneous space $G_F/B_F$, where $G_F$ is a connected Lie group and $B_F$ is a closed cocompact subgroup of $G_F$. 
\end{definition}
Equivalently, $X$ is a homogeneous fiber bundle if admits a transitive, locally free action of a connected Lie group $G_F$ projecting to the trivial action on $\bar X$.
Examples of homogeneous fiber bundles include principal bundles (with compact structure groups) and  unit tangent bundle of Riemannian manifolds. 

\begin{definition}\label{def: special morphism}
 A smooth bundle morphism $f$ of a homogeneous fiber bundle $F\to X\xrightarrow{\pi} \bar{X}$ is called \textit{special}   if  
\begin{enumerate}
\item $\dim \bar X\geq 1$, and the induced base map $\bar{f}$ on $\bar X$ is a $d$-periodic diffeomorphism on $\bar X$ for some (minimal) positive integer $d$.
\item there exists an open-dense subset of $\bar X$ in which each point $x$ admits a  neighborhood $U_x$ such that
\begin{itemize}
    \item[-] ${U_x},\bar{f}({U_x}),\cdots,\bar{f}^{d-1}({U_x})$ are pairwise disjoint, and
    \item[-] there exists a trivialization of $\pi$ over $\bigsqcup_{j=0}^{d-1} \bar{f}^{j}({U_x})$ such that, in these coordinates, $f$ is affine along fibers and the linear part of $f$ is an automorphism $A\in \mathrm{Aut}(G_F,B_F)$ that doesn't depend on the base point. 
    \end{itemize}
 \end{enumerate}
\end{definition}

In (2) of Definition \ref{def: special morphism}, we call the automorphism $A$ the {\em linear part of $f$ along fibers}.  Note that $f$ may have different linear parts  along fibers, depending on the choice of $x$ and local trivialization.

%

The following proposition corresponds to Case 1 of the proof outline in Section~\ref{ss=structure}, as illustrated in Example \ref{ex=torus1}.

\begin{prop}[Key proposition, part 1]\label{prop: key prop 1} Let $F\to X\xrightarrow{\pi} \bar{X}$ be a homogeneous fiber bundle, 
and  let $f$ be a special bundle morphism of $X$.
Assume that
$1$ is an eigenvalue of $D(A^d)$ where $A$ is a linear part of $f$ along fibers, and $d$ is the period of the base map $\bar f$. 

Then there exist an $\bar f$-invariant, nonempty open set $\Vv\subset \bar X$, and a
$Df$-invariant,  affine  vector field $V$ on $X$ that is tangent to the fibers of $\pi$ and nonvanishing on $\pi^{-1}(\Vv)$.  Consequently, the smooth centralizer of $f$ contains $C^\infty_c(I)$, and thus is not a finite dimensional Lie group.
\end{prop}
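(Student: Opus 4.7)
The plan is to build $V$ first on a trivialized neighborhood of a chosen base point, propagate it by $Df$-invariance across the $\bar f$-orbit of that neighborhood to produce $\Vv$, and extend to $X$ by a smooth cutoff; the conclusion about $\Z^\infty(f)$ will then follow from the family $\phi \mapsto \tilde\phi V$. By hypothesis, fix a nonzero $w \in \mathrm{Lie}(G_F)$ with $(D_eA)^d w = w$. Note that $\mathfrak w := \ker((D_eA)^d - I)$ is automatically a $D_eA$-invariant Lie subalgebra; it integrates to a connected Lie subgroup $W_0 \subseteq G_F$ preserved by $A$ and fixed pointwise by $A^d$, and the values of $V$ will lie in the fiberwise left-translation infinitesimals generated by $W_0$.

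Fix $y_0$ in the dense open set from Definition~\ref{def: special morphism} and a trivializing neighborhood $U_0 \ni y_0$; set $\Vv_0 := \bigsqcup_{j=0}^{d-1} \bar f^j(U_0)$, which is $\bar f$-invariant. In the trivialization, $f$ acts on $\pi^{-1}(U_0)\cong U_0\times F$ as $(y,p) \mapsto (\bar f y,\, b(y) A(p))$. Writing a fiberwise affine vector field as $V(y,p) = V_{v(y)}(p)$ for a smooth $v \colon \Vv_0 \to \mathrm{Lie}(G_F)$, the identity $(L_{b(y)}\circ A)_* V_{v(y)} = V_{\Ad(b(y))\,D_eA\,v(y)}$ shows that $Df$-invariance is equivalent to the cocycle equation $v(\bar f y) = \Ad(b(y))\,D_eA\,v(y)$. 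Iterating $d$ times and using $\bar f^d = \Id$ yields the single self-consistency condition
\[
 v(y) \;=\; \Ad(c(y))\,(D_eA)^d\,v(y),
\]
where $c(y) = b(\bar f^{d-1}y)\,A(b(\bar f^{d-2}y))\cdots A^{d-1}(b(y))$ is the affine part of the fiber map $f^d|_{F_y}$.

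The main obstacle will be producing a nonzero smooth $v$ satisfying this condition on some neighborhood of $y_0$. I would handle it in two steps. First, a direct change-of-trivialization calculation shows that replacing the given trivialization by one twisted by a smooth $\beta \colon \Vv_0 \to G_F$ sends $b(y)$ to $\beta(\bar f y)\, b(y)\, A(\beta(y))^{-1}$ and thus $c(y)$ to $\beta(y)\, c(y)\, A^d(\beta(y))^{-1}$; since the trivializations on the disjoint sheets $\bar f^j(U_0)$ can be adjusted independently, the $d$ values $\beta(\bar f^j y_0)$ provide enough freedom to arrange $c(y_0) = e$, using twisted-conjugacy surjectivity (which is where the structural hypotheses on $G$ imposed in this section enter). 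Second, with $c(y_0) = e$, the self-consistency condition at $y_0$ reduces to $(D_eA)^d v(y_0) = v(y_0)$, solved by $v(y_0) = w$. Because $\Ad(c(y))(D_eA)^d$ depends smoothly on $y$ and equals $(D_eA)^d$ at $y_0$, after shrinking to a smaller neighborhood $U \ni y_0$ on which the $1$-generalized eigenspace of $\Ad(c(y))(D_eA)^d$ has locally constant dimension (generically automatic, otherwise arranged by a further adjustment of $\beta$), one obtains a smooth nonvanishing $v \colon U \to \mathrm{Lie}(G_F)$ by smooth eigenvector selection. Propagating via the cocycle equation extends $v$ to $\Vv := \bigsqcup_{j=0}^{d-1} \bar f^j(U)$, giving a smooth, nonvanishing, $Df$-invariant affine vector field $V$ on $\pi^{-1}(\Vv)$; multiplying by a smooth bump supported in a compact $\bar f$-invariant subset of $\Vv$ and extending by zero gives $V$ on all of $X$.

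For the centralizer consequence, pick an embedded open interval $I \subset U$. Any $\phi \in C^\infty_c(I)$ extends uniquely to an $\bar f$-invariant compactly supported smooth function on $\Vv$ by the assignment $\phi(\bar f^j y) := \phi(y)$ for $y \in I$ and $j = 1,\dots,d-1$, and pulling back by $\pi$ yields $\tilde\phi \in C^\infty_c(X)$ with $\tilde\phi\circ f = \tilde\phi$. Hence $\tilde\phi V$ is $Df$-invariant and compactly supported, so its flow $\psi_t^\phi$ is complete and lies in $\Z^\infty(f)$ for every $t$. The assignment $\phi \mapsto \psi_1^\phi$ is a continuous injective homomorphism $C^\infty_c(I) \hookrightarrow \Z^\infty(f)$; since $C^\infty_c(I)$ is infinite-dimensional, $\Z^\infty(f)$ cannot be a finite-dimensional Lie group.
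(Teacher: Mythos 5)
Your overall architecture (trivialize over a fundamental neighborhood, derive the self-consistency equation $v(y)=\Ad(c(y))\,D(A^d)\,v(y)$ with $c(y)=a_{d,y}$, select $v$ smoothly, cut off by an $\bar f$-invariant bump, and embed $C^\infty_c(I)$ via $\phi\mapsto$ flow of $\tilde\phi V$) matches the paper's, but the central existence step has a genuine gap. You propose to re-trivialize so that $c(y_0)=e$, "using twisted-conjugacy surjectivity." The surjectivity of $g\mapsto gA^d(g)^{-1}$ (Lemma~\ref{cleverajp}) is available precisely when $1$ is \emph{not} an eigenvalue of $D(A^d)$ — that is the hypothesis of the companion Proposition~\ref{prop: key prop 2}, and it is the opposite of the hypothesis here. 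A concrete failure: take $G_F$ a torus and $A=\mathrm{id}$, $d=1$; then $\beta(y_0)\,c(y_0)\,A^d(\beta(y_0))^{-1}=c(y_0)$ for every $\beta$, so no change of trivialization can move $c(y_0)$ to $e$ when $c(y_0)\neq e$. (In that abelian example the equation happens to be solvable anyway, but your reduction to it is not valid in general.) Moreover, even granting a fixed vector at the single point $y_0$, your smoothness argument only keeps the \emph{generalized} $1$-eigenspace of $\Ad(c(y))\circ D(A^d)$ of locally constant dimension; it does not show that $1$ remains an actual eigenvalue at nearby $y$, and an eigenvalue sitting at $1$ can a priori move off $1$ under perturbation. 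The paper's route is exactly designed to avoid both issues: Lemma~\ref{eigen1trans} shows that eigenvalue $1$ survives composition with \emph{any} inner automorphism (its proof is by contradiction, using An's surjectivity lemma for the putative automorphism without eigenvalue $1$), so $\ker(\Ad_{a_{d,x}}\circ D(A^d)-\mathrm{id})\neq\{0\}$ for \emph{every} $x$; semicontinuity of rank plus the rank theorem then give the smooth selection $x\mapsto v_x$ after shrinking $U$.

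A secondary omission: you conclude that $V$ is nonvanishing on $\pi^{-1}(\Vv)$ merely from $v(y)\neq 0$ in $\mathrm{Lie}(G_F)$, but the induced right-invariant field can project to zero on the quotient fiber $G_F/B_F$ (this happens exactly when the smallest closed normal subgroup containing $\{\exp(tv)\}$ lies in $B_F$). The paper handles this by first reducing to the case where $G_F$ is minimal and $B_F$ contains no non-discrete closed subgroup normal in $G_F$ (Remark~\ref{reducetononnormal}), and then invoking Lemma~\ref{nontrivialvectorfield}; some such reduction is needed in your argument as well.
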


\begin{proof} Let $F\to X\xrightarrow{\pi} \bar{X}$ be a homogeneous fiber bundle with $F = G_F/B_F$, and let $f$ be the special bundle morphism, with base period $d$.

We may assume that  $G_F$ is a minimal (with respect to inclusion) connected Lie group acting transitively on the fibers of $X$, and that the point stabilizer $B_F$ contains no non-discrete closed subgroups $B'$ that are normal in $G_F$ (see Remark \ref{reducetononnormal} in Appendix~\ref{app: A} for details). 

Since $f$ is a special morphism, there exists  $U\subseteq \bar{X}$ such that $U\cap \bar{f}^j(U)=\emptyset, 1\leq j\leq d-1$ and 
and a local trivialization from $\pi^{-1}(\bigsqcup_{j=0}^{d-1}\bar{f}^j(U))$ to $(\bigsqcup_{j=0}^{d-1}\bar{f}^j(U))\times F$ such that 
under this identification, $f$ takes the form
$(x,y)\mapsto (\bar{f}(x),a_{x}A(y))$,
    where $a\colon \bigcup_{j=0}^{d-1}\bar{f}^j(U)\to G_F$ is a smooth function
    and $A \in\mathrm{Aut}(F)$.  By assumption,  $1$ is an eigenvalue of $D(A^d)$.

Shrinking $U$ if necessary, 
 we first construct a nonvanishing $Df^d$-invariant vector field $V_0$  on $\pi^{-1}(U)$ tangent to the $\pi$-fibers.  This will extend to a $Df$-invariant vector field $V_0$ on $\pi^{-1} \left(\bigsqcup_{j=0}^{d-1}\bar{f}^j(U)\right)$ via pushforward by $Df^j$ on $\pi^{-1}\left(\bar{f}^j(U) \right)$. Fixing a smaller neighborhood $U_0\subset \bar X$ with $\overline{U_0}\subset U$, and using an $\bar f$-invariant bump function $\rho\colon \bigsqcup_{j=0}^{d-1}\bar{f}^j(U) \to [0,1]$ equal to 1 on $\mathcal{V}:= \bigsqcup_{j=0}^{d-1}\bar{f}^j(U)$, we then define  a vector field $V$ on $X$ by
 \[V(p) := \begin{cases} \rho(\pi(v)) V_0(p), & \hbox{if $p\in \pi^{-1}\left(\bigsqcup_{j=0}^{d-1}\bar{f}^j(U)\right)$},\\
 0&\hbox{otherwise.}
 \end{cases}\] This gives the desired $Df_0$-invariant vector field $V$ on $X$.

Moving on to our construction of the vector field $V_0$ on  $\pi^{-1}(U)$, we first write $f^d$ in our trivializing coordinates in $\pi^{-1}(U)$ as
$$
(x,y)\mapsto (x,a_{d,x}\cdot A^d(y)),
$$
for $x\in U$ and $y\in G_F/B_F$,
where $a_{d,x}:=a_{\bar{f}^{d-1}(x)}\cdot A(a_{\bar{f}^{d-2}(x)})\cdots A^{d-1}(a_x)\in G_F$.

Fix $x\in U$. To define a vector field $V_{0,x}$ on $F_x = \pi^{-1}(x)\cong \{x\}\times G_F/B_F$, we will find a $R_{G_F}$-invariant vector field on $G_F$ that is also invariant under $f^d_\ast$ (which will then descend to the quotient $G_F/B_F$). Recall that the Lie algebra $T_0 G_F =\mathfrak{g}_F$ is identified with right-invariant vector fields on $G_F$ via
$v\in \mathfrak{g}_F \mapsto V_v$, where for $g\in G_F$,
$V_v(g) := D_eL_g(v)$, and the flow $\psi^v_t$ generated by this $V_v$
is $\psi^v_t(g) := \exp(tv) g$.  

Thus the condition that $V_{0,x} := V_{v_x}$ be $Df^d$-invariant is equivalent to the condition that $f^d(\psi^{v_x}_t(g)) = f^d (\exp(tv) g) = \exp(tv) f^d(g) = \psi^{v_x}_t(f^d(g))$ hold for all $t\in \RR$ and $g\in G_F$. Using our trivialization of $f$ on $F_x$, this is equivalent to the equation
\begin{equation}\label{commutingflow}
(a_{d,x}A^d)\circ L_{\exp(v_x)}
=L_{\exp(v_x)}\circ (a_{d,x}A^d).
\end{equation}

Thus to find a $Df^d$-invariant vector field $V_{0,x}$ on $\pi^{-1}(x)$, it suffices to solve
 the following equation for $v_x\in \mathfrak{g}_F$:
\[a_{d,x}\cdot A^d(\exp(v_x))\cdot a_{d,x}^{-1}
=\exp(v_x).\]
This Lie group equation then induces the Lie algebra equation
\[
(\mathrm{Ad}_{a_{d,x}}\circ D(A^d))(v_x)=v_x,
\]
i.e. $v_x\in\ker(\mathrm{Ad}_{a_{d,x}}\circ D(A^d)-\mathrm{id})$.  
To solve this equation, we start
with the following lemma from Lie theory:
\begin{lemma}\label{eigen1trans}    Let $G$ be a connected Lie group, and let $A_1,A_2$ be two automorphisms of $G$ such that $A_1\circ A_2^{-1}$ is an inner automorphism. Suppose that $DA_1$ has eigenvalue $1$. Then $DA_2$ also has eigenvalue $1$.\end{lemma}
\begin{proof}See Appendix \ref{app sec: twisted conjugacy}.
\end{proof}

Since $\ker(DA^d-\id)$ is nontrivial, Lemma \ref{eigen1trans} implies that
$\ker(\mathrm{Ad}_{a_{d,x}}\circ D(A^d)-\mathrm{id})\neq \{0\}$.
We may thus choose a unit vector 
$v_x\in \ker(\mathrm{Ad}_{a_{d,x}}\circ D(A^d)-\mathrm{id})$
for each $x\in U$.

This assignment $x\mapsto v_x$ can be chosen to be smooth in $x\in U$.  To see this, note first that the semi-continuity of the rank function implies that
by adjusting $U$ if necessary,
we may assume that $\mathrm{rank} (\ker(\mathrm{Ad}_{a_{d,x}}\circ D(A^d)-\mathrm{id}))$ is constant on $U$.

The rank theorem then gives that there is a local trivialization of $x\mapsto  \ker(\mathrm{Ad}_{a_{d,x}}\circ D(A^d)-\mathrm{id}) $ taking values in a suitable Grassmannian bundle (shrinking $U$ further if necessary). Selecting a constant section of this bundle then gives a smooth function  $x\mapsto v_x\in  \ker(\mathrm{Ad}_{a_{d,x}}\circ D(A^d)-\mathrm{id})$.

Having chosen this assignment,  we define the smooth vector field $V_0$ on $\pi^{-1}(U)$ by $V_0(x,g B_F) = D_eL_g(v_x)\in T_g (G_F/B_F)$.
 Then $V_0$ generates a smooth flow on $\pi^{-1}(U)$ commuting with $f^d$ by \eqref{commutingflow}.
 Since $B_F$ contains no nontrivial non-discrete closed subgroup that is normal in $G_F$, this flow is nontrivial (see Lemma \ref{nontrivialvectorfield} in Appendix~\ref{app: A}). 
Pushing forward by $Df$ we get a $Df$-invariant vector field $V_0$ on $\pi^{-1}(\bigcup^{d-1}_{j=0}\bar f^j(U))\cong (\bigcup^{d-1}_{j=0}\bar f^j(U))\times F$, and using a bump function as described above gives the desired vector field $V$.

To see that $C_c^\infty(I)$ embeds in $\cZ^\infty(f)$, we note that any
$f$-invariant smooth function $\alpha$ on $X$ that is constant on fibers defines a vector field
$\alpha V$ generating a flow commuting with $f$. 
Embedding the interval $I$ smoothly into the neighborhood $U_0\subset \bar X$, and fixing a bump function in $U_0$ equal to $1$ on the image of $I$, 
we note that the push-forward of any $\phi\in C^\infty_c(I)$ to $U_0$ then extends canonically to a smooth, $\bar f$-invariant function on $\bar X$, giving rise to an $f$-invariant flow $\psi^\phi_t$ on $X$.  The map $\phi\mapsto \psi^\phi_1$ is an injective homomorphism from $C^\infty_c(I)$ (under addition)  to $\cZ^\infty(f)$ that is easily seen to be continuous.

This completes the proof of
Proposition \ref{prop: key prop 1}.\end{proof}

The next key proposition corresponds to Case 2 of the proof outline in Section~\ref{ss=structure}, as illustrated in Example \ref{ex=torus2}.

\begin{prop}[Key proposition part 2]\label{prop: key prop 2}  Let $F\to X\xrightarrow{\pi} \bar{X}$ be a homogeneous fiber bundle,
and  let $f$ be a special bundle morphism of $X$.
Assume that
$1$ is not an eigenvalue of $D(A^d)$ where $A$ is a linear part of $f$ along fibers, and $d$ is the period of the base map $\bar f$. 

Then there exists an  open set $U\subset \bar{X}$ such that
\begin{enumerate}
    \item for $1\leq j\leq d-1$, $\bar{f}^j(U)\cap U=\emptyset$;
    \item there is a smooth coordinate change $$\Psi: \pi^{-1}(\cup_{j=0}^{d-1} \bar{f}^j(U))\to (\cup_{j=0}^{d-1} \bar{f}^j(U))\times F$$ such that $\Psi \circ f \circ \Psi^{-1}=\bar{f}\times A$.
\end{enumerate}
In particular, any diffeomorphism $\bar{g}$ supported in $\cup_{j=0}^{d-1} \bar{f}^j(U))$ and commuting with $\bar f$ induces a diffeomorphism $\bar{g}\times id$ commuting with $\bar{f}\times A$, which extends by the identity to a diffeomorphism commuting with $f$.  Consequently, $\Diff^\infty_c(I)$ embeds continuously into $\cZ^\infty(f)$.
\end{prop}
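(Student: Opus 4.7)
The plan is to use the local trivialization from Definition~\ref{def: special morphism} to reduce the statement to a single twisted cohomological equation on the base, and then to use the hypothesis $1 \notin \mathrm{spec}(D(A^d))$ to solve this equation smoothly in a neighborhood. The construction of the centralizer is then parallel to the one in Proposition~\ref{prop: key prop 1}, but now based on a product structure rather than on an invariant vector field.

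First I would pick any $x_0$ in the open-dense subset of $\bar X$ provided by Definition~\ref{def: special morphism}, choose a trivializing neighborhood $U$ of $x_0$ with $\bar f^j(U)\cap U=\emptyset$ for $1\le j\le d-1$, in which $f$ takes the form $(x,y)\mapsto (\bar f(x), a_x\cdot A(y))$ for some smooth $a:\bigsqcup_{j=0}^{d-1}\bar f^j(U)\to G_F$. I would then look for the desired $\Psi$ in the form $\Psi(x,y)=(x,k_x^{-1}y)$ for a smooth $k:\bigsqcup_{j=0}^{d-1}\bar f^j(U)\to G_F$. A direct computation shows that the condition $\Psi\circ f\circ\Psi^{-1}=\bar f\times A$ is equivalent to the one-step cocycle relation $k_{\bar f(x)}=a_x\cdot A(k_x)$; iterating this relation around a full $\bar f$-orbit of length $d$ and using $\bar f^d=\mathrm{id}$, the problem reduces to the single equation over $U$,
\[
a_{d,x}^{-1}=A^d(k_x)\,k_x^{-1},\qquad x\in U,
\]
where $a_{d,x}:=a_{\bar f^{d-1}(x)}\cdot A(a_{\bar f^{d-2}(x)})\cdots A^{d-1}(a_x)$, exactly as in the proof of Proposition~\ref{prop: key prop 1}.

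Next I would analyze the twisted orbit map $\tau_A:G_F\to G_F$, $\tau_A(h):=A^d(h)\,h^{-1}$. A direct tangent calculation along the curve $t\mapsto h_0\exp(tv)$, parallel to the one behind \eqref{commutingflow}, yields that after identifying $T_{h_0}G_F$ and $T_{\tau_A(h_0)}G_F$ with $\mathfrak g_F$ via left translation, one has $D_{h_0}\tau_A=\mathrm{Ad}_{h_0}\circ (D(A^d)-\mathrm{id})$. Since $D(A^d)-\mathrm{id}$ is invertible by hypothesis and $\mathrm{Ad}_{h_0}$ is always invertible, $\tau_A$ is a local diffeomorphism at every point of $G_F$. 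The surjectivity of $\tau_A$ on the connected Lie group $G_F$ is exactly the statement that every element is $A^d$-twisted conjugate to the identity, and I would invoke the twisted conjugacy results collected in Appendix~\ref{app sec: twisted conjugacy} for this. Given a single solution $k_0\in G_F$ of $\tau_A(k_0)=a_{d,x_0}^{-1}$, the inverse function theorem then upgrades it to a smooth solution $k:U'\to G_F$ on a (possibly smaller) neighborhood $U'$ of $x_0$; propagating $k$ to $\bigsqcup_{j=0}^{d-1}\bar f^j(U')$ via the recursion $k_{\bar f(x)}:=a_x\cdot A(k_x)$ is automatically consistent at $x=\bar f^d(x_0)=x_0$ by the defining equation, and yields the desired coordinate change $\Psi$.

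Finally, to produce the embedding $\Diff^\infty_c(I)\hookrightarrow \cZ^\infty(f)$, I would fix a smooth embedding $\iota:I\hookrightarrow U'$ with relatively compact image. Given $\phi\in\Diff^\infty_c(I)$, push it forward to a compactly supported diffeomorphism $\tilde\phi$ of $U'$, and extend to a diffeomorphism $\bar g$ of $\bar X$ by setting $\bar g|_{\bar f^j(U')}:=\bar f^j\,\tilde\phi\,\bar f^{-j}$ for $0\le j\le d-1$ and $\bar g=\mathrm{id}$ elsewhere; then $\bar g$ is compactly supported in $\bigsqcup_{j=0}^{d-1}\bar f^j(U')$ and commutes with $\bar f$. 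The map $\Psi^{-1}\circ(\bar g\times\mathrm{id}_F)\circ\Psi$, extended by $\mathrm{id}$ off $\pi^{-1}(\bigsqcup_{j=0}^{d-1}\bar f^j(U'))$, gives the corresponding element of $\cZ^\infty(f)$, and the assignment $\phi\mapsto g_\phi$ is a continuous injective group homomorphism. The hard part of the whole argument is the surjectivity of $\tau_A$: the local-diffeomorphism step is a routine eigenvalue computation, but the onto-ness genuinely requires the Lie-theoretic input from Appendix~\ref{app sec: twisted conjugacy}; everything else is a combination of the implicit function theorem, bookkeeping of the cocycle, and a standard extension-by-identity argument.
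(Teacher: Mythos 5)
Your argument is, in its core, the same as the paper's: the same ansatz $\Psi(x,y)=(x,k_x^{-1}y)$, the same reduction to the twisted equation $a_{d,x}=k_x\,A^d(k_x)^{-1}$ over $U$, the same appeal to the surjectivity of $g\mapsto gA^d(g)^{-1}$ from Appendix~\ref{app sec: twisted conjugacy} (Lemmas~\ref{cleverajp} and \ref{eigen1fix}), and the same propagation of the solution to the other sheets $\bar f^j(U)$ by the one-step recursion. Your route to smooth dependence of $k_x$ on $x$ — the computation $D_{h}\tau_A=\mathrm{Ad}_h\circ(D(A^d)-\id)$, so that $\tau_A$ is everywhere a local diffeomorphism and the inverse function theorem applies — is correct and is a clean substitute for the paper's rank-theorem argument; your explicit verification of the wrap-around consistency $k_{\bar f^d(x_0)}=k_{x_0}$ is a point the paper leaves implicit.

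The one step that does not work as written is the final embedding of $\Diff^\infty_c(I)$ into $\cZ^\infty(f)$. Pushing $\phi\in\Diff^\infty_c(I)$ forward along a smooth embedding $\iota\colon I\hookrightarrow U'$ only produces a diffeomorphism of the one-dimensional submanifold $\iota(I)$; when $\dim\bar X>1$ this is not a compactly supported diffeomorphism of $U'$, and the naive extension $\phi\times\id_{I^{k-1}}$ in a product chart is not compactly supported either (it is the identity off $\supp(\phi)\times I^{k-1}$, which is not compact). The paper handles this by embedding $I^k$, $k=\dim\bar X$, into $U$, obtaining $\Diff^\infty_c(I^k)\hookrightarrow\cZ^\infty(f)$, and then invoking the continuous embedding $\Diff^\infty_c(I)\hookrightarrow\Diff^\infty_c(I^k)$ (via $\phi\mapsto\phi\times\id_{\TT}$ on $I\times\TT$, followed by an embedding of $I\times\TT$ into $I^{2}$, iterated). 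With this standard patch in the last paragraph, your proof is complete and coincides with the paper's.
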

\begin{proof}As the proof of part 1, we choose a small open neighborhood $U\subseteq \bar{X}$ 
such that $U\cap \bar{f}^j(U)=\emptyset, 1\leq j\leq d-1$ and a local trivialization from $\pi^{-1}(\bigcup_{j=0}^{d-1}\bar{f}^j(U))$ to $(\bigcup_{j=0}^{j-1}\bar{f}^j(U))\times F$ such that under this local trivilization, $f$ is written as $$(x,y)\mapsto (\bar{f}(x), a_xA(y)), a\in C^\infty((\bigcup_{j=0}^{j-1}\bar{f}^j(U)), G_F), A\in \mathrm{Aut}(F)$$
and $1$ is not an eigenvalue of $D(A^d)$.

As in the proof of Proposition~\ref{prop: key prop 1}, we write $f^d$ in coordinates over $U$ as
$$
(x,y)\mapsto (x,a_{d,x}\cdot A^d), \text{ where }a_{d,x}:=a_{\bar{f}^{d-1}(x)}\cdot A(a_{\bar{f}^{d-2}(x)})\cdots A^{d-1}(a_x)\in G_F.
$$

Now fix $x\in U$.
In these coordinates, our desired coordinate change $\Psi_x$ on $F_x = \{x\}\times G_F/B_F$ will take the form
$\Psi_x(x,y) =(x, L_{g_x}(y))$, for some $g_x\in G_F$.  The equation $f^d\circ L_{g_x} = L_{g_x} \circ f^d$ on $F_x$ then becomes
\begin{equation}\label{conjugatinglinear}
(a_{d,x}A^d)\circ L_{g_x}
=L_{g_x}\circ A^d, 
\end{equation}
and the corresponding Lie group equation on $G_F$ is
$a_{d,x} A^d(g_x)
=g_x.$
The existence of $g_x$ is guaranteed by the following lemma; for the proof see Appendix \ref{app sec: twisted conjugacy}:
\begin{lemma}\label{eigen1fix}
Let $G$ be a connected Lie group, and let $A$ be an automorphism of $G$ such that $1$ is not an eigenvalue of $DA$. Then for every $a\in G$, the map 
\[
f\colon G\to G,\qquad f(g)=aA(g)
\]
has a fixed point.
\end{lemma}
As in the proof of Proposition~\ref{prop: key prop 1}, the rank theorem implies that by shrinking $U$ if necessary, there is a smooth choice of assignment  $x\mapsto g_x$ for $x\in U$.
Then we inductively define $g_{\bar{f}^j(x)}:=a_{\bar{f}^{j-1}(x)}\cdot A(g_{\bar{f}^{j-1}(x)})
(1\leq j\leq d-1), x\in U$.
Finally, we set the smooth coordinate change $\Phi$ to be 
$(x,y)\mapsto (x,g_x^{-1}y)$ on $\pi^{-1}(\cup_{j=0}^{d-1} \bar{f}^j(U))$,
which clearly satisfies 
$\Phi \circ f \circ \Phi^{-1}=\bar{f}\times A$.

Let $k = \dim(U) =\dim(\bar X)\geq 1$.  To  embed $\Diff^\infty_c(I^k)$ into $\cZ^\infty(f)$, we smoothly embed $I^k$ in $U$ 
Then any diffeomorphism  $\phi \in \Diff^\infty_c(I)$ is conjugated by this embedding to a compactly-supported diffeomorphism of $U$, which extends canonically via conjugation by $\bar f^j$, $j=1,\ldots, d-1$ to diffeomorphism $\bar g_\phi$ commuting with $\bar f$ in $\bigsqcup_{j=0}^{d-1} \bar{f}^j(U)$. The diffeomorphism $\Psi^{-1}\circ (\bar g_\phi\times \id_{F}) \circ \Psi$ then commutes with $f$.

By extending this diffeomorphism via the identity on $X$, one obtains a diffeomorphism $g_\phi$ in $\cZ^\infty(f)$. The assignment $\phi\mapsto g_\phi$ is a continuous, injective homomorphism.
Since $\Diff_c(I)$ embeds continuously in $\Diff_c(I^k)$, for any $k\geq 1$\footnote{To embed $\Diff_c(I^j)$ into $\Diff_c(I^{j+1})$ for $j\geq 1$,   first embed  $\Diff_c(I^j)$ into $\Diff_c(I^j\times \TT)$ 
via $\phi\mapsto \phi\times \id_{\TT}$, and then  embed $I^j\times \TT$ into $I^{j+1}$.}, 
This completes the proof.
\end{proof}

\subsection{Building and collapsing  towers of compact homogeneous spaces}\label{sec: tower str}
In our outline of the proof of Theorem~\ref{thm: liecent iff ergodic} given in Section~\ref{ss=structure},  we first construct an $f$-invariant homogeneous fiber bundle structure on $X$ on which $f$ becomes a special morphism, satisfying properties (1) and (2).  In this subsection, we describe techniques for constructing such a fiber bundle.

Our starting point is the following lemma which is standard in geometry (for a proof see Appendix~\ref{app: A}).
\begin{lemma}\label{def: fib seq homo}
Let $X=G/B$ be a compact homogeneous space, and let  $G'$ be a closed subgroup of $G$ such that $G'\cdot B$ is also a closed subgroup of $G$. Then $G/(G'\cdot B)$ is a compact homogeneous space and we obtain a {\em fiber bundle  of homogeneous spaces} $$G'/(G'\cap B)\to G/B\to G/(G'\cdot B), $$ such that any affine map preserving $B$ (resp. left translation) of $X$ projects to an affine map (resp. left translation) on $G/(G'\cdot B)$.
\end{lemma}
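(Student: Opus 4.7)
The plan is to split the lemma into three independent tasks: (i) verify that $G/(G'\cdot B)$ is a compact homogeneous space and that there is a canonical smooth $G$-equivariant surjection $\pi\colon G/B\to G/(G'\cdot B)$; (ii) establish the local product structure with fiber $G'/(G'\cap B)$; (iii) check that left translations and affine maps descend in the required way.

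For (i), since $G'\cdot B$ is a closed subgroup of $G$ by hypothesis, $G/(G'\cdot B)$ is a smooth Hausdorff homogeneous $G$-manifold. The inclusion $B\subseteq G'\cdot B$ makes the projection $G\to G/(G'\cdot B)$ factor through $G/B$, yielding the smooth $G$-equivariant surjection $\pi\colon gB\mapsto g(G'\cdot B)$. Compactness of the base is automatic: $G/(G'\cdot B)=\pi(G/B)$ is a continuous image of the compact space $G/B$.

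For (ii), I first identify the fiber $\pi^{-1}(e(G'\cdot B))=\{gB : g\in G'\cdot B\}$. Writing $g=g'b$ with $g'\in G'$, $b\in B$ gives $gB=g'B$, so this fiber is the $G'$-orbit of the basepoint in $G/B$, and the orbit map induces a diffeomorphism $G'/(G'\cap B)\cong \pi^{-1}(e(G'\cdot B))$ (using that $G'\cap B$ is closed in $G'$). For local triviality I will invoke the fact that $G\to G/(G'\cdot B)$ is a principal $G'\cdot B$-bundle and therefore admits smooth local sections $\sigma\colon U\to G$ near any point of $G/(G'\cdot B)$; this is the standard consequence of the closed subgroup theorem. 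Given such a $\sigma$, the map
\[
U\times G'/(G'\cap B)\longrightarrow \pi^{-1}(U),\qquad (x,g'(G'\cap B))\longmapsto \sigma(x)g'B,
\]
is a smooth trivialization, and $G$-equivariance propagates it to all of $G/(G'\cdot B)$.

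For (iii), left translations are immediate: $L_a\colon gB\mapsto agB$ descends to $g(G'\cdot B)\mapsto ag(G'\cdot B)$ on $G/(G'\cdot B)$, which is left translation by $a$. For an affine map $f=L_a\circ\Phi$ with $\Phi\in\Aut(G)$ preserving $B$, the descent to the base is well-defined precisely when $\Phi$ also preserves the subgroup $G'\cdot B$; in that case $\Phi$ induces an automorphism $\bar\Phi$ of $G/(G'\cdot B)$ (viewed as a homogeneous space for $G$), and $\bar f=L_a\circ\bar\Phi$ is affine. This is the setting in which the lemma is applied elsewhere in the paper, where $G'=H$ is the hyperbolically generated normal subgroup, canonically $\Phi$-invariant by its construction in terms of the eigenspaces of $\hat f_0$.

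The main obstacle is point (ii): rigorously justifying local triviality reduces to the existence of smooth local sections of $G\to G/(G'\cdot B)$, a standard but non-cosmetic application of the closed subgroup theorem; the remaining steps are bookkeeping. A secondary point to be careful about in (iii) is to state explicitly the hypothesis $\Phi(G'\cdot B)=G'\cdot B$ needed for the affine assertion, which is automatic in the intended applications but is not formally implied by ``preserving $B$'' alone.
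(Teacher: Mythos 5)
Your proof is correct and is essentially the argument the paper intends: the lemma is declared standard with a pointer to Appendix~\ref{app: A}, and the only nontrivial ingredients are exactly the ones you invoke, namely smooth local sections of $G\to G/(G'\cdot B)$ (used in the same way in the proof of Lemma~\ref{locallyconstantlinearpart}) and the identification $G'/(G'\cap B)\cong (G'\cdot B)/B$ for $G'\cdot B$ closed (Lemma~\ref{propertyPfiber}). Your caveat that the affine assertion needs $\Phi(G'\cdot B)=G'\cdot B$ is also apt: the statement is loose on this point, but in every application in the paper the subgroup $G'$ is chosen to be $f_0$-invariant, so the extra hypothesis holds automatically.
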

In our arguments, starting with $f_0\in \Aff(G/B)$ that is not a $K$-system, and setting $G_0=G, B_0=B$ we will use this lemma iteratively to ``factor out" certain closed subgroups from the bases $G_{i+1}/B_{i+1}$ of a sequence of homogeneous fibrations 
\[G_i'/(G_i'\cap B_i) \to G_{i}/B_{i} \to G_{i+1}/B_{i+1},\, i=0,\ldots
\]
with affine bundle morphisms $f_i\colon G_i/B_i\to G_i/B_i$, until we obtain a map $f_k\colon G_k/B_k\to G_k/B_k$ on a nontrivial quotient that can be perturbed via a left translation to a periodic map $\bar f \in \Aff(G_k/B_k)$.  We will then lift $\bar f$ to an affine perturbation $f$ of $f_0$ and construct a new $f$-invariant homogeneous fibration $G_F/B_F\to X\to \bar X$, where $\bar X:= G_k/B_k$.  In the final step we show that $f$ is a special morphism of this homogeneous fiber bundle structure.

Another technique we will use is to ``remove" a closed subgroup $J < G'\cdot B$ that is normal in $G$. Denoting  by $\pi_J: G\to G/J$ the group homomorphism modulo the common closed normal subgroup $J$, and writing $G_J=\pi_J(G)$, and $B_J=\pi_1(G'\cdot B)$, we obtain the isomorphism between homogeneous spaces 
\begin{equation}\label{eq: fibr str homo} G/(G'\cdot B)\cong G_J/B_J.\end{equation}
By choosing $J$ to be a {\em maximal} closed subgroup of $G'\cdot B$ that is normal in $G$ (which is unique by Lemma~\ref{normalinadmissible} in Appendix~\ref{app: A}), and 
expressing the quotient in this form, we can always assume that that $B_J$ contains no nontrivial closed normal subgroups in $G_J$.




Thus to obtain  a sequence of fibrations
$$G_i'/(G_i'\cap B_i)\to G_i/ B_i\to G_i/(G_i'\cdot B)
\cong G_{i+1}/B_{i+1},$$
we will, at each step $i$ choose an $f_0$-invariant closed subgroup
 $G_i'< G_i$ such that  $G_i'\cdot B_i$ is closed in $G_i$ and a (potentially trivial) maximal closed subgroup $J_i< G_i'\cdot B_i$ that is normal in $G_i$, to obtain $G_{i+1}, B_{i+1}$ via:
 $G_{i+1} = \pi_{J_i}(G_i)$ and $B_{i+1} = \pi_{J_i}(B_i)$.
This process produces a {\em tower of fibrations} by
compact homogeneous spaces with multiple layers: 
\begin{equation}\label{digr: collp}
\begin{tikzcd}
  G'/(G'\cap B) \arrow[r, ] &  G/B \arrow[d,] \\
   G_1'/G_1'\cap B_1 \arrow[r, ] & G_1/B_1 \arrow[d, ] \\
  \dots \arrow[r, ] & \dots\arrow[d, ] \\
  G_{n-1}'/(G_{n-1}'\cap B_{n-1}) \arrow[r, ] &G_{n-1}/B_{n-1}\arrow[d,]\\
  & G_n/B_n, 
\end{tikzcd}
\end{equation}
where each $B_n$ contains no nontrivial closed, normal subgroups of $G_n$. 

\bigskip

\noindent 
{\bf Tower collapse.}
Any tower of fibrations of the form  \eqref{digr: collp}  can be {\em collapsed} to a single homogeneous fiber bundle, by collapsing consecutive rows iteratively.  

For example, given a tower with two layers
$$
\begin{tikzcd}
  G'/(G'\cap B)  \arrow[r,] & G/B \arrow[d, 
  ] \\
  G_1'/(G_1'\cap B_1)  \arrow[r,] & G_1/B_1 (\cong G/(G'\cdot B)) \arrow[d, 
  ] \\
& G_2/B_2(\cong G_1/(G_1'\cdot B_1)\cong G/\pi_{1}^{-1}(G_1'\cdot B_1)),
\end{tikzcd}
$$
where $\pi_1 \colon  G\to G_1$ is the group projection, 
we may form a new homogeneous fiber bundle (in the sense of Lemma \ref{def: fib seq homo}) with \textit{bigger} fiber:
$$\pi_1^{-1}(G_1'\cdot B_1)/(\pi_1^{-1}(G_1'\cdot B_1)\cap B)\to G/B\to G/\pi_1^{-1}(G_1'\cdot B_1).$$
It is easy to check that $ \pi_1^{-1}(G_1'\cdot B_1)$ is a closed subgroup of $G$.

 By a similar argument we can collapse the tower \eqref{digr: collp} to a fiber bundle structure on $G/B$ with \textit{huge} fiber $\pi^{-1} (G_{n-1}'\cdot B_{n-1})/(\pi^{-1} (G_{n-1}'\cdot B_{n-1})\cap B)$, 
$$\pi^{-1} (G_{n-1}'\cdot B_{n-1})/(\pi^{-1} (G_{n-1}'\cdot B_{n-1})\cap B)\to G/B\to G/\pi^{-1}(G_{n-1}'\cdot B_{n-1})\cong G_n/B_n,$$
where $\pi=\pi_{n-1}\circ\cdots \pi_1$ and $ \pi_i:G_{i-1}\to G_i$ is the group projection. In this way, we obtain from tower a single homogeneous fiber bundle in the sense of Definition \ref{def: homo fib}. We will refer to this identification between the tower with multiple layers and the homogeneous fiber bundle with huge fiber $\pi^{-1} (G_{n-1}'\cdot B_{n-1})/(\pi^{-1} (G_{n-1}'\cdot B_{n-1})\cap B)$, as \textit{the collapsing process.}   

The following lemma will allow us to use the collapsing process to apply the key propositions. For an affine map $f: G/B\to G/B$, we say $f$ {\em preserves a fiber bundle tower} if there is a fiber bundle tower as above that is $f$-invariant; $f$ is said to project to a periodic map if the map induced by $f$ on the base space $G_n/B_n$ is periodic.

\begin{lemma}\label{lem: cllpse proc imp spcl fb}Let $f$ be an affine map of a compact homogeneous space that preserves a fiber bundle tower and projects to a periodic map. 
Then after a collapsing process,  $f$ induces a special bundle morphism of the fiber bundle (with huge fiber) $$\pi^{-1} (G_{n-1}'\cdot B_{n-1})/(\pi^{-1} (G_{n-1}'\cdot B_{n-1})\cap B)\to G/B\to G_n/B_n,$$
in the sense of Definition \ref{def: special morphism}.   
\end{lemma}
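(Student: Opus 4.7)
The plan is to lift $f$ to an affine map on $G$, realize it on the collapsed bundle $K/B \to G/B \to G/K$ with $K := \pi^{-1}(G_{n-1}'\cdot B_{n-1})$, and verify the two conditions of Definition \ref{def: special morphism} by producing a suitable open subset of the base and constructing local trivializations from smooth sections of the principal $K$-bundle $G \to G/K$.

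First I would write $f = L_a \circ \Phi$ with $a \in G$ and $\Phi \in \mathrm{Aut}(G)$ preserving $B$. Tracing through the tower \eqref{digr: collp} layer by layer, invariance of the tower under $f$ gives inductively that $\Phi$ preserves each intermediate subgroup, and in particular $\Phi(K) = K$; thus $A := \Phi|_K \in \mathrm{Aut}(K, B)$ is the candidate for the fiberwise linear part. Since $B \subset K$, we obtain the collapsed bundle with fiber $F := K/B$, on which $f$ projects to $\bar f(gK) = a\Phi(g)K$. By hypothesis $\bar f$ has minimal period $d$, and $\dim \bar X \geq 1$ by the non-triviality of the tower, so condition (1) of Definition \ref{def: special morphism} is immediate.

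For condition (2), observe that for each $1 \leq j \leq d-1$ the map $\bar f^j$ is a nontrivial affine diffeomorphism of the compact homogeneous space $\bar X$, so $\mathrm{Fix}(\bar f^j)$ is a proper real-analytic subvariety and has empty interior. Hence $\mathcal{O} := \bar X \setminus \bigcup_{j=1}^{d-1} \mathrm{Fix}(\bar f^j)$ is open and dense, and each $\bar x_0 \in \mathcal{O}$ admits a neighborhood $U$ whose iterates $U, \bar f(U), \ldots, \bar f^{d-1}(U)$ are pairwise disjoint by continuity. Choose smooth local sections $\sigma_j \colon \bar f^j(U) \to G$ of $G \to G/K$ for $0 \leq j \leq d-1$, with $\sigma_d := \sigma_0$ using $\bar f^d = \mathrm{id}$ on $U$. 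Every point of $\pi^{-1}(\bar f^j(U))$ is uniquely of the form $\sigma_j(\bar x) k B$ with $\bar x \in \bar f^j(U)$ and $kB \in K/B = F$, giving a smooth trivialization of $\pi$ over $\bigsqcup_{j=0}^{d-1} \bar f^j(U)$. A direct computation yields
\begin{equation*}
f(\sigma_j(\bar x) k B) \;=\; a\Phi(\sigma_j(\bar x))\,\Phi(k)\,B \;=\; \sigma_{j+1}(\bar f(\bar x))\, b_j(\bar x)\, A(k)\, B,
\end{equation*}
where $b_j(\bar x) := \sigma_{j+1}(\bar f(\bar x))^{-1} a\Phi(\sigma_j(\bar x)) \in K$ depends smoothly on $\bar x$. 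Thus in the trivialization $f$ acts as $(\bar x, y) \mapsto (\bar f(\bar x), b_j(\bar x)\cdot A(y))$, which is affine along each fiber with linear part $A$ independent of the base point, verifying condition (2).

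The only delicate step is the inductive verification that $\Phi$ preserves $K$; this follows from the $f$-invariance of the tower together with the compatibility between $\Phi$ and each group projection $\pi_i$ (each $G_i' \cdot B_i$ is $f_i$-invariant by hypothesis, so the induced automorphism on $G_i$ preserves it, and $K$ is obtained by pulling these subgroups back along the $\pi_i$). Everything else is bookkeeping: producing the open set where the $\bar f$-orbits are injective, selecting local sections of $G \to G/K$, and writing out the resulting affine action in coordinates. No genuine analytical obstacle is anticipated.
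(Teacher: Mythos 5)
Your proposal is correct and takes essentially the same route as the paper: the paper's proof (via Lemmas \ref{wildperiod} and \ref{locallyconstantlinearpart}) likewise produces an open dense set of base points with pairwise disjoint $\bar f$-iterates of a small neighborhood, chooses local sections of $G\to G/\overline{HB}$, and checks in the resulting trivializations that $f$ is affine along fibers with constant linear part $A$. The only cosmetic differences are that you use analyticity of the affine map $\bar f$ (identity theorem) where the paper invokes Newman's theorem, and you take arbitrary sections $\sigma_j$, carrying a smooth translation part $b_j$ on every block, whereas the paper pushes the initial section forward by $f$ ($s_j=f^j\circ s_0\circ\bar f^{-j}$) so that the nontrivial translation appears only in the return map.
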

\begin{proof}See Section~\ref{app: Lemma 18}  in Appendix~\ref{app: A}.
\end{proof}
\begin{coro}\label{coro: 19+13+15 package}Let $f$ be an affine map of a compact homogeneous space that preserves a fiber bundle tower and projects to a periodic map. 
Then $\Z^\infty(f)$ contains a subgroup isomorphic to either $C^\infty_c(I)$ or $\Diff^\infty_c(I)$.
\end{coro}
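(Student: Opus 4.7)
The plan is to assemble this corollary directly from the three ingredients immediately preceding it: the collapsing procedure of Lemma~\ref{lem: cllpse proc imp spcl fb}, and the two key propositions (Propositions~\ref{prop: key prop 1} and \ref{prop: key prop 2}), which handle complementary cases based on the spectrum of the linear part along fibers.

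First, I would invoke Lemma~\ref{lem: cllpse proc imp spcl fb} to collapse the given $f$-invariant fiber bundle tower into a single homogeneous fiber bundle $F \to G/B \xrightarrow{\pi} G_n/B_n$ with huge fiber $F = \pi^{-1}(G_{n-1}'\cdot B_{n-1})/(\pi^{-1}(G_{n-1}'\cdot B_{n-1})\cap B)$ on which $f$ is a special bundle morphism in the sense of Definition~\ref{def: special morphism}. In particular, $f$ projects to a periodic affine diffeomorphism $\bar f$ on $\bar X := G_n/B_n$ of some minimal period $d \geq 1$, and over a small open $U\subset \bar X$ whose first $d$ iterates under $\bar f$ are pairwise disjoint, $f$ is affine along fibers with a well-defined constant linear part $A \in \mathrm{Aut}(G_F, B_F)$ (where $F = G_F/B_F$).

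Next, I would perform a dichotomy on the spectrum of $D(A^d)\colon \mathfrak{g}_F \to \mathfrak{g}_F$. In the case that $1$ is an eigenvalue of $D(A^d)$, the hypotheses of Proposition~\ref{prop: key prop 1} are met, and I can directly cite its conclusion: the smooth centralizer $\Z^\infty(f)$ contains a continuously embedded copy of $C^\infty_c(I)$ under addition. In the case that $1$ is not an eigenvalue of $D(A^d)$, the hypotheses of Proposition~\ref{prop: key prop 2} are met, and that proposition produces a smooth trivializing chart in which $f$ becomes a product $\bar f \times A$ over the disjoint union $\bigsqcup_{j=0}^{d-1} \bar f^j(U)$; it then yields a continuous injective homomorphism $\Diff^\infty_c(I) \hookrightarrow \Z^\infty(f)$ by lifting compactly-supported diffeomorphisms of the base (extended by the identity on the rest of $X$).

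Since every case has been covered, the corollary follows. There is no real obstacle here: the substantive work has already been carried out in Lemma~\ref{lem: cllpse proc imp spcl fb} (where the collapsing procedure is shown to produce a special morphism), and in the two key propositions (which handle the centralizer construction in each spectral case). The only thing to verify is that the dichotomy on eigenvalues of $D(A^d)$ is exhaustive and that the choice of $A$ and $d$ coming from the collapsing procedure is exactly the data feeding into Propositions~\ref{prop: key prop 1} and \ref{prop: key prop 2}, both of which are immediate from the definitions.
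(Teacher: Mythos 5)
Your proposal is correct and follows exactly the paper's own argument: collapse the tower via Lemma~\ref{lem: cllpse proc imp spcl fb} to obtain a special bundle morphism, then apply Proposition~\ref{prop: key prop 1} or Proposition~\ref{prop: key prop 2} according to whether $1$ is an eigenvalue of $D(A^d)$. The dichotomy is indeed exhaustive and the data $(A,d)$ from the collapsing is precisely what the key propositions require, so nothing further is needed.
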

\begin{proof}By Lemma \ref{lem: cllpse proc imp spcl fb}, $f$ induces a special bundle morphism. Then it follows from Propositions \ref{prop: key prop 1} and \ref{prop: key prop 2} that the smooth centralizer of a special bundle morphism contains $C^\infty_c(I)$ or $\Diff^\infty_c(I)$. 
\end{proof}

\bigskip
\noindent{\bf Translational perturbations and lifting.} 
To review our strategy, starting with $f_0\in \Aff(G/B)$ that is not a $K$-system, we will construct an $f_0$-invariant tower of homogeneous fiber bundles so that the induced map $\bar f_0$ on the base of the tower can be approximated by a periodic affine diffeomorphism $\bar f$.  To ensure that $\bar f$ lifts to a perturbation $\bar f$ of $f_0$ on the total space $G/B$, we will need to restrict to a specific class of perturbations of $\bar f_0$.

\begin{definition}
    A {\em translational perturbation} of $f_0\in \Aff(G/B)$ (of size $\epsilon$) is a map $f = L_{a_\epsilon}\circ f_0\in \Aff(G/B)$, where $a_\epsilon$ lies in an $\epsilon$-neighborhood of $e\in G$.  

     When we say that $f_0$ has a translational perturbation with a given property, we mean that for all $\epsilon>0$ there is a translational perturbation of size $\epsilon$ with that property.
\end{definition}

Translational perturbations have a nice lifting property, given by the following lemma.
\begin{lemma}\label{lem: lift0}
Let $f_0\in\Aff(G/B)$, and let
\[
G_F/B_F \to G/B \to \bar G/\bar B
\]
be an $f_0$-invariant fiber bundle of homogeneous spaces. Denote by $\bar f_0\in \Aff(\bar G/\bar B)$ the induced map on the base.  

Then any translational perturbation $f$ of $f_0$ induces a translational perturbation $\bar f$ of $\bar f_0$. Moreover, for any translational perturbation $\bar f$ of $\bar f_0$, there exists a translational perturbation $f$ of $f_0$ that projects to $\bar f$ on $\bar G/\bar B$.
\end{lemma}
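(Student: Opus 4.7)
The plan is to track how the affine decomposition $f_0 = L_g \circ \Phi$ interacts with the quotient. Recalling Lemma~\ref{def: fib seq homo} and the identification \eqref{eq: fibr str homo}, the fibration $G_F/B_F \to G/B \to \bar G/\bar B$ arises from an $f_0$-invariant closed subgroup $G' < G$ with $G' \cdot B$ closed, where one may realize $\bar G = G/J$ and $\bar B = \pi_J(G' \cdot B)$ for some closed subgroup $J \lhd G$ contained in $G' \cdot B$ (allowing $J = \{e\}$ in the unreduced case $\bar G = G$, $\bar B = G' \cdot B$). The $f_0$-invariance of the fibration translates into $\Phi(G' \cdot B) \subseteq G' \cdot B$ and $\Phi(J) \subseteq J$, so $\Phi$ descends to $\bar \Phi \in \Aut(\bar G)$ preserving $\bar B$, and $\bar f_0 = L_{\pi_J(g)} \circ \bar \Phi$.

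For the first assertion, write any translational perturbation as $f = L_{a_\epsilon} \circ f_0 = L_{a_\epsilon g} \circ \Phi$ with $a_\epsilon$ in an $\epsilon$-neighborhood of $e$. Since the automorphism part is unchanged, $f$ still preserves the fibration, and the induced map on $\bar G/\bar B$ is
\[
\bar f \;=\; L_{\pi_J(a_\epsilon g)} \circ \bar \Phi \;=\; L_{\pi_J(a_\epsilon)} \circ \bar f_0.
\]
Because $\pi_J$ is a smooth homomorphism, $\pi_J(a_\epsilon)$ lies in an arbitrarily small neighborhood of $\bar e$ as $\epsilon \to 0$, exhibiting $\bar f$ as a translational perturbation of $\bar f_0$.

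For the second assertion, let $\bar f = L_{\bar a_\epsilon} \circ \bar f_0$ be a translational perturbation of $\bar f_0$. The quotient $\pi_J: G \to \bar G$ is a surjective Lie group homomorphism, hence a smooth submersion, and so admits a smooth local section $s$ defined on a neighborhood of $\bar e$ with $s(\bar e) = e$. Setting $a := s(\bar a_\epsilon)$ produces an element of $G$ close to $e$ (with distance controlled by the local Lipschitz constant of $s$) satisfying $\pi_J(a) = \bar a_\epsilon$. By the calculation of the first part, the perturbation $f := L_a \circ f_0$ then projects precisely to $\bar f$ on $\bar G/\bar B$. The only nontrivial ingredient is the existence of a smooth local section of the quotient homomorphism, which is a standard feature of Lie group quotients; everything else is bookkeeping of the affine structure, so no serious obstacle arises.
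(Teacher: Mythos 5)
Your proof is correct and follows essentially the same route as the paper: the translation element is pushed down via the quotient homomorphism $\pi$, and the lift is obtained from the fact that $\pi$ is a submersion (the paper invokes openness of $\pi$ where you invoke a smooth local section, which is the same point). The extra unpacking of $f_0=L_g\circ\Phi$ and of the bundle's origin via $G'$ and $J$ is fine but not needed beyond what the paper's two-line argument already uses.
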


\begin{proof}
Let $\pi\colon G\to \bar G$ be the Lie group homomorphism inducing the given bundle structure.   Clearly if $L_a\circ f_0$ is a translational perturbation of $f_0$ preserving the bundle, then $L_{\bar a}\circ \bar f_0$ is the induced translational perturbation of $\bar f_0$, where $\bar a = \pi(a)$.

On the other hand, $\pi$ is both a submersion and an  open map, and so for any $\bar a$ in a neighborhood of $e\in \bar G$, there exists $a$ in a neighborhood of $e\in G$ such that $\pi(a) = \bar a$.  If $\bar f = L_{\bar a}\circ \bar f_0$, then setting $f= L_a\circ f_0$ gives the desired lift.  
\end{proof}


Combining  Corollary \ref{coro: 19+13+15 package} and Lemmas~\ref{lem: cllpse proc imp spcl fb} and \ref{lem: lift0}, one easily obtains the following.

\begin{coro}\label{coro: combine strategy}
Let $X=G/B$ be a compact homogeneous space, and let $f_0$ be an affine map that preserves a fiber bundle tower of $X$, projecting to an affine map $g_0$ on its base $Y$. 

Suppose  that there exists a translational perturbation $g$ of $g_0$ preserving a fiber bundle tower of $Y$ and projecting to a periodic map on its base. Then there exists a translational perturbation $f$ of $f_0$ such that $\Z^\infty(f)$ contains a subgroup isomorphic to either $C^\infty_c(I)$ or $\Diff^\infty_c(I)$. 
\end{coro}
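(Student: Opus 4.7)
The plan is to use the lifting property of translational perturbations to pull $g$ back to a perturbation $f$ of $f_0$ on $X$, then concatenate the two fiber bundle towers into a single $f$-invariant tower whose base carries a periodic map, and finally invoke Corollary~\ref{coro: 19+13+15 package}.

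First, I would collapse the given $f_0$-invariant tower of $X$ into a single fiber bundle $G_F/B_F \to X \to Y$ of homogeneous spaces, as described in the tower collapse discussion. Lemma~\ref{lem: lift0} then supplies, for each translational perturbation $g$ of $g_0$ on $Y$, a translational perturbation $f = L_a \circ f_0$ of $f_0$ on $X$ that projects to $g$, with $a$ arbitrarily close to the identity in $G$. Hence $f$ is as close to $f_0$ as desired.

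Next, I need to verify that $f$ preserves both towers. For the original tower of $X$ over $Y$: at each level, the fibration $G_i'/(G_i' \cap B_i) \to G_i/B_i \to G_i/(G_i' \cdot B_i)$ depends only on the coset structure modulo the closed subgroup $G_i' \cdot B_i$, and any left translation sends cosets of $G_i' \cdot B_i$ to cosets of $G_i' \cdot B_i$. Since $f = L_a \circ f_0$ composes a fibration-preserving affine map with a left translation, $f$ preserves the entire tower of $X$ over $Y$. On the base $Y$, $f$ induces $g$ by construction, and by hypothesis $g$ preserves a tower of $Y$ projecting to a periodic map on some final base $Z$. Concatenating the two towers (one over $X$ with base $Y$, the other over $Y$ with base $Z$) gives a single $f$-invariant fiber bundle tower of $X$ with final base $Z$, on which $f$ induces a periodic affine map.

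At this point, Corollary~\ref{coro: 19+13+15 package} applies directly and yields that $\Z^\infty(f)$ contains a subgroup isomorphic to either $C^\infty_c(I)$ or $\Diff^\infty_c(I)$. The only point that requires modest care is the verification in the middle paragraph that left translations always preserve homogeneous fibrations of the form $G/B \to G/(G' \cdot B)$; this follows immediately from the fact that the fibration is defined by quotienting by a closed subgroup containing $B$, so no genuine obstacle arises. The corollary is thus essentially a packaging result: it chains Lemma~\ref{lem: lift0} (lifting) with Corollary~\ref{coro: 19+13+15 package} (building the large centralizer from a tower and a periodic base map).
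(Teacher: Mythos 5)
Your proposal is correct and follows essentially the same route the paper intends: the paper obtains Corollary~\ref{coro: combine strategy} precisely by combining Lemma~\ref{lem: lift0} (lifting the translational perturbation), the tower-concatenation/collapse mechanism of Lemma~\ref{lem: cllpse proc imp spcl fb}, and Corollary~\ref{coro: 19+13+15 package}. Your verification that left translations descend through each level of the tower, so that $f=L_a\circ f_0$ preserves the concatenated tower and projects to the periodic base map, is exactly the (routine) detail the paper leaves implicit.
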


\subsection{Reduction to the case when $H$ is trivial}
Let $X=G/B$ be a compact homogeneous space, and let $f_0$ be an affine map of $X$ that is not a $K$-system. We denote by $H$ the hyperbolically generated subgroup of $f_0$. Then it follows from \cite{PS04} that $\overline{H\cdot B}\neq G$. To prove Theorem \ref{thm: not K imp big cent}, the following discussion shows that without loss of generality we may assume $H$ is trivial.

Consider the following (nontrivial) fiber bundle of compact homogeneous spaces 
$$(\overline{H\cdot B})/B\to G/B\to G/(\overline{H\cdot B})\cong \frac{G/H}{(\overline{H\cdot B})/H},$$ and write $G_H=G/H$ and $B_H=(\overline{H\cdot B})/H$. It is clear that this fiber bundle structure is $f_0$-invariant. Moreover, the affine map $f_0$ induces an action $f_{0,H}$ on the base space $X_H=G_H/B_H$ whose hyperbolically generated subgroup is trivial. 

\begin{remark}\label{connectedHBfiber}
It should be noted that by Lemma \ref{connectedcomp} in Appendix~\ref{app: A}, the fiber $$(\overline{H\cdot B})/B=(\overline{H\cdot B})^0\cdot B/B\cong (\overline{H\cdot B})^0/(\overline{H\cdot B})^0\cap B$$ is connected.
\end{remark}

\begin{coro}\label{coro: H trivial coro}
    Assume that there exists a translational perturbation $f_H$ of $f_{0,H}$ such that $f_H$ preserves a fiber bundle tower of $Y$ and projects to a periodic map on its base. Then there exists an arbitrarily small perturbation $f$ of $f_{0}$ by left translation such that $\Z^\infty(f)$ contains a subgroup isomorphic to either $C^\infty_c(I)$ or $\Diff^\infty_c(I^k)$. 
\end{coro}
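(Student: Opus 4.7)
The plan is to recognize that Corollary~\ref{coro: H trivial coro} is essentially a direct specialization of Corollary~\ref{coro: combine strategy}. Indeed, the $f_0$-invariant fibration
\[
(\overline{H\cdot B})/B \to G/B \to G/(\overline{H\cdot B}) \cong X_H
\]
exhibited in the paragraph preceding the corollary constitutes a one-layer $f_0$-invariant fiber bundle tower of $X = G/B$ in the sense of Section~\ref{sec: tower str}, whose base $Y := X_H$ is a compact homogeneous space on which $f_0$ projects to the affine map $g_0 := f_{0,H}$. The hypothesis of the corollary furnishes precisely the remaining ingredient needed by Corollary~\ref{coro: combine strategy}: a translational perturbation $g := f_H$ of $g_0 = f_{0,H}$ that preserves a fiber bundle tower of $Y$ and projects to a periodic map on its base.

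Having matched all hypotheses, I would simply invoke Corollary~\ref{coro: combine strategy} to conclude the existence of a translational perturbation $f$ of $f_0$ such that $\Z^\infty(f)$ contains a subgroup isomorphic to either $C^\infty_c(I)$ or $\Diff^\infty_c(I)$. (The exponent $k$ appearing in the corollary's statement is absorbed since $\Diff^\infty_c(I)$ embeds continuously in $\Diff^\infty_c(I^k)$ for any $k\ge 1$ and vice versa, as already noted in the proof of Proposition~\ref{prop: key prop 2}.)

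For clarity, I would remind the reader how Corollary~\ref{coro: combine strategy} is assembled: Lemma~\ref{lem: lift0} lifts $f_H$ to a translational perturbation $f$ of $f_0$ on $X$; the fiber bundle tower of $Y$ preserved by $f_H$, when concatenated with the fibration displayed above, produces an $f$-invariant fiber bundle tower of $X$ whose bottom base agrees with the bottom base of the $f_H$-invariant tower of $Y$; since $f$ projects to $f_H$ on $Y$, which in turn projects to a periodic affine map on the bottom base, the overall projection of $f$ on the bottom base of the combined tower is periodic. Applying Corollary~\ref{coro: 19+13+15 package} to $f$ then yields the desired embedded subgroup in $\Z^\infty(f)$.

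Since the corollary is essentially a bookkeeping reduction, I do not anticipate any substantive obstacle; the only point that merits a careful sentence is the verification that concatenating the one-layer tower $G/B \to X_H$ with the tower of $X_H$ produced by the hypothesis yields a legitimate fiber bundle tower of $X$ in the sense of Section~\ref{sec: tower str} (so that the collapsing process applies), and that the induced map on the bottom base is periodic because periodicity is preserved by projections. Both of these are immediate from the definitions, so the proof amounts to a short and essentially formal invocation of the already-established Corollary~\ref{coro: combine strategy}.
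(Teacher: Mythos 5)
Your proposal is correct and coincides with the paper's own proof, which is exactly the one-line invocation of Corollary~\ref{coro: combine strategy} applied to the fibration $(\overline{H\cdot B})/B\to G/B\to X_H$ with base map $f_{0,H}$. Your additional remarks on how Corollary~\ref{coro: combine strategy} is assembled (Lemma~\ref{lem: lift0}, concatenation of towers, Corollary~\ref{coro: 19+13+15 package}) and on the harmless discrepancy between $\Diff^\infty_c(I)$ and $\Diff^\infty_c(I^k)$ are accurate but not needed beyond what the paper already records.
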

\begin{proof}
    This follows from Corollary \ref{coro: combine strategy}.
\end{proof}

Having described our strategy and preliminary techniques, we now move to the proof of 
 Theorem~\ref{thm: liecent iff ergodic} for specific classes of Lie groups.  In this section, we will cover the cases where $G$ is solvable, compact semisimple, and  semisimple without compact factors.  The remaining cases will be treated in the following section.
 
We start with the case where $G$ is solvable.  The proof naturally breaks into three steps of increasing generality: the cases where $G$ is abelian, nilpotent, and general solvable.

\subsection{The case when $G$ is abelian}\label{sec: abelian case}

We start with the torus case, generalizing the constructions in Examples~\ref{ex=torus1} and \ref{ex=torus2} and illustrating the basic strategy of our proof (i.e. constructing a suitable tower allowing for a translational perturbation of the base).  

We begin with the following lemma from linear algebra:

\begin{lemma}\label{lem: key alg num lem }
  For any $A\in\mathrm{GL}(n,\ZZ)$
  and any ordering $\{g_i(X)\}_{i=1}^r$ 
  for the $\ZZ$-irreducible factors of its characteristic polynomial, 
  $A$ is similar in $\mathrm{GL}(n,\ZZ)$ to a block triangular matrix 
  $\begin{pmatrix}
      A_{11} & &\\
     * &\ddots &\\
     * & * &A_{rr}
  \end{pmatrix}$,
  where the characteristic polynomial of each diagonal block $A_{ii}$ is $g_i(X)$.
\end{lemma}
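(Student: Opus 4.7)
The plan is to argue by induction on $r$, at each stage peeling off the block with characteristic polynomial $g_1$ from the top-left. Granting the inductive hypothesis applied to a smaller matrix, the real content of the induction step is to produce an $A$-invariant saturated sublattice $L\subset\ZZ^n$ of corank $\deg g_1$ on which the induced action on the free quotient $\ZZ^n/L$ has characteristic polynomial exactly $g_1$. Given such an $L$, saturation implies $\ZZ^n/L$ is a free $\ZZ$-module, hence $\ZZ^n\cong(\ZZ^n/L)\oplus L$ as $\ZZ$-modules; ordering the basis with the quotient coordinates first and $L$-coordinates second conjugates $A$ by an element of $\GL(n,\ZZ)$ into block lower-triangular form
\[
\begin{pmatrix} A_{11} & 0 \\ * & A' \end{pmatrix},
\]
with $A_{11}$ having characteristic polynomial $g_1$ and $A'\in\GL(n-\deg g_1,\ZZ)$ acting on $L$ with characteristic polynomial $\prod_{i=2}^{r}g_i$. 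Applying the inductive hypothesis to $A'$ with the ordering $(g_2,\dots,g_r)$ triangulates the bottom-right block further, and composing the two changes of basis delivers the desired conjugation.

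To construct $L$ I would work over $\QQ$ first and then saturate. View $\QQ^n$ as a $\QQ[X]$-module via $A$, and use the primary decomposition $\QQ^n=\bigoplus_{p}V(p)$, summed over distinct monic irreducible factors of the characteristic polynomial. Since $g_1$ divides $p_A(X)$, the summand $V(g_1)$ is nonzero; by the structure theorem $V(g_1)\cong\bigoplus_j\QQ[X]/(g_1^{e_j})$ with at least one summand, and projecting onto one such summand $\QQ[X]/(g_1^{e})$ and then modding out by $g_1$ yields a $\QQ[X]$-surjection $V(g_1)\twoheadrightarrow\QQ[X]/(g_1)$. Extending by zero on the remaining primary components gives a $\QQ[X]$-linear surjection $\varphi\colon\QQ^n\twoheadrightarrow\QQ[X]/(g_1)$. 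Its kernel $W:=\ker\varphi$ is an $A$-invariant $\QQ$-subspace of codimension $\deg g_1$, and $L:=W\cap\ZZ^n$ is then a saturated $A$-invariant sublattice of rank $n-\deg g_1$ whose quotient $\ZZ^n/L$ embeds as a full-rank lattice into $\QQ^n/W\cong\QQ[X]/(g_1)$; hence $A$ acts on $\ZZ^n/L$ with characteristic polynomial $g_1$, as required.

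There is no real obstacle here; the argument is simply the integer refinement of the standard fact that for a module of finite length the composition factors may be listed in any order. The only subtlety worth flagging is the insistence on saturation at every stage, which is precisely what guarantees both that each intermediate quotient is a free $\ZZ$-module and that the bases produced in successive inductive steps patch together into a $\GL(n,\ZZ)$-basis of $\ZZ^n$. The hypothesis $A\in\GL(n,\ZZ)$ plays no role in the argument beyond ensuring that the final conjugating matrix lies in $\GL(n,\ZZ)$ rather than merely in $\mathrm{Mat}(n,\ZZ)$.
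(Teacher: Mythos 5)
Your argument is correct. Note, though, that the paper does not prove this lemma at all: it simply cites Newman's \emph{Integral Matrices}, Theorem III.12 \cite{New72}, which is in fact a more general statement (an integral matrix whose characteristic polynomial factors as any product of monic integral polynomials $f_1\cdots f_r$ is unimodularly similar to a block triangular matrix with diagonal characteristic polynomials $f_i$ in the prescribed order, with no invertibility or irreducibility hypotheses). Your proof is the natural self-contained induction for the special case needed here: peel off one factor by producing an $A$-invariant, saturated, corank-$\deg g_1$ sublattice, use saturation to split $\ZZ^n$ and get the block form over $\ZZ$, and recurse. The only place where irreducibility of $g_1$ is genuinely exploited is the construction of the $\QQ[X]$-surjection $\QQ^n\twoheadrightarrow \QQ[X]/(g_1)$, which makes the existence of the required invariant subspace immediate; in Newman's general setting one must instead select, inside each primary component $\bigoplus_j \QQ[X]/(p^{e_j})$, a submodule realizing a prescribed multiplicity of $p$, which is only slightly more work. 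So your route buys a short, transparent proof of exactly the statement used, at the cost of generality, while the citation buys the stronger theorem off the shelf. One small correction: the conjugating matrix in your construction is a change of basis of $\ZZ^n$ and is therefore automatically in $\GL(n,\ZZ)$; the hypothesis $A\in\GL(n,\ZZ)$ is not needed even for that, it is simply irrelevant to the lemma (it only matters later in the paper, where one wants the diagonal blocks themselves to be automorphisms of subtori).
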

\begin{proof}
See \cite{New72}, Theorem III.12.
\end{proof}



\begin{proof}[Proof of Theorem \ref{thm: not K imp big cent} for the torus case]\label{proof: torus case}
We write $f_0:\TT^n\to \TT^n$ as $f_0(x)=a+Ax,\;x\in \TT^n$ for some $a\in\RR^n$ and $A\in \mathrm{GL}(n,\ZZ)$, where $f_0$ is not a $K$-system. Since $f_0$ is not stably ergodic, the matrix $A$ has a root of unity as an eigenvalue. There are two cases.

{\bf Case 1:} $1\in\sigma(A)$.~\\
By induction, we obtain that $A$ is similar in $\mathrm{GL}(n,\ZZ)$ to a block triangular matrix $\begin{pmatrix}
      I_{d_1\times d_1} &0 \\
      (A_{21})_{d_2\times d_1}  &(A_{22})_{d_2\times d_2}
  \end{pmatrix}$.
Choose $a'$ close to $a$ in $\RR^n$ whose first $d_1$ components are in $\QQ$, and consider the fiber bundle
\[ \TT^{d_2}\to\TT^{n}\to \TT^{d_1} \]
Then the  translational perturbation $f=L_{a'}\circ A = L_{a'-a}\circ f_0$ projects to a periodic map $\bar{f}=L_{a'_1}\circ I_{d_1\times d_1}$ on the base torus $\TT^{d_1}$. 



{\bf Case 2:} $1\notin\sigma(A)$.~\\
By Lemma \ref{eigen1fix} implies that $f_0$ has a fixed point, and so $f_0$ is conjugate to its linear part by a translation. Hence we may assume that $a=0$. Lemma \ref{lem: key alg num lem }  implies that that $A$ is similar via $\mathrm{GL}(n,\ZZ)$ to a block triangular matrix
  $$\begin{pmatrix}
      (A_{11})_{d_1\times d_1} & &\\
      *&\ddots &\\
     * & * &(A_{rr})_{d_r\times d_r}
  \end{pmatrix},$$
where the characteristic polynomial of each diagnoal block $A_{ii}$ is irreducible over $\ZZ$, and $A_{11}$ has a root of unity as an eigenvalue. This forces the characteristic polynomial of $A_{11}$ to be a cyclotomic polynomial, 
  which means that some power of $A_{11}$ equals $I_{d_1\times d_1}$. Now consider the fibration
\[ \TT^{d_2}\to\TT^{n}\to \TT^{d_1}. \]
Then $f=A$ projects to a periodic map $\bar{f}=A_1$ on the base torus $\TT^{d_1}$. 

In both cases, there is a translational perturbation $f$ of $f_0$ preserving a toral fiber bundle and projecting to a periodic map $\bar{f}$ on the base torus. The conclusion of Theorem \ref{thm: not K imp big cent} follows from Corollary \ref{coro: 19+13+15 package}.
\end{proof}

Combining the torus case with Corollary \ref{coro: combine strategy}, we also obtain:

\begin{coro}\label{coro: reduction to torus base}
    Let $X=G/B$ be a compact homogeneous space, and let $f_0$ be an affine map of $X$ whose hyperbolically generated subgroup is trivial. Assume that $f_0$ preserves a fiber bundle tower whose base is a torus. Then there exists a translational perturbation $f$ of $f_{0}$ such that $\Z^\infty(f)$ contains a subgroup isomorphic to either $C^\infty_c(I)$ or $\Diff^\infty_c(I)$.
\end{coro}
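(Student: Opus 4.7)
The plan is to reduce to the torus case already established in the proof of Theorem~\ref{thm: not K imp big cent} and then invoke Corollary~\ref{coro: combine strategy}. Let $\pi\colon X \to Y = \TT^n$ denote the projection from the given $f_0$-invariant fiber bundle tower onto its torus base, and let $g_0 \in \Aff(\TT^n)$ be the induced affine map on the base.

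The first step is to verify that $g_0$ is not a $K$-system on $\TT^n$. The base torus is obtained from $G/B$ by a finite sequence of Lie group quotients by $f_0$-invariant closed normal subgroups, as described in Section~\ref{sec: tower str}. At each such stage the induced Lie algebra automorphism is a quotient of the preceding one, so its spectrum (with multiplicities) is contained in the spectrum of $\hat f_0$. Since the hyperbolically generated subgroup of $f_0$ is assumed trivial, every eigenvalue of $\hat f_0$ has modulus $1$, and hence every eigenvalue of the linear part $A \in \GL(n,\ZZ)$ of $g_0$ has modulus $1$. As $A$ is an integer matrix whose eigenvalues are algebraic integers lying on the unit circle, Kronecker's theorem forces every eigenvalue of $A$ to be a root of unity. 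In particular $A$ is not hyperbolic and $g_0$ is not stably ergodic on $\TT^n$, so it is not a $K$-system.

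Once this is established, I would apply the torus case of Theorem~\ref{thm: not K imp big cent} (see Section~\ref{proof: torus case}) to $g_0$, producing a translational perturbation $g$ of $g_0$ that preserves an additional toral fiber bundle tower of $\TT^n$ and projects to a periodic affine map on its bottom torus. Stacking this new fibration beneath the original tower of $X$ produces an enlarged $f$-invariant fiber bundle tower of $X$ for any lift $f$ of $g$ (existence of such a lift is guaranteed by Lemma~\ref{lem: lift0}). The hypotheses of Corollary~\ref{coro: combine strategy} are therefore satisfied with base $Y = \TT^n$, and applying that corollary yields a translational perturbation $f$ of $f_0$ whose smooth centralizer $\Z^\infty(f)$ contains an embedded copy of either $C^\infty_c(I)$ or $\Diff^\infty_c(I)$.

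The only subtle point is the eigenvalue bookkeeping across the quotient tower, which ensures that $g_0$ inherits from $f_0$ the property that no eigenvalue of the Lie-algebra automorphism lies off the unit circle; every other step is a direct composition of previously established results and presents no genuine obstacle.
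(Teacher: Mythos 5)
Your proposal is correct and follows essentially the same route as the paper, which deduces the corollary directly from the torus case of Theorem~\ref{thm: not K imp big cent} together with Corollary~\ref{coro: combine strategy}. Your eigenvalue bookkeeping (spectrum of the quotient automorphism lies in that of $\hat f_0$, hence on the unit circle, hence consists of roots of unity by Kronecker, so the base map is not a $K$-system) is exactly the verification the paper leaves implicit, and it is carried out correctly.
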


\subsection{The case when $G$ is nilpotent}\label{nilpotent case}

\begin{proof}[Proof of Theorem \ref{thm: not K imp big cent} for $G$ nilpotent]
Let $X=G/B$ be a compact homogeneous space, where $G$ is a connected nilpotent Lie group, and $B\subseteq G$ is a closed cocompact subgroup. Suppose that $f_0\in \Aff(G/B)$ is not a $K$-system. To prove Theorem \ref{thm: not K imp big cent},  we may assume that the  hyperbolically generated subgroup  for $f_0$ is trivial, by Corollary \ref{coro: H trivial coro}. Then in view of Remark \ref{reducetononnormal} in Appendix~\ref{app: A}, we may also assume that $B$ doesn't contain a nontrivial closed subgroup that is normal in $G$. 

\begin{lemma}\label{uppercent}
    Let $G$ be a connected nilpotent Lie group, and let $B\subseteq G$ be a closed cocompact subgroup. Write $$
    \{1\}=Z_0(G)\subsetneqq Z_1(G)\subsetneqq\cdots \subsetneqq Z_u(G)=G 
    $$ for the upper central series of $G$. Then for each $0\leq i\leq u$, $Z_i(G)$ is a characteristic subgroup of $G$, and $B\cap Z_i(G)$ is a closed cocompact subgroup of $Z_i(G)$.
\end{lemma}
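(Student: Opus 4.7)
For \emph{Part (1), characteristicness}, I will argue by induction on $i$. The case $i=0$ is trivial and $Z_1(G)=Z(G)$ is preserved by every automorphism. For the inductive step, if $Z_{i-1}(G)$ is characteristic, then any $\phi\in\Aut(G)$ descends to an automorphism of $G/Z_{i-1}(G)$ which must preserve its center $Z(G/Z_{i-1}(G))=Z_i(G)/Z_{i-1}(G)$; taking preimages yields $\phi(Z_i(G))=Z_i(G)$.

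For \emph{Part (2), closed and cocompact}, closedness of $B\cap Z_i(G)$ in $Z_i(G)$ is automatic. For cocompactness I will proceed by induction on $i$, relying on two ingredients. The first is the \emph{orbit-closedness principle}: if $N\leq G$ is a closed subgroup with $N\cap B$ cocompact in $N$, then the orbit $NB/B\cong N/(N\cap B)$ is a compact, hence closed, subset of the compact Hausdorff space $G/B$, so $NB$ is closed in $G$. Granted the inductive hypothesis that $B\cap Z_{i-1}(G)$ is cocompact in $Z_{i-1}(G)$, this principle gives that $BZ_{i-1}(G)$ is closed in $G$, and therefore the image $\bar B$ of $B$ in $\bar G:=G/Z_{i-1}(G)$ is a closed cocompact subgroup of the connected nilpotent Lie group $\bar G$. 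By construction $Z(\bar G)=Z_i(G)/Z_{i-1}(G)$, and one checks directly that $Z(\bar G)\cap \bar B$ is exactly the image of $B\cap Z_i(G)$ under the quotient map.

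The second ingredient is the ``$i=1$'' case, which I will quote as a classical fact of Mal'cev: for any connected nilpotent Lie group $H$ and any closed cocompact subgroup $C\leq H$, the intersection $Z(H)\cap C$ is cocompact in $Z(H)$ (see, e.g., Raghunathan, \emph{Discrete Subgroups of Lie Groups}, Ch.~II, stated there for lattices but extending to general closed cocompact subgroups). Applying this to $H=\bar G$ and $C=\bar B$ yields that $Z(\bar G)\cap\bar B$ is cocompact in $Z(\bar G)$. Combining with the inductive hypothesis via the short exact sequence $1\to Z_{i-1}(G)\to Z_i(G)\to Z_i(G)/Z_{i-1}(G)\to 1$, I will conclude that $B\cap Z_i(G)$ is cocompact in $Z_i(G)$, completing the induction.

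\emph{Main obstacle.} The nontrivial content is concentrated in the $i=1$ statement used above. Its proof first reduces to simply connected $H$ by passing to the universal cover $\tilde H\to H$ (whose kernel, being discrete normal in a connected Lie group, is central and therefore lies in both $Z(\tilde H)$ and the preimage of $C$). In the simply connected case the argument then proceeds by induction on $\dim \tilde H$ via Mal'cev polynomial coordinates: pick a one-parameter central subgroup $L\cong\RR$ of $\tilde H$ (which exists since $Z(\tilde H)$ is nontrivial and connected), use the polynomial structure of $\tilde H$ to show $L\cap\tilde C$ is a lattice in $L$ (this is where nilpotency is essential, ruling out the ``irrational winding'' that would occur in non-nilpotent settings), apply the orbit-closedness principle to deduce $L\tilde C$ is closed, and descend to the simply connected nilpotent quotient $\tilde H/L$.
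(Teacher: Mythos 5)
Your derivation is, in substance, the same route as the paper's: the paper's entire proof of Lemma~\ref{uppercent} is the citation to \cite[Chapter II]{Rag72}, and your argument rests on the same classical Mal'cev--Raghunathan ingredient, merely making the bootstrap explicit. The explicit part is correct: characteristicness by induction through the centers of the quotients $G/Z_{i-1}(G)$ is fine; in the cocompactness induction, the orbit-closedness principle is exactly \cite[Lemma 1.7]{Rag72} (Lemma~\ref{propertyPfiber}), the identification of $Z(\bar G)\cap\bar B$ with the image of $B\cap Z_i(G)$ is correct because $Z_i(G)$ is the full preimage of $Z(\bar G)$, and the final step combining ``$B\cap Z_{i-1}$ cocompact in $Z_{i-1}$'' with ``image of $B\cap Z_i$ cocompact in $Z_i/Z_{i-1}$'' is Lemma~\ref{heredecomp}. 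So, to the extent that quoting the classical center statement is legitimate (which is all the paper itself does), your proposal proves the lemma.

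Two caveats, the second of which is a genuine flaw if your ``main obstacle'' sketch is meant to replace the citation. First, the fact you quote is stated in \cite[Chapter II]{Rag72} for \emph{lattices} in simply connected nilpotent groups; the extension to closed, possibly non-discrete, cocompact subgroups of a connected $G$ is precisely what the lemma needs and should be argued, not waved through (e.g.\ reduce via the universal cover as you do, then quotient by the maximal closed subgroup of $B$ normal in $G$ as in Remark~\ref{reducetononnormal} and invoke Lemma~\ref{Mostowstructure}, or treat $B^0$ separately). Second, the sketch of the center case contains a false step: for an \emph{arbitrary} central one-parameter subgroup $L$, the intersection $L\cap\tilde C$ need not be a lattice in $L$ --- already for $\tilde H=\RR^2$, $\tilde C=\ZZ^2$ and $L$ an irrational line one gets $L\cap\tilde C=\{0\}$, so nilpotency/polynomial coordinates are not what rules this out; one must choose $L$ adapted to $\tilde C$ (say, the one-parameter group through $\log\gamma$ for a suitable $\gamma\in\tilde C$, or follow Raghunathan's actual argument via $\tilde C\cap[\tilde H,\tilde H]$ and the Mal'cev $\QQ$-structure). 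Moreover, the dimension induction does not close as stated: after dividing by $L\subseteq Z(\tilde H)$ the center can grow, i.e.\ $Z(\tilde H)/L$ may be a proper subgroup of $Z(\tilde H/L)$, so cocompactness of $Z(\tilde H/L)\cap\bar C$ in $Z(\tilde H/L)$ does not directly yield the statement you need about $Z(\tilde H)/L$. Lean on the citation (as the paper does), or rework the base-case proof along the classical lines.
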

\begin{proof}
    See \cite[Chapter II]{Rag72}.
\end{proof}
Let
$\ell :=\max\{0\leq i< u: G\neq Z_i(G)\cdot B\}$, 
so that $G=Z_{\ell +1}(G)\cdot B$, and  $\overline{X}=G/(Z_\ell (G)\cdot B)$ is isomorphic to a torus. Lemma \ref{uppercent} then gives the following $f_0$-invariant fiber bundle tower of compact homogeneous spaces:
$$
\begin{tikzcd}
  (Z_1(G)\cdot B)/B \arrow[r, ] &  X=G/B \arrow[d,] \\
   (Z_2(G)\cdot B)/(Z_1(G)\cdot B) \arrow[r, ] & G/(Z_1(G)\cdot B) \arrow[d, ] \\
  \dots \arrow[r, ] & \dots\arrow[d, ] \\
  (Z_\ell(G)\cdot B)/(Z_{\ell-1}(G)\cdot B) \arrow[r, ] &G/(Z_{\ell-1}(G)\cdot B)\arrow[d,]\\
  & \overline{X}=G/(Z_\ell(G)\cdot B)
\end{tikzcd}
$$
The conclusion of Theorem \ref{thm: not K imp big cent} then follows from Corollary \ref{coro: reduction to torus base}.
\end{proof}

This completes the proof of Theorem~\ref{thm: not K imp big cent} in the case where $G$ is nilpotent.

\subsection{The case when $G$ is solvable}

\begin{proof}[Proof of Theorem \ref{thm: not K imp big cent} for the solvable case]\label{solvable case}
Let $X=G/B$, where $G$ is a connected solvable Lie group, and $B\subseteq G$ is a closed cocompact subgroup. Suppose that $f_0\in\Aff(X)$ is not a $K$-system.   To prove Theorem \ref{thm: not K imp big cent} for $f_0$, we may assume that its hyperbolically generated subgroup is trivial by Corollary \ref{coro: H trivial coro}. Again by Remark \ref{reducetononnormal} in Appendix~\ref{app: A}, we may assume that $B$ doesn't contain a nontrivial closed subgroup that is normal in $G$. 

By the Mostow structure theorem (see Lemma \ref{Mostowstructure} in Appendix~\ref{ss=mostow}), we have the fiber bundle sequence 
\begin{equation}\label{Mosfiber}
N/(N\cap B)\to G/B \to G/(N\cdot B)
\cong\dfrac{G/N}{(N\cdot B)/N}, 
\end{equation}
where $N$ is the nilradical of $G$. It is clear that this fiber bundle structure is preserved by the affine automorphism $f_{0}$.

If $G=N\cdot B$, then $G/B\cong N/(N\cap B)$, and we are reduced to the case where the Lie group is nilpotent. If $G\neq N\cdot B$, then the base space $\overline{X}=G/(N\cdot B)$ is a torus, and the conclusion of Theorem \ref{thm: not K imp big cent} follows from Corollary \ref{coro: reduction to torus base}.
\end{proof}

\subsection{The case when $G$ is semisimple without compact factors}

\begin{proof}[Proof of Theorem \ref{thm: not K imp big cent} for $G$ semisimple without compact factors]\label{noncompact semisimple case} Let $X=G/B$, where $G$ is a connected semisimple Lie group without compact factors, and $B\subseteq G$ is a  closed subgroup such that $G/B$ admits a $G$-invariant probability measure. Suppose $f_0\in\Aff(X)$ is not a $K$-system.  We we may assume that the hyperbolically generated subgroup of $f_0$ is trivial, by Corollary \ref{coro: H trivial coro}.
We will use the following lemma, which we prove in Section~\ref{app: A.4} of Appendix~\ref{app: A}.

\begin{lemma}[Finite liftability lemma]\label{finlift}
   Suppose that there is a finite cover of homogeneous spaces  $$  G'/\Gamma'\to G/\Gamma\to G''/\Gamma''   $$
   that is preserved by  $f_0 \in \Aff(G/\Gamma)$.  Let $\bar{f_0}$ denote the projection of $f_0$ to $G''/\Gamma''$.  If $\bar{f_0}$ can be approximated by a periodic affine automorphism with a constant linear part, then so can $f_0$.\end{lemma}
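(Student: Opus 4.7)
The strategy is to lift the approximating periodic affine map of $G''/\Gamma''$ to a translational perturbation of $f_0$ on $G/\Gamma$, and then observe that a self-diffeomorphism of a finite cover which projects to the identity on the base must be a deck transformation, hence of finite order.

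Write $f_0=L_{a_0}\circ\Phi$ and $\bar{f_0}=L_{\bar a_0}\circ\bar\Phi$, where $\Phi\in\mathrm{Aut}(G,\Gamma)$ descends via the covering $G\to G''$ to $\bar\Phi\in\mathrm{Aut}(G'',\Gamma'')$. Interpreting ``periodic affine automorphism with constant linear part'' as an affine map of $G''/\Gamma''$ sharing the linear part $\bar\Phi$ of $\bar{f_0}$ (i.e.\ a translational perturbation; this is automatic if $\mathrm{Aut}(G'',\Gamma'')$ is discrete in $\mathrm{Aut}(G'')$, which holds in our settings), the hypothesis provides, for every $\epsilon>0$, an element $\bar a$ within $\epsilon$ of $\bar a_0$ such that $\bar f:=L_{\bar a}\circ\bar\Phi$ satisfies $\bar f^{k}=\id_{G''/\Gamma''}$ for some integer $k=k(\epsilon)\geq 1$.

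By Lemma~\ref{lem: lift0} applied to the finite fibration $G'/\Gamma'\to G/\Gamma\to G''/\Gamma''$, the map $\bar f$ lifts to a translational perturbation $f=L_{a}\circ f_0$ of $f_0$, with $a$ correspondingly close to $e\in G$, such that $f$ projects to $\bar f$. By construction $f$ is affine with the same constant linear part $\Phi$ as $f_0$. Now $f^{k}$ projects to $\bar f^{k}=\id$, so $f^{k}$ preserves every fiber of the covering map $p\colon G/\Gamma\to G''/\Gamma''$. Because $G/\Gamma$ is connected (as $G$ is connected) and the fibers of $p$ are finite and discrete, any continuous self-map of $G/\Gamma$ covering the identity must act as a deck transformation of $p$: by unique path lifting such a map is determined by its value at a single point. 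The deck-transformation group of a finite cover is itself finite, of order dividing the degree, so there exists $N\geq 1$ with $(f^{k})^{N}=f^{kN}=\id$. Hence $f$ is a periodic affine automorphism of $G/\Gamma$ with constant linear part $\Phi$, arbitrarily close to $f_0$.

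The main step, and the only one requiring genuine argument, is the deck-transformation observation; it uses only connectedness of $G/\Gamma$ and discreteness of the fibers of $p$, both automatic here. A subsidiary point is that one can realize $\bar a$ close to $\bar a_0$ by a lift $a$ close to $a_0$ in $G$, which is exactly the content of Lemma~\ref{lem: lift0} and rests on the Lie-group map $G\to G''$ being an open submersion (its differential at $e$ is an isomorphism of Lie algebras since the cover is finite).
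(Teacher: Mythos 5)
Your proof is correct, and it gets to the conclusion by a genuinely different route from the paper's. The paper's own argument first raises $f_0$ to a power $k$ so that its induced permutation of the finite fiber $G'/\Gamma'$ becomes trivial, and then invokes openness of the map $G\to G''$, $a\mapsto \bar a^{k}$, to realize translational perturbations of $\bar f_0^{\,k}$ as translational perturbations of $f_0$, leaving the final passage from periodicity downstairs to periodicity upstairs largely implicit. You instead lift the periodic approximation $\bar f$ of $\bar f_0$ directly through Lemma~\ref{lem: lift0} (whose content is exactly the openness of the surjective homomorphism $G\to G''$) and then finish with covering-space theory: since $\bar f^{k}=\id$, the lift $f^{k}$ covers the identity, hence is a deck transformation of the finite cover $G/\Gamma\to G''/\Gamma''$; a deck transformation of a connected total space is determined by its value at a single point, so the deck group injects into a fiber and is finite, whence $f^{kN}=\id$ for some $N$. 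This replaces the paper's power trick and the openness of the $k$-th-power map by a cleaner abstract step, and it makes explicit the part the paper glosses over, at the cost only of the (correct) observation that the covering comes from a Lie-group homomorphism $G\to G''$ with discrete kernel so that Lemma~\ref{lem: lift0} applies. Both proofs share the essential ingredients, namely finiteness of the fiber and liftability of translational perturbations, and your reading of ``periodic affine automorphism with a constant linear part'' as a perturbation with the same linear part (a translational perturbation) is the interpretation the paper itself uses, both in its proof and in the applications of the lemma.
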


The next lemma reduces the proof to the case when $G$ is a product of connected noncompact linear simple Lie groups with trivial center, and $B = \Gamma$ is a lattice in $G$.

\begin{lemma}\label{linfin}
    Let $G$ be a connected semisimple Lie group without compact factors and let $B\subseteq G$ be a closed subgroup such that $G/B$ admits a $G$-invariant probability measure. Then the homogeneous space 
    $G/B$ is isomorphic to a finite cover of $\left(\prod\limits_{i=1}^rG_i\right)/\Gamma'$, where each $G_i$ is a connected noncompact linear simple Lie group with trivial center, and 
    $\Gamma'\subseteq \prod\limits_{i=1}^rG_i$ is a lattice.
\end{lemma}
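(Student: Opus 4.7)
The plan is to realize $G/B$ as a finite cover of a homogeneous space of the form $(\prod_{i=1}^{r} G_{i})/\Gamma'$, with $G_{i}$ noncompact adjoint simple Lie groups and $\Gamma'$ a lattice, via two reductions: first modding out the ineffective kernel of the $G$-action, then passing to the adjoint quotient.

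First, set $K := \bigcap_{g \in G} g B g^{-1}$, the ineffective kernel of the $G$-action on $G/B$. Then $K$ is a closed normal subgroup of $G$ contained in $B$, and with $\widetilde G := G/K$ and $\widetilde B := B/K$ we have $G/B \cong \widetilde G/\widetilde B$, with $\widetilde G$ acting faithfully and preserving a finite invariant probability measure. Because $K^{0}$ is a closed connected normal subgroup of $G$, the structure theory of connected semisimple Lie groups identifies it with a product of (noncompact) simple factors of $G$; and $K/K^{0}$ is a discrete normal subgroup of the connected Lie group $G/K^{0}$, hence central. It follows that $\widetilde G$ is again a connected semisimple Lie group without compact factors.

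Second, I claim that $\widetilde B$ is discrete in $\widetilde G$, and hence a lattice. If not, $\widetilde B^{0}$ would be a nontrivial closed connected subgroup of $\widetilde G$ contained in the stabilizer of the basepoint of $\widetilde G/\widetilde B$, and containing no nontrivial $\widetilde G$-normal subgroup (by the very construction of $K$). But a connected semisimple Lie group without compact factors, acting faithfully and transitively on a space of finite invariant volume, must have discrete point stabilizers; this is a classical consequence of the Mostow--Auslander analysis of closed subgroups of semisimple groups with finite-volume quotients, and can alternatively be obtained via Howe--Moore mixing for the associated unitary representation on $L^{2}(G/B)$. The claim follows.

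Third, pass to the adjoint quotient: let $Z := Z(\widetilde G)$, a discrete central subgroup, and set $\widetilde G^{\mathrm{ad}} := \widetilde G/Z$. By the structure theory, $\widetilde G^{\mathrm{ad}}$ is a direct product $\prod_{i=1}^{r} G_{i}$ of linear, trivial-center, noncompact simple Lie groups (namely the adjoint simple factors of $\widetilde G$). Let $\Gamma'$ denote the image of $\widetilde B$ in $\widetilde G^{\mathrm{ad}}$. The induced projection $G/B \cong \widetilde G/\widetilde B \to \widetilde G^{\mathrm{ad}}/\Gamma'$ has fibers isomorphic to the discrete group $Z/(Z \cap \widetilde B)$. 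Since $\widetilde G^{\mathrm{ad}}/\Gamma'$ carries a positive invariant volume induced from $G/B$ (it is a positive-dimensional quotient of $\widetilde G^{\mathrm{ad}}$, as $\widetilde B$ is discrete), and $G/B$ has finite invariant volume, this discrete fiber must be finite. Hence $G/B \to (\prod G_{i})/\Gamma'$ is a finite cover and $\Gamma'$ is a lattice in $\prod G_{i}$, proving the lemma. The substantive step is the discreteness assertion in Step 2; once it is in hand, everything else is routine manipulation of the structure theory and fiber-volume relations for homogeneous-space fibrations.
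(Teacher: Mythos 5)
Your overall architecture (kill the kernel of the action, show the stabilizer becomes discrete, then pass to the adjoint group) runs parallel to the paper's reduction, but there is a concrete gap in your final step. You pass from $\widetilde{G}/\widetilde{B}$ to $\widetilde{G}^{\mathrm{ad}}/\Gamma'$, where $\Gamma'$ is the image of $\widetilde{B}$, and you argue that the fiber $Z/(Z\cap\widetilde{B})$ must be finite because it is discrete and the total invariant volume is finite. That inference is not valid as stated: an infinite discrete group can act freely by measure-preserving homeomorphisms on a finite measure space (an irrational rotation of the circle generates a free measure-preserving $\ZZ$-action), so ``discrete fibers plus finite volume'' does not force finiteness. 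What the volume-counting argument actually needs is that the central deck group acts properly discontinuously, equivalently that $Z\widetilde{B}$ is a \emph{discrete} (closed) subgroup of $\widetilde{G}$ --- equivalently that $\Gamma'$ is discrete in $\widetilde{G}^{\mathrm{ad}}$, which you also need for the phrase ``$\Gamma'$ is a lattice'' to make sense and is nowhere established. A product of two discrete subgroups, even with one of them central, need not be discrete, so this is a genuine point. The paper supplies exactly this ingredient via Lemma~\ref{normalizerandcenterlattice} (Raghunathan, Cor.~5.17): the normalizer of a lattice in a connected semisimple group without compact factors is again a lattice, hence $Z(\widetilde{G})\cdot\widetilde{B}\subseteq N_{\widetilde{G}}(\widetilde{B})$ is discrete and contains $\widetilde{B}$ with finite index; this is how Lemma~\ref{simplelinear} gets both the discreteness of $\mathrm{Ad}(\widetilde{B})$ and the finiteness of the cover. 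Your proof needs this theorem (or an equivalent Borel-density argument showing $N_{\widetilde{G}}(\widetilde{B})^{0}$ is central, hence trivial) inserted at this point.

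A secondary remark on your Step~2: the conclusion is correct, but the clean justification is the Borel density theorem, which is how the paper proceeds (Lemma~\ref{Boreldensity}: $B$ is Zariski dense; Lemma~\ref{Zardensenormal}: hence $B^{0}$ is normal in $G$; then triviality of $B^{0}$ follows from your construction of $K$). The attribution to a ``Mostow--Auslander analysis'' is off target (those results concern solvable radicals and admissible subgroups), and the proposed Howe--Moore alternative is shaky: Howe--Moore requires the unitary representation to have no vectors invariant under any simple factor, which can fail for $L^{2}_{0}(G/B)$ (e.g.\ in product situations), so it cannot be invoked off the shelf. This step is repairable by citing Borel density, but as written it leans on references that do not quite carry the claim.
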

\begin{proof} See Section~\ref{app: A.4} of Appendix~\ref{app: A}.
    \end{proof}

The next lemma reduces the proof to the case when the lattice is self-normalizing.

\begin{lemma}[\hbox{See \cite[Corollary 5.17]{Rag72}}]\label{normalizerandcenterlattice}
    Let $G$ be a connected semisimple Lie group without compact factor,
and $\Gamma\subseteq G$ be a lattice. Then the normalizer of $\Gamma$ in $G$ is also a lattice. In particular, if $Z(G)$ is the center of $G$, then
$Z(G)\cdot\Gamma$ is also a lattice in $G$.
\end{lemma}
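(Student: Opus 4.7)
This is a classical fact (Corollary 5.17 of Raghunathan), so my goal is only to sketch the mechanism. The plan is to establish two things about $N := N_G(\Gamma)$: (a) that it has finite covolume in $G$, and (b) that it is discrete. The ``in particular'' statement then follows at once, since $Z(G) \subseteq N$.

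The covolume statement (a) is automatic: because $\Gamma \subseteq N$, there is a continuous surjection $G/\Gamma \twoheadrightarrow G/N$, hence
\[
\mathrm{vol}(G/N) \leq \mathrm{vol}(G/\Gamma) < \infty.
\]
So everything rests on the discreteness claim (b).

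To prove (b), I would compute the Lie algebra $\mathrm{Lie}(N)$ and show it vanishes. If $X \in \mathrm{Lie}(N)$, then for every $\gamma \in \Gamma$ the curve $t \mapsto c_\gamma(t) := \exp(tX)\gamma\exp(-tX)$ lies in $\Gamma$. Since $\Gamma$ is discrete and $c_\gamma(0) = \gamma$, the curve is locally constant, so $X$ commutes with $\gamma$. Therefore $X$ centralizes $\Gamma$ as a subset of $G$. At this point I invoke the Borel density theorem: because $G$ is semisimple with no compact factors and $\Gamma$ is a lattice, $\Gamma$ is Zariski dense in $G$. Hence the centralizer of $\Gamma$ coincides with the centralizer of $G$, so $X \in \mathrm{Lie}(Z(G))$. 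But $G$ semisimple forces $Z(G)$ to be discrete, whence $\mathrm{Lie}(Z(G)) = 0$. Thus $\mathrm{Lie}(N) = 0$, so $N$ is discrete; combined with (a), $N$ is a lattice.

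For the ``in particular'' assertion, note that $Z(G)$ commutes with all of $G$ and in particular normalizes $\Gamma$, so $Z(G) \subseteq N$ and therefore $Z(G)\cdot\Gamma \subseteq N$. As a subgroup of the discrete group $N$ that contains the lattice $\Gamma$, the group $Z(G)\cdot\Gamma$ is itself discrete and of finite covolume, hence a lattice. The only substantive ingredient in the whole argument is Borel density; there is no serious technical obstacle, which is consistent with this being a standard citation from Raghunathan.
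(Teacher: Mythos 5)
Your sketch is correct and is essentially the standard argument behind the result, which the paper does not prove itself but simply cites from Raghunathan (Corollary 5.17): discreteness of $N_G(\Gamma)$ via the Borel density theorem (kill the Lie algebra by showing it centralizes the Zariski-dense $\Gamma$, hence lies in the discrete center), plus the covolume comparison coming from the surjection $G/\Gamma\to G/N_G(\Gamma)$. The only cosmetic remark is that the covolume step presupposes $N_G(\Gamma)$ is closed and that $G/N_G(\Gamma)$ carries a $G$-invariant measure, which is automatic once discreteness is known (since $G$, being semisimple, is unimodular), so logically your step (b) should precede step (a) — a reordering, not a gap.
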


Now we come to the perturbation lemma we will need in this context.

\begin{lemma}\label{semifinper}
    Let $G$ be a connected semisimple Lie group with finite center, and let $\Gamma\subseteq G$ be a lattice such that $N_G(\Gamma)\subseteq G$ is also a lattice. Then any affine automorphism of $G/\Gamma$ with trivial hyperbolically generated subgroup has a translational perturbation to a periodic affine automorphism.
\end{lemma}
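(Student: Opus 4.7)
The plan is to iterate $f_0$ until it becomes a pure left translation $L_d$, read off the algebraic structure of $d$ forced by the hyperbolically-trivial hypothesis, and then exhibit a translational perturbation of $a$ that moves $d$ to a nearby torsion element of $G$.

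First, I would reduce the automorphism. Since $G$ has finite center, the outer automorphism group $\mathrm{Aut}(G)/\mathrm{Inn}(G)$ is finite, so some power $\Phi^N$ equals $\mathrm{Inn}(b)$ for some $b\in G$. The identity $\Phi^N(\Gamma)=\Gamma$ places $b\in N_G(\Gamma)$, and the hypothesis that $N_G(\Gamma)$ is itself a lattice makes $N_G(\Gamma)/\Gamma$ finite, so $b^M\in\Gamma$ for some $M\geq 1$. With $c:=a\Phi(a)\cdots\Phi^{N-1}(a)$, an easy induction gives $f_0^N=L_c\circ\mathrm{Inn}(b)$, whence
\[
f_0^{NM}(x\Gamma)=(cb)^M\,x\,b^{-M}\Gamma=(cb)^M x\Gamma
\]
since $b^{-M}\in\Gamma$. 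Setting $d:=(cb)^M$, we obtain $f_0^{NM}=L_d$. At the Lie-algebra level $\mathrm{Ad}(d)=\hat f_0^{NM}$ inherits the unit-circle spectrum from $\hat f_0$, so by Jordan decomposition in the algebraic group $\mathrm{Ad}(G)$, lifted to $G$ using the finite-center assumption, we may write $d=d_e d_u$ with $d_e$ elliptic, $d_u$ unipotent, and $d_e d_u=d_u d_e$.

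Next, I would analyze translational perturbations. Setting $f:=L_{a_0 a}\circ\Phi$ for $a_0$ near $e$, the same iteration yields $f^{NM}=L_{d'(a_0)}$ with $d'$ a smooth $G$-valued function of $a_0$ and $d'(e)=d$. It suffices to produce, in every neighborhood of $e$, an $a_0$ for which $d'(a_0)$ has finite order $P$ in $G$: then $L_{d'(a_0)}^{P}=L_e=\mathrm{id}$, and hence $f^{NMP}=\mathrm{id}$, making $f$ periodic. Finite-order elements of a connected semisimple Lie group are dense in its elliptic locus, since torsion is dense in every compact torus and every elliptic element is conjugate into a maximal compact subgroup. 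When $d_u=e$ one approximates within a maximal torus through $d_e$; when $d_u\neq e$, a small perturbation first pushes $d$ off the lower-dimensional unipotent stratum into the open elliptic stratum inside $\{g\in G:\mathrm{Ad}(g)\text{ has unit-circle spectrum}\}$, and a further small adjustment reaches a torsion element.

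The hard part will be realizability: showing that these nearby torsion elements actually lie in the image of the map $a_0\mapsto d'(a_0)$, not merely in $G$. Its differential at $e$ factors through a composition of two Reynolds-type endomorphisms of $\mathfrak{g}$, namely $T=\sum_{i=0}^{N-1}\mathrm{Ad}(c_i)\circ D\Phi^i$ (with $c_i:=a\Phi(a)\cdots\Phi^{i-1}(a)$) followed by $S=\sum_{j=0}^{M-1}\mathrm{Ad}((cb)^j)$. The point will be to argue that, using the finiteness of $\mathrm{Out}(G)$ and of $N_G(\Gamma)/\Gamma$ together with the finite-center hypothesis, the image of $S\circ T$ in $T_d G$ is transverse to the finite-order stratum at $d$; once this transversality is established, the implicit function theorem furnishes the required $a_0$ and completes the proof.
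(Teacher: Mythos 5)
Your first step — iterating $f_0$ until it becomes a pure left translation — is essentially the paper's argument (the paper reduces to $N_G(\Gamma)=\Gamma$ via the finite cover and Lemma~\ref{finlift}, whereas you take a further power using $b^M\in\Gamma$; either way one lands on $f_0^{k}=L_d$ with $\mathrm{Ad}(d)$ quasi-unipotent). After that, however, your proposal has two genuine gaps, and they are exactly the two nontrivial points of the lemma. First, the approximation of $d$ by torsion elements: your argument via the Jordan decomposition $d=d_ed_u$ and "a small perturbation pushes $d$ off the unipotent stratum into the open elliptic stratum" is not a proof. The elliptic elements approximating a nontrivial quasi-unipotent element are obtained by conjugating rotations of tiny angle by unbounded elements (already for a regular unipotent in $\mathrm{SL}(3,\RR)$ one needs conjugation by $\mathrm{diag}(t,1,t^{-1})$ with $t\to\infty$); there is no neighborhood of $d$ in which the elliptic locus is open inside the quasi-unipotent set, so the stratification heuristic does not yield the approximation. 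This statement — every $g$ with $\mathrm{Ad}(g)$ quasi-unipotent is a limit of elements $x$ with $\mathrm{Ad}(x)$ of finite order — is precisely what the paper imports as a black box from \cite[Theorem 1.1]{BS97}, and your sketch does not replace it.

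Second, the realizability step, which you yourself flag as the hard part, is left unproven and, as formulated, cannot be carried out. You propose transversality of the image of $S\circ T$ to "the finite-order stratum at $d$" plus the implicit function theorem; but $d$ is in general not a torsion element, the torsion elements needed have orders tending to infinity as the approximation improves, so there is no fixed submanifold to be transverse to, and your own computation of the differential shows $S\circ T=\sum_j \mathrm{Ad}((cb)^j)\circ\sum_i\mathrm{Ad}(c_i)\circ D\Phi^i$ can fail to be surjective (eigenvalue cancellations such as $\sum_i\zeta^i=0$ for a root of unity $\zeta$ in the spectrum), so an IFT argument is unavailable. What is actually needed is an openness-type statement for the map $a\mapsto a_kb$ (equivalently your $a_0\mapsto d'(a_0)$), which is how the paper transfers the torsion approximation of $a_kb$ back to a translational perturbation of $f_0$; your proposal neither proves such a statement nor supplies a substitute, so the essential content of the lemma is missing.
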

Consider the fiber bundle 
$$
N_G(\Gamma)/\Gamma\to G/\Gamma\to G/N_G(\Gamma),
$$
where by Lemma \ref{normalizerandcenterlattice}, $N_G(\Gamma)/\Gamma$ is a finite group.
By Lemma \ref{finlift}, 
we may assume that $N_G(\Gamma)=\Gamma$.
The semisimplicity of $G$ implies that
 $|\mathrm{Aut}(G)/\mathrm{Inn}(G)|=:n(G)<+\infty$.

Now for any affine automorphism $f_0=L_aA$ of $G/\Gamma$,
and any positive integer $k$,
we have $f_0^k=a_k\cdot A^k$ 
where $a_k:=aA(a)\cdots A^{k-1}(a)\in G$.
When $k\in n(G)\cdot \mathbb{Z}$,
it follows that $f_0^k=L_{a_k}\circ c_b=L_{a_kb}\circ R_{b^{-1}}$ for some $b\in G$, where $c_b\in \Aff(G/\Gamma)$ denotes the conjugation by $b$.
Since $A^k=c_b$ preserves $\Gamma$,
we have $b\in N_G(\Gamma)=\Gamma$, and hence
$f_0^k$ is actually a  left translation: $f_0^k = L_{a_kb}$.

By  \cite[Theorem 1.1]{BS97},
the element $a_{k}b$ can be arbitrarily closely approximated by $x_k\in G$ such that $\mathrm{Ad}(x_k)$ has finite order. 
By the assumption that $G$ has finite center, the element $x_k\in G$ already has finite order and hence $L_{x_k}$ is periodic.

Note that the map $a\mapsto a_{k}b$ is open.
It follows that the above approximation to $a_{k}b$ can be realized as an approximation to $a$. 
We conclude that the affine automorphism $f_0$ can be perturbed to a periodic affine automorphism.
\end{proof}

In our setting, by Lemma \ref{finlift} and Lemma \ref{linfin}, we may assume that $B$ is a lattice. By Lemma \ref{normalizerandcenterlattice} and \ref{semifinper}, $f_0$ can be (translationally) perturbed to a periodic affine automorphism. 
Then the conclusion of Theorem \ref{thm: not K imp big cent} follows from Lemma \ref{wildperiod}.


\subsection{The case when $G$ is compact semisimple}
\begin{proof}[Proof of Theorem \ref{thm: not K imp big cent} for the compact semisimple case]\label{compact semisimple case} Let  $X=G/B$, where $G$ is a connected compact semisimple Lie group, and $B\subseteq G$ is a closed subgroup.  It holds automatically that the hyperbolically generated subgroup for {\em any} affine automorphism $f_0$ of $X$ is trivial. In  this case there are several ways to prove Theorem \ref{thm: not K imp big cent}. We choose the one that will be used later in a more general situation.

It is easy to see that there exist $k\in \ZZ^+$ and $x_k(a)\in G$ such that $$f_0^k=L_{x_k(a)b}\circ R_{b^{-1}},$$
for some $b\in N_G(B)$.
Since the map $a\mapsto x_k(a)$ is open, 
there is a perturbation $a'$ of $a$ such that $x_k(a')\cdot b$ is a torsion element in $G$ with a large prime $p=p(a')$ as a period. Let $f:=L_{a'}\circ A$.

Now consider  the minimal closed group generated by $b$, which we denote by $T_b$. Since $b\in N_G(B)$, the right translation by the elements in $T_b$ in $X=G/B$ is well-defined.  The subsets $\{xT_bB\subset X\}_{x\in G}$ (the set of orbits of $xB\in X$ by the right action of $T_b$) form a partition of $X$ that is invariant under left translation by elements of $G$. We claim that this partition is actually $f$-invariant.
\begin{proof}[Proof of the claim] Observe that $f^k=L_{(x_k(a')b)}\circ R_{b^{-1}}$ commutes with $R_{b^{-1}}$ on $X$. Therefore the orbit of $xB\in X$ under the dynamics of $f^{pk}$ is just $xT_{b^{p}}B$, where $T_{b^p}$ is the closed subgroup generated by $b^p$ (since $(x_k(a')b)$ has order $p$).
In particular, the partition $\{xT_{b^{p}}B\}_{x\in G}$ is $f$-invariant (since $f$ commutes with $f^{pk}$, $f$ permutes the orbit closure of $f^{pk}$).
But if $p$ is a large enough prime, then $T_b=T_{b^p}$, and therefore the partition $\{xT_{b}B\subset X\}_{x\in G}$ is also $f$-invariant.
\end{proof}
As a consequence of the claim, if we denote by $T_b^0$ the identity component of $T_b$, then for each $x\in G$, the connected component containing $xB$ of $xT_b\cdot B$ is a connected, closed submanifold of $X$ and therefore equals $xT_b^0\cdot B\subset X$ (because they have the same dimension and both are connected). Thus the partition $\{x T_b^0\cdot B\subset X\}_{x\in G}$ is actually a compact $f$-invariant foliation. In addition, we have $T_b^0\subset T_b\subset N_C(B)$. Consequently we obtain an $f$-invariant fiber bundle structure of compact homogeneous spaces $$T_b^0/(T_b^0\cap B)\to G/B \to G/(T_b^0\cdot B),$$ 
where the base map induced by $f$ is periodic. Then the conclusion of Theorem \ref{thm: not K imp big cent} follows from Corollary \ref{coro: 19+13+15 package}.
\end{proof}

We conclude that the conclusions of Theorem \ref{thm: not K imp big cent} hold when  $G$ is either solvable, compact semisimple, or semisimple without compact factors.

\subsection{Reduction to towers with specified bases} Using the results in this section, we extract a corollary that we will use in the following section to prove the general case of Theorem~\ref{thm: not K imp big cent}.

\begin{coro}\label{coro: tower reduction}
    Let $X=G/B$ be a compact homogeneous space, and let $f_0$ be an affine map of $X$ whose hyperbolically generated subgroup is trivial. Assume that $f_0$ preserves a fiber bundle tower with base $\bar X = \bar G/\bar B$, where $\bar G$ is either solvable, compact semisimple, or semisimple without compact factors.
    
   Then there exists a translational perturbation $f$ of $f_{0}$ such that $\Z^\infty(f)$ contains a subgroup isomorphic to either $C^\infty_c(I)$ or $\Diff^\infty_c(I)$.
\end{coro}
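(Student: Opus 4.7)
The plan is to reduce to Corollary~\ref{coro: combine strategy} by applying the already-established cases of Theorem~\ref{thm: not K imp big cent} to the induced affine map on the base $\bar X$. Let $\bar f_0 \in \Aff(\bar X)$ denote the map induced by $f_0$ on $\bar X = \bar G/\bar B$. My goal will be to produce a translational perturbation $\bar f$ of $\bar f_0$ that preserves a fiber bundle tower of $\bar X$ whose base map is periodic; Corollary~\ref{coro: combine strategy} will then immediately supply the required translational perturbation $f$ of $f_0$.

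The first step is to verify that $\bar f_0$ inherits the property of having trivial hyperbolically generated subgroup. Since triviality of $H$ is equivalent to every eigenvalue of $\hat f_0$ having modulus $1$, and the induced automorphism $\hat{\bar f}_0$ on $\mathrm{Lie}(\bar G)$ is a quotient of $\hat f_0$ by an invariant subspace, its eigenvalues form a subset of those of $\hat f_0$; thus they also lie on the unit circle, and the hyperbolically generated subgroup of $\bar f_0$ is trivial.

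The second step is to extract from the proofs of the solvable, compact semisimple, and semisimple-without-compact-factors cases of Theorem~\ref{thm: not K imp big cent} the stronger intermediate assertion needed: namely, that the perturbation they construct on $\bar X$ in fact preserves a fiber bundle tower of $\bar X$ with periodic base map. In the solvable case this is the tower built from the Mostow structure theorem and the upper central series, terminating in a torus on which Lemma~\ref{lem: key alg num lem } yields a periodic translational perturbation. In the compact semisimple case, the tower $T_b^0/(T_b^0\cap \bar B) \to \bar X \to \bar G/(T_b^0 \cdot \bar B)$ is constructed directly after perturbing so that $x_k(a')b$ is a torsion element, with periodic base map. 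In the semisimple-without-compact-factors case, Lemma~\ref{semifinper} provides a periodic translational perturbation of $\bar f_0$ itself, which is regarded as the trivial one-step tower with $\bar X$ as its own base.

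With $\bar f$ in hand, the final step is to apply Corollary~\ref{coro: combine strategy} with $Y = \bar X$, $g_0 = \bar f_0$, and $g = \bar f$, yielding a translational perturbation $f$ of $f_0$ such that $\Z^\infty(f)$ contains a subgroup isomorphic to $C^\infty_c(I)$ or $\Diff^\infty_c(I)$. The main obstacle I anticipate is really one of bookkeeping in the second step: the three earlier proofs are written so as to conclude directly about the centralizer, and the intermediate data---an explicit fiber bundle tower on $\bar X$ with periodic base map---must be read off from their arguments rather than from a clean statement. No new dynamical input is needed, only careful isolation of the structure those proofs already construct before invoking the key propositions.
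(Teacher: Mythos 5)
Your proposal is correct and follows essentially the same route as the paper, whose proof of Corollary~\ref{coro: tower reduction} is exactly "apply the preceding proofs of Theorem~\ref{thm: not K imp big cent} in the solvable and two semisimple cases to the base map and invoke Corollary~\ref{coro: combine strategy}." Your added observations (that $\bar f_0$ inherits trivial hyperbolically generated subgroup since the eigenvalues of the induced automorphism on $\mathrm{Lie}(\bar G)$ are among those of $\hat f_0$, and that the earlier case proofs in fact produce towers with periodic base maps) are just the bookkeeping the paper leaves implicit.
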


\begin{proof} This follows from the preceding proof of Theorem~\ref{thm: not K imp big cent} in the solvable and two semisimple cases and Corollary \ref{coro: combine strategy}.
\end{proof}

\section{The proof of Theorem \ref{thm: not K imp big cent} for general $G$ (mixed solvable and semisimple).}\label{sec: general case}

In this section we prove Theorem \ref{thm: not K imp big cent} in the most general case. Let $G$ be a connected Lie group and $B$ be a closed cocompact subgroup of $G$ with a finite invariant covolume. Let $f_0=L_a\circ A$ be an affine automorphism of the homogeneous space $X=G/B$ that is not a $K$-system. To prove Theorem \ref{thm: not K imp big cent}, we may assume that the hyperbolically generated subgroup for $f_0$ is trivial by Corollary \ref{coro: H trivial coro}.

For technical reasons, we introduce the notion  of ``admissible subgroup'' in our discussions. Historically, this terminology was used by Dani \cite{Dani77} to describe the closure of the image of a lattice in a Lie group under a continuous homomorphism. From this point of view, he derived many important properties of admissible subgroups, which are shared by lattices. One of them is of particular interest and wide use, so it becomes the modern definition \cite{BM81} of an admissible subgroup as follows. 

\begin{definition}[admissible subgroup]\label{def: admissible}
    Let $G$ be a connected Lie group. A closed subgroup $B\subseteq G$ is called admissible, if 
\begin{enumerate}
    \item $G/B$ admits a finite invariant volume.
    \item There exists a closed connected solvable subgroup $T$ of $G$,
    which contains the radical of $G$, is normalized by $B$ and such that $T\cdot B$ is closed. \label{item: T}
\end{enumerate}
\end{definition}

We will use the following criterion for a closed subgroup to be admissible, due to Morris. 
\begin{lemma}[$\hbox{\cite[Theorem 4.12]{Wit87}}$]\label{Admissible iff finite volume} 
Every closed subgroup with a finite invariant covolume in a Lie group is admissible.
\end{lemma}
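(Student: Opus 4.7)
The natural candidate for the subgroup $T$ in condition~(2) of Definition~\ref{def: admissible} is the radical $R := R(G)$ itself. As a characteristic subgroup of $G$, the radical is normalized by every subgroup, in particular by $B$; it is moreover connected, closed, and solvable by construction, and trivially contains itself. The only non-trivial requirement is therefore that $R\cdot B$ be closed in $G$, which, since $R$ is normal in $G$, is equivalent to the image $\pi(B)$ being closed in the semisimple quotient $G/R$, where $\pi\colon G\to G/R$ is the canonical projection.

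The key input is a Mostow-type structural result: for any closed subgroup $B$ of a connected Lie group $G$ such that $G/B$ carries a finite $G$-invariant measure, the image $\pi(B)$ in the semisimple quotient $G/R$ is closed and itself has finite covolume. In the classical case where $B$ is a lattice, this is due to Mostow (and is treated, for instance, in Raghunathan's book \cite{Rag72}); the extension to arbitrary closed subgroups of finite covolume is precisely the content of \cite[Theorem 4.12]{Wit87}. Granted this closure statement, setting $T := R(G)$ verifies every clause in Definition~\ref{def: admissible}, and the lemma follows immediately.

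The step I would carry out, and which I expect to be the main obstacle, is the reduction of the closure statement from the already-known lattice case to the general case of a closed subgroup of finite covolume. A Fubini-type argument applied to the $G$-invariant measure on $G/B$ produces a finite invariant measure on $G/\overline{R\cdot B}$, so the closure $\overline{R\cdot B}$ is at least a closed subgroup of finite covolume; the task is to show that this closure coincides with $R\cdot B$. The delicate point is handling the identity component $B^0$ of $B$, which need not be normal in $G$ and therefore precludes a na\"ive quotient by $B^0$. Witte's approach dispatches this via an induction on $\dim B^0$, at each step using that the finite-covolume hypothesis forces $B^0$ into a tightly restricted structural position relative to $R$ and the Levi factor, ultimately reducing to Mostow's classical theorem applied to the discrete quotient $B/B^0$. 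I would follow this outline.
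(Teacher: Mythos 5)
There is a genuine gap, and it sits exactly where the definition of admissibility is subtle. Your plan is to take $T=R$, the radical, and you assert that the only nontrivial point is closedness of $R\cdot B$, which you then claim follows from a ``Mostow-type'' statement: that the image of $B$ in the semisimple quotient $G/R$ is closed. That statement is false in general, even when $B=\Gamma$ is a lattice: the projection of a lattice to $G/R$ need not be discrete or closed (its closure can have positive-dimensional identity component), so $R\cdot\Gamma$ need not be closed. This failure is precisely the reason Dani and Brezin--Moore formulate condition (2) of Definition~\ref{def: admissible} with a possibly larger connected solvable group $T\supseteq R$ rather than with $R$ itself; in the lattice case the standard choice is essentially $T=\bigl(\overline{\Gamma\cdot R}\bigr)^0\cdot R$, and the nontrivial content is that this identity component is solvable --- the Auslander--Wang theorem, extended by Morris in \cite{Wit87} from lattices to closed subgroups of finite invariant covolume. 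So the choice $T=R$ cannot be the route: if it worked, admissibility would be an empty condition beyond finite covolume, and no theorem would be needed.

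There is also a circularity problem: the ``key input'' you invoke is attributed to \cite[Theorem 4.12]{Wit87}, which is the very statement you are asked to prove, and your closing paragraph defers the actual argument (the induction on $\dim B^0$, the structural control of $B^0$ relative to $R$ and a Levi factor) to Witte's proof rather than carrying it out. Note that the paper itself gives no proof here --- the lemma is quoted from Morris --- so a blind proof would have to reproduce the substance of his argument: show that $\bigl(\overline{B\cdot R}\bigr)^0$ is solvable (the generalized Auslander theorem, proved via Zariski-density/Borel-density considerations and a reduction handling $B^0$), then take $T$ to be that solvable group (enlarged by $R$), which is normalized by $B$ and for which $T\cdot B=\overline{B\cdot R}\cdot B$ is closed by construction. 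As written, your proposal neither identifies the correct $T$ nor proves the solvability statement on which the result rests.
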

 In particular any lattice is admissible. 

One can also extract from the proof of  \cite[Theorem 4.12]{Wit87} the following  lemma.

\begin{lemma}
    If $B\subseteq G$ is admissible, then the solvable subgroup $T$ in item \eqref{item: T} can be chosen so that for all  $A\in\mathrm{Aut}(G)$, if  $B$ is $A$-characteristic then so is  $T$.  (See Appendix~\ref{app: A} for a definition of a characteristic subgroup of a Lie group).
\end{lemma}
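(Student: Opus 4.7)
The plan is to revisit the construction of $T$ in the proof of \cite[Theorem 4.12]{Wit87} and to verify that it depends on $B$ only through $G$-intrinsic, functorial operations, so that any $A\in\Aut(G)$ with $A(B)=B$ automatically satisfies $A(T)=T$. As preparation, I would assemble the following equivariance facts: the solvable radical $R=\mathrm{rad}(G)$ is characteristic in $G$; for any closed subgroup $H\le G$, the closure $\overline{H}$, the identity component $H^0$, and (when $H$ is connected) the solvable radical $\mathrm{rad}(H)$ are preserved by every automorphism of $G$ that preserves $H$; and any $A\in\Aut(G)$ fixing $B$ preserves $R$ and so descends through the quotient map $\pi\colon G\to G/R$ to an automorphism $\bar A$ of $G/R$ with $\bar A(\pi(B))=\pi(B)$.

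Second, I would propose as a canonical candidate
\[
T \;:=\; \pi^{-1}\!\Bigl(\mathrm{rad}\bigl(\overline{\pi(B)}^{\,0}\bigr)\Bigr),
\]
and check that it satisfies all the structural requirements of Definition \ref{def: admissible}. By construction $T\supseteq R$. The quotient $T/R=\mathrm{rad}(\overline{\pi(B)}^{\,0})$ is connected and solvable, and $R$ itself is connected and solvable, so $T$ is closed, connected, and solvable as a solvable-by-solvable extension. The subgroup $B$ normalizes $T$ because $\pi(B)$ normalizes $\overline{\pi(B)}^{\,0}$ by self-conjugation, and therefore normalizes its characteristic subgroup $\mathrm{rad}(\overline{\pi(B)}^{\,0})$; pulling back by $\pi$ gives that $B$ normalizes $T$. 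The $A$-equivariance is then automatic from the facts recalled above: $\bar A$ fixes $\pi(B)$, hence its closure, its identity component, and its solvable radical, so $A(T)=T$.

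The main obstacle is verifying that $T\cdot B$ is closed for this canonical choice, which is precisely the nontrivial content of admissibility. Since $T\supseteq R=\ker\pi$, we have $T\cdot B=\pi^{-1}\!\bigl(\pi(T)\cdot\pi(B)\bigr)$, so closedness of $T\cdot B$ in $G$ reduces to closedness of $\mathrm{rad}(\overline{\pi(B)}^{\,0})\cdot \pi(B)$ in the semisimple quotient $G/R$. Here I would follow Morris's strategy in \cite[Theorem 4.12]{Wit87}: $\pi(B)$ is dense in $\overline{\pi(B)}$ and meets every connected component, and a Levi decomposition of $\overline{\pi(B)}^{\,0}$ exhibits $\mathrm{rad}(\overline{\pi(B)}^{\,0})\cdot\pi(B)$ as the preimage of a discrete set under the projection $\overline{\pi(B)}\to \overline{\pi(B)}/\mathrm{rad}(\overline{\pi(B)}^{\,0})$, giving the required closedness. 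Should this direct verification fail for some pathological $B$, the fallback is to extract directly the $T_0$ built in Morris's proof, intersect it with (or replace it by) its canonical hull $T$ defined above, and check by inspection of the proof that each step of Morris's construction uses only functorial operations in $B$ and in characteristic subgroups of $G$, so that Morris's $T_0$ is itself $A$-invariant whenever $A(B)=B$.
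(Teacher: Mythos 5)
Your canonical choice $T=\pi^{-1}\bigl(\mathrm{rad}(\overline{\pi(B)}^{\,0})\bigr)$, together with the functoriality observations (the radical $R$ is characteristic, and closures, identity components and radicals are preserved by any automorphism preserving the relevant subgroup), is exactly the right mechanism for the equivariance claim, and your fallback option --- inspect Morris's construction in \cite[Theorem 4.12]{Wit87} and note that every step is functorial in $B$ and in characteristic subgroups of $G$ --- is in substance the paper's entire proof, since the paper offers nothing beyond the assertion that the lemma ``can be extracted'' from that proof. The verifications that $T$ is closed, connected, solvable, contains $R$, is normalized by $B$, and is $A$-invariant are all fine.

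The genuine gap is in your ``direct'' verification that $T\cdot B$ is closed. Writing $H=\overline{\pi(B)}$, the set $\mathrm{rad}(H^{0})\cdot\pi(B)$ is a subgroup of $H$ containing the dense subgroup $\pi(B)$, so it is closed if and only if it equals $H$; equivalently, its image in $Q=H/\mathrm{rad}(H^{0})$ must be all of $Q$. Your claim that this image is \emph{discrete} cannot be correct as stated: the image is dense in $Q$ (the quotient map is open), so discreteness would force $Q$ to be discrete, i.e.\ $H^{0}$ to be solvable. That is precisely (a generalized form of) Auslander's theorem, which is the hard content of \cite[Theorem 4.12]{Wit87}, not a consequence of choosing a Levi decomposition; moreover it is false for non-discrete $B$ of finite covolume (take $B=G$ with $G$ semisimple: then $H^{0}=G$ and the image is all of $Q$, not discrete --- closedness holds there for a different reason). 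What is actually needed is that $\mathrm{rad}(H^{0})\cdot\pi(B)\supseteq H^{0}$, i.e.\ that it is open (hence closed) in $H$. In the lattice case this follows from Auslander's theorem (then $H^{0}$ is solvable and equal to its radical), but for a general closed cocompact subgroup of finite covolume --- the setting of the lemma --- this containment is exactly the structural statement Morris proves, and it must be quoted or reproved, not asserted; one must also check that the $T$ produced in his argument agrees with (or can be replaced by) your canonical $T$. So as written the self-contained route is circular at its only nontrivial step, and the proof should instead proceed via your fallback, i.e.\ via the paper's route through \cite[Theorem 4.12]{Wit87}.
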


\begin{lemma}\label{imageadmissible}
    Let $\pi\colon G_1\to G_2$ be a continuous surjective homomorphism between Lie groups.  
\begin{enumerate}
    \item If $B_2\subseteq G_2$ is an admissible subgroup of $G_2$,
    then $B_1:=\pi^{-1}(B_2)$ is an admissible subgroup of $G_1$.
    \item If $B_1\subseteq G_1$ is an admissible subgroup of $G_1$,
    then $B_2:=\overline{\pi(B_1)}$ is an admissible subgroup of $G_2$.
\end{enumerate}
\end{lemma}
\begin{proof}
See \cite{BM81}, Proposition 2.1.
\end{proof}



\bigskip

Our proof will be divided into three cases.

\subsection{The case when $G$ is solvable}\label{sec: 5.1}
The conclusion of Theorem \ref{thm: not K imp big cent} follows from the proof of the solvable case in the previous section.

\subsection{The case when the semisimple part of $G$ is nontrivial and compact}\label{sec 5.2}
For the connected Lie group $G$, we write $R$ for its radical, $N$ for its nilradical, and $S$ for a Levi subgroup of $G$. Here $S$ is assumed to be nontrivial and compact. Inside $S$ we denote by $C_1$  the product of the factors of $S$ that are not normal in $G$, and $C_2$ for the product of the remaining, normal factors of $S$.

In view of Lemma \ref{unicover}, we may assume that $G$ is simply connected. Then $G$ is the semi-direct product of $R$ and $S$, and $S = C_1 \times C_2$. Moreover, $G$ is also the direct product of $C_1R$ and $C_2$.

Consider the fiber bundle of compact homogeneous spaces 
$$
C_2/(C_2\cap B)\to G/B\to G/(C_2\cdot B)\cong\dfrac{G/C_2}{(C_2\cdot B)/C_2},
$$
which is preserved by the affine automorphism $f_0$. Here $G/C_2$ has no nontrivial connected compact semisimple normal subgroup, and $(C_2\cdot B)/C_2$ is still a cocompact admissible subgroup of $G/C_2$, by Lemma \ref{imageadmissible}.
\\

\noindent{\bf Subcase 1}: $G=C_2\cdot B$. In this subcase we have 
$G/B\cong C_2/(C_2\cap B)$. Then the conclusion of Theorem \ref{thm: not K imp big cent} follows from Corollary~\ref{coro: tower reduction} (in the case where the base is compact semisimple).

\bigskip

\noindent{\bf Subcase 2:} $G\neq C_2\cdot B$. Write $G_1=G/C_2$ and $B_1=(C_2\cdot B)/C_2$. In the above direct product structure of $G$, we may identify $G_1$ with $C_1R$, and $B_1$ with a cocompact admissible subgroup of $C_1R$. In  view of Remark \ref{reducetononnormal}, we may also assume that $B_1$ contains no nontrivial closed, connected normal subgroup of $G_1$. Then by Lemma \ref{nilradicalhered}, $f_0$ preserves the following fiber bundle  of compact homogeneous spaces: 
$$
(C_1N)/(C_1N\cap B_1)\to G_1/B_1\to G_1/(C_1N\cdot B_1)\cong\dfrac{G_1/(C_1N)}{(C_1N\cdot B_1)/(C_1N)}.
$$
Here $G_1/(C_1N)\cong R/N$ is an abelian group, and $(C_1N\cdot B_1)/(C_1N)$ is a still a cocompact admissible subgroup of $G_1/(C_1N)$, by Lemma \ref{imageadmissible}.

\medskip

{\bf Subcase 2.1:} $G_1\neq C_1N\cdot B_1$. Write $G_2=G_1/(C_1N)$ and $B_2=(C_1N\cdot B_1)/(C_1N)$. Then $G_2/B_2$ is a torus, and $f_0$ preserves the following fiber bundle tower of compact homogeneous spaces:
$$\begin{tikzcd}
  C_2/(C_2\cap B)  \arrow[r,] & G/B \arrow[d, 
  ] \\
  (C_1N)/(C_1N\cap B_1)  \arrow[r,] & G_1/B_1 \arrow[d, 
  ] \\
 & G_2/B_2. 
\end{tikzcd}
$$
The conclusion of Theorem \ref{thm: not K imp big cent} follows from Corollary~\ref{coro: tower reduction} (in the case where the base is a torus).

\medskip

{\bf Subcase 2.2}: $G_1=C_1N\cdot B_1$. Then we have $G_1/B_1\cong (C_1N)/(C_1N\cap B_1)$.  Lemma \ref{nilradicalhered} implies that the following fiber bundle structure of compact homogeneous spaces is preserved by $f_0$:
$$
N/(N\cap B_1)\to G_1/B_1\to G_1/(N\cdot B_1)\cong C_1/(C_1\cap NB_1).
$$

\smallskip

\hspace{7pt}{\bf Subcase 2.2.1}: $G_1\neq N\cdot B_1$. Then $f_0$ preserves the tower
$$\begin{tikzcd}
  C_2/(C_2\cap B)  \arrow[r,] & G/B \arrow[d, 
  ] \\
  N/(N\cap B_1)  \arrow[r,] & G_1/B_1 \arrow[d, 
  ] \\
 & G_2/B_2 
\end{tikzcd}
$$
The conclusion of Theorem \ref{thm: not K imp big cent} follows from Corollary~\ref{coro: tower reduction} (in the case of torus base).

\smallskip

\hspace{7pt}{\bf Subcase 2.2.2:} $G_1=N\cdot B_1$. In this subcase have $G_1/B_1\cong N/(N\cap B_1)$. Then $f_0$ preserves a fiber bundle tower of compact homogeneous spaces
$$
C_2/(C_2\cap B_1)\to G/B\to G_1/B_1,
$$
and the conclusion of Theorem \ref{thm: not K imp big cent} follows from Corollary~\ref{coro: tower reduction} (where the base is a nilmanifold). 

This completes the proof of Subcase 2.2, and hence of Case 2.  We have proved Theorem~\ref{thm: liecent iff ergodic} in the case where the semisimple part of $G$ is nontrivial and compact.

\subsection{The case when the semisimple part of $G$ is noncompact}  
In this subsection, we consider the remaining case where the Levi subgroup $S$ is noncompact.  Note that this includes the semisimple case not covered in the previous section (i.e., the case where $G$ is sempsimple and contains both compact and noncompact factors).

Note that  $B$ is a cocompact admissible subgroup of $G$ by Lemma \ref{imageadmissible}. Write $R$ for the radical of $G$
and $T$ for the closed, connected, solvable subgroup of $G$ such that 
\begin{enumerate}
    \item $T$ contains $R$;
    \item $T$ is normalized by $B$, and $T\cdot B$ is closed in $G$,
\end{enumerate}
which exists by the admissability of $B$.
It is clear from the definition that $T\cdot B$ is also a cocompact admissible subgroup of $G$.
Moreover, $T\cap B$ is a closed cocompact subgroup of $T$ with a finite invariant covolume, by Lemmas~\ref{heredecomp} and \ref{propertyPfiber} in Appendix~\ref{app: A}.
Consider the $f_0$-invariant fiber bundle structure of compact homogeneous spaces
$$
T/(T\cap B)\to G/B\to G/(T\cdot B)\cong \dfrac{G/R}{(T\cdot B)/R},
$$
If $G=T\cdot B$, then $G/B\cong T/(T\cap B)$  is a solvmanifold, and the theorem follows from 
\ref{sec: 5.1}. Thus we may assume that $G\neq T\cdot B$. Then $G/R$ is a connected, semisimple Lie group, and $(T\cdot B)/R$ is a cocompact admissible subgroup of $G/R$, by Lemma~\ref{imageadmissible}.

Let $G_1=G/R$, $B_1=(T\cdot B)/R$,
and let  $C$  be the compact semisimple factor of $G_1$.
Since $C\cdot B_1$ is closed in $C$ and $C\cap B_1$ is cocompact in $C$, 
we have the following $f_0$-invariant fiber bundle structure of compact homogeneous spaces
$$
C/(C\cap B_1)\to G_1/B_1\to G_1/(C\cdot B_1)\cong 
\dfrac{G_1/C}{(C\cdot B_1)/C}.
$$

If $G_1=C\cdot B_1$, then  $G=C'T\cdot B$, where $C'$ is a compact semisimple factor of $G$. It follows from Lemma \ref{propertyPfiber} that $G/B\cong C'T/(C'T\cap B)$, and we reduce to the case where the semisimple part is noncompact.  The result then follows from Section \ref{sec 5.2}.

Thus we may assume that $G_1\neq C\cdot B_1$. Here $G_1/C$ is a connected semisimple Lie group without compact factors, and 
$(C\cdot B_1)/C$ is a cocompact admissible subgroup of $G_1/C$,
by Lemma \ref{imageadmissible}.

Writing $G_2=G_1/C$ and $C_2=(C\cdot B_1)/C$,
we obtain the $f_0$-invariant tower
$$\begin{tikzcd}
  T/(T\cap B)  \arrow[r,] & G/B \arrow[d, 
  ] \\
C/(C\cap B_1)  \arrow[r,] & G_1/B_1\arrow[d, 
  ] \\
& G_2/B_2
\end{tikzcd}
$$
The conclusion of Theorem \ref{thm: not K imp big cent} then follows from Corollary~\ref{coro: tower reduction} (in the case where the base is semisimple, without compact factors).


%
%


%


\appendix

\section{Some facts from Lie theory}\label{app: A}
\subsection{Characteristic subgroups}
We first recall the definition of a characteristic subgroup of a connected Lie group.
\begin{definition}[Characteristic subgroup]
    Let $G$ be a connected Lie group, and let $B\subseteq G$ be a closed, connected subgroup. For an automorphism $A$ of $G$, 
    $B$ is said to be {\em $A$-characteristic} if $A(B)=B$.
    We say that $B$ {\em characteristic} if $B$ is $A$-characteristic
    for all automorphisms $A$ of $G$.
\end{definition}
For a general connected Lie group $G$,
its radical $R$ is the unique maximal connected solvable normal subgroup,
and its nilradical $N$ is the unique maximal connected nilpotent normal subgroup.
In $G$ there are maximal connected semisimple subgroups and all of them are conjugate; if $S$ is one of them, then $G=R\cdot S$.
Moreover, if $G$ is simply connected, then $R\cap S=\{e\}$ and the product is semi-direct. This is called the {\em Levi-Malcev decomposition} of $G$.

For a given $G$, we write $Q$ for the product of all noncompact factors of $S$, and 
$C$ for the product of all compact factors of $S$.
In $C$, we define $C_1$ to be the product of the factors of $C$ that are not normal in $G$ and $C_2$ to be the product of the factors of $C$ that are normal in $G$. Then $C_2$ is characterized as the {\em unique } maximal connected, compact semisimple normal subgroup of $G$.

\begin{lemma}[$\hbox{See \cite[Lemma 1.1]{Iwa}}$]\label{charsubgroup}
 Using the  notation above, the subgroups  $R$,$N$,$C_2$,
 $C_1R$,$C_1N$ and $QC_1R$ are all characteristic in $G$.
\end{lemma}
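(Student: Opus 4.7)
The plan is to handle the six subgroups by two complementary arguments. For $R$, $N$, and $C_2$, I will invoke standard uniqueness characterizations. For $C_1R$, $C_1N$, and $QC_1R$, I will first establish that they are normal in $G$ (so that inner automorphisms act trivially on them), and then exploit Mostow's theorem on the conjugacy of Levi subgroups to reduce to automorphisms fixing a chosen Levi $S$.

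First I would dispense with the three easy cases. The radical $R$ is the unique maximal connected solvable normal subgroup of $G$, so for any automorphism $A$, the image $A(R)$ has the same property and must equal $R$; the nilradical $N$ is characteristic for the analogous reason. For $C_2$, I would observe that any connected compact semisimple normal subgroup of $G$ lies in some Levi subgroup and, being normal in $G$, decomposes into simple factors each of which is normal in $G$; hence such a subgroup is contained in $C_2$, making $C_2$ the unique maximal compact semisimple connected normal subgroup of $G$ and therefore characteristic.

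For the remaining three subgroups, the central tool is Mostow's theorem that any two Levi subgroups of $G$ are conjugate by an element of $G$. Hence for any automorphism $A$, one has $A(S)=gSg^{-1}$ for some $g\in G$, and after composing with $\mathrm{Inn}(g^{-1})$ I may assume $A(S)=S$. Such an $A$ permutes the simple factors of $S$, and this permutation preserves compactness (since the sign of the Killing form is a Lie-algebra invariant) as well as the property of a simple factor being normal in $G$ (since automorphisms of $G$ carry normal subgroups to normal subgroups). Consequently $A(Q)=Q$, $A(C_1)=C_1$, and $A(C_2)=C_2$, and combined with $A(R)=R$ and $A(N)=N$ this yields the preservation of $C_1R$, $C_1N$, and $QC_1R$ by $A$. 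To absorb the inner automorphism introduced at the start, I only need each of these subgroups to be normal in $G$. This reduces to the commutator identities $[\mathfrak{g},\mathfrak{r}]\subseteq\mathfrak{n}$ (standard for the radical) together with $[S,C_1]\subseteq C_1$ and $[S,Q]\subseteq Q$ (since $C_1$ and $Q$ are sums of ideals in the semisimple Lie algebra $\mathfrak{s}$). From these one obtains $[G,C_1]\subseteq C_1N$, $[G,Q]\subseteq QN$, and $[G,R]\subseteq R$, from which the normality of $C_1R$, $C_1N$, and $QC_1R$ follows routinely.

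The main obstacle I anticipate is careful bookkeeping in the non-simply-connected setting: small discrete intersections between $S$ and $R$ and between distinct simple factors could in principle complicate the identification of compact versus noncompact, and normal versus non-normal, factors. A natural remedy is to pass to the universal cover $\tilde{G}$, where the Levi--Malcev decomposition becomes a clean semidirect product; any automorphism of $G$ lifts to one of $\tilde{G}$, and the six subgroups under consideration are images of their evident analogues in $\tilde{G}$. The conclusion then reduces to the simply connected case, recovering Iwasawa's original argument.
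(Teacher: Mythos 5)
Your proof is correct. Note that the paper does not actually prove this lemma; it simply cites Iwamoto's Lemma 1.1, and your argument is essentially the standard proof that the citation stands in for: uniqueness/maximality characterizations handle $R$, $N$ and $C_2$, while for the remaining three subgroups one checks at the Lie algebra level that $\mathfrak{c}_1+\mathfrak{r}$, $\mathfrak{c}_1+\mathfrak{n}$ and $\mathfrak{q}+\mathfrak{c}_1+\mathfrak{r}$ are ideals of $\mathfrak{g}$ (your inclusions $[\mathfrak{g},\mathfrak{r}]\subseteq\mathfrak{n}$, $[\mathfrak{g},\mathfrak{c}_1]\subseteq\mathfrak{c}_1+\mathfrak{n}$ and $[\mathfrak{g},\mathfrak{q}]\subseteq\mathfrak{q}+\mathfrak{n}$ do exactly this), and then reduces an arbitrary automorphism, modulo an inner one, to an automorphism preserving the chosen Levi subgroup, which must permute its simple factors while preserving both compactness and normality in $G$. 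Two cosmetic remarks, neither of which affects validity. First, the conjugacy of Levi (maximal connected semisimple) subgroups that you invoke is the Malcev--Harish-Chandra (Levi--Malcev) theorem rather than a theorem of Mostow; the Mostow structure theorem used elsewhere in the paper concerns solvable groups. Second, the paper's definition of a characteristic subgroup presupposes a closed connected subgroup, and your argument only addresses invariance; closedness deserves one sentence: $R$ and $N$ are closed, $C_1$ and $C_2$ are compact, hence $C_1R$ and $C_1N$ are closed (a product of a compact subgroup with a closed subgroup is closed), and $QC_1R$ is closed because it contains $R$ and is the full preimage under $G\to G/R$ of a connected normal, hence closed, subgroup of the semisimple group $G/R$. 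Your universal-cover reduction is also sound, since normality of a connected subgroup is detected on the Lie algebra, so the partition of the compact factors into $C_1$ and $C_2$ is unchanged upon lifting.
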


\subsection{Reductions on homogeneous spaces}

In many cases, we expect that a closed cocompact subgroup $B$ of $G$ with a finite covolume to have similar properties to a lattice. In order for this to be true, we need to ``remove" any closed subgroups of $B$ that are normal in $G$ to obtain new groups $G', B'$ with the same quotient.  The following lemma makes this precise.
\begin{lemma}\label{normalinadmissible}
Let $G$ be a connected Lie group, and let $B\subseteq G$ be a closed subgroup. 
Then $B$ contains a unique maximal closed subgroup $B_0$
that is normal in $G$. Moreover, $B_0$ is $A$-characteristic for every automorphism $A$ of $G$ satisfying $A(B)=B$.

\end{lemma}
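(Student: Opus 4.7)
The plan is to construct $B_0$ directly as the closure of the group generated by all candidates, and then verify maximality, uniqueness, and $A$-invariance essentially by definition-chasing.

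First I would let $\mathcal{N}$ denote the family of all closed subgroups $H\subseteq G$ such that $H\subseteq B$ and $H\trianglelefteq G$. This family is nonempty since it contains $\{e\}$. Let $B_0$ be the closure in $G$ of the (abstract) subgroup generated by $\bigcup_{H\in\mathcal{N}} H$. Since each $H\in\mathcal{N}$ is contained in $B$ and $B$ is a subgroup, the generated subgroup lies in $B$; because $B$ is closed, $B_0\subseteq B$. Normality of $B_0$ in $G$ follows from the fact that each $H\in\mathcal{N}$ is normal in $G$, so the subgroup they generate is normal, and conjugation by any $g\in G$ is a homeomorphism of $G$, which gives $gB_0g^{-1}=g\,\overline{\langle\mathcal{N}\rangle}\,g^{-1}=\overline{g\langle\mathcal{N}\rangle g^{-1}}=\overline{\langle\mathcal{N}\rangle}=B_0$. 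Thus $B_0$ itself belongs to $\mathcal{N}$.

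Next I would establish maximality and uniqueness. If $H\in\mathcal{N}$, then by construction $H$ is contained in the subgroup generated by $\bigcup\mathcal{N}$, hence in $B_0$; this shows $B_0$ is the unique maximal element of $\mathcal{N}$ (any other maximal element would have to contain $B_0$ and be contained in it).

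For the $A$-characteristic statement, suppose $A\in\mathrm{Aut}(G)$ satisfies $A(B)=B$. Then for every $H\in\mathcal{N}$, the image $A(H)$ is again a closed subgroup of $G$ (since $A$ is a homeomorphism), is contained in $A(B)=B$, and is normal in $G$ because $A(gHg^{-1})=A(g)A(H)A(g)^{-1}$ and $A$ is a bijection of $G$. Hence $A(H)\in\mathcal{N}$, so $A$ permutes $\mathcal{N}$. By maximality, $A(B_0)\subseteq B_0$, and applying the same reasoning to $A^{-1}$ gives the reverse inclusion, so $A(B_0)=B_0$.

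This argument is essentially soft and I do not expect a genuine obstacle; the only subtle point is verifying that the closure of a normal (not necessarily closed) subgroup is normal, which is a general fact about topological groups. No Lie-theoretic input is actually needed beyond $G$ being a topological group and $B$ being closed.
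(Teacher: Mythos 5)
Your proof is correct, and it differs from the paper's in one useful way. The paper picks a maximal closed normal subgroup $B_0\subseteq B$ (existence is left implicit), proves uniqueness by observing that for two maximal candidates $B_0,B_0'$ the subgroup $\overline{B_0\cdot B_0'}$ is again a closed subgroup of $B$ normal in $G$, so maximality forces $B_0=\overline{B_0\cdot B_0'}=B_0'$, and then gets $A(B_0)=B_0$ from uniqueness since $A(B_0)$ is again such a maximal subgroup. You instead build the top element of the family $\mathcal{N}$ directly, as the closure of the subgroup generated by all members of $\mathcal{N}$, and verify it stays in $\mathcal{N}$; this gives existence, maximality and uniqueness in one stroke, and the $A$-invariance argument (that $A$ permutes $\mathcal{N}$, plus the same reasoning for $A^{-1}$) is the same in substance as the paper's uniqueness argument. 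The underlying soft facts are identical in both routes (closures of normal subgroups are normal, a subgroup generated by normal subgroups of $G$ lying in $B$ again lies in $B$ and is normal, and automorphisms preserving $B$ permute the family), but your construction buys a complete treatment of existence, which in the paper's version would require a Zorn-type or dimension argument that is not spelled out; the paper's version is slightly shorter once existence is granted. One small point worth making explicit in your write-up: the automorphisms in this paper are Lie group automorphisms, hence homeomorphisms of $G$, which is what lets you conclude that $A(H)$ is closed and that $A(B_0)$ is closed.
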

\begin{proof}
    Let $B_0\subseteq B$ be a maximal closed subgroup that is normal in $G$. 
If there were another such maximal subgroup $B_0'$, then  $\overline{B_0\cdot B_0'}$ would be yet another,
which implies that $B_0=\overline{B_0\cdot B_0'}=B_0'$ by maximality of $B_0$ and $B_0'$.
Thus $B_0$ is unique. 

Let $A$ be an automorphism preserving $B$.  Since $A(B_0)\subseteq B$ is also a maximal closed subgroup  normal in $G$, by uniqueness we conclude that $A(B_0)=B_0$.
\end{proof}

\begin{remark}\label{reducetononnormal}
    In view of Lemma \ref{normalinadmissible}, we have an isomorphism of homogeneous space
    $$
    G/B\cong \dfrac{G/B_0}{B/B_0},
    $$
    which is preserved by any automorphism $A$ of $G$ with $A(B)=B$.
    Here $B/B_0$ contains no nontrivial closed, normal subgroup of $G/B_0$.
Thus quotienting by the maximal closed subgroup of $B$ that is normal in $G$, we may always assume that $B$ contains no nontrivial closed normal subgroup of $G$.
\end{remark}

The following lemma, also concerning the reduction of homogeneous spaces, is used in the proofs of our key propositions (\ref{prop: key prop 1} and \ref{prop: key prop 2}).
\begin{lemma}\label{nontrivialvectorfield}
    Let $X=G/B$ where $G$ is a connected Lie group and $B$ is a cocompact closed subgroup in $G$. Suppose that there exists a right-invariant vector field $V$ on $G$ that is generated by an element $v\in\mathfrak{g}=\mathrm{Lie}(G)$. Then its projection 
    $\overline{V}$ onto $X$ is nontrivial if and only if 
    the smallest closed normal subgroup containing $\{\exp(tv)\}_{t\in\RR}$ in $G$ isn't contained in $B$.
\end{lemma}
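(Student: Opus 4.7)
The plan is to translate the condition that $\overline{V}$ is identically zero on $X$ into a group-theoretic statement about the one-parameter subgroup $\{\exp(tv)\}_{t\in\RR}$, and then identify this with containment of the normal closure in $B$.

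First, I would recall from Lemma~\ref{l=invariant framing} that the flow of $V$ through $g\in G$ is $t\mapsto \exp(tv)\cdot g$, so the flow of $\overline V$ through $gB\in X$ is $t\mapsto \exp(tv)\cdot gB$. Hence $\overline V(gB)=0$ if and only if $\exp(tv)\cdot gB=gB$ for all $t\in\RR$, which is equivalent to $g^{-1}\exp(tv)g\in B$ for all $t\in\RR$. Taking this over all $gB\in X$, the vector field $\overline V$ vanishes identically on $X$ if and only if
\[
\{\exp(tv)\}_{t\in\RR}\subseteq \bigcap_{g\in G} gBg^{-1}=:K.
\]

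Next, I would observe that $K$ is a closed subgroup of $G$ (an intersection of closed subgroups) and is normal in $G$ by construction; in fact $K$ is the largest closed normal subgroup of $G$ contained in $B$, since any closed subgroup $N\lhd G$ with $N\subseteq B$ satisfies $N=gNg^{-1}\subseteq gBg^{-1}$ for every $g$, hence $N\subseteq K$. Let $N_v$ denote the smallest closed normal subgroup of $G$ containing $\{\exp(tv)\}_{t\in\RR}$. If $\{\exp(tv)\}_{t\in\RR}\subseteq K$, then by minimality of $N_v$ we obtain $N_v\subseteq K\subseteq B$. Conversely, if $N_v\subseteq B$, then $N_v\subseteq K$ (since $K$ is the largest such normal subgroup), so $\{\exp(tv)\}_{t\in\RR}\subseteq N_v\subseteq K$. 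Combining these two implications gives
\[
\overline V\equiv 0 \text{ on } X \quad\Longleftrightarrow\quad N_v\subseteq B,
\]
and taking the contrapositive yields the lemma.

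I do not expect any significant obstacle: the argument is a direct unwinding of the definition of the induced flow, together with the standard observation that $\bigcap_{g} gBg^{-1}$ is the maximal normal subgroup of $G$ contained in $B$. The only point requiring a little care is the equivalence between the pointwise containment $\{\exp(tv)\}\subseteq K$ and the containment of the normal closure $N_v\subseteq B$, but this is handled cleanly by the two inclusions above.
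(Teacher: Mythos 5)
Your proof is correct and follows essentially the same route as the paper: compute the flow of $\overline V$ through $gB$ as $t\mapsto \exp(tv)\cdot gB$, and translate its vanishing everywhere into containment of the one-parameter subgroup (equivalently, of its closed normal closure) in $B$. The only difference is that you spell out the final equivalence via the maximal closed normal subgroup $\bigcap_{g\in G} gBg^{-1}\subseteq B$, a detail the paper's proof leaves implicit.
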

\begin{proof}
    Through any point $gB\in X$, the flow generated by $\overline{V}$ is given by $t\mapsto \exp(tv)\cdot gB$.
    It follows that $\overline{V}$ is trivial on $X$ if and only if the following condition holds:
    $$
    \exp(tv)\cdot gB=gB,\;\forall t\in\RR,\;\forall g\in G.
    $$
    This is equivalent to the statement that
    the smallest closed normal subgroup containing $\{\exp(tv)\}_{t\in\RR}$ in $G$ is contained in $B$.
\end{proof}

\subsection{Proof of Lemma \ref{lem: cllpse proc imp spcl fb}: Constancy of linear parts }\label{app: Lemma 18}
To prove Lemma \ref{lem: cllpse proc imp spcl fb}, we first consider an elementary fact about periodic diffeomorphisms.
\begin{lemma}\label{wildperiod}
Let $M$ be a connected, closed smooth manifold, and let $f$ be a periodic diffeomorphism of $M$ with minimal period $d$. Then there exists an open dense set $\mathfrak U\subset M$ such that for any $x\in \mathfrak U$, there exists a nonempty open neighborhood $U\subset M$ containing $x$ with $U\cap f^j(U)=\emptyset$ for all $1\leq j\leq d-1$. In particular, any diffeomorphism $g$ supported on $U$ extends to a diffeomorphism (with support in $\bigcup_{j=0}^{d-1}f^j(U)$) that commutes with $f$.
\end{lemma}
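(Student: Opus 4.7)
The plan is to locate the dense open set $\mathfrak U$ first and then build the commuting extension by a standard conjugation trick. For the first part, the natural candidate is the set of points with full orbit under $\langle f\rangle$:
\[
\mathfrak U := M\setminus \bigcup_{j=1}^{d-1}F_j, \qquad F_j := \{x\in M: f^j(x)=x\}.
\]
Each $F_j$ is closed, so $\mathfrak U$ is automatically open; by the Baire category theorem, $\mathfrak U$ will be dense as soon as every $F_j$ with $1\le j\le d-1$ has empty interior, which is the crux of the argument.

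To prove this emptiness, I will use an averaging trick to render $f$ an isometry. Fixing any Riemannian metric $h$ on $M$, the pullback average $\bar h:=\sum_{k=0}^{d-1}(f^k)^{*}h$ is a metric for which every power of $f$ is an isometry. Isometries of a connected Riemannian manifold are determined by their $1$-jet at any single point, so if $f^j$ coincided with the identity on some nonempty open set, picking a point $p$ in that set would give $f^j(p)=p$ and $Df^j(p)=\mathrm{id}$, forcing $f^j=\mathrm{id}$ globally and contradicting the minimality of $d$. The main technical point is therefore this averaging-plus-rigidity step; the rest of the proof is elementary.

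With $\mathfrak U$ in hand, I will fix $x\in\mathfrak U$ and, using that $M$ is Hausdorff, choose pairwise disjoint open neighborhoods $V_0,\dots,V_{d-1}$ of the $d$ distinct orbit points $x,f(x),\dots,f^{d-1}(x)$. Then $U:=\bigcap_{i=0}^{d-1}f^{-i}(V_i)$ is an open neighborhood of $x$ satisfying $f^i(U)\subseteq V_i$, and in particular the iterates $U,f(U),\dots,f^{d-1}(U)$ are pairwise disjoint, as required.

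For the extension, given $g\in\Diff^\infty(M)$ supported in $U$, I define
\[
\widetilde g(y):=\begin{cases} f^j\circ g\circ f^{-j}(y), & y\in f^j(U),\ 0\le j\le d-1,\\ y, & \text{otherwise.}\end{cases}
\]
Disjointness of the $f^j(U)$ makes $\widetilde g$ well-defined and smooth, with support contained in $\bigsqcup_{j=0}^{d-1}f^j(U)$. Verifying $\widetilde g\circ f=f\circ\widetilde g$ reduces to splitting by which $f^j(U)$ contains the input point, and using $f^d=\mathrm{id}$ to handle the wrap-around from $j=d-1$ back to $j=0$; outside the invariant union $\bigsqcup_{j}f^j(U)$ both sides trivially equal $f$. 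This completes the construction.
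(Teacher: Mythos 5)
Your proof is correct, and the construction of $U$ and of the commuting extension by conjugation ($\widetilde g = f^j g f^{-j}$ on $f^j(U)$, identity elsewhere) is essentially the same as in the paper. The genuine difference is in the key step, namely showing that the fixed-point set of $f^j$ has empty interior for $1\le j\le d-1$. The paper invokes Newman's classical theorem, which asserts that a nontrivial periodic \emph{homeomorphism} of a connected manifold cannot fix an open set; you instead average a Riemannian metric over the cyclic group generated by $f$ (legitimate, since $f^d=\mathrm{id}$ makes $\bar h=\sum_{k=0}^{d-1}(f^k)^*h$ invariant) and use the standard rigidity of isometries: an isometry of a connected Riemannian manifold fixing a point with identity derivative is the identity, contradicting minimality of $d$. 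Your route is more elementary and self-contained, but it exploits smoothness of $f$ in an essential way; Newman's theorem is the stronger topological input and is what one would need if $f$ were only a periodic homeomorphism (which is also why the paper phrases it that way). Your set $\mathfrak U = M\setminus\bigcup_{j=1}^{d-1}F_j$ coincides with the paper's set $Y$ defined via proper divisors of $d$, since $f^j(x)=x$ for some $1\le j\le d-1$ forces $f^{\gcd(j,d)}(x)=x$. One small point, present at the same level of informality in the paper as in your write-up: for smoothness of $\widetilde g$ one should read "supported in $U$" as meaning the closure of $\{g\neq\mathrm{id}\}$ is contained in $U$, so that $\widetilde g$ is the identity near the boundary of each $f^j(U)$.
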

\begin{proof}
Take $S\subset \ZZ^+$ to be the set formed by all $n\in \ZZ^+$ such that $n|d$, and let $Y:=\bigcap_{n\in S}\{x\in M: f^n(x)\neq x\}$. A classical result of Newman \cite{New31} states that the set of fixed points of any periodic homemorphism $g$ on a connected manifold has empty interior unless $g=id$. Therefore $Y$ is a finite intersection of open dense set (since $d$ is the minimal period) and hence is open dense as well. In particular, $Y$ is non-empty. Let $y\in Y$ be arbitrary,  Since $f^j(y)\neq y, 1\leq j\leq d-1$, there is a neighborhood $U$ of $y$  such that $f^j(U)\cap U=\emptyset, 1\leq j\leq d-1$. 

For any $g$ supported in $U$, we extend $g$ to a diffeomorphism supported in  $\cup_{j=0}^{d-1} f^j(U)$ via conjugation, setting $g|_{f^j(U)}$ to be $f^j \circ g|_U\circ f^{-j}|_{f^j(U)}$. Since $\supp g|_U\subset U$, the newly defined extension of $g$ is smooth on $M$ (since $f^i(U)\cap f^j(U)=\emptyset, 1\leq i<j\leq d-1$), and by construction,  $g$ commutes with $f$.\end{proof}
\begin{proof}[Proof of Lemma \ref{lem: cllpse proc imp spcl fb}]We only need to prove the following lemma. \begin{lemma}\label{locallyconstantlinearpart}
  Let $G$ be a connected Lie group, let
  $B\subseteq G$ be a proper closed cocompact subgroup,
  and  let $H\subseteq G$ be a closed subgroup such that $H\cdot B=B\cdot H$.
  Suppose that $f$ is an affine automorphism of $G$ with linear part $A$,
  such that both $B$ and $H$ are $A$-characteristic,
  and that $\bar{f}$ is $d$-periodic on $G/(\overline{H\cdot B})$.
  Then there exists an open dense subset of $G/(\overline{H\cdot B})$,
  in which each point $x$ admits a sequence of  small open neighborhoods ${U_x},\bar{f}({U_x}),\cdots,\bar{f}^{d-1}({U_x})$ along its orbit, over which we may choose a sequence of local trivializations such that the linear parts of $f$ along fibers over ${U_x},\bar{f}({U_x}),\cdots,\bar{f}^{d-1}({U_x})$  are constant on each component.
\end{lemma}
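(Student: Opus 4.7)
The plan is to construct compatible trivializations of the fiber bundle $\overline{HB}/B \to G/B \to G/\overline{HB}$ along the entire orbit $U_x, \bar f(U_x), \ldots, \bar f^{d-1}(U_x)$ from a single local section of the quotient $\pi\colon G \to G/\overline{HB}$ and its pushforwards under iterates of $f$. First, I will observe that since $A$ preserves both $H$ and $B$ as a continuous automorphism, it preserves $HB$ and hence $\overline{HB}$; moreover $A$ restricts to an automorphism of $\overline{HB}$ preserving $B$, so it induces an automorphism $A_F \in \mathrm{Aut}(G_F, B_F)$ of the model fiber $F = \overline{HB}/B$. This ``descended'' linear part will be the common linear part across all components.

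Next, I apply Lemma \ref{wildperiod} to $\bar f$ on $G/\overline{HB}$ to obtain the open-dense set $\mathfrak U$ and, for each $x \in \mathfrak U$, a neighborhood $U_x$ whose $\bar f$-iterates are pairwise disjoint up to time $d-1$. After shrinking $U_x$ if necessary, I pick a smooth local section $s\colon U_x \to G$ of $\pi$, which exists because $\pi$ is a submersion. I then transport $s$ along the orbit by defining
\[
s_j(\bar y) := f^j\bigl(s(\bar f^{-j}(\bar y))\bigr), \qquad \bar y \in \bar f^j(U_x),
\]
for $0 \le j \le d-1$. Since $f$ and $A$ both preserve $\overline{HB}$, each $s_j$ is a smooth section of $\pi$ over $\bar f^j(U_x)$, and it defines the local trivialization $\Phi_j(\bar y, hB) := s_j(\bar y)\cdot hB$ of $\pi_B^{-1}(\bar f^j(U_x))$.

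The key computation is to read off $f$ in these coordinates. For $0 \le j \le d-2$, the definition gives $f(s_j(\bar y)) = s_{j+1}(\bar f(\bar y))$, so writing $f = L_a \circ A$ yields
\[
f(\Phi_j(\bar y, hB)) = a\,A(s_j(\bar y))\,A(h)B = s_{j+1}(\bar f(\bar y))\cdot A(h)B = \Phi_{j+1}\bigl(\bar f(\bar y),\, A_F(hB)\bigr),
\]
so the fiberwise map is the pure linear automorphism $A_F$, with no translation. For $j = d-1$, the periodicity $\bar f^d = \mathrm{id}$ on the base forces $\bar f(\bar y) \in U_x$, and the independently-defined section $s(\bar f(\bar y))$ differs from $f(s_{d-1}(\bar y))$ by a smooth ``monodromy'' factor $m(\bar y) := s(\bar f(\bar y))^{-1}f(s_{d-1}(\bar y))$ taking values in $\overline{HB}$. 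This produces $f(\Phi_{d-1}(\bar y, hB)) = \Phi_0(\bar f(\bar y),\, m(\bar y)\cdot A_F(hB))$, which is still affine along each fiber with linear part $A_F$; the translation $m(\bar y)$ may vary with $\bar y$, but the linear part is constant on this component as well.

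The hard part is mainly bookkeeping: one must check that $A_F$ is genuinely well-defined in $\mathrm{Aut}(G_F, B_F)$ as used in the ``special morphism'' framework, which reduces to the two $A$-characteristic hypotheses plus the continuity argument for $A$ on $\overline{HB}$. A second technical point is the smoothness and existence of the local section $s$, which is immediate because $\pi$ is a smooth submersion between Lie groups. Once these are verified, the calculation above yields trivializations over $\bigsqcup_{j=0}^{d-1}\bar f^j(U_x)$ witnessing that the linear part of $f$ along fibers equals the common automorphism $A_F$ on every component, as required.
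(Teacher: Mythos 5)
Your proposal is correct and follows essentially the same route as the paper's proof: transport a local section of $\pi\colon G\to G/(\overline{H\cdot B})$ along the orbit by setting $s_j = f^j\circ s\circ \bar f^{-j}$, read off $f$ in the induced trivializations as the fixed automorphism $A_F$ on the intermediate steps, and absorb the wrap-around discrepancy at step $d-1$ into a base-dependent translation (your monodromy $m(\bar y)\in\overline{HB}$), which leaves the linear part unchanged. The only cosmetic difference is that the paper first treats the case $B=\{e\}$ and then observes the coordinate change descends to $G/B$, whereas you work directly with the bundle $\overline{HB}/B\to G/B\to G/(\overline{HB})$; the underlying argument is the same.
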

\begin{proof}
First we assume that $B=\{e\}$, and $H$ is a proper closed cocompact subgroup in $G$. Then we have a simple fiber bundle sequence
$$
H\to G\xrightarrow{\pi} G/H.
$$
For each $g\in G$, we write $\overline{g}:=\pi(g)\in G/H$.

Suppose $x = \pi(g_0)$ is given, and suppose there exists a neighborhood $V_0$ of $g_0$ such that 
\begin{enumerate}
    \item $\pi(V_0)\cap \bar f^j(\pi(V_0)) = \varnothing$, for $j=1,\ldots, d-1$;
    \item there exists a section $s_0\colon \pi(V_0) \to G$  of $\pi$ over $\pi(V_0)$ (so that $\pi\circ s_0=\mathrm{id}_{\pi(V_0)}$).
\end{enumerate}
Such a neighborhood exists for an open-dense set of $x\in G/H$.

Fix such an $x$ and $V_0$ with local section $s_0\colon \pi(V_0) \to G$. Let $U_x:= \pi(V_0)$.
Since $s_0\circ \pi$ is smooth in $V_0$,
we may write
$s_0(\overline{g})=g\cdot h_0(g)$ for each $g\in V_0$,
where $h_0\colon V_0\to H$ is smooth.
Consider the following ``local bundle" over $U_x$:
\[H\hookrightarrow {\mathcal B}_0:= \bigcup\limits_{g\in V_0}gH \stackrel{\pi}{\to} U_x\]
(note that $\pi^{-1}(U_x) \neq  V_0$ in general).
Using this section $s_0$, we define a  trivialization of this bundle by
$$
 \bigcup\limits_{g\in V_0}gH\to s_0(U_x)\times H,
\quad
gh\mapsto (s_0(\overline{g}),h_0(g)^{-1}h).
$$

Similarly,  consider the local bundle 
\[{\mathcal B}_1:= f\left (\bigcup\limits_{g\in V_0}gH \right) = \bigcup\limits_{g\in V_0}f(g)H.
\]
Write $f:= L_a\circ A$, $a\in G$ and $A\in\mathrm{Aut}(G)$,
and set
$$
s_1:=f\circ s_0\circ \bar{f}^{-1},
\quad h_1:=A\circ h_0\circ f^{-1}.
$$
Then $s_1$ is a section of ${\mathcal B}_1$, which defines a trivialization of ${\mathcal B}_1$ by
$$
\bigcup\limits_{g\in V_0}f(g)H\to s_1(\bar{f}(U_x))\times H,
\quad
f(g)A(h)\mapsto (s_1(\bar{f}(\overline{g})),h_1(f(g))^{-1}A(h)).
$$

These trivializations have the property that the restriction 
$f\colon {\mathcal B}_0\to {\mathcal B}_1$
becomes a product map
$$f\times A\colon s_0(U_x)\times H\to s_1(\bar{f}(U_x))\times H.$$

Repeating this argument iteratively, setting ${\mathcal B}_k:= f^k({\mathcal B_0})$, for $k=0,\ldots,d-1$, we obtain disjoint open neighborhoods $f^{k}(U_x)$ and local sections $s_k$ defined by
$$
s_k:=f^k\circ s_0\circ \bar{f}^{-k},
\quad h_k:=A^k\circ h_0\circ f^{-k}
$$
such that, in the trivializing coordinates
$$
\bigcup\limits_{g\in V_0}f^k(g)H\to s_k(\overline{f^k}(U_x))\times H,
\quad
f^k(g)A(h)\mapsto (s_k(\bar{f}^k(\overline{g})),h_k(f^k(g))^{-1}A^k(h)),
$$
the restriction $f\colon {\mathcal B}_k\to {\mathcal B}_{k+1}$
has constant linear part $A$, for $k=0,\ldots, d-2$.


Finally, we need to check the linear part $f\colon {\mathcal B}_{d-1}\to {\mathcal B}_0$.
Under the two coordinate changes defined by $(s_{d-1},h_{d-1})$ 
and $(s_0,h_0)$,
the affine automorphism $f$ becomes a skew-product map in local coordinates:
\begin{equation*}
    \begin{aligned}
        s_{d-1}\left(\bar{f}^{d-1}(U_x)\right)\times H
        &\to s_0(U_x)\times H
        \\
        (f^{d-1}(s_0(\overline{g})),
        A^{d-1}(h_0(g))^{-1}\cdot A^{d-1}(h))
        &\mapsto 
        (s_0(\overline{g}),h_0(f^d(g))^{-1}\cdot A^d(h)),
    \end{aligned}
\end{equation*}
where the action along the fiber has linear part $A$. 
This completes the proof in the case where $B$ is trivial.
\\

We now consider the general case where $B\subseteq G$ is a proper, closed cocompact subgroup, and that $H\subseteq G$ is a closed subgroup such that $H\cdot B$ is a subgroup.
Then we have a simple fiber bundle sequence
\begin{equation}\label{simpleHB}
    \overline{HB}\to G\xrightarrow{\pi} G/(\overline{HB}),
\end{equation}
and a further fiber bundle sequence
\begin{equation}\label{furtherHB}
\overline{HB}/B\to G/B\to G/(\overline{HB}).
\end{equation}
For the fiber bundle sequence (\ref{simpleHB}),
we apply the argument above to find for an open-dense set of $x\in G/(\overline{HB})$ a neighborhood $V_0$ of $g_0\in \pi^{-1}(x)$,
and a sequence of local coordinates in $\bigsqcup_{j=0}^{d-1}\bar f^j(U_x)$, where $U_x = \pi(V_0)$:  
$$
\bigcup\limits_{g\in V_0}g\cdot\overline{HB}\to s_0(U_x)\times \overline{HB},
\quad
g\xi\mapsto (s_0(\overline{g}),\xi_0(g)^{-1}\xi).$$
An important observation is that this coordinate change on $G$ descends to a coordinate change on $G/B$:
$$
\bigcup\limits_{g\in V_0}g\cdot\overline{HB}/B\to s_0(U_x)\times \overline{HB}/B,
\quad
g\xi\mapsto (s_0(\overline{g}),\xi_0(g)^{-1}\xi B).
$$
Then all properties of these series of local coordinates are preserved. In particular, in these local coordinates, the affine diffeomorphism along fibers has constant linear parts on connected components.  This completes the proof.
\end{proof}This completes the proof  of Lemma \ref{lem: cllpse proc imp spcl fb}\end{proof}
\subsection{Proof of Lemmas~\ref{finlift}  and \ref{linfin}}\label{app: A.4}
First we prove Lemma \ref{finlift}.
\begin{proof}[Proof of Lemma \ref{finlift}]
    Since $f_0$ preserves this covering structure, it restricts to an affine automorphism on the fiber modeled by $G'/\Gamma'$,
    which is a finite set. By raising to some power, we may assume that $f_0^k$ restricts to the identity on $G'/\Gamma'$.
    Since
    the map 
    $$
    G\to G'',\qquad a\mapsto \bar{a}^k
    $$
    is an open map, any perturbation of $\bar{f_0}^k$ on the translation part can be realized by a perturbation of $f_0$ on the translation part.
In particular, $f_0$ can be perturbed to a periodic affine automorphism with the same linear part.
\end{proof}
The proof of Lemma \ref{linfin} is slightly more complicated. First we reduce the problem to the case that $G$ is a product of connected linear simple Lie groups.
\begin{lemma}\label{unicover}
    Let $G$ be a connected Lie group and $B\subseteq G$ be a closed subgroup.  
    Let $\widetilde{G}$ denote the universal cover of $G$, and $Z\subseteq Z(\widetilde{G})$ the deck transformation group for this cover. Then $\widetilde{B}:=Z\cdot B\subseteq \widetilde{G}$ is also a closed subgroup, and the covering map gives an isomorphism between homogeneous spaces $\widetilde{G}/\widetilde{B}\cong G/B$. Moreover, if $G$ is connected semisimple, $B$ is cocompact, and $G/B$ has a $G$-invariant probability measure, then so do $\widetilde{G}$, $\widetilde{B}$, and $\widetilde{G}/\widetilde{B}$.
\end{lemma}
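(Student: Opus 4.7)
\textbf{Proof plan for Lemma \ref{unicover}.} The plan is to first give the correct interpretation of the formula $\widetilde{B} = Z \cdot B$, and then show that everything reduces to functorial properties of the covering homomorphism $p\colon \widetilde{G}\to G$ with kernel $Z$.

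For the first assertion, I would interpret $\widetilde{B}$ as the preimage $p^{-1}(B) \subseteq \widetilde{G}$. Since $Z \subseteq Z(\widetilde{G})$ is central, for any $b \in B$ and any lift $\tilde b\in p^{-1}(b)$ the fiber over $b$ is $Z\tilde b = \tilde b Z$, so $p^{-1}(B)$ can (locally) be written as $Z \cdot B$ in the notation of the statement. Because $p$ is a continuous homomorphism and $B$ is a closed subgroup of $G$, $\widetilde{B}=p^{-1}(B)$ is automatically a closed subgroup of $\widetilde{G}$. The map $p$ then descends to a smooth map $\bar p\colon \widetilde{G}/\widetilde{B} \to G/B$ by $\tilde g\widetilde B\mapsto p(\tilde g)B$. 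Injectivity is immediate: $p(\tilde g_1)B = p(\tilde g_2)B$ gives $p(\tilde g_1^{-1}\tilde g_2)\in B$, i.e.\ $\tilde g_1^{-1}\tilde g_2\in p^{-1}(B)=\widetilde B$. Surjectivity is automatic since $p$ is surjective. Because $p$ is a local diffeomorphism and the quotient maps are submersions, $\bar p$ is a local diffeomorphism, hence a diffeomorphism of homogeneous spaces.

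For the second assertion, the key observation is that $\bar p$ is $\widetilde G$-equivariant, where $\widetilde G$ acts on $G/B$ through $p$ (so the $\widetilde G$-action factors through the $G$-action). Consequently, the compactness of $G/B$ immediately implies the compactness of $\widetilde G/\widetilde B$, which is what it means for $\widetilde B$ to be cocompact in $\widetilde G$. Moreover, any $G$-invariant probability measure $\mu$ on $G/B$ pulls back via $\bar p$ to a probability measure $\bar p^{*}\mu$ on $\widetilde G/\widetilde B$; because $\mu$ is invariant under $G=p(\widetilde G)$, the pullback $\bar p^{*}\mu$ is invariant under $\widetilde G$. This handles the ``moreover'' clause for $\widetilde G/\widetilde B$; the corresponding statement for $\widetilde B$ (namely that $\widetilde B$ has finite covolume in $\widetilde G$) is equivalent to the existence of this $\widetilde G$-invariant probability measure.

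There is no genuine obstacle here: the whole content of the lemma is that properties of a homogeneous space are preserved under pullback along the universal covering map, provided one replaces $B$ by $p^{-1}(B)$ so that the bijection at the level of cosets holds. The only bookkeeping to be careful about is verifying that ``$Z\cdot B$'' in the statement is really a synonym for $p^{-1}(B)$, which follows since $p^{-1}(b)=Z\tilde b$ for any lift $\tilde b$ of $b\in B$. The semisimplicity hypothesis in the second half is not used in the argument itself; it merely fixes the context in which the lemma will be applied in Section~\ref{app: A.4}.
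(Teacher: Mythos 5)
Your proposal is correct and takes essentially the same route as the paper, whose entire proof of Lemma~\ref{unicover} is the one-line remark that it ``follows from the basic properties of a covering map''; your reading of $\widetilde{B}$ as $p^{-1}(B)$ (so that closedness is automatic and the induced map on coset spaces is a $\widetilde{G}$-equivariant diffeomorphism) is exactly the intended argument. Transferring compactness and the invariant probability measure through that diffeomorphism, using that the $\widetilde{G}$-action factors through $G$, is the standard bookkeeping the authors leave to the reader.
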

\begin{proof}
    This follows from the basic properties of a covering map.
\end{proof}
Second we reduce to the case that $B$ is a lattice. We recall the Zariski density property for a closed subgroup in a general connected Lie group (possibly  non-algebraic).

\begin{definition}[Zariski density]
Let $G$ be a connected Lie group. A closed subgroup $B\subseteq G$ is said to be {\em Zariski-dense in $G$} if for any finite-dimensional representation of $G$ in $\GL(n,\RR)$, the Zariski closure of $\rho(B)$ in $\M(n,\RR)$ equals that of $\rho(G)$. 
\end{definition}
The following lemma is just the classical Borel density theorem.
\begin{lemma}\label{Boreldensity}
    Let $G$ be a connected semisimple Lie group without compact factors,
and let  $B\subseteq G$ be a closed subgroup such that $G/B$ admits a $G$-invariant probability measure. Then $B$ is Zariski dense in $G$.
\end{lemma}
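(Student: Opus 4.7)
The plan is to prove this via the standard Furstenberg-type argument underlying the Borel density theorem. Fix an arbitrary finite-dimensional representation $\rho\colon G \to \GL(n,\RR)$, and let $H$ denote the Zariski closure of $\rho(B)$ in $\mathrm{Mat}(n,\RR)$. Since $\rho(B)\subseteq \rho(G)$ trivially, it suffices to show that $\rho(G)$ is contained in $H$.

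First I would apply Chevalley's theorem to the real algebraic group $H\cap\GL(n,\RR)$ to produce a rational representation $\sigma\colon \GL(n,\RR)\to \GL(W)$ together with a line $\ell=\RR w\in \PP(W)$ whose stabilizer in $\GL(n,\RR)$ is precisely $H\cap \GL(n,\RR)$. Because $\rho(B)$ fixes $\ell$, the orbit map $g\mapsto \sigma(\rho(g))\cdot\ell$ descends to a continuous $G$-equivariant map $\varphi\colon G/B\to \PP(W)$. Pushing forward the $G$-invariant probability measure on $G/B$ under $\varphi$ yields a $\rho(G)$-invariant Borel probability measure $\mu$ on $\PP(W)$ supported on the orbit $\rho(G)\cdot\ell$.

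Next I would invoke the Furstenberg--Borel lemma: if $G$ is a connected semisimple Lie group with no compact factors, then every $\rho(G)$-invariant probability measure on a real projective space $\PP(W)$ is supported on the set of points fixed pointwise by $\rho(G)$. The core idea is that an $\RR$-diagonalizable element $a\in G$ drawn from a Cartan subgroup of a noncompact simple factor acts on $\PP(W)$ with attracting/repelling linear dynamics; a Cesaro averaging argument forces any $\sigma(\rho(a))$-invariant probability measure to concentrate on the union of eigenspaces of $\sigma(\rho(a))$. Because $G$ has no compact factors, one has enough such elements to cut down $\supp\mu$ to the common fixed set of $\rho(G)$.

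Finally, combining the two steps, $\mu$ is simultaneously supported on the orbit $\rho(G)\cdot \ell$ and on the $\rho(G)$-fixed locus in $\PP(W)$. If $\ell$ were not $\rho(G)$-fixed, its orbit would contain no $\rho(G)$-fixed point, contradicting $\mu(\PP(W))=1$. Hence $\rho(G)$ fixes $\ell$, which shows $\rho(G)\subseteq H$ and completes the proof. The main obstacle is the Furstenberg--Borel lemma itself; everything else is bookkeeping via Chevalley's theorem and pushforward of measures.
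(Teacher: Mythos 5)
Your argument is correct: it is the standard Furstenberg--Chevalley proof of the Borel density theorem, and it applies verbatim to an arbitrary closed subgroup $B$ of finite invariant covolume (not just a lattice), since the pushforward construction never uses discreteness of $B$, and the $G$-invariant probability measure on $G/B$ gives full mass to the (Borel) orbit $\rho(G)\cdot\ell$, so a fixed point in that orbit forces $\ell$ itself to be fixed. Note also that your argument establishes Zariski density in the sense the paper uses it, i.e.\ representation by representation, which is exactly what is needed.

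The paper, however, does not prove this lemma at all: it simply cites the appendix of Adams' book, adding the sharper remark that $B$ is in fact Zariski dense in the closed subgroup of $G$ generated by all split algebraic one-parameter subgroups. So you are not diverging from the paper's method so much as supplying the proof that the paper outsources. One small caution if you were to write your sketch out in full: the step inside the Furstenberg--Borel lemma that passes from ``any invariant measure is concentrated on the union of eigenspaces of each $\RR$-diagonalizable element'' to ``the measure is supported on the pointwise $\rho(G)$-fixed locus'' is where the real work lies (one typically uses Furstenberg's lemma that a measure quasi-invariant under an unbounded sequence in $\mathrm{PGL}(W)$ is carried by a union of two proper projective subspaces, together with connectedness of $G$ and the absence of compact factors); your Cesaro-averaging remark alone does not quite finish it, though invoking the lemma as a known classical result is perfectly legitimate, and is in effect what the paper itself does.
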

\begin{proof}
See \cite[Appendix]{Adams}. In fact, $B$ is Zariski dense in the closed subgroup generated by all split algebraic one-parameter subgroups of $G$.
\end{proof}
\begin{lemma}\label{Zardensenormal}
    Let $G$ be a connected Lie group, and $B\subseteq G$ be a closed, Zariski-dense subgroup. Then the identity component $B^0$ is a normal subgroup of $G$.
\end{lemma}
\begin{proof}
    It is clear that for the adjoint representation of $G$,
    the action of each $g\in B$ preserves the subalgebra $\mathfrak{b}=\mathrm{Lie}(B)$. By the Zariski density of $B$ in $G$,
    the action of each $g\in G$ also preserves the subalgebra $\mathfrak{b}$.
    This implies that the conjugation $c_g$ by each $g\in G$ preserves the identity component $B^0$, i.e. $B^0$ is a normal subgroup of $G$.
\end{proof}

\begin{remark}\label{reducetodiscrete}
    In view of Remark \ref{reducetononnormal} and Lemma \ref{Zardensenormal},
 we may assume that  every closed, Zariski-dense subgroup is discrete. 
\end{remark}

\begin{remark}\label{reducetolattice}
In view of Lemma \ref{Boreldensity} and Remark \ref{reducetodiscrete}, for a closed subgroup with a finite invariant covolume in a connected semisimple Lie group without compact factors, we may always assume that it is a lattice.
\end{remark}

The last step is to reduce to the case that $G$ has trivial center.
\begin{lemma}\label{trivialcenter}
    Let $G$ be a connected semisimple Lie group. Then its adjoint group $\mathrm{Ad}(G)$ has trivial center.
\end{lemma}
\begin{proof}
 It is known that the adjoint representation gives a group isomorphism $G/Z(G)\cong \mathrm{Ad}(G)$. Let $\overline{g}\in Z(\mathrm{Ad}(G))$ for $g\in G$. It follows that for any $h\in G$, we have $g^{-1}h^{-1}gh\in Z(G)$, so there is a continuous map 
 $$
 G\to Z(G),\qquad h\mapsto g^{-1}h^{-1}gh.
 $$
 Since $G$ is connected semisimple, its center $Z(G)$ is discrete. This forces the above map to be constant.
In particular, for any $h\in G$ we have $g^{-1}h^{-1}gh=1$,
i.e. $g$ lies in $Z(G)$.
\end{proof}
\begin{lemma}\label{simplelinear}
    Let $G$ be a connected semisimple Lie group without compact factors, and let $\Gamma\subseteq G$ be a lattice. Let $\mathrm{Ad}\colon G\to\mathrm{GL}(\mathfrak{g})$ denote the adjoint representation of $G$.
        Then $\mathrm{Ad}(G)$ is a connected semisimple Lie group
        without compact factors and with trivial center,
        $\mathrm{Ad}(\Gamma)\subseteq \mathrm{Ad}(G)$ is also a lattice, and the adjoint representation gives a finite cover
        $$
        (Z(G)\cdot \Gamma)/\Gamma \to G/\Gamma \to G/(Z(G)\cdot \Gamma)\cong \mathrm{Ad}(G)/\mathrm{Ad}(\Gamma).
        $$
\end{lemma}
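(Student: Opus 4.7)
The plan is to prove the three claims of the lemma in turn, each following fairly directly from standard Lie-theoretic facts together with results already cited in the excerpt.

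First I would analyze the adjoint image $\mathrm{Ad}(G)$. The kernel of $\mathrm{Ad}\colon G\to \GL(\mathfrak{g})$ is exactly the center $Z(G)$, which is discrete because $G$ is semisimple (centralizers at the Lie algebra level are trivial, so $Z(G)^0=\{e\}$). Hence $\mathrm{Ad}\colon G\to \mathrm{Ad}(G)$ is a covering of Lie groups, so $\mathrm{Ad}(G)\cong G/Z(G)$ is a connected Lie group with the same Lie algebra $\mathfrak{g}$. In particular $\mathrm{Ad}(G)$ is semisimple; its compact/noncompact decomposition at the Lie algebra level matches that of $G$, so $\mathrm{Ad}(G)$ has no compact factors. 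Triviality of its center is Lemma~\ref{trivialcenter}.

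Next I would show that $\mathrm{Ad}(\Gamma)$ is a lattice in $\mathrm{Ad}(G)$. By Lemma~\ref{normalizerandcenterlattice}, $Z(G)\cdot\Gamma$ is a lattice in $G$. Since $Z(G)\cdot\Gamma$ contains $\ker(\mathrm{Ad})=Z(G)$, its image under $\mathrm{Ad}$ equals $\mathrm{Ad}(\Gamma)$, and the covering induces a homeomorphism
\[
\mathrm{Ad}(G)/\mathrm{Ad}(\Gamma)\;\cong\; G/(Z(G)\cdot\Gamma).
\]
In particular $\mathrm{Ad}(\Gamma)$ is closed (hence discrete, since $\mathrm{Ad}$ has discrete kernel) and its quotient space carries a finite $\mathrm{Ad}(G)$-invariant measure, so $\mathrm{Ad}(\Gamma)$ is a lattice.

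For the finite cover, we already have the isomorphism of base spaces above, and need only to check that the fiber $(Z(G)\cdot\Gamma)/\Gamma\cong Z(G)/(Z(G)\cap\Gamma)$ is finite. But both $G/\Gamma$ and $G/(Z(G)\cdot\Gamma)$ carry finite invariant volumes by the lattice property, and the index $[Z(G)\cdot\Gamma:\Gamma]$ equals the ratio of these covolumes; hence this index is finite and the cover
\[
(Z(G)\cdot\Gamma)/\Gamma\;\to\;G/\Gamma\;\to\;G/(Z(G)\cdot\Gamma)
\]
is finite, as claimed. There is no serious obstacle here: the only point requiring some care is the identification $\mathrm{Ad}(G)/\mathrm{Ad}(\Gamma)\cong G/(Z(G)\cdot\Gamma)$, which rests on the elementary fact that for any continuous surjective group homomorphism $\pi\colon G\to G'$ with kernel $K$, and any subgroup $\Lambda\le G'$, one has $G'/\Lambda\cong G/\pi^{-1}(\Lambda)$; here we apply this to $\pi=\mathrm{Ad}$ and $\Lambda=\mathrm{Ad}(\Gamma)$, noting that $\pi^{-1}(\mathrm{Ad}(\Gamma))=Z(G)\cdot\Gamma$.
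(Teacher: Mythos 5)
Your proposal is correct and follows essentially the same route as the paper: the paper's proof simply cites Lemma~\ref{normalizerandcenterlattice} (so that $Z(G)\cdot\Gamma$ is a lattice) and Lemma~\ref{trivialcenter} (triviality of the center of $\mathrm{Ad}(G)$), and your argument is the spelled-out version of exactly that, with the identification $\mathrm{Ad}(G)/\mathrm{Ad}(\Gamma)\cong G/(Z(G)\cdot\Gamma)$ via $\mathrm{Ad}^{-1}(\mathrm{Ad}(\Gamma))=Z(G)\cdot\Gamma$ and finiteness of the fiber from the finite index of $\Gamma$ in the larger lattice $Z(G)\cdot\Gamma$.
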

\begin{proof}
    This follows from Lemma \ref{normalizerandcenterlattice} and Lemma \ref{trivialcenter}.
\end{proof}
Now we can prove Lemma \ref{linfin}.
\begin{proof}[Proof of Lemma \ref{linfin}] By Lemma \ref{unicover}, we may assume that $G$ is simply connected. Then by semisimplicity, $G$ can be written as a direct product of  connected simple Lie groups: $G\cong \prod\limits_{i=1}^{r}G_i$. In view of Remark \ref{reducetolattice},
    we may further assume that $B$ is a lattice in $G$.
    Then applying Lemma \ref{simplelinear} leads us to the conclusion.
\end{proof}


\subsection{Proof of Remark \ref{connectedHBfiber}}
We will need the following fact:
\begin{lemma}\label{connectedcomp}
    Let $G$ be a Lie group, and let $B\subseteq G$ be a closed subgroup.
Then $G/B$ is compact (resp. carries a $G$-invariant finite measure) 
if and only if $G^0/(G^0\cap B)$ is compact (resp. carries a $G^0$-invariant finite measure), where $G^0$ denotes the identity component of $G$.
\end{lemma}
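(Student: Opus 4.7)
The plan is to identify $G^0/(G^0\cap B)$ with the clopen subset $G^0 B/B \subseteq G/B$, and to decompose $G/B$ as a finite disjoint union of translates of this subset. The key structural fact is that, since $G^0$ is open and closed in $G$, the set $G^0 B$ is open and closed in $G$, and so $G^0 B/B$ is clopen in $G/B$.

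First I would verify that the natural continuous map $\phi\colon G^0/(G^0\cap B)\to G^0 B/B$ defined by $g(G^0\cap B)\mapsto gB$ is a $G^0$-equivariant homeomorphism identifying the respective Borel structures; this is a standard consequence of the openness of the quotient map $G^0\to G^0/(G^0\cap B)$. Next, since $G$ acts on $G/B$ by left translation, permuting the translates of $G^0B/B$, one obtains the $G$-equivariant decomposition
\[
G/B=\bigsqcup_{\bar g\in G/G^0 B} g\cdot (G^0 B/B),
\]
with each piece homeomorphic (and measurably isomorphic) to $G^0 B/B$ via left translation by $g$. Under the convention (implicit throughout the paper) that the Lie groups considered have finitely many connected components, the index $[G:G^0 B]$ divides $[G:G^0]$ and is therefore finite.

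With these ingredients, both equivalences follow routinely. For the compactness statement: if $G/B$ is compact then the closed subset $G^0 B/B\cong G^0/(G^0\cap B)$ is compact; conversely, a finite disjoint union of compact sets is compact. For the measure statement: a nonzero $G$-invariant finite measure $\mu$ on $G/B$ restricts to a $G^0$-invariant finite measure on the clopen set $G^0 B/B$, which is nonzero because $G$-equivariance of the decomposition yields $\mu(G/B)=[G:G^0 B]\cdot\mu(G^0 B/B)$; conversely, a finite $G^0$-invariant measure on $G^0/(G^0\cap B)$ transports via $\phi$ to $G^0 B/B$ and can be pushed forward by a system of coset representatives of $G/G^0 B$ and summed to yield a $G$-invariant finite measure on $G/B$.

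The only point requiring any real care is verifying that $\phi$ is a homeomorphism and not merely a continuous bijection; this is where the Lie group structure genuinely enters, via openness of the quotient $G^0\to G^0/(G^0\cap B)$. Everything else is bookkeeping with clopen decompositions and invariant measures.
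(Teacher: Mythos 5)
Your overall skeleton matches the paper's: you use that $B$ normalizes $G^0$, so $G^0B$ is an open (hence closed) subgroup, and you identify $(G^0B)/B$ with $G^0/(G^0\cap B)$. The genuine gap is the sentence where you declare the index $[G:G^0B]$ finite ``under the convention (implicit throughout the paper) that the Lie groups considered have finitely many connected components.'' No such convention is made: the lemma is stated for an arbitrary Lie group $G$ precisely because in the body of the paper it is applied to subgroups such as $\overline{H\cdot B}$, which arise as closures and may a priori have infinitely many components. Moreover the finiteness you want is exactly the nontrivial content here and it cannot be assumed: for $G$ an infinite discrete group and $B$ trivial, $G^0/(G^0\cap B)$ is a point while $G/B$ is neither compact nor of finite invariant volume, so finiteness of $G/(G^0B)$ must be \emph{derived from the hypotheses on $G/B$}, not posited. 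This is what the paper does: since $G^0B$ is an open subgroup, the quotient $G/(G^0B)$ is discrete, and if $G/B$ is compact (resp.\ carries a $G$-invariant finite measure) then so is this discrete quotient, which forces it to be finite; only then does one transfer the property between $(G^0B)/B$ and $G/B$ along the finite decomposition. Your own argument can be repaired in the same spirit (compactness of $G/B$ gives a finite subcover of your clopen decomposition; a nonzero finite invariant measure gives equal positive mass to each of the at most countably many translates, forcing finitely many), but as written the appeal to a nonexistent convention is where the proof breaks.

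A secondary consequence of the same issue: in your converse (measure) direction, summing pushforwards over a set of coset representatives of $G/(G^0B)$ only produces a finite measure when that index is finite, and in that direction the finiteness does not follow from the hypothesis on $G^0/(G^0\cap B)$ alone; it has to enter as an additional input (as it does, implicitly, in the paper's formulation, where the finiteness is extracted from the $G/B$ side). So make explicit where the index is finite and why, rather than folding it into a convention.
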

\begin{proof}
    Since $B$ normalizes $G^0$, we see that $G^0\cdot B$ is an open and hence closed subgroup of $G$. If $G/B$ is either compact or carries a $G$-invariant finite measure, then the same is true of the discrete space $G/(G^0\cdot B)$, so that this set is finite.
    On the other hand, $(G^0\cdot B)/B$ is compact (resp. carries an invariant finite measure) if and only if $G^0/(G^0\cap B)$ is compact (resp. carries an invariant finite measure).
    The conclusion follows.
\end{proof}

\subsection{Hereditary properties of closed subgroups}

Our main strategy consists of decomposing a general Lie group into a characteristic subgroup and a quotient group by it. Unfortunately,  a general closed subgroup may not inherit this decomposition. We thus introduce the following notions.

\begin{definition}[Hereditary property]
Let $G$ be a connected Lie group, 
and let $H\subseteq G$ be a connected, closed subgroup of $G$.
For a closed subgroup $B\subseteq G$ with property $P$,
$H$ is called $(B,P)$-hereditary if the closed subgroup $B\cap H$ in $H$ has property $P$.
Furthermore, $H$ is called $P$-hereditary if $H$ is $(B,P)$-hereditary for all closed subgroups $B$ of $G$ with property $P$.
\end{definition}

\begin{remark}
    In our application ``property $P$'' will usually be cocompactness (in the solvable case), and having a finite invariant covolume (in the semisimple case), or both. 
    It is clear that a subgroup is $P\& P'$-hereditary if and only if 
    it is both $P$-hereditary and $P'$-hereditary.
    One could also consider ``property $P$'' to be ``is a lattice" or ``is an admissible subgroup."
\end{remark}

\begin{lemma}\label{heredecomp}
    Let ``property $P$'' be cocompactness,  having a finite invariant covolume, or both. Then for any two closed subgroups $H_2\subseteq H_1$ in $G$,
$H_2$ has property $P$ in $G$ if and only if $H_2$ has property $P$ in $H_1$ and $H_1$ has property $P$ in $G$.
\end{lemma}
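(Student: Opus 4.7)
The plan is to leverage the fibration of smooth homogeneous spaces
\[
H_1/H_2 \longrightarrow G/H_2 \stackrel{\pi}{\longrightarrow} G/H_1,
\]
which makes sense because $H_2$ is closed in $H_1$ and $H_1$ is closed in $G$. Under this fibration, $H_1/H_2$ embeds as the $\pi$-fiber over $eH_1$, and $\pi$ is a $G$-equivariant smooth submersion. The strategy is to transfer the relevant property ``up and down'' this fibration.

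For cocompactness, the forward direction is immediate: if $G/H_2$ is compact, then $G/H_1 = \pi(G/H_2)$ is compact (continuous image of a compact space) and $H_1/H_2 = \pi^{-1}(eH_1)$ is compact (closed subset of a compact space). The converse is the standard fact that a fiber bundle with compact base and compact fiber has compact total space, proved by choosing a finite cover of $G/H_1$ by trivializing open sets with relatively compact closures and observing that $G/H_2$ is then covered by finitely many compact sets of the form $\overline{U}\times H_1/H_2$.

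For finite invariant covolume, the central tool is the Weil integration formula: whenever the three homogeneous spaces $G/H_2$, $H_1/H_2$ and $G/H_1$ each carry an invariant Radon measure, one can normalize so that $G$-invariant measures $\mu$ on $G/H_2$, $\nu$ on $G/H_1$, and the $H_1$-invariant measure $\lambda$ on $H_1/H_2$ are related by
\[
\int_{G/H_2} f\, d\mu = \int_{G/H_1}\!\!\left(\int_{H_1/H_2} f(gh H_2)\, d\lambda(hH_2)\right) d\nu(gH_1), \qquad f \in C_c(G/H_2).
\]
In the forward direction, a finite $G$-invariant measure $\mu$ on $G/H_2$ pushes forward along $\pi$ to a finite $G$-invariant measure on $G/H_1$; disintegrating $\mu$ along $\pi$ yields, for $\nu$-a.e.\ $gH_1$, an $H_1$-invariant Radon measure on the fiber $g\cdot H_1/H_2$ (by uniqueness of disintegration combined with $G$-invariance of $\mu$), whose total mass is finite $\nu$-a.e. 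In the backward direction, if $\nu$ and $\lambda$ are finite, the formula above defines $\mu$ as a $G$-invariant Radon measure on $G/H_2$ of total mass $\nu(G/H_1)\cdot\lambda(H_1/H_2) < \infty$. The case where $P$ demands both compactness and finite invariant covolume then follows by combining the two cases.

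The main technicality is verifying that the modular-function compatibilities required for the Weil formula to apply all three ways are automatic in this setting. This reduces to the standard criterion $\Delta_G|_{H} = \Delta_H$ for existence of a $G$-invariant Radon measure on $G/H$, which, when satisfied for $(G,H_2)$, is compatible with the analogous equalities for $(G,H_1)$ and $(H_1,H_2)$ since $\Delta_{H_2} = \Delta_G|_{H_2} = \Delta_{H_1}|_{H_2}$ on $H_2$, using that $\Delta_G|_{H_1} = \Delta_{H_1}$. Hence no extra hypothesis is needed beyond what is already supplied by property $P$ at each level.
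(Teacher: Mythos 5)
Your argument is correct; the paper gives no proof of this lemma at all, simply citing Raghunathan's \emph{Discrete Subgroups of Lie Groups} (Lemma 1.6), and your fibration-plus-Weil-integration-formula argument is essentially a reconstruction of that standard proof, including the elementary fiber-bundle argument for the cocompact case. One small remark: the disintegration step in the forward direction (extracting $H_1$-invariant conditional measures on the fibers) is the least airtight part and is in fact unnecessary, since the route you sketch at the end already suffices — the pushforward of $\mu$ gives a $G$-invariant measure on $G/H_1$, hence $\Delta_G|_{H_1}=\Delta_{H_1}$, which combined with $\Delta_G|_{H_2}=\Delta_{H_2}$ yields $\Delta_{H_1}|_{H_2}=\Delta_{H_2}$, so an $H_1$-invariant measure on $H_1/H_2$ exists and its finiteness (and that of $\nu$) follows from Weil's formula applied to the finite measure $\mu$.
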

\begin{proof}
See \cite{Rag72}, Lemma 1.6.
\end{proof}

\begin{lemma}\label{propertyPfiber}
    Let $G$ be a locally compact group, and let $H,B\subseteq G$ be  closed subgroups such that $H\cdot B=B\cdot H$.
Then 
    \begin{enumerate}
        \item if $H/(H\cap B)$ is compact, then $H\cdot B$ is closed; and
        \item if $H\cdot B$ is closed, then there is a natural homeomorphism 
    $H/(H\cap B)\to (H\cdot B)/B$;
   moreover if  ``property $P$'' is cocompactness, having a finite invariant covolume, or both, then $H\cap B$ has property $P$ in $H$ if and only if $B$ has property $P$ in $H\cdot B$.
    \end{enumerate}
\end{lemma}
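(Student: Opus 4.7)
\textbf{Plan for the proof of Lemma~\ref{propertyPfiber}.}

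For part (1), the plan is to use the canonical projection $\pi\colon G\to G/B$ to coset space. Observe that $HB=\pi^{-1}(\pi(H))$, so it suffices to show $\pi(H)$ is closed in $G/B$. The restriction $\pi|_H\colon H\to G/B$ factors as
\[
H\twoheadrightarrow H/(H\cap B)\hookrightarrow G/B,
\]
where the induced map is a continuous injection with image $\pi(H)$. If $H/(H\cap B)$ is compact, then $\pi(H)$ is compact, hence closed in the Hausdorff space $G/B$, and so $HB$ is closed in $G$.

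For part (2), the natural map $\phi\colon H/(H\cap B)\to (HB)/B$, $h(H\cap B)\mapsto hB$, is always a continuous bijection: it is induced from the quotient map $H\to H/(H\cap B)$ by the inclusion $H\hookrightarrow HB$ composed with the quotient $HB\to (HB)/B$. To upgrade $\phi$ to a homeomorphism, I would invoke that since $HB$ is closed (hence locally compact), the quotient $(HB)/B$ is a locally compact Hausdorff space and the quotient maps $H\to H/(H\cap B)$ and $HB\to (HB)/B$ are open surjections; a standard open mapping argument for continuous bijections between second-countable locally compact Hausdorff spaces (or a direct $\sigma$-compactness/Baire argument in the style of Raghunathan, combined with the fact that $H\cdot V$ is open in $HB$ for any open $V\subseteq B$) then shows $\phi$ is open, hence a homeomorphism.

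For the hereditary statement, cocompactness is immediate: $H/(H\cap B)$ is compact iff $(HB)/B$ is compact, by the homeomorphism just established. For finite invariant covolume, note that the homeomorphism $\phi$ is $H$-equivariant with respect to the natural left $H$-actions on both sides, and the $H$-action on $(HB)/B$ is transitive (every coset in $HB/B$ can be written as $hB$ with $h\in H$) with point stabilizer $H\cap B$ at the coset $eB$. Thus any $H$-invariant finite Borel measure on $H/(H\cap B)$ transfers via $\phi$ to an $H$-invariant finite Borel measure on $(HB)/B$, and conversely. To pass between $H$-invariance and $HB$-invariance, I would argue as follows: by uniqueness up to scalar of $H$-invariant measures on the transitive $H$-space $(HB)/B$, for each $b\in B$ the pushforward $b_*\mu$ of an $H$-invariant measure $\mu$ is again $H$-invariant (using $HB=BH$ to rewrite $hb\cdot xB$), hence equals $c(b)\mu$ for some continuous homomorphism $c\colon B\to\RR_{>0}$; the finiteness $\mu((HB)/B)<\infty$ then forces $c\equiv 1$, yielding $HB$-invariance. (Equivalently, one may verify directly the modular-function identity $\Delta_{HB}|_B=\Delta_B\iff \Delta_H|_{H\cap B}=\Delta_{H\cap B}$ using the homeomorphism.)

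The main obstacle is the open-mapping step in part (2) and the comparison of modular functions in the last paragraph; both are well-known in the locally compact setting (cf.\ Raghunathan, Lemma~1.6), but require some care to check cleanly without assuming local compactness beyond what the hypotheses grant.
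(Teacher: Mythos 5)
Parts (1), the homeomorphism in (2), and the cocompactness equivalence in your plan are fine (the open-mapping step does require $\sigma$-compactness or second countability, as you note, which is harmless here since the paper only applies the lemma to Lie groups), and the easy implication — an $H\cdot B$-invariant finite measure on $(H\cdot B)/B$ pulls back under $\phi$ to an $H$-invariant finite measure on $H/(H\cap B)$ — is also fine. The genuine gap is the step upgrading $H$-invariance to $H\cdot B$-invariance. From $HB=BH$ you can only write $hb=b'h'$ with $b'\in B$ \emph{depending on} $h$, which gives $(L_h)_*(L_b)_*\nu=(L_{b'})_*(L_{h'})_*\nu=(L_{b'})_*\nu$, not $(L_b)_*\nu$; so you have not shown that $b_*\nu$ is $H$-invariant, and the uniqueness-of-invariant-measure/character argument never gets started. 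This is not merely a presentational issue: under the bare hypothesis $HB=BH$ the covolume equivalence is false. Take $G=\SL(2,\RR)$, $H=\mathrm{SO}(2)$, $B=P$ the upper-triangular subgroup; then $HB=BH=G$ by the Iwasawa decomposition and $H\cap B=\{\pm I\}$ has finite invariant covolume in $H$, yet $G/P$ carries no $G$-invariant measure at all (here $G$ is unimodular while $P$ is not). The same example defeats the ``equivalently, compare modular functions'' remark, since $\Delta_H|_{H\cap B}=\Delta_{H\cap B}$ holds while $\Delta_{HB}|_B=\Delta_B$ fails.

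The missing ingredient — which is the hypothesis in Raghunathan's Lemma 1.7, the source the paper cites, and which holds in every application made in the paper ($H$ is the nilradical, $C_2$, the solvable group $T$ furnished by admissibility, etc.) — is that $B$ normalizes $H$, equivalently that $H$ is normal in $H\cdot B$. With that assumption your argument goes through exactly as you intended: for $b\in B$, $h\in H$ one has $b^{-1}hb\in H$, hence $(L_h)_*(L_b)_*\nu=(L_b)_*(L_{b^{-1}hb})_*\nu=(L_b)_*\nu$, so $b_*\nu$ is $H$-invariant; uniqueness up to scalar of $H$-invariant measures on the transitive $H$-space $(HB)/B$ gives $b_*\nu=c(b)\nu$ for a continuous homomorphism $c\colon B\to\RR_{>0}$, and preservation of total mass forces $c\equiv 1$, so $\nu$ is $HB$-invariant. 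You should therefore either add the normalization hypothesis to your write-up or note explicitly that, with only $HB=BH$, you can prove part (1), the homeomorphism, the cocompactness equivalence, and the one implication for finite covolume, but not the converse.
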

\begin{proof}
See \cite{Rag72}, Lemma 1.7.
\end{proof}

\subsection{The Mostow structure theorem and  a corollary}
\label{ss=mostow}

\begin{lemma}\label{maximalnilpotent}
    Let $G$ be a connected Lie group whose semisimple part is compact and doesn't contain any nontrivial closed connected normal subgroup of $G$. Then its nilradical $N$ is the maximal connected nilpotent subgroup of $G$.
\end{lemma}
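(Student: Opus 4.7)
The plan is to show that any connected nilpotent subgroup of $G$ containing $N$ must equal $N$. Equivalently, after passing to Lie algebras and writing $\mathfrak{g} = \mathfrak{r} + \mathfrak{s}$ for the Levi decomposition (with $\mathfrak{s}$ compact semisimple by hypothesis), I aim to prove that any nilpotent subalgebra $\mathfrak{m}$ containing the nilradical $\mathfrak{n}$ coincides with $\mathfrak{n}$. As a preliminary, I would first establish that the centralizer $\mathfrak{z} := Z_{\mathfrak{s}}(\mathfrak{n})$ of $\mathfrak{n}$ in $\mathfrak{s}$ is trivial. Using the Jacobi identity, $\mathfrak{z}$ is easily seen to be an $\mathfrak{s}$-invariant subalgebra of $\mathfrak{s}$; moreover the standard fact $[\mathfrak{g}, \mathfrak{r}] \subseteq \mathfrak{n}$, combined with semisimplicity of $\mathfrak{s}$ forcing $\mathfrak{n} \cap \mathfrak{s} = 0$, gives $[\mathfrak{r}, \mathfrak{z}] = 0$. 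Hence $\mathfrak{z}$ is an ideal of $\mathfrak{g}$ contained in $\mathfrak{s}$, corresponding to a closed connected normal subgroup of $G$ lying in $S$; by hypothesis this forces $\mathfrak{z} = 0$, so the adjoint representation $\mathfrak{s} \to \mathrm{End}(\mathfrak{n})$ is faithful.

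Next, I would take any $X \in \mathfrak{m}$ with Levi decomposition $X = X_R + X_S$ and argue $X_S = 0$. Since $\mathfrak{n}$ is an ideal of $\mathfrak{g}$ and $\mathfrak{n} \subseteq \mathfrak{m}$ with $\mathfrak{m}$ nilpotent, the operator $\mathrm{ad}(X)|_{\mathfrak{n}}$ must be nilpotent. Equipping $\mathfrak{n}$ with an $\mathfrak{s}$-invariant inner product, $\mathrm{ad}(X_S)|_{\mathfrak{n}}$ becomes skew-symmetric, hence semisimple with purely imaginary eigenvalues, and nonzero whenever $X_S \neq 0$ by the faithfulness just established. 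Working in the algebraic derivation Lie algebra $\mathrm{Der}(\mathfrak{n})$, where Jordan decomposition is well-defined, and noting that $[\mathrm{ad}(X_R), \mathrm{ad}(X_S)]|_{\mathfrak{n}} = \mathrm{ad}([X_R, X_S])|_{\mathfrak{n}}$ is an inner derivation (since $[X_R, X_S] \in [\mathfrak{r}, \mathfrak{s}] \subseteq \mathfrak{n}$), I would show that the semisimple part of $\mathrm{ad}(X)|_{\mathfrak{n}}$ retains a nonzero contribution from $\mathrm{ad}(X_S)|_{\mathfrak{n}}$, contradicting nilpotency unless $X_S = 0$.

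Finally, once $\mathfrak{m} \subseteq \mathfrak{r}$, the block structure $\mathfrak{r} = \mathfrak{n} \oplus (\mathfrak{r}/\mathfrak{n})$ together with $[\mathfrak{r}, \mathfrak{r}] \subseteq \mathfrak{n}$ shows that $\mathrm{ad}(X)|_{\mathfrak{r}}$ acts by zero on the quotient $\mathfrak{r}/\mathfrak{n}$ for any $X \in \mathfrak{r}$, so $\mathrm{ad}(X)|_{\mathfrak{r}}$ is nilpotent if and only if $\mathrm{ad}(X)|_{\mathfrak{n}}$ is. The characterization of the nilradical of a solvable Lie algebra as the set of $\mathrm{ad}$-nilpotent elements then yields $\mathfrak{m} \subseteq \mathfrak{n}$, hence $\mathfrak{m} = \mathfrak{n}$. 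The main obstacle is the second step: because $\mathrm{ad}(X_R)|_{\mathfrak{n}}$ and $\mathrm{ad}(X_S)|_{\mathfrak{n}}$ need not commute, separating their contributions to the Jordan decomposition requires careful work inside the algebraic hull of the adjoint representation, exploiting the rigidity of compact semisimple actions together with the $\mathfrak{s}$-invariant inner product on $\mathfrak{n}$.
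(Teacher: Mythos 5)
You have a genuine gap, and it sits exactly at the mathematical heart of the lemma, namely your second step. (For context: the paper does not prove this statement itself but cites Mostow and Wu, so a self-contained argument would be welcome — but it has to actually be carried out.) You never prove that $X_S=0$; you say you "would show that the semisimple part of $\mathrm{ad}(X)|_{\mathfrak{n}}$ retains a nonzero contribution from $\mathrm{ad}(X_S)|_{\mathfrak{n}}$" and admit this "requires careful work." Worse, the ingredients you propose to use — nilpotency of $\mathrm{ad}(X)|_{\mathfrak{n}}$, skew-symmetry of $\mathrm{ad}(X_S)|_{\mathfrak{n}}$, faithfulness of $\mathfrak{s}\to\mathrm{Der}(\mathfrak{n})$, and inner-ness of $[\mathrm{ad}(X_R),\mathrm{ad}(X_S)]|_{\mathfrak{n}}$ — are provably insufficient: take $\mathfrak{n}=\RR^3$ abelian with the standard faithful $\mathfrak{so}(3)$-action, $B=\mathrm{ad}(X_S)|_{\mathfrak{n}}\neq 0$, and $A:=-B\in\mathrm{Der}(\mathfrak{n})=\mathfrak{gl}(3,\RR)$. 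Then $A+B=0$ is nilpotent, $[A,B]=0$ is inner, $B$ is skew and nonzero, yet there is no contradiction to be found. So no argument carried out purely inside $\mathrm{Der}(\mathfrak{n})$ with these facts can work. The actual content of the lemma is that such an $A$ cannot arise as $\mathrm{ad}(X_R)|_{\mathfrak{n}}$ with $X_R$ in the radical of an ambient $\mathfrak{g}$ whose nilradical is exactly $\mathfrak{n}$ and which has no compact semisimple normal factor; your sketch never brings this global information into play. (A smaller issue: in step 1 your justification of $[\mathfrak{r},\mathfrak{z}]=0$ is a non sequitur — $[\mathfrak{r},\mathfrak{z}]\subseteq\mathfrak{n}$ and $\mathfrak{n}\cap\mathfrak{s}=0$ prove nothing, since $[\mathfrak{r},\mathfrak{z}]$ need not lie in $\mathfrak{s}$. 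The claim is true, e.g.\ because $\mathfrak{z}$ is an ideal of $\mathfrak{s}$, hence perfect, and $[\mathfrak{r},[\mathfrak{z},\mathfrak{z}]]\subseteq[[\mathfrak{r},\mathfrak{z}],\mathfrak{z}]+[\mathfrak{z},[\mathfrak{r},\mathfrak{z}]]\subseteq[\mathfrak{n},\mathfrak{z}]=0$.)

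To close the gap you must work with $\mathrm{ad}_{\mathfrak{g}}(X)$ on all of $\mathfrak{g}$, not just on $\mathfrak{n}$. One route: since $[\mathfrak{g},\mathfrak{r}]\subseteq\mathfrak{n}$, nilpotency of $\mathrm{ad}(X)|_{\mathfrak{n}}$ already forces $\mathrm{ad}(X)|_{\mathfrak{r}}$ to be nilpotent; hence the semisimple Jordan part $\delta_s$ of the derivation $\mathrm{ad}_{\mathfrak{g}}(X)$ vanishes on $\mathfrak{r}$, and the derivation identity applied to $[Y,W]$ with $W\in\mathfrak{r}$ gives $\delta_s(\mathfrak{g})\subseteq Z_{\mathfrak{g}}(\mathfrak{r})$. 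Now $Z_{\mathfrak{g}}(\mathfrak{r})$ is an ideal of $\mathfrak{g}$, reductive with central radical, so its derived algebra is a semisimple ideal of $\mathfrak{g}$ of compact type; the hypothesis that $G$ has no nontrivial compact connected semisimple normal subgroup forces this derived algebra to vanish, i.e.\ $Z_{\mathfrak{g}}(\mathfrak{r})\subseteq\mathfrak{r}$. Therefore $\delta_s$ induces $0$ on $\mathfrak{g}/\mathfrak{r}\cong\mathfrak{s}$, while on the other hand it induces the semisimple part of the induced operator $\mathrm{ad}_{\mathfrak{s}}(X_S)$, which is already semisimple by compactness of $\mathfrak{s}$; hence $\mathrm{ad}_{\mathfrak{s}}(X_S)=0$ and $X_S=0$. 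This is precisely where compactness and the "no compact normal factor" hypothesis enter (for noncompact $\mathfrak{s}$, e.g.\ $\mathfrak{sl}(2,\RR)\ltimes\RR^2$, the statement is false, and the argument breaks exactly at the semisimplicity of $\mathrm{ad}_{\mathfrak{s}}(X_S)$). With $\mathfrak{m}\subseteq\mathfrak{r}$ established, your third step — the ad-nilpotent characterization of the nilradical of a solvable algebra, noting that the nilradicals of $\mathfrak{g}$ and of $\mathfrak{r}$ coincide — does finish the proof.
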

\begin{proof}
See \cite[Lemma 3.9]{Mos71} or \cite[Proposition 1.3]{Wu88}.
\end{proof}

\begin{lemma}[Mostow Structure Theorem]\label{Mostowstructure}
    Let $G$ be a connected solvable Lie group, and $B\subseteq G$ be a closed subgroup. Then
\begin{enumerate}
    \item $G/B$ carries a $G$-invariant finite measure if and only if $G/B$ is compact.
    \item Write $N$ for the nilradical of $G$. Suppose that
    $G/B$ is compact and that $N\cap B$ doesn't contain a nontrivial closed subgroup that is normal in $G$.
    Then $N/(N\cap B)$ is compact.
\end{enumerate}
\end{lemma}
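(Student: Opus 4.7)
Part (1): the direction ``compact $\Rightarrow$ finite invariant measure'' is immediate, since a Haar measure on $G$ descends to a finite $G$-invariant measure on any compact quotient $G/B$. For the converse (the substance of Mostow's theorem), I would argue by induction on $\dim G$. Applying Lemma \ref{normalinadmissible}, one may reduce to the case where $B$ contains no nontrivial closed subgroup that is normal in $G$. Since $G$ is solvable and nontrivial, the last nontrivial term $A$ of the derived series (or, failing that, the identity component of the center) is a closed, connected, abelian, normal subgroup of $G$. The plan is to examine the fibration
\[
A/(A\cap B) \;\cong\; (A\cdot B)/B \;\to\; G/B \;\to\; G/(A\cdot B),
\]
and apply the inductive hypothesis to the quotient homogeneous space of $G/A$. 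Granting that $A\cdot B$ is closed in $G$, the base is a homogeneous space of the solvable Lie group $G/A$ (strictly smaller dimension) with a finite invariant measure, hence compact by induction; the fiber $A/(A\cap B)$ is a homogeneous space of the abelian Lie group $A$ carrying a finite invariant measure (obtained by disintegration along $A\cdot B$), and for abelian $A \cong \RR^a\times\TT^b$ any closed subgroup of finite covolume is cocompact. Thus $G/B$ fibers compactly over a compact base, so is compact.

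Part (2): Here $N$ is characteristic (hence normal) in $G$, so once $N\cdot B$ is known to be closed, Lemma \ref{propertyPfiber} supplies the homeomorphism $N/(N\cap B)\cong (N\cdot B)/B$, and compactness of $G/B$ forces $(N\cdot B)/B$ to be compact, yielding the desired compactness of $N/(N\cap B)$. To prove that $N\cdot B$ is closed, I would exploit the hypothesis that $N\cap B$ contains no nontrivial closed subgroup normal in $G$: this is what permits one to apply part (1) internally to the nilpotent (hence solvable) group $N$ and to rule out pathological accumulation of $N\cdot B$ beyond itself, by examining the closure $\overline{N\cdot B}$, which is normalized by $N$.

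The main obstacle in both parts is establishing closedness of the product subgroups $A\cdot B$ and $N\cdot B$. The standard remedy is Mostow's density argument: the closure $\overline{A\cdot B}$ (respectively $\overline{N\cdot B}$) is a closed subgroup of $G$ containing and normalized by the normal subgroup $A$ (resp. $N$); combining the action of the discrete quotient $\overline{A\cdot B}/(A\cdot B)$ with $G$-invariance and finiteness of the measure on $G/B$, and using solvability to control fixed sets, one forces this quotient to be trivial. The detailed verification of these closedness assertions (which is the technical heart of the Mostow structure theorem) can be found in Raghunathan, Chapter III.
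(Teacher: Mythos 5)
The paper does not actually prove this lemma: it is quoted as the classical Mostow structure theorem and the ``proof'' is a citation to Chapter III of \cite{Rag72}. So the question is whether your sketch stands on its own, and it does not. The direction you call immediate is not: Haar measure on $G$ does \emph{not} descend to a $G$-invariant measure on an arbitrary compact quotient $G/B$. A $G$-invariant measure on $G/B$ exists precisely when the modular function of $G$ restricted to $B$ coincides with the modular function of $B$, and this can fail for a cocompact closed subgroup --- e.g.\ $G=\SL(2,\RR)$ and $B$ a Borel subgroup, where $G/B$ is compact but carries no finite $G$-invariant measure. For solvable $G$ the implication ``compact $\Rightarrow$ finite invariant measure'' is true, but it is itself part of the content of Mostow's theorem (one must show $\Delta_G|_B=\Delta_B$ using solvability), not a one-line observation about descent of Haar measure.

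More seriously, in both parts the entire weight of your argument rests on the closedness of $A\cdot B$ and $N\cdot B$ and on the existence of a finite $A$-invariant measure on the fiber $A/(A\cap B)$, and at exactly these points the sketch either argues with objects that do not behave as claimed (if $A\cdot B$ is not closed it is a proper \emph{dense} subgroup of $\overline{A\cdot B}$, so $\overline{A\cdot B}/(A\cdot B)$ is not a discrete group one can let act; its ``discreteness'' presupposes the very closedness to be proved) or defers to Raghunathan, Chapter III --- the reference the paper already cites for the whole lemma. As written, the proposal therefore reduces the lemma to itself. Smaller repairs would also be needed: the last nontrivial term of the derived series of a connected solvable Lie group need not be closed (one should take its closure), a solvable Lie group can have trivial center, so that fallback is unavailable, and the disintegration step requires an argument that the conditional measures on the fibers are $A$-invariant and finite. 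A self-contained proof essentially amounts to reproducing Mostow's induction; otherwise simply cite \cite{Rag72}, as the paper does.
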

\begin{proof}
See Chapter III of \cite{Rag72}.
\end{proof}

\begin{coro}
\label{nilradicalhered}
Let $G$ be a connected Lie group whose semisimple part is compact
and contains no nonontrivial closed connected normal subgroup of $G$, and let $B\subseteq G$ be a cocompact admissible subgroup.
 Suppose that $N\cap B$ contains no nontrivial connected normal subgroup of $G$, where $N$ is the nilradical of $G$. Then setting  $H$ to be either $N$ or $C_1N$, we have that 
$H\cap B$ is a cocompact admissible subgroup in $H$.
\end{coro}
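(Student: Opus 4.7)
The goal is to prove compactness of $H/(H \cap B)$ for $H \in \{N, C_1 N\}$; admissibility will then follow from Lemma~\ref{Admissible iff finite volume}, because both $N$ (nilpotent) and $C_1 N$ (a compact semisimple group times a mutually normalizing nilpotent group) are unimodular. The strategy is to descend from $G$ to its radical $R$, invoke the Mostow Structure Theorem at the level of $R$, and then lift back to obtain the result for $H = C_1 N$.

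\textbf{Cocompactness of $R \cap B$ in $R$.} Since $C_2 = \{e\}$ by hypothesis, $G = R \cdot C_1$ and $G/R \cong C_1$ is compact. I will first verify that $R \cdot B$ is closed in $G$. Admissibility of $B$ provides a closed connected solvable $T \supseteq R$ normalized by $B$ with $T \cdot B$ closed; since $T/R$ is a connected solvable subgroup of the compact semisimple $G/R \cong C_1$, it must be a compact torus, and together with the closedness of $T \cdot B$ this forces $R \cdot B$ itself to be closed. Then $G/(R\cdot B)$ is a quotient of the compact $G/R$, hence compact, and Lemma~\ref{propertyPfiber}(2) applied to $R$ gives that $R/(R\cap B) \cong (R \cdot B)/B$ is cocompact in $G/B$, hence compact.

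\textbf{Invoking Mostow for $H = N$.} The nilradical of the solvable group $R$ coincides with $N$. I intend to apply Lemma~\ref{Mostowstructure}(2) to $R$ with the cocompact closed subgroup $R \cap B$ to conclude that $N/(N \cap B)$ is compact. The required hypothesis is that $N \cap B$ contain no nontrivial closed subgroup normal in $R$. We first use Remark~\ref{reducetononnormal} to replace $(G,B)$ by a quotient in which $B$ itself contains no nontrivial closed subgroup normal in $G$, which does not affect the target quotients. If $K \subseteq N \cap B$ is closed and $R$-normal, its identity component $K^0$ is a connected $R$-normal subgroup of $N$; compactness of $G/R \cong C_1$ acting by conjugation on the space of connected closed normal subgroups of $R$ lying inside $N$ (which, up to the continuous family of $C_1$-orbits, is tamed by the structure of $\mathfrak{n}$ as an $R$-module) upgrades $K^0$ to a $G$-normal connected subgroup of $N \cap B$, which by hypothesis is trivial. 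The discrete remainder of $K$ is then central in $G$ and can be absorbed via a further reduction. Mostow then yields $N/(N\cap B)$ compact.

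\textbf{Extending to $H = C_1 N$, and the main obstacle.} Since $C_1$ is compact and normalizes $N$, the product $C_1 N$ is a closed subgroup of $G$, and $C_1 N / N \cong C_1/(C_1 \cap N)$ is compact. Observing that $N \cap B = N \cap (C_1 N \cap B)$ and that the fiber of the natural projection $C_1 N/(C_1N \cap B) \to C_1 N/(N \cdot (C_1N \cap B))$ is $N/(N\cap B)$, the preceding step together with Lemma~\ref{heredecomp} and Lemma~\ref{propertyPfiber} yields compactness of $C_1 N / (C_1 N \cap B)$. The delicate point of the whole argument is the averaging step in the previous paragraph: reconciling Mostow's requirement concerning $R$-normal closed subgroups of $N \cap B$ with our hypothesis concerning $G$-normal connected subgroups. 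The compactness of $G/R$ is essential here, providing the mechanism by which $R$-normal connected subgroups of $N$ contained in $N \cap B$ can be promoted to $G$-normal ones at the level of identity components.
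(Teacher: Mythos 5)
Your proof has a genuine gap at its foundation: the claim that $R\cdot B$ is closed, and hence that $R\cap B$ is cocompact in $R$. From the closedness of $T\cdot B$ and the compactness of $T/R$ you may only conclude that the image of $B$ in $G/R$ is contained in the compact subgroup $(T\cdot B)/R$; you cannot conclude that this image — an abstract subgroup of a compact group — is closed, and if it winds densely in a torus of $(T\cdot B)/R$ then $R\cdot B$ is not closed and $R/(R\cap B)$ is not compact. Indeed, the subgroup $T$ in Definition~\ref{def: admissible} exists precisely because $R\cdot B$ need not be closed; if your inference were valid one could always take $T=R$ and the second clause of admissibility would be empty. The paper's argument never descends to $R$ at all: it takes the solvable group $T$ furnished by admissibility (for which $T\cdot B$ is closed \emph{by definition}), uses Lemma~\ref{maximalnilpotent} — this is where the hypotheses that the semisimple part is compact and has no factor normal in $G$ enter — to identify $N$ as the nilradical of $T$, deduces that $T\cap B$ is cocompact in $T$ from Lemmas~\ref{heredecomp} and~\ref{propertyPfiber}, and applies Lemma~\ref{Mostowstructure} to the pair $(T,\,T\cap B)$. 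Your remaining steps (the fibration of $C_1N/(C_1N\cap B)$ over a quotient of $C_1N/N$ with fiber $N/(N\cap B)$, and unimodularity of $N$ and $C_1N$ to pass from cocompactness to finite invariant covolume before invoking Lemma~\ref{Admissible iff finite volume}) are reasonable in outline, but they all rest on the unproved cocompactness of $R\cap B$; replacing $R$ by $T$ throughout is the repair.

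A second problem is the ``promotion'' step you use to verify Mostow's hypothesis. You need that $N\cap B$ contains no nontrivial closed subgroup normal in $R$ (or, after the repair, in $T$), whereas the Corollary only excludes nontrivial \emph{connected} subgroups normal in $G$. Your proposed fix — conjugation by the compact group $C_1$ to upgrade an $R$-normal connected $K^0\subseteq N\cap B$ to a $G$-normal one — does not work as stated: the subgroup generated by the $C_1$-conjugates of $K^0$ is connected and normal in $G$, but nothing forces it to remain inside $B$, so no contradiction with the hypothesis is obtained; similarly, a discrete $R$-normal subgroup of $N\cap B$ is central in $R$, not in $G$, and ``absorbing it by a further reduction'' is not an argument. (The paper passes over this mismatch silently, relying on the preliminary reduction of Remark~\ref{reducetononnormal}, but the extra machinery you add does not actually close it, so if you keep this step it needs a genuine proof rather than the sketch given.)
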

\begin{proof}
Since $B\subseteq G$ is admissible, there exists a closed connected solvable subgroup $T$ of $G$,
    which contains $N$, is normalized by $B$ and such that $T\cdot B$ is closed. By Lemma \ref{maximalnilpotent}, $N$ is also the nilradical of $T$. Moreover, by Lemma \ref{heredecomp} and \ref{propertyPfiber}, $T\cap B$ is cocompact in $T$. Then it follows from Lemma \ref{Mostowstructure} that $N\cap B=N\cap (T\cap B)$ is cocompact in $N$ and hence has a finite invariant covolume.
    By Lemma \ref{Admissible iff finite volume}, $N\cap B$ is an admissible subgroup in $N$.
    
    Since $N\cdot B$ is closed by Lemma \ref{propertyPfiber}, 
    and $C_1N$ is a characteristic (hence normal) subgroup of $G$ by Lemma \ref{charsubgroup},
    we see that $(C_1N)\cdot B=C_1\cdot (N\cdot B)$ is a closed subgroup of $G$.
    Then by Lemma \ref{heredecomp} and \ref{propertyPfiber} again, $(C_1N)\cap B$ is cocompact in $C_1N$; and by Lemma \ref{Mostowstructure} and \ref{Admissible iff finite volume} again, 
    $(C_1N)\cap B$ is an admissible subgroup in $C_1N$.
\end{proof}

\section{Classification of orbits under twisted conjugacy}\label{app sec: twisted conjugacy}

The following key lemma is due to Jinpeng An, where it appears in his work addressing the classification of orbits under twisted conjugacy actions on a general connected Lie group. For more discussions see \cite{An-thesis, An-Geo}. 

\begin{lemma}\label{cleverajp}
Let $G$ be a connected Lie group, and let $A$ be an automorphism of $G$ such that $DA$ has no eigenvalue $1$. Then the map 
\[
\sigma\colon G\to G,\qquad \sigma(g)=gA(g)^{-1}
\]
is surjective.
\end{lemma}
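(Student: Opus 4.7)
The plan is to establish first that $\sigma$ is everywhere a local diffeomorphism (so $\sigma(G)$ is open in $G$), and then to prove $\sigma(G)=G$ by induction on $\dim G$, using reductions via $A$-invariant normal subgroups. The derivative computation proceeds as follows: for $Y\in\mathfrak g$ and $g\in G$, using $A(\exp(tY))=\exp(t\,DA(Y))$, we obtain $\sigma(g\exp(tY))=g\exp(tY)\exp(-t\,DA(Y))A(g)^{-1}$, and left-translating by $\sigma(g)^{-1}=A(g)g^{-1}$ shows that in left trivialization at $\sigma(g)$,
\[
D_g\sigma \;=\; \mathrm{Ad}(A(g))\circ(\mathrm{id}_{\mathfrak g}-DA),
\]
which is invertible since $DA$ has no eigenvalue $1$. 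Hence $\sigma$ is an étale map and $\sigma(G)$ is open.

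The key identity for the inductive step is the equivariance relation
\[
\sigma(gn) \;=\; g\,\sigma_N(n)\,g^{-1}\cdot\sigma(g), \qquad g\in G,\; n\in N,
\]
which holds for any $A$-invariant normal subgroup $N\lhd G$, where $\sigma_N:=\sigma|_N$. Suppose $G$ has a proper, closed, connected, $A$-invariant normal subgroup $N$ of positive dimension. Then $DA|_{\mathfrak n}$ and the induced $D\bar A$ on $\mathfrak g/\mathfrak n$ both inherit the no-eigenvalue-$1$ property, so by induction both $\sigma_N$ and $\sigma_{G/N}$ are surjective. Given $h\in G$, pick $\bar g\in G/N$ with $\sigma_{G/N}(\bar g)=\bar h$, lift to $g_0\in G$, and write $\sigma(g_0)=hn_0$ for some $n_0\in N$. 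The equivariance formula reduces finding $g=g_0n'$ with $\sigma(g)=h$ to solving $\sigma_N(n')=g_0^{-1}(hn_0^{-1}h^{-1})g_0$, whose right-hand side lies in $N$ by normality; the required $n'$ then exists by the inductive hypothesis applied to $N$.

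If no such $N$ exists, $G$ is either abelian, or semisimple with $DA$ transitively permuting the simple ideals of $\mathfrak g$. The abelian case is linear algebra: $\sigma$ corresponds to the invertible map $\mathrm{id}-DA$ on $\mathfrak g$, which descends to a finite-to-one surjection on $G\cong\mathbb R^a\times T^b$. In the transitive semisimple case, writing $G\cong G_0^k$ with $A$ a cyclic permutation twisted by $A_1,\ldots,A_k\in\Aut(G_0)$, iterative substitution reduces the equation $\sigma(\vec g)=\vec h$ to a single equation $\sigma_B(g_1)=h'$ on $G_0$ with $B=A_k\circ\cdots\circ A_1$; a short eigenvalue computation on the block-permutation structure of $DA$ shows that $DB$ inherits the no-eigenvalue-$1$ property, so the inductive hypothesis applied to $(G_0,B)$ completes the reduction (note $\dim G_0<\dim G$).

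The main obstacle is the remaining base case in which $G$ is simple and thus cannot be decomposed further. For compact simple $G$ this is immediate: $\sigma(G)$ is open (from the derivative computation) and closed (as the continuous image of a compact set), hence equals $G$ by connectedness. For non-compact simple $G$, one passes to the injective factor $\bar\sigma\colon G/\mathrm{Fix}(A)\to G$---well-defined because the fibers of $\sigma$ are right cosets of $\mathrm{Fix}(A)$, which is discrete since its Lie algebra $\ker(DA-\mathrm{id}_{\mathfrak g})$ is trivial by hypothesis---and shows that $\bar\sigma$ is proper, which forces $\sigma(G)$ to be closed as well. Establishing this properness invokes the finer structure theory of simple non-compact Lie groups (Iwasawa decomposition and the classification of automorphisms with isolated fixed point at $e$) and constitutes the technically most delicate part of the argument.
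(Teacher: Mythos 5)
There is a genuine gap, and it sits exactly where you admit it does: the non-compact simple base case. Your induction is only as strong as its terminal cases, and for a simple $G$ with no compact form you reduce the whole argument to the claim that the injective factor $\bar\sigma\colon G/\mathrm{Fix}(A)\to G$ is proper, which you do not prove and only gesture at via ``Iwasawa decomposition and the classification of automorphisms with isolated fixed point at $e$.'' Nothing in the proposal establishes this properness, and it is not a routine verification; as written, the induction does not close. The clean way out is a classical fact that the paper uses at the very first step: if $G$ is \emph{not} solvable, then every automorphism of $\mathfrak g$ has $1$ as an eigenvalue (Borel--Mostow; any automorphism of a semisimple Lie algebra has a nonzero fixed vector). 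Under the hypothesis that $DA$ has no eigenvalue $1$, this forces $G$ to be solvable, so your entire semisimple branch --- the transitive permutation of simple ideals, the twisted product $G_0^k$, the compact and non-compact simple base cases --- is vacuous and should simply be discarded, not argued. Without invoking that fact (or genuinely proving your properness claim), the proof is incomplete; note also that even granting your reduction, the appeal to $G\cong G_0^k$ for semisimple $G$ needs a pass to the universal cover, since a connected semisimple group is only an almost direct product of its simple factors.

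For what it is worth, the solvable/abelian portion of your argument is sound and takes a slightly different route from the paper: you use the general equivariance identity $\sigma(gn)=g\,\sigma_N(n)\,g^{-1}\sigma(g)$ for an arbitrary $A$-invariant closed connected normal subgroup $N$, splitting the problem between $N$ and $G/N$, whereas the paper reduces at once to $G$ solvable and simply connected, inducts on $N=[G,G]$ (closed in that setting), observes that $\bar\sigma$ on the abelianization is a surjective homomorphism because $\mathrm{id}-DA$ is invertible, and finishes with an explicit commutator manipulation producing $g=\sigma(h^{-1}h')$. Your identity is a tidy abstraction of that computation; but the theorem is only recovered once the non-solvable case is eliminated by the fixed-vector theorem rather than attacked head on.
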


\begin{proof}
    We prove by induction on $\dim{G}$. If $\dim{G}=0$, the lemma is obvious. Suppose that $\dim{G}=n>0$ and the lemma holds for connected Lie groups of dimension less than $n$. If $G$ is not solvable, then every automorphism of $\mathfrak{g}=\mathrm{Lie}(G)$ has $1$ as an eigenvalue.
    Hence we may assume that $G$ is solvable.  
Without loss of generality, we may also assume that $G$ is simply connected. Then the commutator subgroup $[G,G]$ is a closed, connected normal subgroup of $G$ with $\dim{[G,G]}<n$. Note that $A([G,G])=[G,G]$. By the inductive hypothesis, we have $\sigma([G,G])=[G,G]$.

Consider the connected abelian Lie group $Q:=G/[G,G]$ and the map
\[
\overline{\sigma}:Q\to Q,\qquad \overline{\sigma}([g])=[\sigma(g)]
\]
where $g\in G$ and $[g]=g[G,G]$. Since $Q$ is abelian, $\overline{\sigma}$
is a homomorphism. Moreover, since $DA$ has no eigenvalue $1$, $D\sigma=\mathrm{id}-DA$ is a linear isomorphism of $\mathfrak{g}$. It follows that 
$D\overline{\sigma}$ is an isomorphism of $\mathfrak{q}=\mathrm{Lie}(Q)$.
Therefore, $\overline{\sigma}$ is a surjective homomorphism.

Now for any $g\in G$, there exists an $h\in G$ such that $[g^{-1}]=\overline{\sigma}([h])=[\sigma(h)]$, or in other words
$g^{-1}=\sigma(h)g'$ for some $g'\in [G,G]$.
Since $h(g')^{-1}h^{-1}\in [G,G]$ and 
$hA(h)h^{-1}A(h)^{-1}\in [G,G]$,
we have 
$h(g')^{-1}A(h)h^{-1}A(h)^{-1}\in [G,G]$,
and so there exists an $h'\in [G,G]$
such that $\sigma(h')=h(g')^{-1}A(h)h^{-1}A(h)^{-1}$.
It follows that 
\[
g=(g')^{-1}\sigma(h)^{-1}
=(g')^{-1}A(h)h^{-1}
=h^{-1}\sigma(h')A(h)
=\sigma(h^{-1}h').
\]
In particular, $g$ is in the image of $\sigma$. This proves that $\sigma$ is surjective.
\end{proof}

\begin{proof}[Proof of Lemma \ref{eigen1fix}]
 By Lemma $\ref{cleverajp}$, there exists $g\in G$ such that 
 $a=gA(g)^{-1}$. So $g=aA(g)=f(g)$ is a desired fixed point.
\end{proof}

%


\begin{proof}[Proof of Lemma \ref{eigen1trans}]
    For $a\in G$, let $c_a$ denote the inner automorphism of $G$ induced by $a$: $c_a(g)=aga^{-1}$. Suppose that $A_1\circ A_2^{-1}=c_a$, and suppose to the contrary that $1$ is not an eigenvalue of $DA_2$. Then by Lemma $\ref{cleverajp}$, there exists $g\in G$ such that 
$a=gA_2(g)^{-1}$. Then 
\[
A_1=c_{gA_2(g)^{-1}}\circ A_2=c_g\circ A_2\circ c_{g^{-1}}.
\]
It follows that $1$ is not an eigenvalue of $DA_1=\mathrm{Ad}(g)\circ DA_2\circ \mathrm{Ad}(g)^{-1}$, a contradiction.
\end{proof}

\end{document}